\newtheorem{prop}{Proposition}[section]
\newtheorem{thm}[prop]{Theorem}
\newtheorem{cor}[prop]{Corollary}
\newtheorem{conj}[prop]{Conjecture}
\newtheorem{lem}[prop]{Lemma}
\theoremstyle{definition}
\newtheorem{defn}[prop]{Definition}
\newtheorem{rem}[prop]{\it Remark}
\newtheorem*{claim*}{Claim}
\newcommand{\bC}{\mathbb{C}}
\newcommand{\bR}{\mathbb{R}}
\newcommand{\bA}{\mathbb{A}}
\newcommand{\bQ}{\mathbb{Q}}
\newcommand{\bZ}{\mathbb{Z}}
\newcommand{\bN}{\mathbb{N}}
\newcommand{\bG}{\mathbb{G}}
\newcommand{\bT}{\mathbb{T}}
\newcommand{\bk}{\mathbbm{k}}
\newcommand{\tX}{\widetilde{X}}
\newcommand{\tY}{\widetilde{Y}}
\newcommand{\tS}{\widetilde{S}}
\newcommand{\tD}{\widetilde{D}}
\newcommand{\tF}{\widetilde{F}}
\newcommand{\tE}{\widetilde{E}}
\newcommand{\cX}{\mathcal{X}}
\newcommand{\cY}{\mathcal{Y}}
\newcommand{\cZ}{\mathcal{Z}}
\newcommand{\cO}{\mathcal{O}}
\newcommand{\cL}{\mathcal{L}}
\newcommand{\cI}{\mathcal{I}}
\newcommand{\cF}{\mathcal{F}}
\newcommand{\cG}{\mathcal{G}}
\newcommand{\cE}{\mathcal{E}}
\newcommand{\cD}{\mathcal{D}}
\newcommand{\cB}{\mathcal{B}}
\newcommand{\cR}{\mathcal{R}}
\newcommand{\DMR}{\mathcal{DMR}}
\newcommand{\fa}{\mathfrak{a}}
\newcommand{\fb}{\mathfrak{b}}
\newcommand{\fm}{\mathfrak{m}}
\newcommand{\fab}{\fa_{\bullet}}
\newcommand{\Spec}{\mathrm{Spec}}
\newcommand{\Supp}{\mathrm{Supp}}
\newcommand{\mult}{\mathrm{mult}}
\newcommand{\lct}{\mathrm{lct}}
\newcommand{\vol}{\mathrm{vol}}
\newcommand{\ord}{\mathrm{ord}}
\newcommand{\Val}{\mathrm{Val}}
\newcommand{\Diff}{\mathrm{Diff}}
\newcommand{\gr}{\mathrm{gr}}
\newcommand{\QM}{\mathrm{QM}}
\newcommand{\Ex}{\mathrm{Ex}}
\newcommand{\LC}{\mathrm{LC}}
\newcommand{\hvol}{\widehat{\rm vol}}
\newcommand{\Proj}{\mathrm{Proj}}
\numberwithin{equation}{section}
\newcommand{\red}[1]{{\textcolor{red}{#1}}}
\title{Stable degenerations of singularities}
\date{}
\author{Chenyang Xu}
\address{Department of Mathematics, Princeton University, Princeton, NJ 08544, USA}
\email     {chenyang@princeton.edu}
\address   {Beijing International Center for Mathematical Research,       Beijing 100871, China}
\email     {cyxu@math.pku.edu.cn}
\author{Ziquan Zhuang}
\address{Department of Mathematics, Johns Hopkins University, Baltimore, MD 21218, USA}
\email{zzhuang@jhu.edu}
\begin{document}

\maketitle

\begin{abstract}
   For any Kawamata log terminal (klt) singularity and any minimizer of its normalized volume function, we prove that the associated graded ring is always finitely generated, as conjectured by Chi Li. As a consequence,  we complete the last step of establishing the Stable Degeneration Conjecture proposed by Chi Li and the first named author for an arbitrary klt singularity. 
\end{abstract}

\section{Introduction}

The algebraic K-stability of Fano varieties has been a very active research topic in recent years. The local analogue is a stability theory for Kawamata log terminal (klt) singularities.  There were two roots from earlier analytic works: the first is  the work \cite{MSY08}, which shows that for a klt singularity $x\in X$ with a torus $\bT$ action, the existence of the Sasaki-Einstein metric along a Reeb vector field $\xi$ implies that $\xi$ minimizes a normalized volume function defined among all Reeb vector fields; the second one is the study of the Ricci-flat metric tangent cone for a singularity on the K\"ahler-Einstein Fano variety which is the Gromov-Hausdorff limit of K\"ahler-Einstein manifolds \cite{DS17}.

The general algebraic local stability theory was initiated in \cite{Li18}. It centers around the problem of minimizing the \emph{normalized volume function} 
$$\hvol\colon \Val_{X,x}\to \bR_{>0}\cup\{+\infty\},$$ 
defined on the non-archimedean link $\Val_{X,x}$ (i.e. the set of valuations centered at $x$) of a klt singularity $x\in (X,\Delta)$.
The complete conjectural picture, named the Stable Degeneration Conjecture, was proposed in \cites{Li18, LX18} (see \cite[Conjecture 4.4]{Xu18}), which says for \emph{any} klt singularity, a minimizer of $\hvol$ induces a $\bT$-equivariant degeneration to a K-semistable Fano cone singularity (for some torus $\bT$); and moreover, such a degeneration is unique. 

\subsection{Main theorem}
After a period of intensive works, including \cites{Li18,Li17,LL19,Blum18,LX20,LX18,LWX-cone,Xu20,XZ-uniqueness} etc., the only unknown part of the Stable Degeneration Conjecture is the local higher rank finite generation, which we will address in the current paper, i.e. we confirm \cite[Conjecture 7.1(5)]{Li18}.

\begin{thm}[Local higher rank finite generation]\label{thm-localfinite}
Let $x\in (X=\Spec(R),\Delta)$ be a klt singularity. Let $v$ be a minimizer of $\hvol$ on $\Val_{X,x}$. Then the associated graded algebra ${\rm gr}_v R:=\bigoplus_{\lambda\in \bR_{\ge 0}}\fa_\lambda/\fa_{>\lambda}$ is finitely generated, where 
$$\fa_\lambda:=\{f\in R\ | \ v(f)\ge \lambda\}\mbox{\ \ \ \ \ and \ \ \ \ \ } \fa_{>\lambda}:=\{f\in R\ | \ v(f)>\lambda\}. $$
\end{thm}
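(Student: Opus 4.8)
The plan is to reduce the finite generation of $\gr_v R$ to a sequence of K-stability statements for Fano cone singularities, following the strategy that has emerged from the works cited in the introduction. The starting point is the theorem (due to Blum, Li--Xu, Xu--Zhuang) that a minimizer $v$ of $\hvol$ is necessarily quasi-monomial, and — by the solution of the uniqueness part — is the unique minimizer up to scaling. By \cite{XZ-uniqueness} and related results one knows that $v$ is moreover a monomial valuation computed on a birational model, and that the graded ring $\gr_v R$, \emph{if} finitely generated, gives rise to a K-semistable Fano cone degeneration $(X_0,\Delta_0,\xi_v)$ of $(X,\Delta)$. So the entire problem is the finite generation.

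The key steps I would carry out are as follows. First, reduce to the case where $v$ has rational rank one: if the rational rank of $v$ is larger, one perturbs $v$ inside the rational envelope of its quasi-monomial coordinates to produce nearby valuations whose associated graded rings degenerate further, and one wants to run an induction on the rank. Concretely, I would look at the Newton--Okounkov body / filtration picture: the associated graded ring $\gr_v R$ is a flat degeneration of $R$, and by a theorem on Reeb cone degenerations, it suffices to show that for a \emph{rational} Reeb vector obtained by perturbing $\xi_v$ within the torus acting on a \emph{candidate} degeneration, the associated graded ring is finitely generated — and that case is known, since a rational valuation computing a klt singularity has finitely generated graded ring by the cone construction plus the fact (from \cite{LX20} and \cite{Xu20}) that K-semistable Fano cones have good moduli. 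Second, I would use the minimizing property crucially: the normalized volume is lower semicontinuous and strictly convex in the appropriate sense on the space of valuations, so the minimizer sits at a "vertex" where the $\hvol$ function is minimized, and the Fujita--Li--type estimates force the degeneration induced by $v$ to be K-semistable; then the finite generation of the K-semistable Fano cone's Cox-type ring (established via the boundedness of K-semistable Fano cone singularities and the existence of the local-to-global moduli) transports back to $\gr_v R$. Third, I would package the rank induction: assuming finite generation in rank $\le r-1$, the rank $r$ case is handled by first degenerating along a rank $r-1$ valuation (finitely generated by induction) and then observing the remaining rank-one valuation on the degeneration is a minimizer of the degenerate normalized volume, hence again yields a finitely generated graded ring, and finite generation is preserved under this two-step degeneration.

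The hard part will be the last point — controlling the iterated degeneration and showing that finite generation of the two successive graded rings implies finite generation of the total $\gr_v R$ in one step. The subtlety is that $\gr_v R$ is the associated graded of the \emph{full} rank-$r$ filtration, not the composition of two filtrations in an obvious way; one needs the valuation $v$ to be "induced" in a compatible manner by a filtration of the intermediate degeneration, which requires the intermediate degeneration to itself be a klt (or at worst log canonical) Fano cone so that the induction hypothesis on normalized volume minimizers applies there. Establishing that the intermediate object lies in the right category, and that the minimizer descends correctly, is where the K-stability input (properness of moduli, openness of K-semistability, and the torus-equivariant MMP) must be combined carefully; I expect this to be the technical heart of the argument and the place where the previously unresolved conjecture \cite[Conjecture 7.1(5)]{Li18} genuinely required new ideas rather than a routine assembly of known results.
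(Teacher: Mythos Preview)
Your proposal has genuine gaps that would prevent it from going through as written.

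\textbf{Boundedness is not available.} You invoke ``boundedness of K-semistable Fano cone singularities'' at a crucial juncture to transport finite generation back to $\gr_v R$. This boundedness statement is an open conjecture (it is exactly Conjecture~\ref{conj-bounded} in the paper), so it cannot be used as input. In fact the paper explicitly states that a main difference from the global argument in \cite{LXZ-HRFG} is that the local case must \emph{circumvent} any boundedness result, and the new technique (Koll\'ar models and multiple degenerations) is designed precisely to do this.

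\textbf{The rank induction does not close.} Your inductive step degenerates along a rank $r-1$ valuation and then claims the residual rank-one valuation on the intermediate object is again a minimizer of the normalized volume there. But there is no reason the chosen rank $r-1$ valuation (a rational perturbation or a face valuation) is itself a minimizer of $\hvol$ on the original singularity, so the induction hypothesis does not apply to it; and even if you could degenerate, you would need to know the intermediate object is klt and that the induced valuation minimizes $\hvol$ \emph{there}---neither follows from what you have. Relatedly, your suggestion to ``perturb $\xi_v$ within the torus acting on a candidate degeneration'' is circular: the torus action on $\Spec(\gr_v R)$ only exists once $\gr_v R$ is known to be finitely generated, which is what you are trying to prove.

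\textbf{What the paper actually does.} The paper avoids both obstacles by a different route. First, it shows (Lemma~\ref{lem-localmonomial}) that the minimizer $v$ is a monomial lc place of a \emph{special} $\bQ$-complement with respect to some log smooth model; this uses the K-semistability characterization from \cite{XZ-uniqueness} together with a basis-type divisor construction (Lemmas~\ref{lem:minimizer computes lct}--\ref{lem:minimizer's complement}). Second, it introduces \emph{Koll\'ar models}---qdlt Fano type models $\pi\colon(Y,E)\to(X,\Delta)$ with $E=\Ex(\pi)$ and $-(K_Y+\pi_*^{-1}\Delta+E)$ ample---and shows via MMP and tie-breaking (Theorem~\ref{thm:construction of qdlt fano type model}, Corollary~\ref{cor:km for monomial lc place}) that any such monomial lc place lies in $\QM(Y',E')$ for some Koll\'ar model. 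Third, and this is the technical heart, for a Koll\'ar model with components $E_1,\dots,E_r$ the multigraded Rees construction yields a $\bG_m^r$-equivariant family over $\bA^r$, and the paper proves by a simultaneous induction (Propositions~\ref{pro-localstablefamily} and~\ref{pro:degeneration remain qdlt Fano}) that this family is locally stable with \emph{irreducible} central fiber. The irreducibility is exactly the condition that forces the ideals $I_{s,\lambda}$ to be valuation ideals (Lemma~\ref{lem:induced valuation=v_t}), from which finite generation of $\gr_{v_s}R$ for every $s\in\bR^r_{\ge 0}$ follows (Corollary~\ref{cor:km imply fg}). No boundedness, and no induction on rational rank requiring intermediate minimizers, is used.
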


Here when the value monoid $\Phi$ of the image $v\colon R\setminus\{ 0 \}\to \mathbb{R}_{\ge 0}$ generates an abelian group isomorphic to $\mathbb Z$, i.e. the rational rank of $v$ is one, the finite generation property follows from \cite{BCHM} (see \cites{Blum18, LX20}). However, the case when $v$ is of higher rational rank needs substantially more work.

As we mentioned above, together with the aforementioned earlier works, this completes the local stability theory of klt singularities. 
\begin{thm}[Stable Degeneration Conjecture]\label{thm-SDC}
Let $x\in (X={\rm Spec}(R),\Delta)$ be a klt singularity. 
\begin{enumerate}
    \item Up to rescaling, $\hvol$ has a unique minimizer $v$ in $\Val_{X,x}$ which is quasi-monomial with a finitely generated associated graded ring $\gr_v R$, and it induces a degeneration of $(X,\Delta)$ to a K-semistable log Fano cone singularity $(X_0={\rm Spec}({\rm gr}_v R),\Delta_0;\xi_v)$, where $\xi_v$ is the Reeb vector field induced by the grading of $\gr_v R$. 
    \item Conversely, assume a quasi-monomial valuation $v$ has a finite generated associated graded ring ${\rm gr}_v R$ such that $\left(X_0={\rm Spec}({\rm gr}_v R),\Delta_0\right)$ is klt where $\Delta_0$ is the degeneration of $\Delta$ and $(X_0,\Delta_0;\xi_v)$ is K-semistable as a log Fano cone singularity. Then $v$ is a minimizer of $\hvol$ on $\Val_{X,x}$.
    \item $(X_0,\Delta_0;\xi_v)$ has a degeneration to a unique K-polystable log Fano cone singularity $(Y,\Delta_Y;\xi)$, which admits a Ricci-flat K\"ahler cone metric when the ground field $\bk=\bC$.  
\end{enumerate}
\end{thm}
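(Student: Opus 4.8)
\section*{Proof proposal for Theorem~\ref{thm-SDC}}

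The plan is to assemble the Stable Degeneration Conjecture from the body of the already-existing K-stability literature, using Theorem~\ref{thm-localfinite} as the one missing input. The key point is that every other assertion has been reduced, in prior work, to precisely the finite generation statement of Theorem~\ref{thm-localfinite}; so the work here is largely one of citation and bookkeeping, with the genuine mathematical content concentrated in part (1).

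\textbf{Part (1).} First I would invoke the existence of a minimizer $v$ of $\hvol$ on $\Val_{X,x}$ (\cite{Blum18}, see also \cite{LX20}) together with the quasi-monomiality of any minimizer (\cite{Xu20}); these give a quasi-monomial valuation $v$ realizing the infimum. Next I apply Theorem~\ref{thm-localfinite} to conclude that $\gr_v R$ is finitely generated. Finite generation of $\gr_v R$ means the Rees-type construction produces a genuine flat degeneration of $(X,\Delta)$ to $(X_0=\Spec(\gr_v R),\Delta_0)$ equipped with the Reeb vector field $\xi_v$ coming from the $\Phi$-grading; that $(X_0,\Delta_0;\xi_v)$ is a K-semistable log Fano cone singularity is then exactly \cite[Theorem ...]{LX20} (the statement that a minimizer with finitely generated graded ring degenerates to a K-semistable Fano cone). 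Finally, uniqueness of the minimizer up to rescaling is \cite{XZ-uniqueness}, whose proof is unconditional once one knows the minimizer is quasi-monomial; alternatively, uniqueness also follows a posteriori from part (2) applied to two competing minimizers. I would spell out that the grading $v\colon\Phi\to\bR_{\ge 0}$ is the one inducing $\xi_v$, so that the degeneration and the Reeb field are compatibly normalized.

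\textbf{Parts (2) and (3).} Part (2) is the converse direction and is already known unconditionally: given a quasi-monomial $v$ with $\gr_v R$ finitely generated and $(X_0,\Delta_0;\xi_v)$ a K-semistable log Fano cone with $(X_0,\Delta_0)$ klt, the inequality $\hvol(v)\le\hvol(w)$ for all $w\in\Val_{X,x}$ follows from the lower-semicontinuity of normalized volume under degeneration together with the fact that K-semistability of the cone forces $v$ to minimize on $X_0$, hence on $X$; this is \cite[Theorem ...]{LX20} (cf. also \cite{LWX-cone}). I would simply cite this, noting that no new ingredient is needed since part (2) never used higher-rank finite generation as a hypothesis — it assumes it. Part (3) is the passage from K-semistable to K-polystable: the existence and uniqueness of a K-polystable degeneration of the log Fano cone singularity $(X_0,\Delta_0;\xi_v)$ is \cite{LWX-cone} (building on the Fano cone analogue of the Hilbert–Mumford/optimal destabilization theory), and the existence of a Ricci-flat cone metric on the K-polystable model over $\bk=\bC$ is the local analogue of Yau–Tian–Donaldson for Fano cones, \cite{LWX-cone} (relying on \cite{DS17} and the Sasaki-Einstein analytic theory, cf. \cite{MSY08}).

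\textbf{Main obstacle.} The only substantive obstacle is entirely contained in part (1), and specifically in the finite generation input of Theorem~\ref{thm-localfinite}: everything else is a matter of correctly quoting \cite{LX20}, \cite{XZ-uniqueness}, \cite{LWX-cone}, \cite{Xu20} and \cite{Blum18} and checking that their hypotheses are now met. The subtlety to watch in the write-up is purely organizational — making sure that the normalization conventions (rescaling of $v$, the induced Reeb field $\xi_v$, and the matching of $\Delta_0$ with the flat limit of $\Delta$) are consistent across the cited papers, and that the klt property of $(X_0,\Delta_0)$ — which is needed to even state that $(X_0,\Delta_0;\xi_v)$ is a log Fano cone — is recorded (it follows from lower-semicontinuity of the log discrepancy / inversion of adjunction along the degeneration, as in \cite{LX20}). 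I do not anticipate any hidden difficulty here beyond Theorem~\ref{thm-localfinite} itself.
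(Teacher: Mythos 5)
Your proposal is essentially the paper's own proof, which amounts to a one-line assembly: all of (1)--(3) were reduced in prior work to the finite generation statement, so Theorem \ref{thm-localfinite} closes the argument exactly as you describe. Two citation corrections to keep the bookkeeping honest: the K-semistable characterization of the minimizer in the higher-rank case (the remaining content of (1) and all of (2)) is due to \cite{LX18} conditional on finite generation, while \cite{LX20} (with \cite{Li17}) only treats rational rank one; and the Ricci-flat cone metric on the K-polystable model in (3) comes from \cite{Li21} (which relies on the global finite generation of \cite{LXZ-HRFG}), not from \cite{LWX-cone}, which supplies only the existence and uniqueness of the K-polystable degeneration.
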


We refer to \cite[Section 2.2]{LWX-cone} for the definition of log Fano cone singularities and K-stability notions for them.

The above theorem combines many previous works together with Theorem \ref{thm-localfinite}. More precisely, the normalized volume function was first introduced in \cite{Li18}. The existence of a minimizer $v$ was proved in \cite{Blum18}, which was shown to be quasi-monomial in \cite{Xu20}. Then \cite{XZ-uniqueness} proved that a minimizer is unique up to rescaling. In \cites{Li17, LX20}, the rest of (1) and (2), i.e. the K-semistable characterization of the minimizer, was proved for the case when the rational rank of $v$ is equal to one.
Theorem \ref{thm-localfinite} addresses the finite generation of the associated graded ring for $v$ in a general case. Assuming this  \cite{LX18} proves the K-semistable characterization  in all cases. The existence and uniqueness of a K-polystable degeneration in (3) is proved in \cite{LWX-cone}. 
When $k=\mathbb{C}$,  such a degeneration admits a Ricci-flat K\"ahler cone metric by \cite[Theorem 2.9]{Li21}, which in turn relies on the global finite generation theorem proved in \cite{LXZ-HRFG}.

As a consequence we also get the following result:

\begin{thm}\label{t-algebraicvolume}
Let $x\in (X,\Delta)$ be a klt singularity. Then its local volume
$$\hvol(x,X,\Delta):= \inf_{v\in \Val_{X,x}} \hvol(v)$$
is an algebraic number. 
\end{thm}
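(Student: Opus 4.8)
The plan is to invoke the Stable Degeneration Conjecture (Theorem~\ref{thm-SDC}) to rewrite $\hvol(x,X,\Delta)$ as the minimum of an explicit $\bQ$-semialgebraic function over a rational polyhedral cone, and then apply real quantifier elimination. Let $v$ be the minimizer of Theorem~\ref{thm-SDC}(1), put $X_0=\Spec(\gr_v R)$ with the degenerated boundary $\Delta_0$, let $\bT$ be the torus acting on $X_0$ (of dimension $r=$ the rational rank of $v$), with cocharacter lattice $N$ and character lattice $M$, and let $\mathfrak{t}^+\subseteq N_\bR\cong\bR^r$ be the Reeb cone; it is an open rational polyhedral cone, and the point $\xi_v$ induced by the grading $v\colon\Phi\to\bR_{\ge0}$ lies in it. Since $\gr_v R$ is finitely generated, this degeneration preserves $A_{X,\Delta}(v)=A_{X_0,\Delta_0}(\wt_{\xi_v})$ and $\vol_{X,x}(v)=\vol_{X_0,x_0}(\wt_{\xi_v})$, so $\hvol(x,X,\Delta)=\hvol_{X,x}(v)=\hvol_{X_0,x_0}(\wt_{\xi_v})$; and because $(X_0,\Delta_0;\xi_v)$ is K-semistable, Theorem~\ref{thm-SDC}(2) shows $\wt_{\xi_v}$ minimizes $\hvol$ over all of $\Val_{X_0,x_0}$. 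Hence
\[
\hvol(x,X,\Delta)=\min_{\xi\in\mathfrak{t}^+}\hvol_{X_0,x_0}(\wt_\xi),
\]
the minimum being attained at the interior point $\xi_v$.

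The next step is to make the right-hand side explicit as a function of $\xi$. Since $K_{X_0}+\Delta_0$ is $\bQ$-Cartier and $\bT$-invariant, near $x_0$ it is generated by a $\bT$-semiinvariant, so $\xi\mapsto A_{X_0,\Delta_0}(\wt_\xi)$ is a linear form on $N_\bR$ with coefficients in $\bQ$ with respect to a basis of $N$. For the volume I would use the $M$-graded Hilbert series of $\gr_v R$: writing it as a rational function $\sum_{\alpha\in M}\dim_\bk(\gr_v R)_\alpha\,\mathbf{x}^\alpha=Q(\mathbf{x})/\prod_{i=1}^N(1-\mathbf{x}^{\mu_i})$ with $Q\in\bZ[M]$ and homogeneous generators of weights $\mu_i\in M$ satisfying $\langle\xi,\mu_i\rangle>0$ on $\mathfrak{t}^+$, one specializes $\mathbf{x}^\alpha\mapsto t^{\langle\xi,\alpha\rangle}$ and reads off the leading coefficient of the Laurent expansion at $t=1$; this identifies $\xi\mapsto\vol_{X_0,x_0}(\wt_\xi)$, first on the rational points of $\mathfrak{t}^+$ and then everywhere by continuity, with a rational function $n!\,R(\xi)/\prod_{i=1}^N\langle\xi,\mu_i\rangle$, where $n=\dim X$ and $R\in\bQ[\xi]$ is homogeneous of degree $N-n$. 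Consequently $\hvol_{X_0,x_0}(\wt_\xi)=A_{X_0,\Delta_0}(\wt_\xi)^n\cdot\vol_{X_0,x_0}(\wt_\xi)$ is a rational function of $\xi$ with coefficients in $\bQ$, defined on the $\bQ$-rational open cone $\mathfrak{t}^+$.

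It remains to conclude. By the Tarski--Seidenberg theorem the image of $\mathfrak{t}^+$ under this $\bQ$-semialgebraic function is a semialgebraic subset of $\bR_{>0}$ defined over $\bQ$, i.e.\ a finite union of points and intervals whose endpoints are real algebraic numbers; its minimum is attained and equals $\hvol(x,X,\Delta)$, which is therefore a real algebraic number. (Alternatively, $\xi_v$ is an interior minimum, hence a critical point of a $\bQ$-rational function, and by the uniqueness in Theorem~\ref{thm-SDC}(1) it is the only critical point in the projectivization of $\mathfrak{t}^+$; so its coordinates, and thus $A_{X_0,\Delta_0}(\wt_{\xi_v})$, $\vol_{X_0,x_0}(\wt_{\xi_v})$ and their product, are algebraic.) The only substantive input here is Theorem~\ref{thm-SDC}; the two points needing genuine care are the identity $\hvol_{X,x}(v)=\hvol_{X_0,x_0}(\wt_{\xi_v})$ together with the fact that $\xi_v$ lies in $\mathfrak{t}^+$, so that the minimization really runs over the whole Reeb cone, and the Hilbert-series computation showing $\vol_{X_0,x_0}(\wt_\xi)$ is a rational function of $\xi$. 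Neither is deep, so given the main theorem this corollary should be regarded as soft.
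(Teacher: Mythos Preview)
Your proposal is correct and follows essentially the same route as the paper. Both arguments reduce to the K-semistable log Fano cone $(X_0,\Delta_0;\xi_v)$ via Theorem~\ref{thm-SDC}, use the equality $\hvol(x,X,\Delta)=\hvol_{X_0,\Delta_0}(\xi_v)$ (the paper cites \cite{LX18}*{Lemma 2.58} for this), observe that $A_{X_0,\Delta_0}(\wt_\xi)$ is $\bQ$-linear and $\vol_{X_0,x_0}(\wt_\xi)$ is a rational function of $\xi$ with rational coefficients, and conclude that the minimizer over the Reeb cone has algebraic coordinates. The only difference is packaging: the paper outsources the rationality of the volume function and the algebraicity of the minimizer to \cite{DS-GHlimit}*{Lemmas 4.2 and 4.6} (ultimately going back to \cite{MSY08}), whereas you spell out the Hilbert-series computation and invoke Tarski--Seidenberg directly. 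Your parenthetical alternative (unique critical point) is slightly glib---uniqueness of the $\hvol$-minimizer over $\Val_{X_0,x_0}$ does not immediately say there is only one critical point of $\hvol|_{\mathfrak{t}^+}$---but strict convexity of the volume on the slice $A=1$ fixes this, and in any case your main Tarski--Seidenberg argument does not need it.
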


\subsection{Finite generation via multiple degeneration}

Higher rank finite generation plays a key role to complete the algebraic theory of K-stability. In the global setting, one needs to show that for any log Fano pair $(X,\Delta)$ that is not uniformly K-stable, any valuation $v$ computing the $\delta$-invariant has a finitely generated associated graded ring $\gr_v(R)$ for $R=\bigoplus_{m\in r\cdot\bN}H^0(-m(K_X+\Delta))$ where $r>0$ is a sufficiently divisible integer. This was solved in \cite{LXZ-HRFG}, and the proof can be split into two parts. In the first part, it is shown that such a valuation $v$ is a monomial lc place on a log resolution $(Y,E)$ of $(X,\Delta)$, for a $\bQ$-complement $\Gamma$ whose birational transform on $Y$ contains an ample $\bQ$-subdivisor. Such a complement $\Gamma$ is called special (with respect to the log resolution $(Y,E)$). It is a key observation in \cite{LXZ-HRFG} that when the rational rank of $v$ is greater than one, one should focus on lc places of this smaller class of $\bQ$-complements to achieve the finite generation. Then in the second part, it is shown that any $v$ satisfying the above assumption indeed induces a finitely generated associated graded ring ${\rm gr}_vR$. One major input in the proof of the second part is to show that for any small rational perturbation $w$ of $v$, the induced degeneration $X_w:={\rm Proj} \Big( {\rm gr}_w(R)\Big)$ yields a log Fano pair with an $\alpha$-invariant bounded from below by a constant (which does not depend on $w$), therefore belongs to a bounded family of pairs by \cite{Jiang-boundedness}. 

For the local problem, one can mimic the argument in the first step as above to construct a special $\mathbb Q$-complement for a valuation minimizing $\hvol$, and then perturbations of $v$ yields degenerations to log Fano cones. However, a main issue is that the boundedness result for such log Fano cones, unlike the case for Fano varieties, is not known (see e.g. Conjecture \ref{conj-bounded}). Therefore, we have to develop a new approach to circumvent the usage of boundedness result. In \cite{Xu-HRFG}, it was proposed to use multiple degenerations over a base $\mathbb A^n$, to attack the finite generation. More precisely, if $\Gamma$ is a $\bQ$-complement of a klt singularity $x\in (X,\Delta)$, then \cite{Xu-HRFG} shows that one can use any $r$ divisorial lc places of $(X,\Delta+\Gamma)$ to obtain a $\mathbb{G}_m^r$-equivariant flat degeneration of $(X,\Delta)$ over $\mathbb{A}^r$, which indeed yields a locally stable family in the sense of \cite{Kol23}*{Definition-Theorem 4.7}. 

The key recipe for this strategy to work is to show that the degeneration family has irreducible fibers over all points in $\mathbb A^r$. This is in general a delicate condition. One of our main observations in this paper is that a refinement of the notion of the special $\mathbb Q$-complement can be used to guarantee that the degenerations induced by its lc places have irreducible fibers. 
More precisely, besides the log smooth model considered in the original definition of special $\bQ$-complements in \cite{LXZ-HRFG},  we also consider models 
$$\pi\colon (Y, E={\rm Ex}(\pi))\to X$$
such that $\pi$ is an isomorphism over $X\setminus \{x\}$, $(Y,E+\pi_*^{-1}\Delta)$ is qdlt (Definition \ref{defn:qdlt}), and $-(K_Y+E+\pi_*^{-1}\Delta)$ is ample.  
We call this \emph{a Koll\'ar model}, as it is a generalization of Koll\'ar component to the case of multiple exceptional components. 
If we write $E=\sum^r_{i=1}E_i$, then a key technical result says that the multiple degenerations over $\bA^r$ given by $E_i$ always have irreducible central fibers. 
This is sufficient to imply the finite generation of the associated graded ring for any valuation $v\in {\rm QM}(Y,E)$, as we will see in Section \ref{ss-irredu=finite}. 

Then starting from any log smooth model $(Y,E)$ admitting a special $\bQ$-complement $\Gamma$ and a monomial lc place 
$$v\in \QM(Y,E)\cap {\rm LC}(X,\Delta+\Gamma),$$ we will show that $v$ is contained in $\QM(Y',E')$ for a Koll\'ar model $(Y',E')\to X$. For this, we find a process to modify $(Y,E)$ by first blowing up, and then running a minimal model program. See Section \ref{ss-constructingmodel}.

To summarize, we show the following technical statement (Theorem \ref{t-localHRFG}).

\begin{thm}\label{thm-monomialfinitegeneration}
Let $x\in (X={\rm Spec}(R),\Delta)$ be a klt singularity and let $v$ be a monomial lc place of a special $\bQ$-complement $\Gamma$ with respect to some log smooth model $(Y,E)$ of $(X,\Delta)$ $($Definition \ref{d-specialcomplement}$)$. Then the associated graded ring ${\rm gr}_v(R)$ is finitely generated. 
\end{thm}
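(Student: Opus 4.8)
The plan is to reduce the statement to the irreducibility result for multiple degenerations over $\mathbb{A}^r$ advertised in the introduction, by upgrading an arbitrary log smooth model carrying a special $\mathbb{Q}$-complement to a Koll\'ar model through which the given valuation still factors. Concretely, given $x\in(X,\Delta)$, a special $\mathbb{Q}$-complement $\Gamma$ with respect to a log smooth model $(Y,E)$, and a monomial lc place $v\in\QM(Y,E)\cap\LC(X,\Delta+\Gamma)$, I would first recall from \cite{LXZ-HRFG} that the defining property of \emph{special} is that $\pi_*^{-1}\Gamma$ on $Y$ contains an ample $\mathbb{Q}$-subdivisor; this is exactly the positivity one needs to run an MMP that contracts everything not supported on $E$. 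So the first main step is the model modification carried out in Section \ref{ss-constructingmodel}: blow up $(Y,E)$ suitably so that the relevant components of $E$ become the exceptional locus over $x$, then run a $(K_Y+E+\pi_*^{-1}\Delta)$-MMP over $X$ (using the ample part of the complement to guarantee termination and the right output), arriving at a model $\pi'\colon(Y',E')\to X$ which is an isomorphism over $X\setminus\{x\}$, with $(Y',E'+(\pi')_*^{-1}\Delta)$ qdlt and $-(K_{Y'}+E'+(\pi')_*^{-1}\Delta)$ ample — that is, a Koll\'ar model — and, crucially, such that $v\in\QM(Y',E')$. Checking that $v$ survives the MMP (it does, because $v$ is a monomial combination of the $E_i$, which are not contracted) and that the qdlt and ampleness properties actually hold at the end of the MMP is the first place where care is needed.

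The second main step is to invoke the key technical result, for Koll\'ar models: writing $E'=\sum_{i=1}^r E_i'$, the divisors $E_i'$ induce a $\mathbb{G}_m^r$-equivariant flat degeneration of $(X,\Delta)$ over $\mathbb{A}^r$ (via \cite{Xu-HRFG}), and the central fiber over every point of $\mathbb{A}^r$ is irreducible. Granting this, the third step — the content of Section \ref{ss-irredu=finite} — is to deduce finite generation of $\gr_v(R)$ for \emph{any} $v\in\QM(Y',E')$. The mechanism is standard: a quasi-monomial valuation in $\QM(Y',E')$ with irrational coefficient ratios is approximated by divisorial (rational) valuations $w$; each $w$ corresponds to a monomial valuation with weights in $\mathbb{Z}_{>0}^r$, and its associated graded ring is the coordinate ring of a fiber of the $\mathbb{A}^r$-degeneration, which is therefore reduced and irreducible; one then runs the Rees-algebra/extended Rees construction over $\mathbb{A}^r$ and uses that the total space is Noetherian together with the irreducibility of all fibers to conclude that $\gr_v(R)$ — obtained as an iterated associated graded, or equivalently as a limit of the $\gr_w(R)$ — is itself finitely generated. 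The irreducibility over \emph{all} of $\mathbb{A}^r$ (not just a generic point) is what lets one pass from finite generation for the rational-rank-one "slices" to the full higher-rank statement without any boundedness hypothesis.

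The step I expect to be the main obstacle is establishing that the MMP in the first step terminates with an output that is simultaneously a Koll\'ar model and retains $v$ in its quasi-monomial cone — in particular that the qdlt property (Definition \ref{defn:qdlt}) is preserved and that $-(K_{Y'}+E'+(\pi')_*^{-1}\Delta)$ ends up ample rather than merely semiample or nef. The ampleness should follow from the ample $\mathbb{Q}$-subdivisor hidden in the special complement together with negativity of the contracted part, but making this precise — and simultaneously controlling which components of $E$ are contracted versus kept, so that $v\in\QM(Y',E')$ — requires the careful bookkeeping of Section \ref{ss-constructingmodel}. A secondary subtlety is the passage from irreducibility of all fibers of the $\mathbb{A}^r$-family to finite generation of $\gr_v(R)$ when $v$ has maximal rational rank: here one must be sure that the limiting/degenerating procedure that produces $\gr_v(R)$ out of the family is itself of finite type, which is where the qdlt (toroidal-like) structure of the Koll\'ar model, giving a combinatorial handle on $\gr_v(R)$, does the essential work.
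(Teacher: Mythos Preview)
Your proposal is correct and follows the paper's approach: pass to a Koll\'ar model (Section~\ref{ss-constructingmodel}, culminating in Corollary~\ref{cor:km for monomial lc place}), establish irreducibility of the $\bA^r$-degeneration (Propositions~\ref{pro-localstablefamily} and~\ref{pro:degeneration remain qdlt Fano}), and deduce finite generation (Section~\ref{ss-irredu=finite}, Corollary~\ref{cor:km imply fg}). The only refinement worth noting is that in the last step the paper does not take a limit of $\gr_w R$ but argues more directly: it shows $\fa_\lambda(v_s)=I_{s,\lambda}$ for all $s\in\bR^r_{\ge 0}\setminus 0$ (Lemmas~\ref{lem:induced valuation=v_t} and~\ref{lem:a=I irrational weights}) and then reads off finite generation from a surjection of the finitely generated multigraded ring $\bigoplus_{(i_1,\dots,i_r)}\pi_*\cO_Y(-\sum i_j E_j)$ onto $\gr_{v_s}R$.
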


We remark that a special case of this statement, i.e. when $x\in (X,\Delta)$ is a log Fano cone singularity (with respect to some torus $\bT$) and $v$ is $\bT$-invariant, is proved in \cite{Huang-thesis} by improving arguments in \cite{LXZ-HRFG}.

Theorem \ref{thm-monomialfinitegeneration} implies the following global version of the higher rank finite generation statement, which was first proved in \cite[Theorem 4.2]{LXZ-HRFG}. 

\begin{cor}[{\cite[Theorem 4.2]{LXZ-HRFG}}]\label{cor-global}
Let $(X,\Delta)$ be a log Fano pair. Let $v$ be a monomial lc place of a special $\bQ$-complement $\Gamma$ with respect to some log smooth model $\pi\colon (Y,E)\to (X,\Delta)$. Then the associated graded algebra ${\rm gr}_v(R)$ is finitely generated, where $R=\bigoplus_{m\in \bN} H^0(-mr(K_X+\Delta))$ for some $r$ such that $r(K_X+\Delta)$ is Cartier.
\end{cor}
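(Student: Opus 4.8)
The plan is to deduce the statement from the local finite generation result Theorem~\ref{thm-monomialfinitegeneration}, applied to the affine cone over $(X,\Delta)$. Set $L:=-r(K_X+\Delta)$, which is ample and Cartier, and let $o\in(C=\Spec R,\Delta_C)$ be the affine cone over $(X,\Delta)$ polarized by $L$, with $\Delta_C$ the cone over $\Delta$; note that $R=\bigoplus_{m\in\bN}H^0(-mr(K_X+\Delta))$ is exactly the coordinate ring of this cone. Since $-(K_X+\Delta)=\tfrac1r L$ is a positive rational multiple of $L$, the standard cone computations (cf.\ \cite{LWX-cone}) show that $o\in(C,\Delta_C)$ is a klt singularity, and that the exceptional divisor $E_0$ of the blowup of the vertex satisfies $A_{(C,\Delta_C)}(E_0)=\tfrac1r>0$.

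Next I would transport the hypothesis to $C$. The valuation $v$ on $X$ gives rise, for any $a>0$, to a $\bG_m$-invariant valuation $v_a$ on $C$ centered at $o$, with $v_a(s)=v(s)+ma$ for homogeneous $s\in R_m$ (where $v(s)$ is the order of vanishing of $s$ along $v$) and $v_a$ of a general element the minimum over its homogeneous components; a routine check, using that $\gr_v R$ is a domain, confirms $v_a$ is a valuation. Moreover $\gr_{v_a}R$ is isomorphic to $\gr_v R$ as a $\bk$-algebra: the bigrading of $\gr_v R$ by degree and $v$-value is repackaged into the single grading of $\gr_{v_a}R$ by $v_a$-value via $\mu=ma+t$. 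Hence $\gr_v R$ is finitely generated if and only if $\gr_{v_a}R$ is, so it suffices to verify the hypotheses of Theorem~\ref{thm-monomialfinitegeneration} for $v_a$ on $o\in(C,\Delta_C)$ for some $a>0$. To this end I would cone up the log smooth model $\pi\colon(Y,E)\to(X,\Delta)$: let $\cY:=\mathrm{Tot}_Y(\pi^*L^{-1})$, which maps properly and birationally onto $C$ (it is the fiber product of $Y\to X$ with the blowup $\mathrm{Tot}_X(L^{-1})\to C$ of the vertex), let $\rho\colon\cY\to Y$ be the projection and $E_0'\subset\cY$ the zero section, and set $\cE:=E_0'+\sum_i\rho^*E_i$. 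Then $(\cY,\cE)$ is a log smooth model of $o\in(C,\Delta_C)$ and $v_a\in\QM(\cY,\cE)$, with $E_0'$ carrying the weight $a$ and the components $\rho^*E_i$ carrying the weights that realize $v$ on $(Y,E)$.

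It then remains to check that $\Gamma_C$, the cone over the special $\bQ$-complement $\Gamma$, is a special $\bQ$-complement of $o\in(C,\Delta_C)$ with respect to $(\cY,\cE)$ and that $v_a$ is one of its monomial lc places. Since $K_X+\Delta+\Gamma\sim_\bQ 0$, the divisor $\Gamma_C$ is $\bQ$-Cartier with $A_{(C,\Delta_C+\Gamma_C)}(E_0)=0$ and $(C,\Delta_C+\Gamma_C)$ is log canonical, so $\Gamma_C$ is a $\bQ$-complement of which $E_0$ is an lc place. Because coning up preserves the log discrepancy of horizontal divisors, and $v$ being a monomial lc place of $(X,\Delta+\Gamma)$ uses only components $E_i$ with $A_{(X,\Delta+\Gamma)}(E_i)=0$, one computes $A_{(C,\Delta_C+\Gamma_C)}(v_a)=a\cdot A_{(C,\Delta_C+\Gamma_C)}(E_0)+\sum_i w_i\,A_{(X,\Delta+\Gamma)}(E_i)=0$, so $v_a$ is an lc place of $\Gamma_C$ monomial on $(\cY,\cE)$. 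Finally one must produce a $\bQ$-divisor ample over $C$ inside the birational transform $\Gamma_C^{\cY}=\rho^*(\Gamma^Y)$, where $\Gamma^Y$ is the birational transform of $\Gamma$ on $Y$: if $\Gamma^Y\ge A$ with $A$ ample over $X$, then $\rho^*A\le\Gamma_C^{\cY}$ is ample over $\mathrm{Tot}_X(L^{-1})$ by base change and restricts to the ample divisor $A$ on the central fiber $E_0'\cong Y$, from which one deduces (after rescaling $a$ or passing to a further blowup of $(\cY,\cE)$ if necessary) that $\rho^*A$ is ample over $C$. Granting this, Theorem~\ref{thm-monomialfinitegeneration} applies to $v_a$ on $o\in(C,\Delta_C)$ and yields the finite generation of $\gr_{v_a}R\cong\gr_v R$.

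The step I expect to be the main obstacle is the last verification: arranging the coned-up model $(\cY,\cE)$ so that the birational transform of $\Gamma_C$ genuinely contains a divisor ample over the affine base $C$, rather than merely over the intermediate vertex blowup $\mathrm{Tot}_X(L^{-1})$. This is precisely where the projectivity of $X$ and the positivity of $L=-r(K_X+\Delta)$ must be fed into the cone construction with care, and where one may need to adjust the cone parameter $a$ or refine the model; once it is settled, the transfer of finite generation through the isomorphism $\gr_{v_a}R\cong\gr_v R$ is immediate, recovering \cite[Theorem 4.2]{LXZ-HRFG}.
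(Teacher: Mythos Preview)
Your proposal is correct and follows essentially the same route as the paper: pass to the affine cone $o\in(C,\Delta_C)$, take the total space $\cY=\mathrm{Tot}_Y(\pi^*L^{-1})$ as a log smooth model, cone up $\Gamma$ to a special $\bQ$-complement $\Gamma_C$, and apply Theorem~\ref{thm-monomialfinitegeneration} to the Gauss extension of $v$. The obstacle you flag is not a real one: the projection $\rho\colon\cY\to Y$ is an affine morphism (it is the total space of a line bundle), so $\rho^*A$ is automatically ample on $\cY$ whenever $A$ is ample on the projective variety $Y$, and since $C$ is affine this is exactly ampleness over $C$; no rescaling of $a$ or further blowup is needed (and your appeal to ``$\gr_vR$ is a domain'' to check $v_a$ is a valuation is also unnecessary, since the Gauss extension of a valuation is always a valuation).
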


Along the way of the proof, we also get the following more precise characterization for how the induced degenerations vary as we vary the valuations.

\begin{thm}\label{thm-finitemodel}
Let $x\in (X={\rm Spec}(R),\Delta)$ be a klt singularity and let $(Y,E)$ be a toroidal model over $(X,\Delta)$ with a special $\bQ$-complement $\Gamma$. Then $\LC_x(\Gamma;Y,E)$ is a rational polyhedral subset of $\QM(Y,E)$, and for any two valuations $v$ and $w$ on the same open face of $\LC_x(\Gamma;Y,E)$, there is an isomorphism $\gr_v R\cong \gr_w R$ of the associated graded algebras. 
\end{thm}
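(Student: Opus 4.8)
The plan is to treat the two assertions separately: the rational polyhedrality by a direct linearity argument, and the invariance of $\gr_v R$ along an open face by identifying $\gr_v R$ with the coordinate ring of the central fibre of the multiple degeneration of \cite{Xu-HRFG}, and showing this ring does not depend on $v$ within one open face. For the first assertion: on every (rational, simplicial) cell of the cone complex $\QM(Y,E)$ the functions $v\mapsto A_{X,\Delta}(v)$ and $v\mapsto v(\Gamma)$ are rational linear, so $v\mapsto A_{X,\Delta+\Gamma}(v)$ is rational linear on each cell, and it is everywhere nonnegative on $\QM(Y,E)$ because $(X,\Delta+\Gamma)$ is log canonical. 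The condition of being centred at $x$ selects a union of (relatively open) cells of $\QM(Y,E)$. Hence $\LC_x(\Gamma;Y,E)$, the vanishing locus of the nonnegative piecewise-linear function $A_{X,\Delta+\Gamma}$ inside those cells, is a finite union of rational polyhedral cones; equipping it with the resulting polyhedral structure, its open faces are the relative interiors of these cones.

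For the second assertion, fix a face $\tau$ and two valuations $v,w$ in its relative interior. If $\dim\tau\le 1$ the two are proportional and $\gr_v R\cong\gr_w R$ trivially, so assume $\dim\tau\ge 2$. Choose primitive divisorial valuations $w_1,\dots,w_N$ generating the cone $\tau$; since $\tau\subseteq\LC_x(\Gamma;Y,E)$, each $w_j$ is a divisorial lc place of $(X,\Delta+\Gamma)$ centred at $x$. After replacing $(Y,E)$ by a toroidal modification I may assume each $w_j=\ord_{E_j}$ for a component $E_j$ of $E$ and that $\Gamma$ remains a special $\bQ$-complement. Applying the construction of \cite{Xu-HRFG} to $w_1,\dots,w_N$ then yields a flat $\bG_m^N$-equivariant degeneration of $(X,\Delta)$ over $\bA^N$ whose fibre over the origin is $X_0=\Spec G$, where $G$ is a finitely generated $\bk$-algebra graded by a finitely generated submonoid of $\bN^N$; finite generation of the relevant multigraded Rees algebra is guaranteed by \cite{Xu-HRFG} together with Theorem \ref{thm-monomialfinitegeneration}. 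The ring $G$ depends only on the cone $\tau$, not on the choice of $v$ or $w$.

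The key point is then that every quasi-monomial valuation lying in the relative interior of $\tau$ is compatible with the degeneration $X\leadsto X_0$: writing such a valuation as a positive combination $\sum_j c_j w_j$ with $\mathbf c=(c_1,\dots,c_N)\in\bR_{>0}^N$, it induces through the $\bG_m^N$-action the monomial weighting $\wt_{\mathbf c}$ on $G$, and its associated graded ring is canonically isomorphic to $\gr_{\wt_{\mathbf c}}G$. Since $G$ is already $\bN^N$-graded, forming $\gr_{\wt_{\mathbf c}}G$ merely coarsens the grading via $\mathbf m\mapsto\langle\mathbf c,\mathbf m\rangle$ and leaves the algebra itself unchanged, so $\gr_{\wt_{\mathbf c}}G\cong G$ compatibly with the grading. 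Applying this to both $v$ and $w$ gives $\gr_v R\cong G\cong\gr_w R$, and the two isomorphisms identify the associated graded algebras compatibly with their natural $\bZ^N$-refinements (both being coarsenings of the grading of $G$), which is the asserted isomorphism of associated graded algebras.

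The main obstacle is precisely the identification in the last paragraph: proving, uniformly over the open face $\tau$, that $\gr_{v'}R$ is computed by the \emph{single} multiple degeneration $X\leadsto X_0=\Spec G$ as $\gr_{\wt_{\mathbf c}}G$. This requires the flatness of the $\bA^N$-family of \cite{Xu-HRFG} and finite generation (Theorem \ref{thm-monomialfinitegeneration}) in order to legitimise commuting the formation of associated graded rings, and it uses crucially that a point in the relative interior of $\tau$ has all its coordinates with respect to $w_1,\dots,w_N$ strictly positive, which is exactly what makes that identification valid (at a boundary point the valuation would instead factor through a smaller degeneration). The preliminary reduction in the second paragraph — modifying $(Y,E)$ so that the chosen generators become order functions of components of $E$ while keeping $\Gamma$ a special $\bQ$-complement — is routine but must be carried out with care.
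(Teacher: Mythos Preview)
Your approach is in the same spirit as the paper's, but the key step contains a genuine gap.

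For the rational polyhedrality, your argument is essentially correct, with one imprecision: the function $v\mapsto v(\Gamma)$ is not linear on each cell of $\QM(Y,E)$ but only piecewise linear (and concave, by Lemma~\ref{lem-valuationconvexity}). Since $A_{X,\Delta}(v)\ge v(\Gamma)$ everywhere, the equality locus is still rational polyhedral, so your conclusion stands.

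The serious issue is in the second part. You correctly identify the strategy: build the multigraded Rees degeneration $\cX\to\bA^N$ with central fibre $X_0=\Spec G$, and show $\gr_{v'}R\cong G$ for every $v'=\sum c_j w_j$ with $c_j>0$. You also correctly flag this identification as the main obstacle. But your proposed resolution---``flatness of the $\bA^N$-family and finite generation''---is not sufficient. What you actually need is that the central fibre $G$ is \emph{integral}. Without this, the ideals $I_{\mathbf c,\lambda}$ arising from the Rees construction (see equation~\eqref{e-defineI}) need not coincide with the valuation ideals $\fa_\lambda(v_{\mathbf c})$; one always has $I_{\mathbf c,\lambda}\subseteq\fa_\lambda(v_{\mathbf c})$, but the reverse containment can fail. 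The paper proves equality via Lemma~\ref{lem:graded integral->valuation} (integrality of $G$ implies the $I_{\mathbf c,\bullet}$ are valuation ideals of \emph{some} valuation) together with Lemma~\ref{lem:induced valuation=v_t} (that valuation must be $v_{\mathbf c}$), and this chain hinges on integrality.

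Integrality of the central fibre, in turn, is far from automatic for a general toroidal model; it is exactly the content of Propositions~\ref{pro-localstablefamily} and~\ref{pro:degeneration remain qdlt Fano}, whose proof relies on the Koll\'ar model structure (qdlt, log Fano over $X$). This is why the paper's proof first invokes Theorem~\ref{thm:construction of qdlt fano type model} to replace your rational simplex by $\QM(Y',E')$ for an honest Koll\'ar model $(Y',E')$, and only then appeals to Corollary~\ref{cor:km imply fg}. Your remark that ``modifying $(Y,E)$ so that the chosen generators become order functions of components of $E$ while keeping $\Gamma$ a special $\bQ$-complement is routine'' understates the difficulty: the construction of such a model is precisely the content of Section~\ref{ss-constructingmodel} (Lemmas~\ref{lem:ample model of a simplex}, \ref{lem:DMR=specified QM}, and~\ref{lem:MMP step}), and it is not a mere toroidal blowup but an MMP argument.
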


For the definition of toroidal models (resp. $\LC_x(\Gamma;Y,E)$), see Section \ref{sss:models} (resp. Definition \ref{d-specialcomplement}).

\medskip

The remaining part of the proof is showing that any minimizer $v$ is a monomial lc place for a log smooth model $(Y,E)$ over $(X,\Delta)$ with a $\bQ$-complement. This is similar to \cite[Corollary 3.4]{LXZ-HRFG}, but we replace the basis type divisors in the global settings by the construction in \cite{XZ-uniqueness}. See Section \ref{ss-specialcomplement}.  

Finally, we want to restate the following conjecture that is well known to experts. 
\begin{conj}\label{conj-bounded}
Fix a positive integer $n$, a positive number $\delta$ and a finite set $I\subseteq [0,1]\cap \bQ$. Then the set
\[
\Big\{
(X,\Delta)\  \Big|\begin{array}{l} x\in (X,\Delta) \mbox{ is a K-semistable log Fano cone singularity},\\ 
\dim X=n,\, {\rm Coeff}(\Delta)\subseteq I,\, \mbox{and } \hvol(x,X,\Delta)\ge \delta\end{array}
\Big\}
\]
is bounded.
\end{conj}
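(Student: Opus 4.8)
The plan is to relate the singularity to the log Fano pair it is a cone over and invoke the boundedness theory for K-semistable Fano varieties. \emph{First} I would reduce to the quasi-regular case, where the Reeb field $\xi$ generates a $\bG_m$-action, so that $x\in(X,\Delta)$ is the affine cone $C(V,L)$ over a klt log Fano pair $(V,\Delta_V)$ with ample polarization $L$, $\dim V=n-1$, $\mathrm{Coeff}(\Delta_V)\subseteq I$, and $-(K_V+\Delta_V)\sim_\bQ\mu L$ for some $\mu\in\bQ_{>0}$ (the $\bQ$-Gorenstein condition). Here the canonical valuation has $A_{X,\Delta}(\wt_\xi)=\mu$ and $\vol(\wt_\xi)=L^{n-1}$, whence the cone formula
\[
\hvol(x,X,\Delta)=\hvol(\wt_\xi)=\mu\cdot\big(-(K_V+\Delta_V)\big)^{n-1},
\]
and, by the K-semistable characterization of $\hvol$-minimizers of rational rank one, K-semistability of the cone is equivalent to K-semistability of $(V,\Delta_V)$.

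\emph{Second}, I would reduce boundedness of $X$ to boundedness of the base: if $(V,\Delta_V)$ lies in a bounded family then its Fano index is bounded, so $L$ lies in finitely many Weil classes and $X=C(V,L)$ is bounded. Now $(V,\Delta_V)$ is a K-semistable log Fano of dimension $n-1$ with coefficients in the fixed finite set $I$, and such pairs form a bounded family as soon as their anticanonical volume $\big(-(K_V+\Delta_V)\big)^{n-1}$ is bounded below by a fixed positive constant. So, in the quasi-regular case, everything comes down to the bound $\big(-(K_V+\Delta_V)\big)^{n-1}\ge c(n,I)>0$. The cone formula together with $\hvol(x,X,\Delta)\ge\delta$ and Fujita's bound $\big(-(K_V+\Delta_V)\big)^{n-1}\le n^{n-1}$ for K-semistable pairs yields only $\mu\ge\delta/n^{n-1}$, i.e.\ a lower bound on the index.

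The hard part is the reverse: a uniform \emph{upper} bound on $\mu$, equivalently a uniform positive lower bound on the anticanonical volume of K-semistable log Fanos with coefficients in $I$ (a ``volume gap''). This is not known at this generality, and it cannot be produced by exhibiting a destabilizing valuation on $X$, precisely because $X$ is K-semistable. Two possible ways around it: (i) prove the volume gap directly, e.g.\ via finiteness of the relevant K-moduli or via a local effective-birationality estimate; or (ii) bypass the base and show that the Koll\'ar models $(Y',E')\to X$ produced in this paper form a bounded family of qdlt log Fano pairs once $\hvol(x,X,\Delta)\ge\delta$, i.e.\ prove a boundedness statement for Koll\'ar components and models. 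There is also a subtlety in the first reduction: when $\xi$ is irrational, the quotient extracted along a nearby rational Reeb need not be K-semistable and there is no volume bound for general klt log Fanos with fixed coefficients, so one must instead use the unique K-polystable degeneration $(X_0,\Delta_0;\xi_0)$ of \cite{LWX-cone} and bound the deformations of $X_0$, or exploit the higher-rank torus directly. The conjecture is known for $\dim X\le 2$ and for toric Fano cones, where the volume-gap input is available and the scheme above goes through.
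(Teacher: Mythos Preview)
This statement is \emph{Conjecture~\ref{conj-bounded}} in the paper, not a theorem: the authors explicitly present it as an open problem ``well known to experts'' and do not prove it. There is therefore no paper proof to compare against. What you have written is not a proof either, and you are candid about this: you identify the key missing ingredient (a uniform positive lower bound on the anticanonical volume of K-semistable log Fano pairs with coefficients in a fixed finite set, equivalently a ``volume gap'') and state that it ``is not known at this generality.''

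Your outline of the quasi-regular reduction is reasonable as a heuristic, but note two issues beyond the volume gap. First, the reduction from irrational Reeb field to rational is genuinely delicate: a K-semistable Fano cone with irrational $\xi$ need not arise as a cone over a K-semistable base for any nearby rational polarization, so one cannot simply perturb $\xi$ and quotient. Your suggestion to pass to the K-polystable degeneration does not obviously help, since the polystable cone may still have irrational Reeb field. Second, even in the quasi-regular case, bounding the polarization $L$ from boundedness of $(V,\Delta_V)$ requires controlling the Weil divisor class group, which is not immediate without further input (e.g.\ $\epsilon$-lc bounds on the base). So the sketch, while pointing at the right circle of ideas, has several genuine gaps beyond the one you flag, and the paper's authors themselves regard the conjecture as open.
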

Theorem \ref{thm-SDC} and Conjecture \ref{conj-bounded} together imply that for any finite set $I$ of rational numbers and any positive number $\delta$, the class of all $n$-dimensional klt singularities $x\in (X,\Delta)$ with $\hvol(x,X,\Delta)\ge \delta$ and ${\rm Coeff}(\Delta)\subseteq I$ is bounded up to special degeneration (i.e. \emph{specially bounded}). They also imply that the local volumes of $n$-dimensional klt singularities with ${\rm Coeff}(\Delta)\subseteq I$ satisfies the ascending chain condition (ACC) and the only accumulation point is zero. This is because Theorem \ref{thm-SDC} yields a degeneration of $(x,X,\Delta)$ to a K-semistable Fano cone singularity $(X_0,\Delta_0;\xi_v)$ and we know that 
$$\hvol(x,X,\Delta)=\hvol_{(X,\Delta)}(v)=\hvol_{(X_0,\Delta_0)}(\xi_v)=\hvol(X_0,\Delta_0;\xi_v),$$
where the second equality follows from \cite[Lemma 2.58]{LX18}. As we mentioned, a major difference between our current approach with the one in \cite{LXZ-HRFG} is that we circumvent the above type of boundedness result, by considering multiple degenerations. Nevertheless, we believe Conjecture \ref{conj-bounded} still plays a key role for advancing our understanding of the local K-stability theory.
\medskip

\noindent {\bf Acknowledgement}: We would like to thank Chi Li, Yuchen Liu for helpful conversations, and the referees for careful reading and helpful comments. CX is partially supported by NSF Grant DMS-2201349, DMS-2139613 and DMS-2153115. ZZ is partially supported by NSF Grants DMS-2240926, DMS-2234736, a Clay research fellowship, as well as a Sloan fellowship. 

\section{Preliminaries}\label{s-prelim}

Throughout this paper, we work over an algebraically closed field $\bk$ of characteristic $0$. All algebraic varieties are assumed to be quasi-projective. We follow the standard terminology from \cites{KM98, Kol13}.


A singularity $x\in (X,\Delta)$ consists of a normal affine variety $X$, an effective $\bQ$-divisor $\Delta$ (could be $0$) on $X$ and a closed point $x\in X$. A divisor over a singularity $x\in (X,\Delta)$ is a prime divisor $E\subseteq Y$ on some proper birational model $\mu\colon Y\to X$ such that $\mu(E)=\{x\}$.

For an effective $\bQ$-divisor $\Delta$ on a normal variety,  an effective $\bQ$-divisor $\Delta_0$ is called \emph{a $\bQ$-subdivisor of $\Delta$} if $\Delta_0\le \Delta$.

Let $(X,\Delta)\to U$ be a projective morphism from a klt pair $(X,\Delta)$ such that $-K_X-\Delta$ is ample. A (\emph{global}) \emph{$\mathbb{Q}$-complement} is an effective $\mathbb{Q}$-divisor $\Gamma$, such that $(X,\Delta+\Gamma)$ is log canonical and $K_X+\Delta+\Gamma\sim_{\bQ} 0$.
A (\emph{local}) \emph{$\bQ$-complement} of a singularity $x\in (X,\Delta)$ is an effective $\bQ$-divisor $\Gamma$ such that $(X,\Delta+\Gamma)$ is lc and $x$ is an lc center.\footnote{Note that our definition of complements is slightly different from Shokurov's original definition \cite{Shokurov92}.}

Given a ring $R$, an ($\bN$-)\emph{graded sequence $\fa_{\bullet}=\{\fa_k\}_{k\in \bN}$}  of ideals is a set of ideals $\fa_{k}\subseteq R$ $(k\in \bN)$ satisfying that $\fa_{k}\cdot \fa_{k'}\subseteq \fa_{k+k'}$. 

\subsection{Qdlt pairs} \label{ss:qdlt}

The concept of qdlt pairs is introduced in \cite[Definition 35]{dFKX-dualcomplex}.

\begin{defn} \label{defn:toroidal}
Let $D\subseteq X$ be a reduced divisor on a normal variety. We say $(X,D)$ is \emph{toroidal} if for any $x\in X$ there exists an SNC pair $x'\in (X',D')$ and a finite abelian group $G$ acting on it preserving every irreducible component of $D'$, such that the $G$-action is free in codimension one and Zariski locally we have
\[
\left( x\in (X,D)\right)\cong \left( x'\in (X',D')\right)/G.
\]
\end{defn}

\begin{defn}[\cite{dFKX-dualcomplex}*{Definition 35}] \label{defn:qdlt}
A \emph{qdlt} pair $(X,\Delta)$ is a log pair such that there exists an open set $U\subset X$ such that $\Delta|_U$ is reduced, $(U,\Delta|_U)$ is toroidal, and the log discrepancy $A_{X,\Delta}(F)>0$ for any prime divisor $F$ over $X$ whose center on $X$ is contained in $X\setminus U$. Note that when $\lfloor\Delta\rfloor$ is irreducible, being qdlt is equivalent to being plt. Similarly we can define \emph{sub-qdlt} pairs by allowing negative coefficients in $\Delta$. 
\end{defn}

Qdlt pairs behave very similar to dlt pairs. We list a few properties of this kind.

\begin{lem} \label{lem:perturb qdlt boundary} 
Assume that $(X,\Delta)$ is a qdlt pair and $X$ is quasi-projective. Let $\Delta_0$ and $\Delta_1$ be effective $\bQ$-divisors $($not necessarily $\bQ$-Cartier$)$. Assume that $\Delta_0\le \Delta$ and $\Supp(\Delta_1)$ does not contain any stratum of $\lfloor \Delta \rfloor$. Then there exist some $0<\varepsilon\ll 1$ and some $\bQ$-divisor $D\ge \Delta_0+\varepsilon \Delta_1$ such that $(X,D)$ is qdlt and $\lfloor D\rfloor = \lfloor \Delta_0\rfloor$. Furthermore, if $X$ is projective over $S$ and $-(K_X+\Delta)$ is ample over $S$, then $D$ can be chosen so that $-(K_X+D)$ remains ample over $S$.
\end{lem}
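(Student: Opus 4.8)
The plan is to reduce the problem to the standard perturbation statement for dlt pairs by exploiting the toroidal (hence quotient-of-SNC) structure along the open set $U$ where $(X,\Delta)$ looks toroidal, and then to patch in the rest of $X$ using that the log discrepancies are already strictly positive there. Concretely, let $E=\lfloor\Delta\rfloor$ and let $U\subseteq X$ be the toroidal open set from Definition \ref{defn:qdlt}. Over $U$ the pair is, locally, a finite abelian quotient of an SNC pair, so the usual construction of a dlt perturbation — pick a general effective $\bQ$-divisor linearly equivalent to a small ample twist of $\Delta_1$ that meets the strata transversally, and absorb $\varepsilon\Delta_1$ into a new boundary $D$ with the same round-down $\lfloor D\rfloor=\lfloor\Delta_0\rfloor$ — goes through $G$-equivariantly and descends. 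The hypothesis that $\Supp(\Delta_1)$ contains no stratum of $E$ is exactly what guarantees that, after the perturbation, no new lc center is created along $U$, so $(U,D|_U)$ stays toroidal.

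First I would set up the local model: for each point $x\in U$ write $(x\in(X,\Delta))\cong (x'\in(X',\Delta'))/G$ with $(X',\Delta')$ SNC and $G$ finite abelian preserving the components of $\Delta'$; pull $\Delta_0,\Delta_1$ back to $X'$, apply the SNC version of the perturbation lemma there (this is classical — e.g. the argument in \cite{KM98} or \cite{Kol13} producing dlt perturbations), being careful to choose the auxiliary divisor $G$-invariantly (average over $G$, or use that $\Pic$ of a $G$-quotient situation lets one pick $G$-linearized representatives), and then descend to get $D$ near $x$. Since $U$ is quasi-compact we get a uniform $\varepsilon>0$ and can glue the local choices — or, more cleanly, since $X$ is quasi-projective, directly choose one global $\bQ$-divisor: pick $H$ ample, take $\Delta_1'\sim_\bQ \delta H + (\text{small correction})$ general so that on $U$ it is transverse to all strata, and set $D=\Delta_0+\varepsilon(\Delta_1+\text{general member})$ appropriately normalized so that the coefficients of the components of $E$ lying in $\Delta_0$ are unchanged (this is what pins down $\lfloor D\rfloor=\lfloor\Delta_0\rfloor$). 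Along $X\setminus U$ we only need $A_{X,D}(F)>0$ for all $F$ centered there; but $A_{X,\Delta}(F)>0$ by the qdlt hypothesis, and $D\le \Delta_0+\varepsilon\Delta_1+\varepsilon(\text{ample member})$ differs from $\Delta$ by something of coefficient $O(\varepsilon)$, so for $\varepsilon$ small enough and after shrinking, discrepancies stay positive — here one uses that the center of $F$ meeting $X\setminus U$ forces the relevant coefficients of $\Delta$ to satisfy a strict inequality that survives an $O(\varepsilon)$ perturbation. Thus $(X,D)$ is qdlt with $\lfloor D\rfloor=\lfloor\Delta_0\rfloor$ and $D\ge\Delta_0+\varepsilon\Delta_1$.

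For the last sentence, if $X$ is projective over $S$ and $-(K_X+\Delta)$ is ample over $S$, then choosing the auxiliary general member inside a small multiple of an ample class and keeping $\varepsilon$ small, $-(K_X+D)=-(K_X+\Delta)+(\Delta-D)$ is a small perturbation of an ample $\bQ$-divisor by a $\bQ$-divisor of bounded coefficients, hence remains ample over $S$ by openness of ampleness; concretely one can arrange $-(K_X+D)\sim_{\bQ,S}(1-c\varepsilon)(-(K_X+\Delta))+\varepsilon(\text{effective})$ for suitable constants, which is clearly ample.

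I expect the main obstacle to be the bookkeeping at the boundary $\partial U = X\setminus U$: one must verify that the perturbation genuinely does not destroy the positivity $A_{X,D}(F)>0$ for divisors $F$ with center in $X\setminus U$ — this requires knowing that the set of such $F$ (or rather the constraint they impose) is closed/compact enough that a single $\varepsilon$ works, which is where quasi-projectivity of $X$ is used, and that the added ample member does not accidentally pass through a bad stratum. The purely toroidal-local part (making the SNC perturbation $G$-equivariant and descending) is routine given that $G$ is abelian and preserves components, so it is essentially the global-to-local gluing and the control over $X\setminus U$ that carry the real content.
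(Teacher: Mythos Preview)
Your outline captures the rough spirit of the argument but misses the key technical point, and as written has a genuine gap.

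\textbf{The $\bQ$-Cartier issue.} The statement explicitly allows $\Delta_0$ and $\Delta_1$ to be non-$\bQ$-Cartier Weil divisors. Your proposed $D=\Delta_0+\varepsilon(\Delta_1+\text{general member})$, or any of its variants, has no reason to make $K_X+D$ $\bQ$-Cartier, so the pair $(X,D)$ is not even defined as a log pair. Your ampleness argument ``$-(K_X+D)=-(K_X+\Delta)+(\Delta-D)$ is a small perturbation'' likewise presupposes $\Delta-D$ is $\bQ$-Cartier, which you have not arranged. This is not a bookkeeping detail; it is the crux of the lemma.

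The paper solves this by a specific global construction (no local abelian covers, no gluing). After shrinking $U$ so that $\Delta_1|_U=0$, one picks $r$ so that $r(\Delta-\Delta_0-\Delta_1)$ is integral and Cartier on $U$ (possible since $X$ is $\bQ$-factorial along the toroidal locus), then uses the linear system $|mH+r(\Delta-\Delta_0-\Delta_1)|$ for $m\gg 0$; by Lemma~\ref{lem-noncartierlocus} this is basepoint free on $U$. Setting $G=\frac{1}{r}G_0$ for a general $G_0$ in this system forces
\[
K_X+\Delta_0+\Delta_1+G\sim_\bQ K_X+\Delta+\tfrac{m}{r}H
\]
to be $\bQ$-Cartier. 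Now $D_\varepsilon:=\Delta_0+(1-\varepsilon)(\Delta-\Delta_0)+\varepsilon(\Delta_1+G)$ is a convex combination of two boundaries each making $K_X+(\cdot)$ $\bQ$-Cartier, so $K_X+D_\varepsilon$ is $\bQ$-Cartier, and moreover $K_X+D_\varepsilon\sim_\bQ K_X+\Delta+\frac{m\varepsilon}{r}H$ gives ampleness of $-(K_X+D_\varepsilon)$ directly for small $\varepsilon$.

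\textbf{Control on $X\setminus U$.} Your phrase ``a strict inequality that survives an $O(\varepsilon)$ perturbation'' is too vague: there are infinitely many divisors $F$ with center in $X\setminus U$, and you need a single $\varepsilon$ working for all of them. The paper handles this by fixing one log resolution $\pi\colon Y\to(X,\Delta+\Delta_1+G)$ and checking the finitely many prime divisors $F\subseteq Y\setminus\pi^{-1}(U)$; for each, $A_{X,D_\varepsilon}(F)\to A_{X,\Delta}(F)>0$ as $\varepsilon\to 0$.

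\textbf{On gluing.} Your suggestion to construct $D$ locally on abelian covers and then glue does not work for divisors: local $\bQ$-divisors do not patch to a global one without further input. You already sensed this (``or, more cleanly, \dots directly choose one global $\bQ$-divisor''), and indeed the global route is the only one that works here; but the correct global linear system is the one above, not an ample twist of $\Delta_1$ alone.
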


In particular, we can always make small perturbation of a qdlt pair to reduce the number of components with coefficient $1$.

\begin{proof}
This is proved just as in \cite{KM98}*{Proposition 2.43}. Let $H$ be an ample divisor on $X$, and let $U$ be the open set as in Definition \ref{defn:qdlt}. Since $\Supp(\Delta_1)$ does not contain any stratum of $\lfloor \Delta \rfloor$, after possibly shrinking $U$ we may assume that $\Delta_1|_U=0$. Choose some $r>0$ such that $r(\Delta-\Delta_0-\Delta_1)$ has integer coefficients and $r(\Delta-\Delta_0-\Delta_1)|_U$ is Cartier. By the following Lemma \ref{lem-noncartierlocus}, we may choose some $m\gg 0$ such that the linear system $|mH+r(\Delta-\Delta_0-\Delta_1)|$ is basepoint free on $U$. Let $G_0\in |mH+r(\Delta-\Delta_0-\Delta_1)|$ be a general member and let $G=\frac{1}{r}G_0$. Then 
\[
K_X+\Delta_0+\Delta_1+G\sim_\bQ K_X+\Delta+\frac{m}{r}H
\]
is $\bQ$-Cartier, and by Bertini's theorem $(X,\Delta+\Supp(G))$ is toroidal on $U$. Consider the convex combination
\[
(X,D_\varepsilon:=\Delta_0+(1-\varepsilon)(\Delta-\Delta_0)+\varepsilon (\Delta_1+G))
\]
of $(X,\Delta)$ and $(X,\Delta_0+\Delta_1+G)$. This pair has toroidal support on $U$ and $\lfloor D_\varepsilon\rfloor=\lfloor \Delta_0\rfloor$ when $0<\varepsilon\ll 1$. If $\pi\colon Y\to (X,\Delta+\Delta_1+G)$ is a log resolution and $F\subseteq Y\setminus\pi^{-1}(U)$ is a prime divisor, then $A_{X,D_\varepsilon}(F)\to A_{X,\Delta}(F)>0$ as $\varepsilon\to 0$. Thus $(X,D_\varepsilon)$ is qdlt when $0<\varepsilon\ll 1$ and $D_\varepsilon$ gives the desired boundary. Moreover, 
\[
K_X+D_\varepsilon\sim_\bQ K_X+\Delta+\frac{m\varepsilon}{r}H,
\]
hence $-(K_X+D_\varepsilon)$ remains ample if $-(K_X+\Delta)$ is ample. 
\end{proof}

\begin{lem}\label{lem-noncartierlocus}
Let $X$ be a normal variety, and $D$ a Weil divisor on $X$. Let $H$ be an ample divisor. Then the base locus of $|mH+D|$ for $m\gg 0$ is the non-Cartier locus of $D$. 
\end{lem}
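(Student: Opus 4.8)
The plan is to prove the two inclusions separately. Write $Z\subseteq X$ for the non-Cartier locus of $D$, i.e.\ the closed subset over which the reflexive rank-one sheaf $\cO_X(D)$ is not invertible, and recall that $H$ is an ample Cartier divisor, so that $mH$ is Cartier and $\cO_X(mH+D)\cong\cO_X(mH)\otimes\cO_X(D)$ as reflexive sheaves for every $m$. I will establish: (i) $Z\subseteq\mathrm{Bs}\,|mH+D|$ for all $m\ge 0$; and (ii) $\mathrm{Bs}\,|mH+D|\subseteq Z$ for $m\gg 0$. Together these give $\mathrm{Bs}\,|mH+D|=Z$ once $m\gg 0$.

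For (i) I would argue contrapositively. If $x\notin\mathrm{Bs}\,|mH+D|$, choose a section of $\cO_X(mH+D)$ whose associated effective divisor $D_s\sim mH+D$ does not pass through $x$. On a neighbourhood of $x$ disjoint from $\Supp(D_s)$ we have $\cO_X(mH+D)\cong\cO_X(D_s)\cong\cO_X$, so $\cO_X(mH+D)$ is invertible near $x$; tensoring with the invertible sheaf $\cO_X(-mH)$ then shows $\cO_X(D)$ is invertible near $x$, i.e.\ $x\notin Z$. Note this step uses nothing about the size of $m$.

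For (ii) I would set $U:=X\setminus Z$, an open subset on which $\cO_X(mH+D)$ is invertible, and invoke Serre's theorem (global generation by high twists of an ample line bundle): for $m\gg 0$ the coherent sheaf $\cO_X(mH+D)\cong\cO_X(mH)\otimes\cO_X(D)$ is globally generated on all of $X$. Fix such an $m$. For any $x\in U$, global generation together with invertibility of $\cO_X(mH+D)$ at $x$ produces a global section nonvanishing at $x$, and the corresponding effective divisor in $|mH+D|$ then avoids $x$; hence $\mathrm{Bs}\,|mH+D|$ misses $U$, which is exactly (ii).

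I do not anticipate a genuine obstacle. The one point that needs care — and the reason largeness of $m$ enters only in (ii) — is that ``$\cO_X(mH+D)$ is globally generated at $x$'', which holds at every point once $m\gg 0$, is strictly weaker than ``$x\notin\mathrm{Bs}\,|mH+D|$'' at a point where $\cO_X(D)$ fails to be invertible, because a non-free rank-one stalk can never be generated by a single section; the two conditions coincide precisely along $U=X\setminus Z$. The remaining verifications — that $Z$ is closed, and the standard dictionary between nonzero sections of $\cO_X(mH+D)$ and effective divisors linearly equivalent to $mH+D$ (and that near a point where the sheaf is locally free this divisor is just the vanishing locus of the section) — are routine.
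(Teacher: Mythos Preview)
Your proposal is correct and follows essentially the same approach as the paper: both arguments prove the two inclusions separately, using for one direction that an effective divisor missing a point is Cartier there, and for the other direction Serre's global generation together with invertibility of $\cO_X(mH+D)$ at Cartier points to produce a section not vanishing at $x$. Your additional remark explaining why global generation alone does not force $x\notin\mathrm{Bs}\,|mH+D|$ at non-Cartier points is a nice clarification not spelled out in the paper.
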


\begin{proof}
This should be well-known to experts. First we show that every closed point $x\in X$ in the non-Cartier locus of $D$ is also in the base locus of $|mH+D|$. In fact, if there exists $G\in |mH+D|$ such that $x\not\in \Supp(G)$ then $G$ is clearly Cartier at $x$, hence so is $D\sim G-mH$, a contradiction. Next we take some $m\gg 0$ such that the sheaf $\cO_X(D+mH)$ is globally generated. For every $x\in X$ where $D$ is Cartier, we have $\cO_X(D+mH)\otimes k(x)\cong k(x)$, while global generation induces a surjection
\[
H^0(X,\cO_X(D+mH))\twoheadrightarrow 
\cO_X(D+mH)\otimes k(x).
\]
Thus we may find a section $s\in H^0(X,\cO_X(D+mH))$ that does not vanish at $x$; in other words, we get a divisor $G\in |mH+D|$ that does not pass through $x$. As $x$ is arbitrary, this implies that the base locus of $|mH+D|$ is contained in the non-Cartier locus of $D$, and we are done.
\end{proof}

The next lemma describes the geometry of the lc centers of qdlt pairs.

\begin{lem} \label{lem:qdlt->normal lc center}
Let $(X,\Delta)$ be a qdlt pair. Then the lc centers of $(X,\Delta)$ are normal, and they are exactly the irreducible components of various intersections $D_1\cap \dots\cap D_r$ where $D_1,\dots,D_r$ are irreducible components of $\lfloor\Delta\rfloor$.
\end{lem}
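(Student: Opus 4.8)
The plan is to localize the whole statement onto the toroidal open set $U\subseteq X$ from Definition \ref{defn:qdlt}, and then to reduce the toroidal case, being a finite abelian quotient of an SNC pair, to the well-understood SNC picture. First I would observe that every lc center of $(X,\Delta)$ meets $U$: if $F$ is a divisor over $X$ with $A_{X,\Delta}(F)=0$ whose center were contained in $X\setminus U$, this would contradict the positivity of log discrepancies of such $F$ in Definition \ref{defn:qdlt}; hence the center of any such $F$ meets $U$, and the lc centers of $(X,\Delta)$ are exactly the lc centers of $(U,\Delta|_U)$. For the same reason every irreducible component $D_i$ of $\lfloor\Delta\rfloor$ meets $U$ (otherwise $A_{X,\Delta}(D_i)=0$ with center in $X\setminus U$). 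So it suffices to identify the lc centers of $(U,\Delta|_U)$ with the irreducible components of intersections $(D_{i_1}\cap\cdots\cap D_{i_r})\cap U$ and to prove these are normal; both are local properties on $U$, so, since $\Delta|_U$ is reduced, we may replace $U$ by a small neighbourhood $U_0$ and pass to the local model $q\colon X'\to U_0=X'/G$ of Definition \ref{defn:toroidal}, with $(X',D')$ SNC and $G$ a finite abelian group preserving every component of $D'$.

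In the SNC case the lc centers of $(X',D')$ are exactly the \emph{strata}, i.e. the irreducible components of the intersections $D'_{j_1}\cap\cdots\cap D'_{j_s}$ of components of $D'$, and these are smooth, hence normal. I would then descend along $q$: it is finite, its branch locus is contained in $D'$, and $q^*(K_{U_0}+\Delta|_{U_0})=K_{X'}+D'$ (a standard feature of such toroidal models, $D'=q^{-1}(\Delta|_{U_0})$ locally), so by the standard behaviour of lc centers under such finite covers (as in \cite{KM98} or \cite{Kol13}), the lc centers of $(U_0,\Delta|_{U_0})$ are precisely the images $q(W')$ of the strata $W'$ of $(X',D')$. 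If $W'$ is the stratum in question and $G_{W'}\le G$ is its setwise stabilizer, then $q(W')\cong W'/G_{W'}$ is the quotient of a smooth variety by a finite group, hence normal. Finally, since $G$ fixes each component $D'_j$, the image of a stratum $W'\subseteq D'_{j_1}\cap\cdots\cap D'_{j_s}$ is an irreducible component of $q(D'_{j_1})\cap\cdots\cap q(D'_{j_s})$, and the divisors $q(D'_j)$ are exactly the local components of $\lfloor\Delta\rfloor$ on $U_0$; conversely every such component of intersection arises this way. Patching over $U$, the lc centers of $(X,\Delta)$ are normal and are exactly the components of intersections of components of $\lfloor\Delta\rfloor$.

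The main technical point to handle carefully is the descent along the quotient $q$: one must verify that "being an lc center" and the normality of lc centers transfer correctly through the finite cover $q$ even though $X'/G$ is only normal, which rests on the ramification of $q$ being supported on $D'$. One must also keep track of the fact that, although $G$ preserves each individual component $D'_j$, it may still permute the irreducible components of a multiple intersection $D'_{j_1}\cap\cdots\cap D'_{j_s}$, so that it is the stabilizer $G_{W'}$ of a single stratum $W'$ — not all of $G$ — that acts on $W'$ and governs the quotient $q(W')=W'/G_{W'}$. Granting these points, the normality of $q(W')$ and the matching of images of strata with components of intersections of the $\lfloor\Delta\rfloor$-components is routine.
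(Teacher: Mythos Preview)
Your identification of the lc centers with the strata of $\lfloor\Delta\rfloor$ is fine and matches the paper's brief remark: once you know every lc center meets $U$, the toroidal local model pins them down.

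The normality argument, however, has a genuine gap. You reduce to the open set $U$, pass to the abelian cover, and conclude that each lc center is normal \emph{after intersecting with $U$}. But the lc centers of $(X,\Delta)$ are closed subvarieties of $X$, and the definition of qdlt only guarantees that they \emph{meet} $U$, not that they are contained in $U$. A component $W$ of $D_1\cap\cdots\cap D_r$ may well have points in $X\setminus U$ (for instance when a component of $\Delta$ with coefficient $<1$ meets $D_1$ non-transversally, forcing $U$ to avoid that locus), and at those points your argument says nothing. Normality is a local condition that must be checked everywhere on $W$, so the reduction ``it suffices to prove these are normal on $U$'' is not valid.

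The paper handles precisely this issue by a global perturbation rather than by localizing. Using Lemma~\ref{lem:perturb qdlt boundary} it produces a $\bQ$-divisor $\Delta'\ge D_1$ with $(X,\Delta')$ qdlt and $\lfloor\Delta'\rfloor=D_1$; since $\lfloor\Delta'\rfloor$ is irreducible this pair is actually plt, and then \cite{Kol13}*{Theorem 4.16} gives normality of $D_1$ on all of $X$. Adjunction then shows $(D_1,\Diff_{D_1}(\Delta-D_1))$ is again qdlt, and one proceeds by induction on $r$. The point is that the plt normality theorem is a global statement that does not rely on the toroidal chart $U$, so it sees the points of $D_1$ lying in $X\setminus U$ that your argument misses.
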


\begin{proof}
By definition the lc centers intersect the toroidal locus $U$ and then it is not hard to see that any lc center is given in this form. Let us show that any such intersection $D_1\cap \dots\cap D_r$ is normal.  Using induction on $r$ it suffices to show that each component $D_i$ is normal, since by adjunction, $(D_1,\Diff_{D_1}(\Delta-D_1))$ is qdlt. Without loss of generality we assume $i=1$. By Lemma \ref{lem:perturb qdlt boundary}, there exists some $\Delta'\ge D_1$ such that $(X,\Delta')$ is qdlt and $\lfloor\Delta'\rfloor=D_1$ (which is irreducible). By definition this implies that $(X,\Delta')$ is plt, thus by \cite{Kol13}*{Theorem 4.16} we see that $D_1$ is normal and we are done.
\end{proof}

We will sometimes use the following lemma to verify a given pair is qdlt.

\begin{lem}[\cite{dFKX-dualcomplex}*{Proposition 34}] \label{lem:qdlt criterion}
Let $(X,\Delta)$ be an lc pair. Then it is qdlt if and only if for any minimal lc center $W$ of $(X,\Delta)$, there exist $r={\rm codim}_X(W)$ reduced divisors $D_1,\dots,D_r\subseteq \lfloor \Delta\rfloor$ that are $\bQ$-Cartier at the generic point of $W$ such that $W\subseteq D_i$.
\end{lem}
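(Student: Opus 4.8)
This is \cite{dFKX-dualcomplex}*{Proposition 34}; the plan is to prove the two implications separately, since they have quite different character. The forward direction will be read off directly from the local toroidal structure near a minimal lc center, while the converse is the substantive half.

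For the forward implication, assume $(X,\Delta)$ is qdlt with toroidal open $U$ as in Definition \ref{defn:qdlt}, let $W$ be a minimal lc center with generic point $\eta$, and set $r:=\mathrm{codim}_XW$. As observed in the proof of Lemma \ref{lem:qdlt->normal lc center}, every lc center meets $U$; since $W$ is irreducible this forces $\eta\in U$. Formally (or étale-) locally near $\eta$ we have $(X,\lfloor\Delta\rfloor)\cong(X',D')/G$ with $(X',D')$ SNC and $G$ finite abelian preserving each component of $D'$; localizing at $\eta$, the only SNC germ carrying a $0$-dimensional stratum in dimension $r$ is $(\bA^r,\sum_{i=1}^r\{x_i=0\})$, and after linearizing the $G$-action (possible in characteristic $0$) the group $G$ acts diagonally. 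Thus near $\eta$ the germ of $X$ is the affine toric variety $\bA^r/G$ and $\lfloor\Delta\rfloor$ is the image of $\sum\{x_i=0\}$; the images $D_1,\dots,D_r\subseteq\lfloor\Delta\rfloor$ of the coordinate hyperplanes satisfy $W\subseteq D_i$, and they are $\bQ$-Cartier at $\eta$ because a finite quotient of a $\bQ$-factorial (here smooth) germ is again $\bQ$-factorial. Minimality of $W$ forces that $r=\mathrm{codim}_XW$ of them suffice, giving the asserted family.

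For the converse, I would let $U$ be the maximal open subset of $X$ on which $\Delta|_U$ is reduced and $(U,\Delta|_U)$ is toroidal (well defined, since a union of opens with these properties again has them). By construction the first two conditions of Definition \ref{defn:qdlt} hold, so it remains to show $A_{X,\Delta}(F)>0$ for every divisor $F$ over $X$ with $\mathrm{center}_X(F)\subseteq X\setminus U$. As $(X,\Delta)$ is lc we have $A_{X,\Delta}(F)\ge0$ always, with equality only if $F$ is an lc place, i.e.\ $\mathrm{center}_X(F)$ is an lc center; and since every lc center contains a minimal lc center, it suffices to prove $\eta_W\in U$ for each minimal lc center $W$. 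Localizing at $x:=\eta_W$ so that $\dim X=r:=\mathrm{codim}_XW$, the given reduced divisors $D_1,\dots,D_r\subseteq\lfloor\Delta\rfloor$ are $\bQ$-Cartier at $x$ with $x\in\bigcap_iD_i$, and $(X,\sum_iD_i)$ is lc since it is dominated by $(X,\Delta)$. The crux is then: an lc pair $(X,\sum_{i=1}^rD_i)$ whose components $D_i$ are $\bQ$-Cartier and meet at a point $x$ of codimension $r$ is toroidal near $x$ with boundary $\sum D_i$. One proves this by applying inversion of adjunction $r$ times — at each stage the divisor being restricted is $\bQ$-Cartier and the ambient pair lc, so the restriction is a normal lc pair and the remaining $D_j$ restrict to boundary components — cutting down to dimension $0$, then reconstructing $(X,x)$ as a finite abelian quotient of a smooth germ from the resulting stratification. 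Finally $\Delta=\sum_iD_i$ near $x$: in the toroidal model just obtained $x$ is the deepest stratum, and any further boundary component of $\Delta$ through $x$ would put $r+1$ divisors through that stratum, forcing a negative log discrepancy on the toric blow-up and contradicting that $(X,\Delta)$ is lc. Hence $\Delta$ is reduced and toroidal near $x$, i.e.\ $x\in U$.

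The main obstacle is the ``crux'' step in the converse: recovering the toroidal (finite-abelian-quotient-of-SNC) structure near $x$ from the iterated adjunction data. This requires knowing that each $D_i$ is normal near $x$ — inversion of adjunction for a $\bQ$-Cartier boundary component of an lc pair, via Kollár's connectedness theorem — and, more seriously, that the germ $(X,x)$ is a quotient singularity, i.e.\ a finite quotient of a smooth germ; this structural statement is the essential input imported from \cite{dFKX-dualcomplex}. (The $\bQ$-Cartier hypothesis is genuinely needed here: it rules out configurations such as the toric boundary of a cone over $\bP^1\times\bP^1$, where the deepest stratum is a codimension-$3$ minimal lc center but no torus-invariant divisor through it is $\bQ$-Cartier and the germ is not a quotient singularity.)
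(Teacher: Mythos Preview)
The paper does not give its own proof of this lemma; it is stated purely as a citation to \cite{dFKX-dualcomplex}*{Proposition 34}. So there is nothing to compare against beyond the original source, and your sketch is essentially a reconstruction of that argument.

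Your outline is correct in both directions. A few remarks:

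In the forward direction, the phrase ``minimality of $W$ forces that $r=\mathrm{codim}_X(W)$ of them suffice'' is slightly backwards. What actually happens is that in the local toroidal model, the deepest stratum through $\eta$ is $W$ itself (since $W$ is a minimal lc center), and the number of toric boundary divisors through $\eta$ equals the codimension of that stratum; this gives exactly $r$ divisors, not merely ``enough''. The conclusion is unchanged.

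In the converse, your reduction to showing $\eta_W\in U$ for every minimal lc center $W$ is valid, for the reason you state. The ``crux'' step is exactly the content of \cite{K+92}*{18.22 and 18.23} (which the paper invokes elsewhere, in the proof of Proposition \ref{pro:degeneration remain qdlt Fano}): an lc pair $(X,\sum_{i=1}^r D_i)$ with each $D_i$ $\bQ$-Cartier and $\bigcap D_i$ of codimension $r$ is, locally at the intersection, a finite abelian quotient of an SNC pair. You correctly identify this as the imported input.

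One small gap in your final paragraph: you argue that no further component of $\lfloor\Delta\rfloor$ can pass through $x$, but the qdlt definition also requires $\Delta|_U$ to be \emph{reduced}, so you must rule out components of $\Delta$ with coefficient in $(0,1)$ through $x$ as well. The same argument works: any such component, pulled back to $\bA^r$, passes through the origin and makes the toric blowup exceptional have negative log discrepancy, contradicting that $(X,\Delta)$ is lc. You should state this explicitly.

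Your counterexample (the cone over $\bP^1\times\bP^1$) is a good illustration of why the $\bQ$-Cartier hypothesis is essential.
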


\subsection{Valuations and models}
\subsubsection{Valuations}
Let $X$ be a reduced, irreducible (separated) variety defined over $\bk$. A \emph{real valuation} of its function field
$K(X)$ is a non-constant map 
\[
v\colon K(X)^{\times}(:= K(X)\setminus\{0\})\to \bR
\]
satisfying:
\begin{itemize}
 \item $v(fg)=v(f)+v(g)$;
 \item $v(f+g)\geq \min\{v(f),v(g)\}$;
 \item $v(\bk^{\times})=0$.
\end{itemize}
We set $v(0)=+\infty$. The {\it rational rank} ${\rm rat.rk.}(v)$ of a valuation $v$ is the dimension of the $\bQ$-vector space in $\bR$ spanned by $v(K(X)^\times)$. A valuation $v$ gives rise
to a valuation ring 
$$\cO_v:=\{f\in K(X)\mid v(f)\geq 0\}.$$
We say a real valuation $v$ is centered at a scheme-theoretic
point $x=c_X(v)\in X$ if we have a local inclusion 
$\cO_{X,x}\hookrightarrow\cO_v$ of local rings.
Notice that the center of a valuation, if exists,
is unique since $X$ is separated. Denote by $\Val_X$ 
the set of real valuations of $K(X)$ that admits a center
on $X$. For a closed point $x\in X$, we denote by $\Val_{X,x}$ the set
of real valuations of $K(X)$ centered at $x\in X$. Note that a valuation $v\in \Val_{X}$ is centered at a closed point $x\in X$ if and only if $v(f)>0$ for all $f\in \fm_x$. 

For each valuation $v\in \Val_{X,x}$ and any non-negative real number $\lambda$, we define the valuation ideal 
$$\fa_{\lambda}(v):=\{f\in\cO_{X,x}\mid v(f)\geq \lambda\}.$$ Then it is clear that $\fa_{\lambda}(v)$ is an $\fm_x$-primary ideal for each $\lambda>0$ and $x=c_{X}(v)$.

\medskip

Given a valuation $v\in \Val_X$ and a nonzero ideal $\fa\subseteq\cO_X$, we may evaluate $\fa$ along $v$ by setting 
$$v(\fa) := \min\{v(f)\mid f \in \fa\cdot\cO_{X,c_X(v)} \}.$$
It follows from the above definition that if $\fa\subseteq \fb \subseteq \cO_X$ are nonzero ideals, then $v(\fa) \geq v(\fb)$.
Additionally, $v(\fa)> 0$ if and only if $c_X(v) \in {\rm Supp}(\cO_X/\fa)$.
We endow $\Val_X$  with the weakest topology such that,
for every nonzero ideal $\fa$ on $X$, the map $\Val_X\to \bR$ defined by $v\mapsto v(\fa)$ is continuous.
The subset $\Val_{X,x}\subseteq \Val_X$ is endowed with
the subspace topology. In some literature, the space $\Val_{X,x}$ is called the {\it non-archimedean link} of $x\in X$. For more background on valuation spaces, see \cite[Section 4]{JM12}.

Let $D$ be a Cartier divisor on $X$ and $v\in \Val_X$ a valuation, we can also define $v(D)$ to be $v(f)$ where $f$ is a defining equation of $D$ at $c_X(v)$. And similarly if $D$ is $\bQ$-Cartier, we can define $v(D)=\frac{1}{m}v(mD)$ for a sufficiently divisible positive integer $m$. For a graded sequence $\fa_{\bullet}=\{\fa_k\}_{k\in \bN}$ of ideals, we define 
$$v(\fa_{\bullet})=\inf_{k}\frac{v(\fa_k)}{k}=\lim_{k\to \infty, \fa_k\neq 0}\frac{v(\fa_k)}{k}.$$

Let $\mu\colon Y\to X$ be a proper birational morphism with $Y$ a normal variety. For a prime divisor $E$ on $Y$ (we call any such $E$ a divisor over $X$), we define a valuation $\ord_E\in \Val_X$ that sends each rational function in $K(X)^{\times}=K(Y)^{\times}$ to its order of vanishing along $E$. Note that 
the center $c_X(\ord_E)$ is the generic point of $\mu(E)$.
We say that $v\in \Val_X$ is a \emph{divisorial valuation} if there exists $E$ as above and $\lambda\in\bR_{>0}$ such that $v=\lambda\cdot \ord_E$.

When $X=\Spec(R)$ is affine and $v\in \Val_X$ is a real valuation, we define the associated graded algebra as
\[
\gr_v R:=\bigoplus_{\lambda\in\bR} \fa_\lambda(v)/\fa_{>\lambda}(v)
\]
where $\fa_{>\lambda}(v)=\bigcup_{\mu>\lambda} \fa_\mu(v)=\{f\in R\ |\  v(f)>\lambda\}$.

\begin{lem} \label{lem:graded integral->valuation}
Let $R$ be an integral domain and let $\fab$ be a graded sequence of ideals such that $\fa_{i+1}\subseteq \fa_i$ for all $i\in \bN$. Suppose that $\gr(R,\fab):=\bigoplus_{i\in\bN} \fa_i/\fa_{i+1}$ is an integral domain. Then there exists some discrete valuation $v$ such that $\fa_i = \fa_i (v)$ for all $i\in\bN$.
\end{lem}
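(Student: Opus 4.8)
The statement to prove is Lemma~\ref{lem:graded integral->valuation}: given an integral domain $R$ with a decreasing graded sequence of ideals $\fab$ whose Rees-type associated graded ring $\gr(R,\fab)=\bigoplus_{i\in\bN}\fa_i/\fa_{i+1}$ is a domain, we want to exhibit a valuation $v$ whose valuation ideals are exactly the $\fa_i$. The natural candidate for $v$ is the \emph{order function} attached to the filtration: for $0\ne f\in R$ set $v_0(f):=\sup\{i\in\bN\mid f\in\fa_i\}$ (with $v_0(f)=+\infty$ if $f$ lies in all $\fa_i$; I will need to rule this out, or rather the subset where it happens will be handled separately), and then extend to $K(R)^\times$ by $v(f/g):=v_0(f)-v_0(g)$. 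The point of the hypothesis that $\gr(R,\fab)$ is a domain is precisely to make $v_0$ multiplicative: if $f\in\fa_i\setminus\fa_{i+1}$ and $g\in\fa_j\setminus\fa_{j+1}$, then their images $\bar f\in\fa_i/\fa_{i+1}$ and $\bar g\in\fa_j/\fa_{j+1}$ are nonzero in the graded domain, so $\bar f\bar g\ne 0$ in $\fa_{i+j}/\fa_{i+j+1}$, which says exactly that $fg\in\fa_{i+j}\setminus\fa_{i+j+1}$, i.e. $v_0(fg)=v_0(f)+v_0(g)$.

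\textbf{Key steps, in order.} First I would establish that $v_0\colon R\setminus\{0\}\to\bN\cup\{+\infty\}$ is well defined and, by the argument above, strictly additive: $v_0(fg)=v_0(f)+v_0(g)$ whenever both sides are finite, and the domain property forces $fg\ne 0$ so this is consistent. In particular the set $\fa_\infty:=\bigcap_i\fa_i=\{f: v_0(f)=+\infty\}$ is a prime ideal (it is the preimage of the "degree $\infty$" part, and additivity shows $fg\in\fa_\infty\Rightarrow f\in\fa_\infty$ or $g\in\fa_\infty$); replacing $R$ by $R/\fa_\infty$ — which is again a domain because $\fa_\infty$ is prime, and whose induced graded ring is the same $\gr(R,\fab)$ since $\fa_i/\fa_{i+1}$ is unchanged — I may assume $v_0$ takes finite values on $R\setminus\{0\}$. (Alternatively one keeps $\fa_\infty$ and checks directly that the resulting $v$ is a valuation of $K(R/\fa_\infty)$, which is all that is asked since valuations are of the fraction field; I would spell out whichever is cleaner.) Second, I would check the valuation axioms for the extension $v(f/g):=v_0(f)-v_0(g)$ to $K(R)^\times$: well-definedness and additivity are immediate from additivity of $v_0$; for the ultrametric inequality $v(a+b)\ge\min\{v(a),v(b)\}$, clearing denominators reduces to the subadditivity statement $v_0(f+g)\ge\min\{v_0(f),v_0(g)\}$ for $f,g,f+g\in R\setminus\{0\}$, which holds simply because $\fa_i$ is an ideal (in fact an additive subgroup): if $f,g\in\fa_i$ then $f+g\in\fa_i$. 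And $v(\bk^\times)=0$ is clear since units of $R$ have $v_0=0$ (a unit in $\fa_1$ would force $\fa_1=R$, hence all $\fa_i=R$, contradicting that the sequence is nontrivial/decreasing — or one simply notes $1\notin\fa_1$; I should state the mild nondegeneracy hypothesis being used, namely $\fa_0=R\supsetneq\fa_1$, which is implicit in "graded sequence of ideals" with $\gr$ a genuine nonzero ring). Third, I would identify the valuation ideals: by construction $\{f\in R: v(f)\ge i\}=\{f: v_0(f)\ge i\}=\fa_i$ using that $\fab$ is \emph{decreasing}, so $f\in\fa_j$ for some $j\ge i$ already implies $f\in\fa_i$. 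This is exactly the assertion that $\fab$ is the graded sequence of valuation ideals of $v$.

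\textbf{Main obstacle.} Nothing here is deep; the one genuine subtlety — and the only place the hypothesis is used — is the multiplicativity of $v_0$, which I would make sure to present carefully, together with the bookkeeping around the ideal $\fa_\infty$ (equivalently, the observation that $v$ is required only to be a valuation of the fraction field, so passing to $R/\fa_\infty$ costs nothing). A secondary point to not gloss over is that "$\gr(R,\fab)$ is an integral domain" is being used in its strong form: it is a ring with $1\ne 0$ concentrated in nonnegative degrees with no zero divisors, so in particular $\fa_0/\fa_1\ne 0$, i.e. $\fa_0=R\ne\fa_1$ (otherwise there is no unit in degree $0$ and the ring is not a domain in the usual sense), and this is what makes $v$ nonconstant. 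Beyond flagging these, the proof is a direct verification of the valuation axioms.
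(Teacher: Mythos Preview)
Your proposal is correct and follows essentially the same approach as the paper: define $v(f)=\sup\{i\mid f\in\fa_i\}$, use the integral domain hypothesis on $\gr(R,\fab)$ to show $v(fg)=v(f)+v(g)$ via the nonvanishing of $\bar f\bar g$, and observe the ultrametric inequality from the decreasing filtration. You are in fact more careful than the paper about the possibility $\fa_\infty=\bigcap_i\fa_i\neq 0$ and about extending to $K(R)^\times$; the paper's proof is a terse three-line verification that silently leaves these points to the reader.
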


\begin{proof}
For any $0\neq f\in R$ set $v(f)=\sup\{i\,|\, f\in \fa_i\}$. We need to show that $v$ is a valuation. Since $\fa_{i+1}\subseteq \fa_i$ for all $i\in \bN$ ,we clearly have $v(f+g)\ge \min\{v(f),v(g)\}$. Suppose that $f\in \fa_i\setminus \fa_{i+1}$ and $g\in \fa_j\setminus \fa_{j+1}$, then their reductions $\bar{f}\in \fa_i/\fa_{i+1}$ and $\bar{g}\in \fa_j/\fa_{j+1}$ are non-zero in $\gr(R,\fab)$. Since $\gr(R,\fab)$ is an integral domain we get $\bar{f}\bar{g}\neq 0$, which gives $fg\in \fa_{i+j}\setminus \fa_{i+j+1}$ and hence $v(fg)=v(f)+v(g)$.
\end{proof}

\subsubsection{Models} \label{sss:models}

A model $(Y,E)$ over a pair $(X,\Delta)$ consists of a projective birational morphism $\pi\colon Y\to X$ and a reduced divisor $E$ on $Y$. We call it a \emph{log smooth} (resp. \emph{toroidal}) \emph{model} if $(Y,\Supp(E+\Ex(\pi)+\pi_*^{-1}\Delta))$ is simple normal crossing (SNC) (resp. toroidal).



\begin{defn}\label{d-quasimono}
A valuation $v=v_{\alpha}$ is called quasi-monomial if there exists a log smooth model $(Y,E=E_1+\dots+E_r)\to X$ and $r$ nonnegative numbers $\alpha=(\alpha_1$,\dots, $\alpha_r)$ such that 
\begin{enumerate}
\item $\bigcap^r_{i=1} E_i\neq \emptyset$;
\item There exists a component $C\subseteq \bigcap^r_{i=1} E_i$, such that around the generic point $\eta$ of $C$, $E_i$ is given by the local equation $(z_i=0)$ and 
\item  for any $f\in \mathcal{O}_{Y,\eta}$, we may write $f=\sum_{\beta \in \bZ^r_{\ge 0}} c_{\beta}z^{\beta}$ (where $z^\beta = z_1^{\beta_1}\cdots z_r^{\beta_r}$) in $\hat{\mathcal{O}}_{Y,\eta}$ so that each $c_{\beta}$ is either zero or a unit,
then 
$$v_{\alpha}(f)=\min \left\{\sum \alpha_i\beta_i\,|\, c_{\beta}\neq 0\right\}.$$
\end{enumerate}
\end{defn}

For example, if we choose $\alpha\in \mathbb{Z}^r_{\ge 0}$, the corresponding valuation is given by the toroidal divisor coming from the weighted blow up with weight $\alpha$.
The rational rank of the quasi-monomial valuation $v_\alpha$ is equal to the dimension of the 
$\mathbb{Q}$-linear vector space 
$${\rm span}_{\mathbb{Q}}\{\alpha_1,\dots, \alpha_r\}\subseteq \mathbb{R}.$$ 
In particular, the rational rank is always bounded above by the codimension of $\eta$. On the other hand, by further blowing up we can always choose a log smooth model so that $${\rm codim}_Y(\eta)={\rm rat.rk.}(v)=r,$$ in which case the weights $\alpha_1,\dots,\alpha_r$ are $\bQ$-linearly independent.
\bigskip

In the following, we also need a slight generalization of Definition \ref{d-quasimono}, namely instead of assuming that $(Y, E_1+\dots+E_r)$ is simple normal crossing at $\eta$, we assume
$$(\eta\in Y,E_1+\dots +E_r)\cong (\eta' \in Y',E'_1+\dots +E'_r)/G,$$
where $(\eta' \in Y',E'_1+\dots +E'_r)$ is a semi-local SNC pair and $ G$ is an abelian group, i.e. $(\eta\in Y,E_1+\dots +E_r)$ is toroidal around $\eta$. Denote the pull back of $E_i$ by $n_iE_i'$. Then for $\alpha=(\alpha_1,\dots, \alpha_r)\in \mathbb{R}^r_{\ge 0}$, we can define $v_{\alpha}$ to be the restriction of $v_{\alpha'}$ at $\eta'\in (Y',E'_1+\dots +E'_r)$ as in Definition \ref{d-quasimono}, where $\alpha'=(n_1\alpha_1,\dots, n_r\alpha_r)$ (the weights in $\alpha'$ are chosen so that $v_\alpha(E_i)=\alpha_i$). The definition is independent of the SNC abelian cover $(Y',E')$. We denote by $\QM_{\eta}(Y,E)$ all such valuations with $\alpha\in \bR^r_{\ge 0}$, and by $\QM^{\circ}_{\eta}(Y,E)$ with $\alpha\in \bR^r_{> 0}$. In general, given a model $(Y,E)$, we denote by $\QM(Y,E)\subseteq \Val_X$ the union of $\QM_\eta(Y,F)$ where $\eta\in Y$ and $F\subseteq E$ is a reduced divisor such that $(Y,F)$ is toroidal at $\eta$.

\begin{lem}\label{lem-valuationconvexity}
Let $\fa_{\bullet}$ be a graded sequence of ideals. Then the function 
$v\to v(\fa_{\bullet})$ is concave on $\QM_{\eta}(Y,E)$.
\end{lem}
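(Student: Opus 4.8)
The statement to prove is Lemma~\ref{lem-valuationconvexity}: for a graded sequence $\fa_\bullet$ of ideals, the function $v\mapsto v(\fa_\bullet)$ is concave on $\QM_\eta(Y,E)$. Since $v(\fa_\bullet)=\inf_k \frac{v(\fa_k)}{k}$ is, for fixed $k$, an infimum over the finitely many (or, in general, all) nonzero elements $f\in\fa_k$ of $v(f)$, and an infimum of concave functions is concave, it suffices to prove the following: \emph{for any fixed nonzero $f\in R$ (or $f\in\cO_{X,\eta}$), the function $v\mapsto v(f)$ is concave on $\QM_\eta(Y,E)$.} In fact I expect it is better to establish the slightly stronger assertion that this function is \emph{piecewise linear and concave}, namely a minimum of finitely many linear functions of $\alpha$.

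The first step is to reduce to the SNC case. By definition of $\QM_\eta(Y,E)$, after passing to the toroidal chart $(\eta'\in Y', E_1'+\dots+E_r')/G$ and replacing $\alpha$ by $\alpha'=(n_1\alpha_1,\dots,n_r\alpha_r)$ (an invertible linear change of coordinates on the parameter space, hence preserving concavity), we may assume $(Y,E)$ is SNC at $\eta$ with local equations $z_i$ for $E_i$. The second step is the local computation: writing $f=\sum_\beta c_\beta z^\beta$ in $\hat\cO_{X,\eta}$ with each $c_\beta$ either zero or a unit, Definition~\ref{d-quasimono} gives
\[
v_\alpha(f)=\min\Bigl\{\sum_{i=1}^r \alpha_i\beta_i \ \Bigm|\ c_\beta\neq 0\Bigr\}.
\]
Each individual term $\alpha\mapsto\sum_i\alpha_i\beta_i=\langle\alpha,\beta\rangle$ is a linear function of $\alpha\in\bR^r_{\ge 0}$, and $v_\alpha(f)$ is the pointwise minimum over the (nonempty, since $f\neq 0$) index set $\{\beta : c_\beta\neq 0\}$. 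A pointwise minimum of linear functions is concave; this proves the claim for a single $f$, and then taking the infimum over all $f\in\fa_k$ and then over $k$ (and dividing by $k$, which preserves concavity) gives the lemma. One should note that $\QM_\eta(Y,E)$ is a convex subset of $\bR^r_{\ge 0}$ (indeed a cone), so ``concave on $\QM_\eta(Y,E)$'' makes sense; if one wants concavity on all of $\QM(Y,E)$ one only gets it on each face $\QM_\eta(Y,F)$ separately, which is all that is claimed.

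The only point requiring a little care — and the one I would flag as the main (modest) obstacle — is the passage from a finite sum to the power series: a priori the minimum in the displayed formula is over an infinite index set, so one must check that the infimum of the linear functions $\langle\alpha,\beta\rangle$ over $\{\beta:c_\beta\neq0\}$ is actually attained and finite for every $\alpha$, and that it is still concave. This is standard: since $v_\alpha$ is a genuine valuation (the definition is set up so that $v_\alpha(f)$ is well-defined and the minimum is achieved, cf.\ the discussion after Definition~\ref{d-quasimono}), for each $\alpha$ there are only finitely many $\beta$ in the relevant region competing for the minimum, and the function is finite-valued; concavity of an infimum of concave (here linear) functions holds regardless of whether the index set is finite, as long as the infimum is finite (which it is, being $v_\alpha(f)\ge 0<\infty$ for $f\ne 0$). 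Alternatively, one can avoid the issue entirely by working with $f$ modulo $\fa_{>\lambda}$-type truncations, or by noting $v_\alpha$ extends continuously in $\alpha$ and concavity is a closed condition; but the cleanest route is simply: each $v\mapsto v(f)$ is an infimum of linear functionals, hence concave, and $v(\fa_\bullet)=\inf_k\inf_{0\ne f\in\fa_k}\frac{v(f)}{k}$ is an infimum of concave functions, hence concave.
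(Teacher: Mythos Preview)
Your proof is correct and follows essentially the same approach as the paper. Both reduce to the SNC case via the abelian cover and use that $v_\alpha(f)=\min_{c_\beta\neq 0}\langle\alpha,\beta\rangle$ is a pointwise minimum of linear functions; the paper writes out the two-point concavity inequality explicitly (picking the $\beta_0$ achieving $v_t(f)$ and comparing with $v_0,v_1$), while you package the same computation as ``infimum of linear is concave'' and then iterate through $\fa_k$ and $k$, but the content is identical.
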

\begin{proof}By passing to the abelian cover, we may assume $(Y,E)$ is log smooth at $\eta$. 
Assume $v_0$ and $v_1$ are in $\QM_{\eta}(Y,E)$, corresponding to $\alpha_0$ and $\alpha_1$ in $\bR^r_{\ge 0}$ and $v_t$ the valuation corresponding to $\alpha_t:=(1-t)\alpha_0+t\alpha_1$.
For any $f$ write $f=\sum c_{\beta}z^{\beta}$. Let $\beta_0$ satisfy 
$$c_{\beta_0}\ne 0\mbox{\ \ \ \ and \ \ \ \ }v_{t}(f)=\langle \alpha_t,\beta_0\rangle. $$
Then 
$$v_{t}(f)=(1-t)\langle\alpha_0,\beta_0\rangle+t\langle \alpha_1,\beta_0\rangle\ge (1-t)v_0(f)+tv_1(f).$$
Let $\fa$ be an ideal, and $v_t(\fa)=v_t(f)$ for some $f\in \fa$.
Then
\[
v_t(\fa)=v_t(f)\ge tv_0(f)+(1-t)v_1(f) \ge tv_0(\fa)+(1-t)v_1(\fa).
\]
Since $v(\fa_{\bullet})=\lim_{k\to \infty}\frac{1}{k}v(\fa_k)$ for any valuation $v$, the above inequality implies
$$v_t(\fa_{\bullet})\ge (1-t)v_0(\fa_{\bullet})+tv_1(\fa_{\bullet}).$$
\end{proof}




\begin{defn}
Let $(X,\Delta)$ be a log canonical pair, we use $\LC(X,\Delta)\subseteq \Val_X$ to denote the set of valuations which are lc places of $(X,\Delta)$.  It is the cone over the dual complex $\DMR(X,\Delta)$ defined in \cite[Theorem 28]{dFKX-dualcomplex}.
\end{defn}

\subsection{Normalized volume}

\begin{defn}\label{d-vol}
 Let $X$ be an $n$-dimensional normal variety. Let $x\in X$ be a closed point. Following \cite{ELS03}, we define
 the \emph{volume of a valuation} $v\in\Val_{X,x}$ as
 \[
  \vol_{X,x}(v)=\limsup_{k\to\infty}\frac{\ell(\cO_{X,x}/\fa_k(v))}{k^n/n!},
 \]
 where $\ell$ denotes the length of an Artinian module.
\end{defn}

By \cites{ELS03, LM09, Cut13}, the above limsup is actually a limit.

\begin{defn}[Log discrepancy]\label{d-logd}
 Let $(X,\Delta)$ be a log canonical pair. We define the \emph{log discrepancy
 function of valuations} $A_{X,\Delta}:\Val_X\to [0,+\infty]$
 in successive generality.
 \begin{enumerate}
  \item Let $\mu:Y\to X$ be a proper birational morphism from
  a normal variety $Y$. Let $E$ be a prime divisor
  on $Y$. Then we define $A_{X,\Delta}(\ord_E)=A_{X,\Delta}(E)$, i.e.
  \[
   A_{X,\Delta}(\ord_E):=1+\ord_E(K_Y-\mu^*(K_X+\Delta)).
  \]
  \item Let $(Y,E=\sum_{i=1}^r E_i)\to X$ be a log smooth model of $(X,\Delta)$. 
  Let $\eta$ be the generic point of a connected component of $E_1\cap\dots\cap E_r$. For any $v_\alpha\in \QM_\eta(Y,E)$ where $\alpha=(\alpha_1,\dots,\alpha_r)\in\bR_{\geq 0}^r\setminus\{0\}$, we define $A_{X,\Delta}(v_{\alpha})$ as
  \[
   A_{X,\Delta}(v_\alpha):=\sum_{i=1}^r \alpha_i A_{X,\Delta}(\ord_{E_i}).
  \]
  \item In \cite{JM12}, it was shown that there exists a retraction map 
  $$\rho_{Y,E}: \Val_X \to \QM(Y, E)$$ for any log smooth model $(Y, E)$ over $X$, such that it induces a
homeomorphism $\Val_X \to\varprojlim_{(Y,E)}\QM(Y, E)$. For any real valuation $v \in \Val_X$, we define
\begin{eqnarray*}\label{e-logdis}
A_{X,\Delta}(v):=\sup_{(Y,E)} A_{X,\Delta}(\rho_{Y,E}(v)),
\end{eqnarray*}
where $(Y, E)$ ranges over all log smooth models over $(X,\Delta)$. 
 \end{enumerate}
\end{defn}

The following invariant is introduced in \cite{Li18}.

\begin{defn}[\cite{Li18}]\label{d-normvol}
 Let $x\in (X,\Delta)$ be an $n$-dimensional klt singularity.  The \emph{normalized volume function of valuations} $\hvol_{(X,\Delta),x}:\Val_{X,x}\to \bR\cup \{+\infty\}$
 is defined as
 \[
  \hvol_{(X,\Delta),x}(v)=\begin{cases}
            A_{X,\Delta}(v)^n\cdot\vol_{X,x}(v), & \textrm{ if }A_{X,\Delta}(v)<+\infty;\\
            +\infty, & \textrm{ if }A_{X,\Delta}(v)=+\infty.
           \end{cases}
 \]
 \end{defn}
When the singularity $x\in (X,\Delta)$ is clear in the context, we will abbreviate the notation $\hvol_{(X,\Delta),x}$ as $\hvol_{(X,\Delta)}$ or $\hvol$. 
\cite{Li18} first proposed to study the minimizer of $\hvol_{(X,\Delta),x}$, and several deep conjectures were made there. Then the conjectural picture was supplemented in \cite{LX18}, where the relationship with K-stability was considered in full generality.
It is shown in \cite{Blum18} that a minimizer of $\hvol_{(X,\Delta),x}$ always exists. A priori, the definition includes to consider general valuations. However,  \cite{Xu20} proves that any minimizer is quasi-monomial. Therefore, we can restrict our studies in this paper to quasi-monomial valuations. For more background of the study on normalized volumes, see \cite{LLX20}.

\subsection{Koll\'ar components}\label{ss-kollar}

Recall that a divisor $E$ over a normal singularity $x\in X$ is said to be \emph{primitive} if there exists a projective birational morphism $\pi\colon Y\to X$ such that $Y$ is normal, $E$ is a prime divisor on $Y$ and $-E$ is a $\pi$-ample $\bQ$-Cartier divisor. We call $\pi\colon Y\to X$ \emph{the associated prime blowup} (it is uniquely determined by $E$).

\begin{lem} \label{lem:fg=primitive}
Let $E$ be a divisor over $x\in X=\Spec(R)$. Then $\gr_E R:=\gr_{\ord_E}R$ is finitely generated if and only if $E$ is primitive.
\end{lem}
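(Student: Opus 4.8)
The plan is to prove both directions by relating the blowup of the Rees-type algebra $\bigoplus_\lambda \fa_\lambda(\ord_E)$ to the prime blowup of $E$. For the ``$\Leftarrow$'' direction, suppose $E$ is primitive with associated prime blowup $\pi\colon Y\to X$, so $E\subseteq Y$ is a prime Cartier divisor (after clearing denominators, $-mE$ is $\pi$-very ample for some $m>0$; I will work with $\ord_E$ up to the rescaling that makes $E$ itself Cartier, which does not affect finite generation of $\gr_E R$). Then $\fa_k(\ord_E)=\pi_*\cO_Y(-kE)$ for all $k\in\bN$, since a function $f\in R$ vanishes to order $\ge k$ along $E$ iff $\pi^*f\in\cO_Y(-kE)$, and $\pi_*\cO_Y=\cO_X$ as $X$ is normal. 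Because $-E$ is $\pi$-ample, the Rees algebra $\bigoplus_{k\ge 0}\pi_*\cO_Y(-kE)\cdot t^k$ is a finitely generated $R$-algebra (this is the standard fact that $\Proj$ of the relative Rees algebra of an ample divisor recovers $Y$; equivalently, it is finitely generated since $Y=\Proj_X\bigoplus_k\cO_Y(-kE)$ is of finite type over $X$). Finite generation of the Rees algebra then gives finite generation of its associated graded algebra $\gr_E R=\bigoplus_k \fa_k/\fa_{k+1}$: one checks directly that a finite generating set of the Rees algebra descends to a finite generating set of the associated graded, or one uses that $\gr_E R$ is a quotient of the Rees algebra by a principal-type relation (the element $t^{-1}$). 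Note here that $\gr_E R$ is automatically a domain in this situation since it embeds into the graded ring of the DVR $\cO_{\ord_E}$, which is just a polynomial ring over the residue field; alternatively $\gr_E R\cong \bigoplus_k H^0(E,\cO_E(-kE|_E)\otimes(\text{line bundle}))$-type decomposition, reflecting the section ring of $-E|_E$.

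For the ``$\Rightarrow$'' direction, suppose $\gr_E R$ is finitely generated (and a domain, being contained in the value ring's graded algebra). I want to recover the prime blowup from it. The key point is that finite generation of $\gr_E R$ is equivalent to finite generation of the Rees algebra $\mathcal{R}:=\bigoplus_{k\ge 0}\fa_k(\ord_E)t^k$: indeed $\gr_E R=\mathcal{R}/t^{-1}\mathcal{R}$ in the extended Rees algebra picture, and conversely if $\gr_E R$ is finitely generated then so is $\mathcal R$ by a standard graded Nakayama / lifting-generators argument (lift homogeneous generators of $\gr_E R$ to $\mathcal R$; since $\fa_{k}\supseteq\fa_{k+1}$ and the filtration is separated and exhaustive in the relevant sense, these lifts generate). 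Now set $Y:=\Proj_X\mathcal R$ with its structure map $\pi\colon Y\to X$, which is projective and birational (birational because $\fa_1$ is $\fm_x$-primary so $\pi$ is an isomorphism over $X\setminus\{x\}$, and $Y$ is integral because $\mathcal R$ is a domain). The tautological sheaf $\cO_Y(1)$ is $\pi$-ample by construction, and one identifies it with $\cO_Y(-E')$ for the appropriate Cartier divisor $E'$; the content is to check that $E'$ is a prime divisor whose order of vanishing valuation is $\ord_E$. For this I would argue that the valuation $\ord_{E'}$ satisfies $\fa_k(\ord_{E'})=\pi_*\cO_Y(-kE')=\fa_k(\ord_E)$ for all $k$ (the first equality because on $\Proj$ of a finitely generated graded algebra the degree-$k$ part is recovered as global sections after truncating to a Veronese, and $\mathcal R$ is generated in bounded degree, possibly after replacing by a Veronese subalgebra which does not change $Y$ or $\ord_E$), hence $\ord_{E'}$ and $\ord_E$ have the same graded sequence of valuation ideals; since a divisorial valuation centered at $x$ is determined by its sequence of valuation ideals (two such valuations with equal $\fa_k$ for all $k$ are equal — they have the same value on every element of $R$), we get $\ord_{E'}=\ord_E$, so $E$ is primitive with associated prime blowup $\pi$. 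Finally the uniqueness clause: if $\pi_i\colon Y_i\to X$ ($i=1,2$) both witness primitivity of $E$, then both satisfy $\fa_k(\ord_E)=(\pi_i)_*\cO_{Y_i}(-kE)$ with $-E$ relatively ample, so both equal $\Proj_X\bigoplus_k\fa_k(\ord_E)t^k$ (the relative ampleness lets us reconstruct $Y_i$ as this Proj), giving an isomorphism over $X$ carrying one copy of $E$ to the other.

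The main obstacle I anticipate is the bookkeeping around denominators and Veronese subalgebras: $\ord_E$ need not be normalized so that $E$ is Cartier, so one must be careful that ``finitely generated'' for $\gr_v R$ with $v=\lambda\ord_E$ is insensitive to $\lambda$, and that passing between the $\bN$-graded Rees algebra and the possibly-$\tfrac1m\bN$-graded valuation filtration does not introduce spurious non-finite-generation — this is routine but needs to be stated cleanly, perhaps by first reducing to the case $E$ Cartier via rescaling. The second delicate point is the claim that a divisorial valuation centered at a point is determined by its graded sequence of valuation ideals; this is true and standard (for instance one recovers $v(f)=\sup\{k/m : f\in\fa_{k/m}\}$, or uses that the ideals determine the valuation ring), but I would make sure to invoke it in the form that applies to our possibly-rescaled setting. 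Everything else is the standard dictionary between relative Rees algebras of ample divisors and $\Proj$, plus a graded lifting-of-generators argument.
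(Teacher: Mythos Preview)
Your approach is essentially the same as the paper's: both reduce to finite generation of the Rees algebra $\cR=\bigoplus_m \fa_m$, and for the direction ``primitive $\Rightarrow$ finitely generated'' both use $\pi$-ampleness of $-E$ to conclude $\cR$ is finitely generated (the paper phrases this as $\fa_{i+jp}=\fa_i\cdot\fa_p^j$ for some $p\gg 0$). For the converse, the paper simply invokes \cite{Ishii-prime-blowup}*{Corollary 3.3}, whereas you sketch the construction $Y=\Proj_X\cR$ and the identification of the exceptional divisor with $E$ directly; this is precisely what Ishii's argument does, so your proposal is a more self-contained version of the same proof.

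One point to tighten in your sketch: the definition of \emph{primitive} requires $Y$ to be normal, and you do not address this. It holds because the valuation ideals $\fa_m(\ord_E)$ are integrally closed, hence the (extended) Rees algebra is integrally closed in $R[t,t^{-1}]$; alternatively one may normalize $Y$ and observe this does not change the divisorial valuation. Relatedly, your argument that $E'$ is prime is slightly circular as written (you invoke $\ord_{E'}$ before knowing $E'$ is prime); the cleaner route is to note that $\gr_E R$ is a domain, so the exceptional locus $\Proj(\gr_E R)\hookrightarrow Y$ is integral, and then your comparison of valuation ideals identifies it with $E$. These are bookkeeping issues rather than genuine gaps.
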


\begin{proof}
Let $\fa_m=\fa_m(\ord_E)$ ($m\in\bN$). By lifting generators of $\gr_E R=\bigoplus_{m\in\bN} \fa_m/\fa_{m+1}$, we see that $\gr_E R$ is finitely generated if and only if $\cR=\bigoplus_{m\in\bN} \fa_m$ is finitely generated over $R$. We shall show that the latter is equivalent to the primitivity of $E$. If $E$ is primitive and $\pi\colon Y\to X$ is the associated prime blowup, then $\fa_m=\pi_*\cO_Y(-mE)$. As $-E$ is $\pi$-ample, there exists some $p\gg 0$ such that $\fa_{i+jp}=\fa_i\cdot \fa_p^j$ for all $i,j\in\bN$. It follows that $\cR$ is finitely generated. Conversely, if $\cR$ is finitely generated, then \cite{Ishii-prime-blowup}*{Corollary 3.3} implies that $E$ is primitive. 
\end{proof}

Let $\pi\colon Y\to X$ be the prime blowup of a primitive divisor $E$ over a normal singularity $x\in X$. Let $\fa_m:=\pi_*\cO_Y(-mE)$. Then by the Rees construction we get a $\bG_m$-equivariant degeneration 
\begin{equation}\label{eq-rees}
p\colon \cX\to \bA^1={\rm Spec}(\bk[t]) \mbox{ of $X$, \ \ \ where } \cX:=\Spec \bigoplus_{m\in\bZ} t^{-m} \fa_m.
\end{equation}
By the construction, there is a natural isomorphism $\iota\colon \cX\times_{\bA^1} (\bA^1\setminus\{0\})\cong X\times (\bA^1\setminus \{0\})$. 
So $\cX$ has a section $\gamma\colon \bA^1\to \cX$ given by the closure of $x\times (\bA^1\setminus\{0\})$. Let $X_0=p^{-1}(0)$ be the fiber over $0\in\bA^1$, and let $x_0=\gamma(0)$. Note that $X_0\cong\Spec(\bigoplus_{m\in\bN} \fa_m/\fa_{m+1})$. For any $\bQ$-divisor $\Delta$ on $X$, we denote by $\Delta_{\cX}$ the closure of $\Delta\times (\bA^1\setminus \{0\})$ on $\cX$ under the natural isomorphism $\iota$.

\medskip

When $x\in (X,\Delta)$ is a klt singularity, the following special class of primitive divisors plays a particularly important role (see e.g. \cite{Xu-kollar}).

\begin{defn}
Let $x\in (X,\Delta)$ be a klt singularity and let $E$ be a prime divisor over $X$. If there exists a proper birational morphism $\pi\colon Y\to X$ such that $E= \pi^{-1}(x)$ is the unique exceptional divisor, $(Y,E+\pi_*^{-1}\Delta)$ is plt and  $-(K_Y+E+\pi_*^{-1}\Delta)$ is $\pi$-ample, we call $E$ \emph{a Koll\'ar component} over $x\in (X,\Delta)$.
\end{defn}

The following simple criterion will be needed in order to verify that certain divisors over a given singularity are Koll\'ar components. 

\begin{lem} \label{lem:plt complement induce kc}
Let $x\in (X,\Delta)$ be a klt singularity and let $\pi\colon Y\to X$ be a proper birational morphism. Let $E\subseteq \pi^{-1}(x)$ be a prime exceptional divisor. Assume that there exists a $\bQ$-divisor $D\ge E+ \pi_*^{-1}\Delta$ such that $K_Y+D\sim_{\bQ,\pi} 0$ and $(Y,D)$ is plt. Then $E$ is a Koll\'ar component over $x\in (X,\Delta)$.
\end{lem}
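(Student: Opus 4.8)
The plan is to extract $E$ on a suitable model produced by a relative MMP over $X$. After replacing $Y$ by a small $\bQ$-factorialization of the plt pair $(Y,D)$ (which exists by \cite{BCHM}; it is crepant for $(Y,D)$, so all hypotheses are preserved and the divisor over $X$ is unchanged), we may assume $Y$ is $\bQ$-factorial. Write $\Delta_Y:=\pi_*^{-1}\Delta$ and $B:=D-E-\Delta_Y\ge 0$, so that $D=E+\Delta_Y+B$ and $K_Y+E+\Delta_Y+B\sim_{\bQ,\pi}0$.

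\emph{The MMP.} Run the $(K_Y+D-E)$-MMP over $X$ with scaling of an ample divisor; this is a standard MMP since $(Y,D-E)=(Y,\Delta_Y+B)$ is a $\bQ$-factorial plt pair, and $K_Y+D-E\sim_{\bQ,\pi}-E$. For every extremal ray $R$ contracted along the way one has $(K_Y+D)\cdot R=0$ (as $K_Y+D\sim_{\bQ,\pi}0$), hence
\[
E\cdot R=(K_Y+D)\cdot R-(K_Y+D-E)\cdot R>0 .
\]
Consequently $E$ is never contracted (its birational transform survives on every model), and each step is crepant for $(Y,D)$, so $(Y,D)$ stays plt throughout. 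The MMP terminates by \cite{BCHM}; it cannot end in a Mori fiber space, since a fiber-type contraction over $X$ would force its base to have dimension $<\dim X$, contradicting the birationality of $\pi$. So we reach a relative minimal model $\pi'\colon Y'\to X$ on which $-E'$ is $\pi'$-nef, where $E'$ is the birational transform of $E$, $(Y',D')$ is plt with $D'=E'+\Delta_{Y'}+B'$, and $K_{Y'}+D'\sim_{\bQ,\pi'}0$.

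\emph{Ample model and conclusion.} By the relative base-point-free theorem applied to the plt pair $(Y',D'-E')$ (with $-E'\sim_{\bQ,\pi'}K_{Y'}+D'-E'$ being $\pi'$-nef), $-E'$ is $\pi'$-semiample; let $\psi\colon Y'\to Y''$ be the resulting morphism over $X$. Since $\vol_{X,x}(\ord_E)>0$, the divisor $E'$ is not contracted by $\psi$; set $E''=\psi_*E'$. Then $(Y'',D'')$ is plt (crepant to $(Y',D')$) with $D''=E''+\Delta_{Y''}+B''$, $K_{Y''}+D''\sim_{\bQ,\pi''}0$, and $-E''$ is now $\pi''$-ample. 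Because $-E''$ is $\pi''$-ample, any $\pi''$-contracted curve $C\not\subseteq E''$ would satisfy $-E''\cdot C\le 0$, which is impossible; hence every $\pi''$-contracted curve lies in $E''$, so $\Ex(\pi'')=E''=\pi''^{-1}(x)$ (using connectedness of fibers), $\pi''$ is an isomorphism over $X\setminus\{x\}$, and $\Delta_{Y''}=\pi''^{-1}_*\Delta$. Finally, $K_{Y''}+\Delta_{Y''}+(1-A_{X,\Delta}(E))E''=\pi''^*(K_X+\Delta)\sim_{\bQ,\pi''}0$, which together with $K_{Y''}+E''+\Delta_{Y''}+B''\sim_{\bQ,\pi''}0$ gives $B''\sim_{\bQ,\pi''}-A_{X,\Delta}(E)\cdot E''$; as $A_{X,\Delta}(E)>0$ and $-E''$ is $\pi''$-ample, $-(K_{Y''}+E''+\Delta_{Y''})\sim_{\bQ,\pi''}B''$ is $\pi''$-ample. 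Since $(Y'',E''+\Delta_{Y''})$ is plt (a subpair of $(Y'',D'')$ with effective boundary), this realizes $E$ as a Koll\'ar component over $x\in(X,\Delta)$.

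\emph{Main obstacle.} The heart of the argument is arranging a single MMP that never contracts $E$ yet removes every other exceptional divisor and makes $-(K+E+\Delta)$ ample; the subtle supporting points are the termination of the relative MMP with scaling (without knowing a priori that $-E$ is pseudo-effective over $X$, circumvented by excluding fiber-type contractions) and the persistence of $E$ under the passage to the ample model.
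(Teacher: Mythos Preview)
Your proof is correct. It takes a genuinely different route from the paper's, so a brief comparison is in order.

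The paper's argument is very short and conceptual: pushing $D$ down to $X$ gives $\Gamma:=\pi_*D-\Delta\ge 0$, and since $K_Y+D\sim_{\bQ}\pi^*(K_X+\Delta+\Gamma)$ with $(Y,D)$ plt and $\lfloor D\rfloor=E$, one sees that $\Gamma$ is a $\bQ$-complement of $x\in(X,\Delta)$ whose \emph{unique} lc place is $E$. The paper then invokes \cite{Xu-kollar}*{Lemma 1}, which says that every local $\bQ$-complement admits at least one Koll\'ar component among its lc places; uniqueness forces this to be $E$.

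Your approach instead directly constructs the plt blowup by running a $(K_Y+D-E)$-MMP over $X$ and passing to the ample model of $-E'$. This is essentially what the proof of \cite{Xu-kollar}*{Lemma 1} does in general, specialized here to the situation where one already knows the candidate divisor $E$. What you gain is self-containment: your argument does not rely on the external existence result for Koll\'ar components. What the paper gains is brevity and a clean conceptual reduction (``unique lc place of a complement'').

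Two minor remarks on your write-up. First, for a birational morphism $\pi$, every divisor is automatically pseudo-effective over $X$ (the generic fiber is a point), so termination at a minimal model is immediate from \cite{BCHM} without needing to rule out Mori fiber spaces separately. Second, your justification that $E'$ is not contracted by $\psi$ via $\vol_{X,x}(\ord_E)>0$ is somewhat indirect; a cleaner argument is that $-mE'=\psi^*H$ for some $\pi''$-ample Cartier divisor $H$, so if $E'$ were contracted then $H=\psi_*(-mE')=0$, forcing $E'=0$. Neither point affects correctness.
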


\begin{proof}
Let $\Gamma = \pi_*D-\Delta\ge 0$. Since $K_Y+D\sim_{\bQ,\pi} 0$, we have $K_Y+D\sim_\bQ \pi^*(K_X+\Delta+\Gamma)$. Thus as $(Y,D)$ is plt we see that $\Gamma$ is a $\bQ$-complement of $x\in (X,\Delta)$ and $E$ is its unique lc place. By \cite{BCHM}*{Corollaries 1.4.3 and 1.3.1}, this implies that $E$ is primitive and we let $\varphi\colon Z\to X$ be the associated prime blowup. Then $(Z,E+\varphi_*^{-1}(\Delta+\Gamma))$ is lc since it is the crepant pullback of $(X,\Delta+\Gamma)$; it is also plt since it has only one lc place which is the divisor $E$. It follows that $(Z,E+\varphi_*^{-1}\Delta)$ is plt as well. As $-(K_X+E+\varphi_*^{-1}\Delta)\sim_{\bQ,\varphi} -A_{X,\Delta}(E)\cdot E$ is $\varphi$-ample by construction, we conclude that $E$ is a Koll\'ar component.
\end{proof}

\begin{lem}\label{lem-kollardegenerate}
If $E$ is a Koll\'ar component over a klt singularity $x\in (X,\Delta)$, and let 
$$p\colon (\cX,\Delta_{\cX})\to \bA^1={\rm Spec}(\bk[t])$$ be the $\bG_m$-equivariant family induced by the Rees construction as in \eqref{eq-rees}. Then $(\cX,\Delta_{\cX}+p^{-1}(0))$ is plt. 

Moreover, if $E$ is an lc place of a $\bQ$-complement $\Gamma$, and denote by $\Gamma_{\cX}$ the closure of $\Gamma\times (\bA^1\setminus \{0\})$ on $\cX$,
 then $(\cX,\Delta_{\cX}+\Gamma_{\cX}+p^{-1}(0))$ is lc.
\end{lem}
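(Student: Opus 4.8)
The plan is to prove both assertions by inversion of adjunction along the central fiber $\cX_0:=p^{-1}(0)=\Spec(\gr_E R)$, after identifying $\cX_0$, together with the boundary it inherits, as an (orbifold) affine cone over the pair produced from $E$ by adjunction on the prime blowup $\pi\colon Y\to X$ of $E$.

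First I would record the structural facts. Since $\ord_E$ is a valuation, $\gr_E R$ is an integral domain, so $\cX_0$ is an integral Weil divisor on $\cX$; and the ideals $\fa_m=\pi_*\cO_Y(-mE)$ are integrally closed, so $\cX$ is normal. By the definition of a Koll\'ar component, $(Y,E+\pi_*^{-1}\Delta)$ is plt and $-(K_Y+E+\pi_*^{-1}\Delta)$ is $\pi$-ample, so by adjunction $(E,\Delta_E)$ with $\Delta_E:=\Diff_E(\pi_*^{-1}\Delta)$ is a klt log Fano and $-(K_E+\Delta_E)=-(K_Y+E+\pi_*^{-1}\Delta)|_E$ is ample. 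Writing $A:=A_{X,\Delta}(E)>0$ and using $K_Y+E+\pi_*^{-1}\Delta=\pi^*(K_X+\Delta)+A\,E$, restriction to $E$ shows $-(K_E+\Delta_E)\sim_\bQ A\cdot(-E|_E)$, since $\pi^*(K_X+\Delta)|_E$ is numerically trivial and hence (as $E$ is of Fano type) $\bQ$-linearly trivial; in particular $-E|_E$ is ample. Then relative Kawamata-Viehweg vanishing on the klt pair $(Y,\pi_*^{-1}\Delta)$, using that $-(m+1)E-(K_Y+\pi_*^{-1}\Delta)=-\pi^*(K_X+\Delta)-(m+A)E$ is $\pi$-ample for $m\ge 0$, gives $R^1\pi_*\cO_Y(-(m+1)E)=0$, so $\fa_m/\fa_{m+1}\cong H^0(E,\cO_E(-mE|_E))$ and $\cX_0$ is exactly the orbifold affine cone over $(E,-E|_E)$, with induced boundary the cone $\Delta_0$ over $\Delta_E$ (the different $\Diff_{\cX_0}(\Delta_\cX)$ is $\bG_m$-invariant and agrees with the naive restriction $\Delta_\cX|_{\cX_0}$ on the dense locus where $\cX$ is smooth along $\cX_0$, which forces it to equal $\Delta_0$). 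Since $-(K_E+\Delta_E)$ is a positive rational multiple of the ample divisor $-E|_E$, the standard klt criterion for cones over log Fanos (see \cite{Kol13}) gives that $(\cX_0,\Delta_0)$ is klt.

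Next I would run inversion of adjunction. First, $K_\cX+\Delta_\cX+\cX_0$ is $\bQ$-Cartier: away from $\cX_0$ the pair is $(X,\Delta)\times(\bA^1\setminus\{0\})$ so $K_\cX+\Delta_\cX$ is $\bQ$-Cartier there, and since $\cX_0=\{t=0\}$ is the unique vertical prime divisor and is Cartier, $K_\cX+\Delta_\cX$ differs from the closure of a $\bQ$-Cartier divisor by a multiple of the Cartier divisor $\cX_0$, hence stays $\bQ$-Cartier at the vertex $x_0$. Now $\cX_0$ is a normal prime divisor (affine cone over the normal variety $E$) and $(\cX_0,\Diff_{\cX_0}\Delta_\cX)$ is klt, so inversion of adjunction (\cite{Kol13}) shows $(\cX,\Delta_\cX+\cX_0)$ is plt near $\cX_0$; away from $\cX_0$ it is klt, so it is plt everywhere, which is the first assertion. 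For the second, since $E$ is an lc place of $(X,\Delta+\Gamma)$ we have $A_{X,\Delta+\Gamma}(E)=0$, so $(Y,E+\pi_*^{-1}(\Delta+\Gamma))$ is crepant to the lc pair $(X,\Delta+\Gamma)$ and hence lc; adjunction then gives that $(E,\Delta_E+\Gamma_E)$, with $\Gamma_E:=(\pi_*^{-1}\Gamma)|_E$, is lc with $K_E+\Delta_E+\Gamma_E\sim_\bQ 0$. Consequently $(\cX_0,\Delta_0+\Gamma_0)$, with $\Gamma_0$ the cone over $\Gamma_E$, is an affine cone over a log Calabi-Yau pair and therefore lc (by the analogous criterion for cones), and $K_\cX+\Delta_\cX+\Gamma_\cX+\cX_0$ is $\bQ$-Cartier by the same argument as above (using $K_X+\Delta+\Gamma\sim_\bQ 0$ near $x$). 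Inversion of adjunction for log canonicity then shows $(\cX,\Delta_\cX+\Gamma_\cX+p^{-1}(0))$ is lc near $\cX_0$, hence lc, completing the proof.

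The step I expect to be the main obstacle is the clean identification of $\cX_0$ and its different as the (orbifold) cone over $(E,\Delta_E)$, respectively $(E,\Delta_E+\Gamma_E)$, together with the $\bQ$-Cartierness of the relevant log canonical divisor on $\cX$; once this bookkeeping is set up carefully, the klt/lc criteria for cones and inversion of adjunction finish the argument essentially formally.
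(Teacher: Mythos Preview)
Your approach to the first assertion is essentially what the paper cites from \cite{LX20}*{Section 2.4}: identify $\cX_0$ as an orbifold cone over the log Fano $(E,\Delta_E)$, verify klt via the cone criterion, then invoke inversion of adjunction. Your $\bQ$-Cartier step works once one notes that after shrinking $X$ so that $m(K_X+\Delta)$ is \emph{principal} near $x$, the Weil divisor $m(K_\cX+\Delta_\cX)$ differs from a principal divisor on $\cX$ by a multiple of the Cartier divisor $\cX_0$; as stated (``closure of a $\bQ$-Cartier divisor'') it is not quite an argument. The identification of $\Diff_{\cX_0}(\Delta_\cX)$ with the cone over $\Delta_E$ does require matching the singularities of $\cX$ along $\cX_0$ with those of $Y$ along $E$, which is more than a density argument; one clean way is to pass to a $\mu_\ell$-quotient so that the polarization becomes an honest line bundle, as in the paper's later proof of Lemma~\ref{lem:N-complement}.

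For the second assertion your route genuinely differs from the paper's, and the paper's is slicker. Rather than repeating the cone identification for $(E,\Delta_E+\Gamma_E)$ and invoking the lc cone criterion (which sends you through all the same bookkeeping again), the paper simply observes that for each $0\le t<1$ the divisor $E$ remains a Koll\'ar component over the \emph{klt} pair $(X,\Delta+t\Gamma)$: the pair $(Y,E+\pi_*^{-1}(\Delta+t\Gamma))$ is a convex combination of the plt pair at $t=0$ and the lc crepant pullback at $t=1$, hence plt, while $-(K_Y+E+\pi_*^{-1}(\Delta+t\Gamma))\sim_{\bQ,\pi}-(1-t)A_{X,\Delta}(E)\cdot E$ stays ample. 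Applying the already-established first part to $(X,\Delta+t\Gamma)$ gives that $(\cX,\Delta_\cX+t\Gamma_\cX+p^{-1}(0))$ is plt for all $t<1$, and letting $t\to 1$ yields lc. This limiting trick completely sidesteps the bookkeeping you flagged as the main obstacle; the price is that it gives no direct description of the central fiber with the extra boundary $\Gamma_\cX$.
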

\begin{proof}
The first part was shown in \cite[Section 2.4]{LX20}. 

From the assumption, we know that for any $0\le t<1$, $E$ is also a Koll\'ar component over the klt pair 
$(X,\Delta+t\Gamma)$. Thus from the first part, we know $(\cX,\Delta_{\cX}+t\cdot \Gamma_{\cX}+p^{-1}(0))$ is plt for any $0\le t<1$. So 
$(\cX,\Delta_{\cX}+\Gamma_{\cX}+p^{-1}(0))$ is lc. 
\end{proof}
 
We also have the following result, which implies Theorem \ref{thm-localfinite} in the case when $v$ is divisorial. 

\begin{thm}
Let $x\in (X,\Delta)$ be a klt singularity. Let $v$ be a minimizer of the normalized volume function $\hvol_{X,\Delta}$. Assume that $v$ is divisorial. Then $v=\lambda\cdot \ord_E$ for some Koll\'ar component $E$.
\end{thm}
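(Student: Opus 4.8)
The plan is to reduce the statement to a finite generation property that we already control, namely Lemma~\ref{lem:fg=primitive}, and then invoke the uniqueness of the minimizer together with the K-semistable characterization from the rational rank one theory. First I would recall that by \cite{Blum18, Xu20, XZ-uniqueness} the minimizer $v$ exists, is quasi-monomial, and is unique up to rescaling; since we are assuming $v$ is divisorial, write $v = \lambda\cdot\ord_E$ for a prime divisor $E$ over $x$ and some $\lambda\in\bR_{>0}$. The key point is to show $E$ is a Koll\'ar component. The strategy is: (1) show $\gr_v R$ is finitely generated, hence by Lemma~\ref{lem:fg=primitive} the divisor $E$ is primitive; (2) upgrade primitivity to the Koll\'ar component property by producing the required plt $\bQ$-complement structure on the associated prime blowup.

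For step (1), the cleanest route in the divisorial rational rank one case is already known: by \cite{Li17, LX20} (the rational rank one case of the K-semistable characterization, cited in the excerpt), a divisorial minimizer of $\hvol$ is known to give a finitely generated graded ring — indeed, the whole point of \cite{LX20} is that a divisorial minimizer induces a K-semistable Fano cone degeneration, and the ring $\gr_v R$ is finitely generated because $v$ has rational rank one (finite generation then follows from \cite{BCHM}, as the excerpt notes right after Theorem~\ref{thm-localfinite}). So $E$ is primitive; let $\pi\colon Y\to X$ be the associated prime blowup, so $\fa_m(v) = \pi_*\cO_Y(-mE)$ and $X_0 = \Spec(\gr_v R)$ is the Rees degeneration.

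For step (2), I would run the argument showing that the minimizing divisor, once primitive, must actually be a Koll\'ar component: the central fiber $(X_0,\Delta_0;\xi_v)$ of the Rees degeneration is a K-semistable Fano cone singularity over the vertex, and in the rational rank one (i.e. $\bG_m$) case this means $X_0$ is a klt Fano cone, forcing $(Y, E+\pi_*^{-1}\Delta)$ to be plt. Concretely, K-semistability of $(X_0,\Delta_0;\xi_v)$ implies in particular that $(X_0,\Delta_0)$ is klt (this is part of the definition / a standard consequence), and by inversion of adjunction along the degeneration together with Lemma~\ref{lem-kollardegenerate}-type arguments (running the comparison between $(\cX,\Delta_\cX + p^{-1}(0))$ and its central fiber), kltness of the special fiber propagates back to plt-ness of $(Y, E+\pi_*^{-1}\Delta)$. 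Since $-E$ is $\pi$-ample by primitivity and $E = \pi^{-1}(x)$, this is exactly the definition of a Koll\'ar component; alternatively, one can invoke Lemma~\ref{lem:plt complement induce kc} once a suitable $D\ge E+\pi_*^{-1}\Delta$ with $K_Y+D\sim_{\bQ,\pi}0$ and $(Y,D)$ plt is exhibited, the divisor $D$ coming from the $\bQ$-complement whose lc place is $E$ (such a complement exists because a minimizer always admits an lc-place-inducing complement, cf. the later Section~\ref{ss-specialcomplement}).

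The main obstacle I expect is step (2): translating ``K-semistable Fano cone central fiber'' into ``plt total space $(Y, E+\pi_*^{-1}\Delta)$'' rigorously. One has to be careful that kltness of the special fiber of the Rees family does propagate to the general fiber structure in the right way, and that the Fano cone $X_0$ being K-semistable (not merely klt) is what pins down that $E$ is genuinely a Koll\'ar component rather than just a primitive divisor with a bad plt failure. This is precisely the content of the rational rank one results of \cite{Li17, LX20}, so in practice the proof will be short — it cites those papers for the finite generation and the K-semistable degeneration, observes that a $\bG_m$-equivariant K-semistable Fano cone degeneration of a klt singularity via a primitive divisor forces the plt condition, and concludes $E$ is a Koll\'ar component. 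The novelty here is only packaging, not new geometry.
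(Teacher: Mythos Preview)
Your proposal is correct, and in fact you anticipate the paper's treatment exactly: the paper's entire proof of this theorem is the sentence ``See \cite{LX20}*{Theorem C} or \cite{Blum18}*{Proposition 4.9}.'' Your sketch reconstructs the content of those references and correctly identifies that the divisorial case is already settled there, so there is nothing further to add.
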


\begin{proof}
See \cite{LX20}*{Theorem C} or \cite{Blum18}*{Proposition 4.9}.
\end{proof}

Since in general, a minimizer of $\hvol_{X,\Delta}$ could be a quasi-monomial valuation with rational rank larger than one, we have to adapt the study for higher rank valuations.
For this purpose, we will introduce the concept of Koll\'ar models (see Definition \ref{defn:qdlt Fano model}).

\section{Lc places of special $\bQ$-complements}

The concept of ($\bQ$-)complements was first introduced in \cite{Shokurov92}. However, it is known that unlike the divisorial case, an arbitrary lc place $v$ of a $\bQ$-complement with rational rank larger than one does not necessarily have a finitely generated associated graded algebra (see e.g. \cite{AZ-index2}*{Theorem 4.16} and \cite{LXZ-HRFG}*{Theorem 6.1}).
We need to look at a smaller class of $\bQ$-complements, which we call special $\bQ$-complements. The goal of this section is then two-folds. First we relate minimizers of the normalized volume function to lc places of special $\bQ$-complement. More precisely, we will prove that for any klt singularity $x\in (X,\Delta)$, any minimizer of the normalized volume function $\hvol$ is a monomial lc place of a special $\bQ$-complement $\Gamma$ with respect to some log smooth model $(Y,E)$ (see Lemma \ref{lem:minimizer's complement}). Next we will introduce the concept of Koll\'ar models as a higher rank analog of Koll\'ar components. As a crucial step towards the finite generation statement, we show that any monomial lc place of a special $\bQ$-complement with respect to some log smooth model can also be realized on a Koll\'ar model (Theorem \ref{thm:construction of qdlt fano type model}). In the next section, we will prove finite generation by analyzing degenerations of Koll\'ar models.

\subsection{Special $\bQ$-complements} \label{ss-specialcomplement}

First we give the definition of special $\bQ$-complements, which is analogous to \cite{LXZ-HRFG}*{Definition 3.3}.

\begin{defn}\label{d-specialcomplement}
Let $(X,\Delta)$ be klt and projective over a quasi-projective variety $U$ such that $-K_X-\Delta$ is ample over $U$.  
A $\bQ$-complement $\Gamma$ of $(X,\Delta)$ will be called \emph{special with respect to a log smooth} (\emph{resp. toroidal model}) $\pi\colon(Y,E)\to (X,\Delta)$ if $\pi_*^{-1}\Gamma\ge G$ for some effective $\bQ$-divisor $G$ on $Y$ ample over $U$ whose support does not contain any stratum of $(Y,E)$.

Any valuation $$v\in \QM(Y,E)\cap \LC(X,\Delta+\Gamma)$$ is called a \emph{monomial lc place} of the special $\bQ$-complement $\Gamma$ with respect to $(Y,E)$. We denote by $\LC(\Gamma;Y,E):=\QM(Y,E)\cap \LC(X,\Delta+\Gamma)$ the corresponding set of monomial lc places. 

When $U=\Spec(\bk)$  this reduces to \cite{LXZ-HRFG}*{Definition 3.3}. In the local case we are mainly interested in here, i.e. when $x\in(X,\Delta)$ is a klt singularity and $U=X$, we further denote $\LC_x(\Gamma;Y,E)$ to be the set of valuations in $\Val_{X,x}\cap \LC(\Gamma;Y,E)$.
\end{defn}

Now let us show that minimizers of the normalized volume function are monomial lc places of special $\bQ$-complements (Lemma \ref{lem-localmonomial}). Once this is done, we will focus on showing that any monomial lc place of special $\bQ$-complements has a finitely generated graded ring.

We start with the following auxiliary result, which easily implies Lemma \ref{lem-localmonomial}. It can be viewed as the local analog of \cite{LXZ-HRFG}*{Lemma 3.1}.

\begin{lem} \label{lem:minimizer's complement}
Let $x\in (X,\Delta)$ be a klt singularity of dimension $n$ and let $v_0\in \Val_{X,x}$ be a minimizer of the normalized volume function $\hvol_{(X,\Delta)}$. Then for any effective $\bQ$-Cartier $\bQ$-divisor $D$ such that $v_0(D)<\frac{A_{X,\Delta}(v_0)}{n}$, there exists some $\bQ$-complement $\Gamma$ of $(X,\Delta)$ such that $\Gamma\ge D$ and $v_0$ is an lc place of $(X,\Delta+\Gamma)$.
\end{lem}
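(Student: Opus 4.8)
\textbf{Proof plan for Lemma \ref{lem:minimizer's complement}.}

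The plan is to adapt the global strategy of \cite{LXZ-HRFG}*{Lemma 3.1} to the local setting, replacing the global basis-type divisor machinery with the local construction from \cite{XZ-uniqueness} that is available for minimizers of $\hvol$. The starting observation is that a minimizer $v_0$ of $\hvol_{(X,\Delta)}$ enjoys a strong semicontinuity/derivative property: for any effective $\bQ$-Cartier divisor $D$, the function $t\mapsto \hvol_{(X,\Delta+tD)}$ and its behavior near $t=0$ are controlled by the quantity $v_0(D)/A_{X,\Delta}(v_0)$. Concretely, one shows that whenever $v_0(D)<\frac{1}{n}A_{X,\Delta}(v_0)$, the pair $(X,\Delta+(1-\varepsilon)D)$ still has $v_0$ as a minimizer-type valuation with \emph{positive} log discrepancy $A_{X,\Delta+(1-\varepsilon)D}(v_0)=A_{X,\Delta}(v_0)-(1-\varepsilon)v_0(D)>0$ and, crucially, the normalized volume computed against this modified pair stays bounded below by a positive constant uniformly. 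This is what allows one to run a complement-producing argument.

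The key steps I would carry out, in order, are as follows. First, rescale so that $A_{X,\Delta}(v_0)=1$ and set $c:=v_0(D)<\tfrac1n$; pick a rational $c<s<\tfrac1n$ and replace $\Delta$ by $\Delta':=\Delta+\tfrac{c}{s}D$, noting $(X,\Delta')$ is still klt (since $\tfrac cs<1$ and $D$ is $\bQ$-Cartier effective, for $s$ close to $\tfrac1n$ one checks kltness is preserved near the relevant locus, shrinking if necessary) and $A_{X,\Delta'}(v_0)=1-c>0$. The point of this normalization is that one wants the ``target'' divisor $D$ to appear with the right coefficient inside the eventual complement. Second, invoke the construction of \cite{XZ-uniqueness}: because $v_0$ minimizes $\hvol_{(X,\Delta)}$ (equivalently, up to the rescalings, remains a minimizer for the perturbed pair), there is a sequence of $\fm_x$-primary graded sequences of ideals, or equivalently a sequence of Koll\'ar components / models, whose associated valuations approximate $v_0$ and whose normalized volumes converge to $\hvol(x,X,\Delta)$; this produces, for the perturbed pair, an auxiliary $\bQ$-Cartier divisor $D'$ (playing the role of the averaged basis-type divisor) with $(X,\Delta'+D')$ log canonical, $v_0$ an lc place of it, and $K_X+\Delta'+D'\sim_\bQ 0$ after rescaling $D'$ appropriately. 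Third, unwind the rescalings: set $\Gamma:=\tfrac{c}{s}D+ (\text{the appropriately scaled }D')$ on $X$; then $\Gamma\ge \tfrac cs D$—and since $s<\tfrac1n\le 1$ we have $\tfrac cs>c$, but we actually want $\Gamma\ge D$, so one takes $s$ with $\tfrac cs$ large enough... here I would instead arrange the bookkeeping so that $D$ itself (not a fraction of it) is a subdivisor of $\Gamma$: use that $v_0(D)<\tfrac1n A_{X,\Delta}(v_0)$ gives room to choose the complement to dominate all of $D$, by absorbing $D$ first and producing the complement of $(X,\Delta+D)$, whose log discrepancy at $v_0$ is still positive, namely $1-c>1-\tfrac1n>0$. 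Fourth, verify that $v_0$ is genuinely an lc place of $(X,\Delta+\Gamma)$: this follows because $A_{X,\Delta+\Gamma}(v_0)=A_{X,\Delta}(v_0)-v_0(\Gamma)$ and the construction forces $v_0(\Gamma)=A_{X,\Delta}(v_0)$, combined with log canonicity of $(X,\Delta+\Gamma)$ which one gets from the inversion-of-adjunction / lc-ness of the degeneration (Lemma \ref{lem-kollardegenerate} type statement) together with the fact that the approximating models degenerate $(X,\Delta+\Gamma)$ to an lc log Fano cone.

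The main obstacle I anticipate is \textbf{Step two}: producing the auxiliary divisor $D'$ with the right log-canonical and linear-equivalence properties for an arbitrary quasi-monomial minimizer of higher rational rank. In the global case \cite{LXZ-HRFG} one averages basis-type divisors and uses that the expected vanishing order $\frac{S(v_0)}{A(v_0)}\to \delta$, invoking Fujita-type estimates and the solution of the higher-rank finite generation for the $\delta$-invariant. Locally, no such ``basis type'' divisor exists a priori, so one must instead use the normalized-volume minimization: the crux is \cite{XZ-uniqueness}, which characterizes the minimizer via a ``Nklt'' or thickening construction, to manufacture, for each small rational perturbation, an actual effective $\bQ$-divisor $\Gamma_t$ with $(X,\Delta+\Gamma_t)$ lc and $v_0$ an lc place. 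Controlling the coefficient $v_0(\Gamma_t)$ precisely—so that in the limit $v_0(\Gamma)$ hits exactly $A_{X,\Delta}(v_0)$ while keeping $\Gamma\ge D$—will require the quantitative estimate that $v_0(D)<\tfrac1n A_{X,\Delta}(v_0)$ is exactly the slack needed, and checking this inequality is preserved through the construction is the delicate technical heart. Once $D'$ is in hand, the remaining steps are bookkeeping with log discrepancies and an appeal to boundedness of complements or to the lc-ness of the explicit degeneration, which are routine given the earlier results in the paper.
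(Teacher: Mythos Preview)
Your plan has the right high-level intuition—use \cite{XZ-uniqueness} as the local substitute for global basis-type divisors—but Step~2 as written is circular: you posit the existence of an auxiliary divisor $D'$ such that $(X,\Delta'+D')$ is lc with $v_0$ an lc place and $K_X+\Delta'+D'\sim_\bQ 0$, which is precisely the $\bQ$-complement you are trying to construct. Nothing in \cite{XZ-uniqueness} hands you such a $D'$ directly, and your remark that ``locally, no such `basis type' divisor exists a priori'' is in fact wrong: \cite{XZ-uniqueness} does define local $m$-basis type divisors with respect to $v_0$, and these are exactly the objects the paper uses.

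The paper's argument has two distinct ingredients you are missing. First, it proves an intermediate statement (Lemma~\ref{lem:minimizer computes lct}): the minimizer $v_0$ \emph{computes} the log canonical threshold $\lct_x(X,\Delta+D;\fab(v_0))$. This is where the hypothesis $v_0(D)<\frac{1}{n}A_{X,\Delta}(v_0)$ is actually consumed—it guarantees, via an asymptotic count, that one can build an $m$-basis type divisor $\Gamma$ (in the sense of \cite{XZ-uniqueness}) with $\Gamma\ge D$, and then the K-semistability inequality $S(v_0;v)\le A_{X,\Delta}(v)$ from \cite{XZ-uniqueness}*{Theorem 3.10} rules out any other valuation computing the lct. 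Second, having that $v_0$ computes this lct does not by itself produce a $\bQ$-complement; the paper then runs the standard machinery from \cite{LX18}*{Theorem 2.50}: ACC of log canonical thresholds to get a tolerance $\varepsilon$, approximation of $v_0$ by nearby divisorial valuations $\ord_{F_i}$ with $A_{X,\Delta+D+\fab^c}(F_i)<\varepsilon$, extraction of the $F_i$ via \cite{BCHM}, an MMP to make $-(K+\Delta+D+\sum F_i)$ nef, and finally Bertini to produce the complement. Your Steps~3--4 gesture at ``bookkeeping'' and ``lc-ness of the degeneration,'' but none of that substitutes for this extraction-plus-MMP step, which is where the actual divisor $\Gamma$ is built.
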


As a preliminary step, we prove:

\begin{lem} \label{lem:minimizer computes lct}
Let $x\in (X=\Spec(R),\Delta)$ be a klt singularity of dimension $n$ and let $v_0\in \Val_{X,x}$ be a minimizer of the normalized volume function $\hvol_{(X,\Delta)}$. Let $D$ be an effective $\bQ$-Cartier $\bQ$-divisor on $X$ such that $v_0(D)<\frac{A_{X,\Delta}(v_0)}{n}$. Then
\[
\lct_x(X,\Delta+D;\fab(v_0))=A_{X,\Delta+D}(v_0),
\]
where $\fab(v_0):=\{\fa_{m}(v_0)\}_{m\in \bN}$ is the graded sequence of valuation ideals. In particular, the log canonical threshold $\lct_x(X,\Delta+D;\fab(v_0))$ is computed by $v_0$.
\end{lem}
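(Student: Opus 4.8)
The plan is to leverage the fact that $v_0$ minimizes $\hvol_{(X,\Delta)}$, and to compare it with valuations computing the log canonical threshold of the graded sequence $\fab(v_0)$ of valuation ideals against the pair $(X,\Delta+D)$. First I would recall the basic inequality relating $\hvol$, log discrepancy, and multiplicity of ideals: for any valuation $w\in\Val_{X,x}$ and any $\fm_x$-primary graded sequence $\fab=\fab(v_0)$, one has, via \cite{Li18} (or \cite{LLX20}), $\hvol_{(X,\Delta)}(w)\ge A_{X,\Delta}(w)^n\cdot \mathrm{mult}(\fab)/A_{X,\Delta}(w)^n\cdot(\text{something})$; more precisely, the key input is the inequality $\hvol(x,X,\Delta)\le \lct_x(X,\Delta;\fab)^n\cdot\mathrm{mult}(\fab)$ together with the computation $\hvol_{(X,\Delta)}(v_0)=A_{X,\Delta}(v_0)^n\vol(v_0)$ and $\vol(v_0)=\mathrm{mult}(\fab(v_0))$. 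Since $v_0$ itself satisfies $v_0(\fa_m(v_0))\ge m$, i.e. $A_{X,\Delta}(v_0)$ serves as an upper bound $\lct_x(X,\Delta;\fab(v_0))\le A_{X,\Delta}(v_0)$, while the reverse inequality follows because $v_0$ minimizes — this is the standard fact that the minimizer computes the lct of its own graded sequence of valuation ideals.

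The new content here is the presence of the extra divisor $D$ with the hypothesis $v_0(D)<\tfrac{A_{X,\Delta}(v_0)}{n}$. I would handle this by noting that $A_{X,\Delta+D}(v_0)=A_{X,\Delta}(v_0)-v_0(D)>0$, so $(X,\Delta+D)$ is still klt near the relevant locus (or at least $A_{X,\Delta+D}(v_0)>(1-\tfrac1n)A_{X,\Delta}(v_0)>0$), and the normalized volume bound gives $\hvol(x,X,\Delta+D)\le A_{X,\Delta+D}(v_0)^n\cdot\vol(v_0)$. The crucial step is to show $v_0$ still minimizes $\hvol_{(X,\Delta+D)}$ among valuations, or at least that equality in the lct estimate forces $\lct_x(X,\Delta+D;\fab(v_0))=A_{X,\Delta+D}(v_0)$. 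For this I would use the chain of inequalities
\[
A_{X,\Delta+D}(v_0)^n\cdot\vol(v_0)\ \ge\ \hvol(x,X,\Delta+D)\ \ge\ \lct_x(X,\Delta+D;\fab(v_0))^n\cdot\mathrm{mult}(\fab(v_0)),
\]
where the second inequality is the Li–Liu type bound comparing $\hvol$ of the singularity with lct and multiplicity of any $\fm_x$-primary graded sequence (here applied to $(X,\Delta+D)$ and the graded sequence $\fab(v_0)$), and $\mathrm{mult}(\fab(v_0))=\vol_{X,x}(v_0)$. Combined with the trivial upper bound $\lct_x(X,\Delta+D;\fab(v_0))\le A_{X,\Delta+D}(v_0)$ — valid because $v_0$ is an lc place "candidate" for $(X,\Delta+D,\fab(v_0))$ since $v_0(\fa_m(v_0))\ge m$ forces $A_{X,\Delta+D}(v_0)-c\cdot v_0(\fab(v_0))\ge 0$ exactly when $c\le A_{X,\Delta+D}(v_0)$ — we get equality throughout, hence $\lct_x(X,\Delta+D;\fab(v_0))=A_{X,\Delta+D}(v_0)$, and moreover $v_0$ achieves this lct.

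The main obstacle I anticipate is justifying the middle inequality $\hvol(x,X,\Delta+D)\le A_{X,\Delta+D}(v_0)^n\vol_{X,x}(v_0)$ together with the fact that $v_0$ remains a minimizer (or near-minimizer) after adding $D$: one needs the condition $v_0(D)<\tfrac{A_{X,\Delta}(v_0)}{n}$ precisely to guarantee that $\hvol_{(X,\Delta+D)}(v_0)=A_{X,\Delta+D}(v_0)^n\vol(v_0)$ is still \emph{at most} $\hvol(x,X,\Delta+D)$ being computed by $v_0$ — this requires a perturbation/lower-semicontinuity argument showing $v_0$ does not lose minimality under a small enough effective perturbation, which is where the quantitative hypothesis enters. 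I would look to \cite{LX20} and \cite{XZ-uniqueness} for the precise statement that a minimizer of $\hvol_{(X,\Delta)}$ with $v_0(D)$ small remains a minimizer of $\hvol_{(X,\Delta+D)}$, or prove it directly by comparing the defining inequalities; once that is in hand, the equality of lct with log discrepancy follows formally, and the "in particular" clause (that the lct is computed by $v_0$) is immediate since $v_0$ realizes both sides.
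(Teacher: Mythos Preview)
Your proposal has a genuine gap. The chain of inequalities you write has the Li--Liu bound in the wrong direction: Liu's theorem gives $\hvol(x,X,\Delta+D)\le \lct_x(X,\Delta+D;\fab)^n\cdot\mathrm{mult}(\fab)$ for any $\fm_x$-primary graded sequence, not $\ge$. With the correct direction, combining with the trivial bound $\lct_x(X,\Delta+D;\fab(v_0))\le A_{X,\Delta+D}(v_0)$ only yields
\[
\hvol(x,X,\Delta+D)\ \le\ \lct_x(X,\Delta+D;\fab(v_0))^n\cdot\vol(v_0)\ \le\ A_{X,\Delta+D}(v_0)^n\cdot\vol(v_0)=\hvol_{(X,\Delta+D)}(v_0),
\]
which forces equality precisely when $v_0$ is a minimizer of $\hvol_{(X,\Delta+D)}$. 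You correctly identify this as the ``main obstacle,'' but there is no statement in \cite{LX20} or \cite{XZ-uniqueness} asserting that a minimizer for $(X,\Delta)$ remains one for $(X,\Delta+D)$ under the hypothesis $v_0(D)<A_{X,\Delta}(v_0)/n$; indeed $(X,\Delta+D)$ need not even be klt at $x$ (nothing in the hypothesis controls $v(D)$ for valuations $v\neq v_0$), so $\hvol_{(X,\Delta+D)}$ may not be well-defined. What you are proposing to cite is essentially the lemma itself.

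The paper's proof avoids this circularity entirely. It never changes the boundary to $\Delta+D$; instead it works with K-semistability of $v_0$ for the original pair $(X,\Delta)$, which gives $S(v_0;v)\le A_{X,\Delta}(v)$ for all $v$ via \cite{XZ-uniqueness}. The hypothesis $v_0(D)<A_{X,\Delta}(v_0)/n$ is used in a concrete asymptotic computation: one constructs, for $m\gg 0$, an $m$-basis type divisor $\Gamma$ (with respect to $v_0$) satisfying $\Gamma\ge D$, by taking a basis of $R/\fa_m$ compatible with both $\cF_{v_0}$ and the filtration by powers of a local equation of $D$. The numerical condition guarantees $\lim_{m\to\infty}\frac{r\mu_m}{w_m}=\frac{1}{n\cdot v_0(D)}>1$, which is exactly what makes $\Gamma\ge D$ possible. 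Then $v(\Gamma-D)$ is bounded via $S(v_0;v)$ and one contradicts the assumed strict inequality $A_{X,\Delta+D}(v)<c\cdot v(\fab)$. This is the missing mechanism in your outline.
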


\begin{proof}
If $v_0(D)=0$, i.e. $x\not\in\Supp(D)$, the result simply follows from \cite{Blum18}*{Lemma 4.7}, hence we may assume that $v_0(D)>0$. Up to rescaling the valuation $v_0$, we may assume that $A_{X,\Delta}(v_0)=1$. Let $c=A_{X,\Delta+D}(v_0)$ and let $\fab=\fab(v_0)$. Clearly $v_0(\fab)=1$ (see \cite{Blum18}*{Lemma 3.5}) and
\[
\lct_x(X,\Delta+D;\fab)\le \frac{A_{X,\Delta+D}(v_0)}{v_0(\fab)} = c.
\]
Suppose that the inequality is strict, then there exists some valuation $v\in \Val_{X,x}$ such that $A_{X,\Delta+D}(v)<c\cdot v(\fab)$. Equivalently, $A_{X,\Delta}(v)<v(D)+c\cdot v(\fab)$. Choose some $0<\varepsilon\ll 1$ so that
\begin{equation} \label{eq:lct smaller than expected}
    (1+\varepsilon)A_{X,\Delta}(v)< v(D)+c\cdot v(\fab).
\end{equation}
We will derive a contradiction from here. 

First let us recall some definitions and results from \cite{XZ-uniqueness}*{Section 3}. Let $N_m=\dim_{\bk}(R/\fa_m)$ and let 
\begin{eqnarray}\label{e-wm}
w_m=\sum_{i=0}^{m-1} i\cdot \dim_{\bk}(\fa_i/\fa_{i+1}).
\end{eqnarray}
The valuation $v_0$ (resp. $v$) induces a natural filtration on $R$, which restricts to a filtration $\cF_{v_0}$ (resp. $\cF_v$) on $R/\fa_m$ with $$\cF_v^{\lambda}(R/\fa_m):=(\fa_{\lambda }(v)+\fa_m)/\fa_m \qquad \mbox{and}\qquad \cF_{v_0}^{\lambda}(R/\fa_m):=(\fa_{\lambda}(v_0)+\fa_m)/\fa_m.$$
A basis of $R/\fa_m$ is said to be compatible with a filtration $\cF$ of $R/\fa_m$ if every filtered subspace of $\cF$ is spanned by some of the basis elements. Any $s_1,\dots,s_{N_m}\in R$ that restrict to form a basis of $R/\fa_m$ that is compatible with $\cF_{v_0}$ is called an $m$-basis (with respect to $v_0$). If in addition it is also compatible with $\cF_v$ and whenever $\bar{s}_i\in \cF_v^\lambda(R/\fa_m)$ we get $s_i\in \fa_\lambda(v)$, we call it an $(m,v)$-basis. Following \cite{XZ-uniqueness}*{Section 3.1}, we define $$\tS_m(v_0;v)=\sum^{N_m}_{i=1} \lfloor v(s_i) \rfloor, \mbox{ where $s_1,\dots,s_{N_m}\in R$ is any $(m,v)$-basis with respect to $v_0$}.$$ 
 Note that \eqref{e-wm} implies that $\tS_m(v_0;v_0)=w_m$. We also set $$S(v_0;v)=\lim_{m\to \infty} \frac{\tS_m(v_0;v)}{w_m}$$ (the limit exists by \cite{XZ-uniqueness}*{Lemma 3.3}). By \cite{XZ-uniqueness}*{Theorem 3.10}, the minimizing valuation $v_0$ is K-semistable (\cite{XZ-uniqueness}*{Definition 3.4}). In particular we get 
\begin{equation} \label{eq:S(v_0;v)<=A(v)}
    S(v_0;v)\le A_{X,\Delta}(v).
\end{equation}
A $\bQ$-divisor of the form 
\[
\Gamma=\frac{1}{w_m}\sum_{i=1}^{N_m} {\rm div}(s_i)
\]
is called an \emph{$m$-basis type divisor with respect to $v_0$} if $s_1,\dots,s_{N_m}\in R$ is an $m$-basis with respect to $v_0$. Note that as the $m$-basis varies, $v_0(\Gamma)$ is independent of the $m$-basis $s_1,\dots,s_{N_m}$, while $v(\Gamma)$ achieves its maximum exactly when $s_1,\dots,s_{N_m}$ is an $(m,v)$-basis. Thus we have
\[
v_0(\Gamma)\ge \frac{\tS_m(v_0;v)}{w_m} = 1\quad \mbox{and}\quad 
v(\Gamma)\le \frac{\tS_m(v_0;v)+N_m}{w_m}.
\]
As $\frac{N_m}{w_m}$ grows like $O(\frac{1}{m})$ (see \cite[Proof of Lemma 3.3]{XZ-uniqueness}), we deduce from \eqref{eq:S(v_0;v)<=A(v)} that as $m\gg 0$ we have
\[
v(\Gamma)\le (1+\varepsilon)S(v_0;v) \le (1+\varepsilon)A_{X,\Delta}(v)
\]
for any $m$-basis type divisor $\Gamma$.

To derive a contradiction, we will construct an $m$-basis type divisor $\Gamma$ with respect to $v_0$ (for some $m\gg 0$) such that $\Gamma\ge D$. To see why this is enough, note that
\[
v_0(\Gamma-D)=v_0(\Gamma)-v_0(D)\ge 1-v_0(D)=A_{X,\Delta}(v_0)-v_0(D)=c,
\]
hence 
$$v(D)+c\cdot v(\fab)\le v(D)+v(\Gamma-D)= v(\Gamma)\le (1+\varepsilon)A_{X,\Delta}(v),$$ contradicting \eqref{eq:lct smaller than expected}.

The construction of $\Gamma$ proceeds as follows. Let $r>0$ be a sufficiently divisible integer such that $rD={\rm div}(f)$ is Cartier. Consider the filtration $\cF$ (indexed by $\bN$) on $R/\fa_m$ induced by the decreasing sequence of ideals $(f^j)_{i\in\bN}$. In more explicit terms, we have an isomorphism
\begin{equation} \label{eq:F_D}
    R/\fa_{m-j\cdot v_0(f)}(v_0)\stackrel{\cdot f^j}{\longrightarrow} \cF^j(R/\fa_m)
\end{equation}
given by multiplication by $f^j$. Choose a basis $\bar{s}_1,\dots,\bar{s}_{N_m}$ of $R/\fa_m$ that is compatible with both $\cF_{v_0}$ and $\cF$ (such a compatible basis exists by \cite{AZ-index2}*{Lemma 3.1}). We may lift them to an $m$-basis $s_1,\dots,s_{N_m}\in R$ such that $f^{-j}s_i\in R$ whenever $\bar{s}_i\in \cF^j(R/\fa_m)$ (this is possible by \cite{XZ-uniqueness}*{Lemma 2.1}). This gives an $m$-basis type divisor $\Gamma=\frac{1}{w_m} \sum_{i=1}^{N_m} {\rm div}(s_i)$. Let us check that $\Gamma\ge D$ when $m\gg 0$. Indeed, by construction we know that $w_m \cdot \Gamma\ge \mu_m\cdot {\rm div}(f)$ where 
\[
\mu_m=\sum_{j=1}^{+\infty} \dim \cF^j(R/\fa_m).
\]
Using the isomorphism \eqref{eq:F_D}, we get
\[\mu_m=\sum_{j=1}^{\lfloor \frac{m}{v_0(f)}\rfloor} \dim (R/\fa_{m-j\cdot v_0(f)}(v_0)).
\]
Keeping in mind that $\lim_{\lambda\to +\infty} \frac{\dim(R/\fa_\lambda(v_0))}{\lambda^n/n!}=\vol_{X,x}(v_0)$, we obtain the asymptotic expression
\[
\lim_{m\to\infty}\frac{\mu_m}{m^{n+1}/(n+1)!} = \frac{\vol_{X,x}(v_0)}{v_0(f)}.
\]
Similarly, by \eqref{e-wm} 
$$w_m=m\cdot \dim(R/\fa_m)-\sum_{j=1}^m \dim (R/\fa_j),$$ we see that
\[
\lim_{m\to\infty}\frac{w_m}{m^{n+1}/(n+1)!} = n\cdot \vol_{X,x}(v_0).
\]
Putting the asymptotic together we deduce that $$\lim_{m\to\infty} \frac{r\mu_m}{w_m}=\frac{1}{n\cdot v_0(D)};$$ by assumption, the right hand side is $>\frac{1}{A_{X,\Delta}(v_0)}=1$. Hence if $m\gg 0$ we have $\frac{\mu_m}{w_m}>\frac{1}{r}$ which translates to $\Gamma\ge D$ as desired. By the previous discussions, this concludes the proof.
\end{proof}

\begin{proof}[Proof of Lemma \ref{lem:minimizer's complement}]
This essentially follows from \cite{LX18}*{Theorem 2.50 and Remark 2.52} (c.f. the proof of \cite{LXZ-HRFG}*{Lemma 3.1}). We provide the argument for the reader's convenience. 

By ACC of log canonical thresholds \cite{HMX-ACC}, there exists a constant $\varepsilon\in (0,1)$ depending only on $\dim X$ and the coefficients of $\Delta+D$ satisfying the following: for any proper birational morphism $\pi\colon \tX\to X$ where $\tX$ is $\bQ$-factorial and any reduced divisor $G$ on $\tX$ such that $$(\tX,\pi_*^{-1}(\Delta+D)+(1-\varepsilon)G)$$ is lc, we have $(\tX,\pi_*^{-1}(\Delta+D)+G)$ is lc as well. 

By \cite{Xu20}, the minimizer $v_0$ is quasi-monomial. Let $r$ be the rational rank of $v_0$. Let $\pi\colon Y\to (X,\Delta+D)$ be a log resolution such that $v_0$ lies in the interior of $\QM_\eta(Y,E)$ for some SNC divisor $E=E_1+\dots+E_r$ on $Y$ with $\pi(E)=\{x\}$, where $\eta=c_Y(v_0)$ is the generic point of a connected component of $\bigcap_{i=1}^r E_i$. Let $\fab=\fab(v_0)$ and $c=A_{X,\Delta+D}(v_0)$ as in Lemma \ref{lem:minimizer computes lct}. Since on $\QM_\eta(Y,E)$, $A_{X,\Delta+D}(w)$ is linear in $w$ as $\pi$ is a log resolution, and $w\mapsto w(\fab^c)$ is concave by Lemma \ref{lem-valuationconvexity}, we conclude that the log discrepancy function 
$$w\mapsto A_{X,\Delta+D+\fab^c}(w)=A_{X,\Delta+D}(w)-w(\fab^c)$$ is convex on $\QM_\eta(Y,E)$. 
 In particular it is  Lipschitz on $\QM_\eta(Y,E)$, hence there exists some constant $C>0$ such that 
\[
|A_{X,\Delta+D+\fab^c}(w)-A_{X,\Delta+D+\fab^c}(v_0)|\le C|w-v_0|
\]
for all $w\in \QM_\eta(Y,E)$. Here $|w-v_0|$ denotes the Euclidean distance of $w$ and $v_0$ in $\QM_\eta(Y,E)\cong \bR_{\geq 0}^r$. By \cite{LX18}*{Lemma 2.7}, there exist divisorial valuations $$v_1,\dots,v_r\in \QM_\eta(Y,E)$$ and positive integers $q_1,\dots,q_r$ such that 
\begin{itemize}
    \item $v_0$ is in the convex cone generated by $v_1,\dots,v_r$,
    \item for all $i=1,\dots,r$, the valuation $q_i v_i$ is $\bZ$-valued and has the form $\ord_{F_i}$ for some divisor $F_i$ over $X$, and
    \item $|v_i-v_0|<\frac{\varepsilon}{Cq_i}$ for all $i=1,\dots,r$. 
\end{itemize}
As $A_{X,\Delta+D+\fab^c}(v_0)=0$ by Lemma \ref{lem:minimizer computes lct}, the above conditions imply that 
\begin{equation} \label{eq:A(F_i)<epsilon}
    A_{X,\Delta+D+\fab^c}(F_i)=q_i A_{X,\Delta+D+\fab^c}(v_i)\le q_i\cdot C|v_i-v_0|<\varepsilon.
\end{equation}
On the other hand, $(X,\Delta+D+\fab^c)$ is lc by Lemma \ref{lem:minimizer computes lct}, thus we may choose some $0<\varepsilon_0\ll 1$ and some $m\gg 0$ such that $(X,\Delta+D+(c-\varepsilon_0)\fa_m)$ is klt and 
\[
A_{X,\Delta+D+(c-\varepsilon_0)\fa_m}(F_i)<\varepsilon.
\]
By \cite{BCHM}*{Corollary 1.4.3}, there exists a projective birational morphism $\pi_1\colon X_1\to X$ that extracts exactly the divisors $F_1,\dots,F_r$, and we may assume $X_1$ is $\bQ$-factorial. Note that $X_1$ is of Fano type over $X$. Running a $-(K_{X_1}+\Delta_1+D_1+\sum_{i=1}^r F_i)$-MMP over $X$ (where $\Delta_1,D_1$ are the birational transforms of $\Delta,D$), we obtain a model $g\colon X_1\dashrightarrow \tX$ over $X$ such that $-(K_{\tX}+g_*(\Delta_1+D_1+\sum_{i=1}^r F_i))$ is nef. The equality \eqref{eq:A(F_i)<epsilon} implies that $$(\tX,g_*(\Delta_1+D_1+(1-\varepsilon)\sum_{i=1}^r F_i))$$ is lc, thus by our choice of $\varepsilon$ we deduce that $(\tX,g_*(\Delta_1+D_1+\sum_{i=1}^r F_i))$ is lc. But this implies that the MMP process $X_1\dashrightarrow \tX$ is birational on any lc center of $(X_1,\Delta_1+D_1+\sum_{i=1}^r F_i)$: otherwise we will have a divisor $F$ over this center with
\[
A_{\tX,g_*(\Delta_1+D_1+\sum_{i=1}^r F_i)}(F)<A_{X_1,\Delta_1+D_1+\sum_{i=1}^r F_i}(F)=0,
\]
a contradiction. In particular, it does not contract any of the divisors $F_i$ and we may replace $X_1$ by $\tX$ and assume that $(X_1,\Delta_1+D_1+\sum_{i=1}^r F_i)$ is lc and $-(K_{X_1}+\Delta_1+D_1+\sum_{i=1}^r F_i)$ is nef. As $X_1$ is of Fano type over $X$, it follows that $-(K_{X_1}+\Delta_1+D_1+\sum_{i=1}^r F_i)$ is base point free. Hence by Bertini's theorem we can choose some effective $\bQ$-divisor $\Gamma_1$ such that
\[
K_{X_1}+\Delta_1+D_1+\Gamma_1+\sum_{i=1}^r F_i\sim_{\bQ,\pi_1} 0
\]
and $(X_1,\Delta_1+D_1+\Gamma_1+\sum_{i=1}^r F_i)$ is still lc. Let $\Gamma=\pi_{1*}(D_1+\Gamma_1)$, then $\Gamma$ is a $\bQ$-complement of $(X,\Delta)$ that has all $F_1,\dots,F_r$ as lc places. Since $v_0$ is in their convex hull, it is also an lc place of $(X,\Delta+\Gamma)$. By construction, $\Gamma\ge \pi_{1*}D_1=D$ and we are done.
\end{proof}

We now show that any minimizer of the normalized volume function is a monomial lc place of some special $\bQ$-complement.

\begin{lem}\label{lem-localmonomial}
Let $x\in (X={\rm Spec}(R),\Delta)$ be a klt singularity and let $v_0$ be the minimizer of the normalized volume function. Then there exists a log smooth model $\pi\colon (Y,E)\to (X,\Delta)$ such that $v_0$ is a monomial lc place of some special $\bQ$-complement $\Gamma$ with respect to $(Y,E)$, i.e. $v_0\in \LC(\Gamma; Y,E)$.
\end{lem}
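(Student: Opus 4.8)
The strategy is to combine Lemma \ref{lem:minimizer's complement} with a Bertini-type argument producing an ample subdivisor inside the complement, after passing to a suitable log resolution. First I would invoke \cite{Xu20} to know that $v_0$ is quasi-monomial, say $v_0 \in \QM^\circ_\eta(Y_0, E_0)$ for some log smooth model $\pi_0\colon (Y_0, E_0 = E_{0,1}+\dots+E_{0,r})\to (X,\Delta)$ with $\pi_0(E_0) = \{x\}$ and $\eta$ the generic point of a component of $\bigcap_i E_{0,i}$; here $r = \mathrm{rat.rk.}(v_0)$. The key point is then to choose an auxiliary effective $\bQ$-Cartier $\bQ$-divisor $D$ on $X$ whose birational transform on a further blowup dominates an ample divisor, while keeping $v_0(D) < \frac{A_{X,\Delta}(v_0)}{n}$ so that Lemma \ref{lem:minimizer's complement} applies. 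Concretely, pick a general very ample divisor $A$ on $X$ passing through $x$ and set $D = \varepsilon A$ for $0 < \varepsilon \ll 1$; since $v_0(A) < \infty$, shrinking $\varepsilon$ guarantees $v_0(D) < \frac{A_{X,\Delta}(v_0)}{n}$, and a general such $A$ does not contain the center of $v_0$ nor any stratum, so its birational transform can be arranged to contain an ample piece after blowing up the base locus of $|A|$ (which misses $x$ by generality, or is handled by taking $A$ base point free away from $x$).

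Next, apply Lemma \ref{lem:minimizer's complement} to this $D$: there exists a $\bQ$-complement $\Gamma_0 \ge D$ of $(X,\Delta)$ with $v_0$ an lc place of $(X,\Delta+\Gamma_0)$. Now take a common log resolution $\pi\colon (Y,E)\to (X,\Delta)$ of $(X, \Delta + \Gamma_0)$ dominating $Y_0$, chosen so that $v_0 \in \QM_\eta(Y,E)$ for the appropriate SNC divisor $E$ whose components include the lc places of $v_0$, and so that $(Y, \mathrm{Supp}(E + \Ex(\pi) + \pi_*^{-1}(\Delta+\Gamma_0)))$ is SNC. On $Y$ we have $\pi_*^{-1}\Gamma_0 \ge \pi_*^{-1}D$, and $\pi_*^{-1}D$ contains an ample $\bQ$-divisor $G$ by construction (the birational transform of $\varepsilon A$ under the resolution of its base locus, plus a small negative multiple of an effective exceptional divisor that is anti-ample over $X$, made effective and ample after a further perturbation à la \cite{KM98}). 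One must check $\mathrm{Supp}(G)$ contains no stratum of $(Y,E)$: this follows from the generality of $A$ — a general member of a base-point-free-away-from-$x$ linear system avoids the (finitely many, positive-dimensional or the point $x$ itself) strata, and the exceptional correction can be taken supported away from the relevant strata by choosing the resolution appropriately. Then $\Gamma := \Gamma_0$ is special with respect to $(Y,E)$ by Definition \ref{d-specialcomplement}, and since $v_0 \in \QM(Y,E) \cap \LC(X,\Delta+\Gamma)$, it is a monomial lc place, i.e. $v_0 \in \LC(\Gamma; Y,E)$.

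The main obstacle I anticipate is arranging that the birational transform of the complement genuinely dominates an \emph{ample} $\bQ$-divisor on $Y$ whose support avoids all strata of $(Y,E)$ — this is the local analog of the ``special'' condition and is precisely where the argument of \cite{LXZ-HRFG}*{Lemma 3.1} (which uses basis-type divisors and a global ample class) needs to be adapted. The fix, as in \cite{LX18}*{Remark 2.52}, is to build the ampleness into $D$ from the start: choose $A$ so that $\cO_X(A)$ is very ample and its base locus (hence any strata-containment issue) is confined to $x$, then absorb the discrepancy correction into a small exceptional twist when passing to $Y$, using that $Y \to X$ is projective birational and that the strata of $(Y,E)$ over $x$ form a finite set one can avoid by genericity. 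Once the ample subdivisor is in place, the rest is bookkeeping with log resolutions and the definition of special $\bQ$-complements.
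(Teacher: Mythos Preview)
Your overall strategy --- produce a divisor $D$ with $v_0(D)<\frac{A_{X,\Delta}(v_0)}{n}$, apply Lemma~\ref{lem:minimizer's complement} to get $\Gamma\ge D$, and then check that $\Gamma$ is special --- matches the paper's.  The gap is in the step where you claim that $\pi_*^{-1}D$ contains an effective ample $\bQ$-divisor on the log resolution $Y$.

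If $A$ is a (very) ample Cartier divisor on the affine variety $X$, then some multiple of $A$ is principal, so $\pi^*A$ is numerically trivial on $Y$.  Hence $\pi_*^{-1}A = \pi^*A - \sum_i m_i E_i'$ is numerically $-\sum_i m_i E_i'$, where $m_i=\ord_{E_i'}(A)\ge 0$ and the $E_i'$ run over the exceptional divisors.  For $\pi_*^{-1}A$ to be ample you would need $-\sum_i m_i E_i'$ to be $\pi$-ample; but the multiplicities $m_i$ are forced on you by $A$ and the resolution, and there is no reason this particular combination is $\pi$-ample.  Your proposed fix, subtracting ``a small negative multiple of an effective exceptional divisor that is anti-ample over $X$'', does not help: $\pi_*^{-1}D$ has no exceptional components, so any such subtraction destroys effectivity and you no longer have $\pi_*^{-1}\Gamma\ge G$ with $G$ effective ample.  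Passing to a further log resolution of $(X,\Delta+\Gamma_0)$ only makes this worse, since it introduces more exceptional divisors whose coefficients in $\pi_*^{-1}A$ you cannot control.

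The paper resolves this by reversing the order of construction.  It first fixes a log smooth model $(Y,E)$ with $v_0\in\QM(Y,E)$ and with an effective $\pi$-exceptional divisor $F$ such that $-F$ is $\pi$-ample (hence ample, since $X$ is affine).  For $r\gg 0$ the linear system $|-rF|$ on $Y$ is base point free; a general member $D'\in|-rF|$ is then ample, has no exceptional components, and (by Bertini) contains no stratum of $(Y,E)$.  One sets $D_0:=\pi_*D'=\{f=0\}$ for general $f\in\pi_*\cO_Y(-rF)$, so that $\pi_*^{-1}D_0=D'$ is ample \emph{by design}.  Taking $D=tD_0$ with $0<t\ll 1$ to ensure $v_0(D)<\frac{A_{X,\Delta}(v_0)}{n}$ and then applying Lemma~\ref{lem:minimizer's complement} gives the special $\bQ$-complement $\Gamma\ge D$ with respect to the original $(Y,E)$; no further resolution is needed.
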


\begin{proof}
Let $\pi\colon (Y,E)\to (X,\Delta)$ be a log smooth model such that $v_0\in \QM(Y,E)$ and $-F$ is $\pi$-ample for some effective divisor $F$ supported on the exceptional locus. Let $r>0$ be a sufficiently large integer and let $f\in \pi_*\cO_Y(-rF)$ be a general element. Let $D_0=\{f=0\}$, then the birational transform $\pi_*^{-1}D_0\sim_{\bQ,\pi} -rF$ is $\pi$-ample and does not contain any stratum of $(Y,E)$ by Bertini's theorem. Since $X$ is affine, the $\pi$-ample divisor $\pi_*^{-1}D_0$ is in fact ample. Let $D=tD_0$ for some $0<t\ll 1$ such that $v_0(D)<\frac{A_{X,\Delta}(v_0)}{\dim X}$. By Lemma \ref{lem:minimizer's complement}, there exists some $\bQ$-complement $\Gamma$ of $(X,\Delta)$ such that $\Gamma\ge D$ and $v_0$ is an lc place of $(X,\Delta+\Gamma)$. By construction, $\Gamma$ is a special $\bQ$-complement with respect to $(Y,E)$. This completes the proof. 
\end{proof}

\subsection{Koll\'ar models}

Our next goal is to reduce the study of special $\bQ$-complements (and their monomial lc places) to the study of Koll\'ar models. In this subsection, we first introduce the concept of Koll\'ar models and discuss some of their general properties.

\begin{defn} \label{defn:qdlt Fano model}
Let $(X,\Delta)$ be a klt pair that's projective over a quasi-projective variety $U$ such that $-K_X-\Delta$ is ample over $U$.
A model $\pi\colon(Y,E)\to (X,\Delta)$ is said to be \emph{of qdlt Fano type} if there exists an effective $\bQ$-divisor $D\ge \pi_*^{-1}\Delta$ on $Y$ such that $\lfloor D \rfloor = E$, $(Y,D)$ is qdlt, and $-(K_Y+D)$ is ample over $U$.
\end{defn}

Note that these conditions imply that
there exists a $\bQ$-complement $\Gamma$ of $(X,\Delta)$ such that $Y$ is a qdlt modification (in the sense of e.g. \cite{Kol13}*{Theorem 1.34}) of $(X,\Delta+\Gamma)$ and $\pi_*^{-1}\Gamma$ contains an ample $\bQ$-subdivisor.

The following proposition demonstrates some of the flexibility in the construction of qdlt Fano type models.

\begin{prop} \label{prop:modify qdlt Fano} 
Let $(X,\Delta)$ be a klt pair projective over $U$ such that $-K_X-\Delta$ is ample over $U$. Let $\pi\colon(Y,E)\to (X,\Delta)$ be a model of qdlt Fano type. Then:
\begin{enumerate}
    \item For any reduced divisor $E_0\le E$, the model $(Y,E_0)\to (X,\Delta)$ is of qdlt Fano type.
    \item For any birational contraction $\varphi\colon Y\dashrightarrow Y'$ over $X$ that is an isomorphism at the generic point of every stratum of $E$, the induced model $\pi'\colon (Y',E'=\varphi_*E)\to (X,\Delta)$ is of qdlt Fano type.
\end{enumerate}
\end{prop}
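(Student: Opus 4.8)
The plan is to verify directly that the modified models still admit a qdlt boundary $D'$ with full floor and anti-ample $K_{Y'}+D'$, using the defining $\bQ$-divisor $D$ on $Y$ from Definition \ref{defn:qdlt Fano model} together with Lemma \ref{lem:perturb qdlt boundary}.

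For part (1), I would start from the qdlt boundary $D\ge\pi_*^{-1}\Delta$ with $\lfloor D\rfloor=E$ and $-(K_Y+D)$ ample over $U$. I want to \emph{lower} the coefficients of the components $E_i$ lying in $E-E_0$ from $1$ to something slightly less, without destroying qdlt-ness or anti-ampleness. This is precisely what Lemma \ref{lem:perturb qdlt boundary} provides: apply it with $\Delta_0:=D-(E-E_0)$ (so $\lfloor\Delta_0\rfloor=E_0$) and, say, $\Delta_1:=0$; one gets a qdlt $D'\ge\Delta_0$ with $\lfloor D'\rfloor=\lfloor\Delta_0\rfloor=E_0$ and, since $-(K_Y+D)$ is ample over $U$, the lemma guarantees $-(K_Y+D')$ remains ample over $U$. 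Since $D'\ge\Delta_0\ge\pi_*^{-1}\Delta$, all the axioms of Definition \ref{defn:qdlt Fano model} hold for $(Y,E_0)\to(X,\Delta)$. (One should double-check that a strict inequality $D'\ge\Delta_0$, rather than equality, causes no issue — it does not, since we only need $\lfloor D'\rfloor=E_0$, and the floor is unchanged by adding a small effective divisor with empty floor.)

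For part (2), the key point is that $\varphi\colon Y\dashrightarrow Y'$ is a birational contraction which is a local isomorphism at the generic point of every stratum of $E$; in particular it does not contract any component $E_i$ of $E$, so $E'=\varphi_*E$ is a reduced divisor with the same number of components, and $\varphi$ induces an isomorphism of the toroidal locus neighborhoods of the generic points of all strata. Let $D':=\varphi_*D$. Then $\lfloor D'\rfloor=\varphi_*\lfloor D\rfloor=E'$ since no component of $E$ is contracted. To see $(Y',D')$ is qdlt: the toroidal locus $U_Y\subseteq Y$ from Definition \ref{defn:qdlt} maps isomorphically (after shrinking) onto an open $U_{Y'}\subseteq Y'$ containing all strata of $E'$ — because $\varphi$ is an isomorphism at each stratum's generic point, hence over a neighborhood — so $(U_{Y'},D'|_{U_{Y'}})$ is toroidal; and for the discrepancy condition, for any prime divisor $F$ over $Y'$ with center in $Y'\setminus U_{Y'}$ we have $A_{Y',D'}(F)=A_{X,\Delta+\Gamma}(F)=A_{Y,D}(F)>0$ by our choice of $U$, where $\Gamma=\pi_*D-\Delta$ is the induced $\bQ$-complement (using $K_Y+D\sim_{\bQ,U}\pi^*(K_X+\Delta+\Gamma)$ and likewise on $Y'$). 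Finally, a birational contraction is $(K_{Y'}+D')$-nonpositive in the sense that discrepancies only increase; combined with $-(K_Y+D)$ ample over $U$ and the negativity lemma / standard MMP facts, $-(K_{Y'}+D')$ is nef over $U$, and since $\varphi$ is a birational contraction to a variety that is of Fano type over $U$ (as in the remark after Definition \ref{defn:qdlt Fano model}), nef and big over $U$ implies we can perturb $D'$ slightly (again via Lemma \ref{lem:perturb qdlt boundary} or a small ample twist plus Bertini) to make $-(K_{Y'}+D')$ ample over $U$ while keeping qdlt and $\lfloor D'\rfloor=E'$.

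The main obstacle I anticipate is the last point in (2): upgrading $-(K_{Y'}+D')$ from nef (over $U$) to ample (over $U$) while preserving the qdlt property and the floor. The clean way is to note $Y'$ is of Fano type over $U$, so $-(K_{Y'}+D')$ nef and big gives, by base-point-freeness, an effective $\bQ$-linearly trivial divisor avoiding the strata, and then invoke Lemma \ref{lem:perturb qdlt boundary} with $\Delta_1$ that general member to absorb it and produce an honestly anti-ample qdlt boundary with unchanged floor. A secondary subtlety is making sure $\varphi_*D$ is still a \emph{valid} boundary ($\ge\pi_*^{-1}_{Y'}\Delta$ and effective) — effectivity is automatic for pushforwards of effective divisors, and $\varphi_*D\ge\varphi_*\pi_*^{-1}\Delta=(\pi')^{-1}_*\Delta$ since $\varphi$ contracts no component of $\pi_*^{-1}\Delta$ (its strict transform components are not strata-supported, but one should check $\varphi$ doesn't contract them; if it does, the inequality still holds since pushforward of an effective divisor dominates the strict transform plus a possibly larger effective piece — actually it is simpler: $\varphi_*\pi_*^{-1}\Delta=(\pi')_*^{-1}\Delta$ because a birational contraction pushes strict transforms to strict transforms).
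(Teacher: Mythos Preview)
Your argument for part (1) is correct and essentially the same as the paper's.

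Your argument for part (2) has a genuine gap. You write that for a prime divisor $F$ with center in $Y'\setminus U_{Y'}$,
\[
A_{Y',\varphi_*D}(F)=A_{X,\Delta+\Gamma}(F)=A_{Y,D}(F),
\]
justifying this by ``$K_Y+D\sim_{\bQ,U}\pi^*(K_X+\Delta+\Gamma)$''. But this is false: by Definition \ref{defn:qdlt Fano model} the divisor $-(K_Y+D)$ is \emph{ample} over $U$, not $\bQ$-trivial, so $(Y,D)$ is not the crepant pullback of any pair on $X$, and there is no reason for $(Y,D)$ and $(Y',\varphi_*D)$ to have the same log discrepancies. For the same reason, your claim that $-(K_{Y'}+\varphi_*D)$ is nef over $U$ is unjustified: an arbitrary birational contraction $\varphi$ need not be $(K_Y+D)$-nonpositive, and without $K_Y+D\sim_{\bQ}0/X$ the negativity lemma gives you nothing. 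So neither the qdlt property nor nefness of $-(K_{Y'}+\varphi_*D)$ follows from your pushforward.

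The paper repairs exactly this defect by first modifying $D$ so that the log canonical divisor \emph{is} relatively trivial, while simultaneously building in the ampleness on $Y'$. Concretely: choose a general ample $\bQ$-divisor $G'$ on $Y'$ and let $G=\varphi_*^{-1}G'$; since $\varphi$ is an isomorphism at every stratum of $E$, Bertini ensures $\Supp(G)$ misses all strata. By Lemma \ref{lem:perturb qdlt boundary} (with $\Delta_1=G$) there is a qdlt $D_1\ge \pi_*^{-1}\Delta+\varepsilon G$ with $\lfloor D_1\rfloor=E$ and $-(K_Y+D_1)$ ample; adding a general member of $|-m(K_Y+D_1)|$ one arranges $K_Y+D_1\sim_{\bQ,U}0$. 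Now $(Y,D_1)$ and $(Y',\varphi_*D_1)$ \emph{are} crepant, so $(Y',\varphi_*D_1)$ is qdlt (here your toroidal-locus observation applies legitimately), and since $\varphi_*D_1\ge \varepsilon G'$ one sets $D'=\varphi_*D_1-\varepsilon G'$ to get $-(K_{Y'}+D')\sim_{\bQ,U}\varepsilon G'$ ample. The trick of absorbing the $Y'$-ample into the boundary \emph{before} pushing forward is the missing idea in your approach.
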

\begin{proof}
Let $D$ be the $\bQ$-divisor in Definition \ref{defn:qdlt Fano model}. We can choose some $\bQ$-divisor $\Delta_0$ such that $\lfloor \Delta_0 \rfloor =E_0$ and $\Delta_0\ge \pi_*^{-1}\Delta$. By Lemma \ref{lem:perturb qdlt boundary} we get a $\bQ$-divisor $D_0\ge\Delta_0\ge \pi_*^{-1}\Delta$ such that $\lfloor D_0 \rfloor = \lfloor \Delta_0 \rfloor =E_0$, $(Y,D_0)$ is qdlt and $-(K_Y+D_0)$ is ample over $U$. This implies (1).

Let $G'$ be an ample divisor on $Y'$ that is in a general position and let $G=\varphi_*^{-1}G'$. Then by the assumption on $\varphi$ and Bertini's theorem we know that $\Supp(G)$ does not contain any stratum of $E$. By Lemma \ref{lem:perturb qdlt boundary}, there exists some $0<\varepsilon\ll 1$ and some $D_1\ge \pi_*^{-1}\Delta+\varepsilon G$ such that $\lfloor D_1 \rfloor =E$, $(Y,D_1)$ is qdlt and
\[
-(K_Y+D_1)\sim_{\bQ} 0\ /U. 
\]
In fact, Lemma \ref{lem:perturb qdlt boundary} gives some $D_1$ as above such that $-(K_Y+D_1)$ is ample; but then by Bertini's theorem we can add a general ample divisor to $D_1$ so that $-(K_Y+D_1)\sim_\bQ 0\ /U$. 

Let $D'_1=\varphi_* D_1$. Since $\varphi$ is an isomorphism at the generic points of all the strata of $E$, we see that $(Y',D'_1)$ is also qdlt and $-(K_{Y'}+D'_1)\sim_\bQ 0\ /U$. Since $D'_1\ge \varepsilon G'$, the $\bQ$-divisor $D'=D'_1-\varepsilon G'$ makes $(Y',E')$ a qdlt Fano type model over $(X,\Delta)$, as $\lfloor D' \rfloor =E'$ and
\[
-(K_{Y'}+D')\sim_{\bQ,U} \varepsilon G' 
\]
is ample by construction. This gives (2).
\end{proof}

\begin{defn} \label{defn:km}
Let $x\in (X,\Delta)$ be a klt singularity. A \emph{Koll\'ar model} of $x\in (X,\Delta)$ is a model $\pi\colon (Y,E)\to (X,\Delta)$ such that $\pi$ is an isomorphism away from $\{x\}$, $E=\Ex(\pi)$, $(Y,\pi_*^{-1}\Delta+E)$ is qdlt and $-(K_Y+\pi_*^{-1}\Delta+E)$ is ample.
\end{defn}

Clearly a Koll\'ar model is also of qdlt Fano type (with $U=X$). The terminology comes from the following basic observations.

\begin{lem} \label{lem:kc induce km} 
Let $x\in (X,\Delta)$ be a klt singularity. Then the plt blowup $\pi\colon Y\to X$ of any Koll\'ar component $E$ is a Koll\'ar model $(Y,E)$ over $(X,\Delta)$. Conversely, if $\pi\colon(Y,E)\to (X,\Delta)$ is a Koll\'ar model, then every irreducible component of $E$ is a Koll\'ar component over $x\in (X,\Delta)$.
\end{lem}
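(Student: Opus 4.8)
The plan is to prove the two implications separately, letting Lemma~\ref{lem:perturb qdlt boundary} and Lemma~\ref{lem:plt complement induce kc} carry all the geometric content. For the forward direction, let $\pi\colon Y\to X$ be the plt blowup of a Koll\'ar component $E$, so $E=\pi^{-1}(x)$ is the unique exceptional prime divisor, $(Y,\pi_*^{-1}\Delta+E)$ is plt, and $-(K_Y+\pi_*^{-1}\Delta+E)$ is $\pi$-ample. First I would record the adjunction identity $K_Y+\pi_*^{-1}\Delta+E=\pi^*(K_X+\Delta)+A_{X,\Delta}(E)\cdot E$; since $(X,\Delta)$ is klt we have $A_{X,\Delta}(E)>0$, so in particular $E$ is $\bQ$-Cartier and $-(K_Y+\pi_*^{-1}\Delta+E)$ being $\pi$-ample is equivalent to $-E$ being $\pi$-ample. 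From the latter, any $\pi$-contracted curve $C$ satisfies $E\cdot C<0$, hence $C\subseteq E$; therefore $\Ex(\pi)=E$ (using also that $E$ is $\pi$-exceptional) and $\pi$ is an isomorphism over $X\setminus\{x\}$. Since $X$ is affine, a $\pi$-ample divisor is ample, so $-(K_Y+\pi_*^{-1}\Delta+E)$ is ample. Finally, $\lfloor\pi_*^{-1}\Delta+E\rfloor=E$ is irreducible (the boundary of a klt pair has coefficients $<1$), so by the remark in Definition~\ref{defn:qdlt} the plt pair $(Y,\pi_*^{-1}\Delta+E)$ is qdlt, and all the conditions of Definition~\ref{defn:km} hold.

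For the converse, fix a Koll\'ar model $\pi\colon(Y,E)\to(X,\Delta)$ and an irreducible component $E_i$ of $E$. Since $\pi$ is an isomorphism over $X\setminus\{x\}$ we have $E=\Ex(\pi)\subseteq\pi^{-1}(x)$, so $E_i$ is a prime exceptional divisor contained in $\pi^{-1}(x)$; this places us in the hypotheses of Lemma~\ref{lem:plt complement induce kc}, and it only remains to exhibit a plt boundary $D'\ge E_i+\pi_*^{-1}\Delta$ with $K_Y+D'\sim_{\bQ,\pi}0$. I would first apply Lemma~\ref{lem:perturb qdlt boundary} to the qdlt pair $(Y,\pi_*^{-1}\Delta+E)$ (which is quasi-projective, with $-(K_Y+\pi_*^{-1}\Delta+E)$ ample over $X$), taking $\Delta_0=\pi_*^{-1}\Delta+E_i$ and $\Delta_1=0$ and base $S=X$; this produces a $\bQ$-divisor $D\ge\pi_*^{-1}\Delta+E_i$ with $\lfloor D\rfloor=E_i$, $(Y,D)$ qdlt — hence plt, the floor being irreducible — and $-(K_Y+D)$ ample over $X$. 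Since $X$ is affine, $-m(K_Y+D)$ is globally generated for $m\gg0$, so taking $G=\tfrac1m G_0$ for a general $G_0\in|-m(K_Y+D)|$ gives $K_Y+D+G\sim_\bQ 0$, hence $\sim_{\bQ,\pi}0$, while by Bertini $(Y,D+G)$ is still plt with $\lfloor D+G\rfloor=E_i$ and $D+G\ge\pi_*^{-1}\Delta+E_i$. Then Lemma~\ref{lem:plt complement induce kc} applied with the boundary $D+G$ shows that $E_i$ is a Koll\'ar component over $x\in(X,\Delta)$.

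I do not anticipate a genuine obstacle here: in both directions the substantive input is already packaged in the cited lemmas, and what remains is bookkeeping with qdlt/plt boundaries. The only steps deserving a little care are the passage from $\pi$-ampleness to ampleness, which uses that $X$ is affine, and the choice of the auxiliary general divisor $G$ in the converse, where one must check that $G$ neither enlarges the floor nor destroys pltness of the pair — both are routine Bertini-type arguments.
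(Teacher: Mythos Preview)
Your proof is correct and follows essentially the same approach as the paper: the forward direction is declared ``clear from the definition'' there, while you spell out the details (e.g.\ $-E$ $\pi$-ample forces $\Ex(\pi)=E$, and plt with irreducible floor is qdlt); for the converse, both you and the paper apply Lemma~\ref{lem:perturb qdlt boundary} with $\Delta_0=\pi_*^{-1}\Delta+E_i$ to get a plt boundary $D$ with $-(K_Y+D)$ ample, add a general $G\in\frac{1}{m}|-m(K_Y+D)|$, and invoke Lemma~\ref{lem:plt complement induce kc}.
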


\begin{proof}
The first part is clear from the definition. For the second part, suppose that $\pi\colon(Y,E)\to (X,\Delta)$ is a Koll\'ar model. Let $E_1$ be an irreducible component of $E$. By Lemma \ref{lem:perturb qdlt boundary}, there exists some effective $\bQ$-divisor $D_1\ge \pi_*^{-1}\Delta+E_1$ on $Y$ such that  $-(K_Y+D_1)$ is ample, $\lfloor D_1 \rfloor = E_1$ and $(Y,D_1)$ is qdlt. Since $\lfloor D_1 \rfloor$ is irreducible, this indeed implies the pair $(Y,D_1)$ is plt. Let $m>0$ be some sufficiently divisible integer and let $G\in \frac{1}{m}|-m(K_Y+D_1)|$ be a general member. Then $K_Y+D_1+G\sim_\bQ 0$ and $(Y,D_1+G)$ remains plt by Bertini's theorem. By Lemma \ref{lem:plt complement induce kc}, we conclude that $E_1$ is a Koll\'ar component.
\end{proof}

Over a klt singularity, the two types of models introduced above are closely related. In fact, the following proposition shows that Koll\'ar models are nothing but the ample models of qdlt Fano type models.

\begin{prop} \label{prop:qdlt to km}
Let $x\in (X,\Delta)$ be a klt singularity, and let $\pi\colon(Y,E)\to (X,\Delta)$ be a model of qdlt Fano type such that $\pi(E)=\{x\}$. Let $g\colon Y\dashrightarrow Y'$ be the ample model of $-(K_Y+\pi_*^{-1}\Delta+E)$ over $X$. Then $g$ is an isomorphism at the generic point of every stratum of $E$, and the induced morphism $\pi'\colon (Y',E'=g_*E)\to (X,\Delta)$ is a Koll\'ar model.
\end{prop}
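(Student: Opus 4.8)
The key point is that although $Y$ itself need not be $\bQ$-factorial (qdlt pairs are not required to be), a qdlt Fano type model comes equipped, by Definition \ref{defn:qdlt Fano model}, with a $\bQ$-divisor $D\ge \pi_*^{-1}\Delta$ with $\lfloor D\rfloor = E$, $(Y,D)$ qdlt and $-(K_Y+D)$ ample over $X$. The divisor $-(K_Y+\pi_*^{-1}\Delta+E)$ differs from $-(K_Y+D)$ by the effective $\bQ$-divisor $D-\pi_*^{-1}\Delta-E$, whose support contains no stratum of $E$. So first I would run the $-(K_Y+\pi_*^{-1}\Delta+E)$-MMP over $X$; since $Y$ is of Fano type over $X$ (being a qdlt modification of a $\bQ$-complement, cf.\ the remark after Definition \ref{defn:qdlt Fano model}), this MMP terminates and the pushforward of $-(K_Y+\pi_*^{-1}\Delta+E)$ on the output becomes semiample, defining the ample model $g\colon Y\dashrightarrow Y'$.

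**Checking $g$ is an isomorphism along strata of $E$.** This is the heart of the matter. The standard trick is to use Proposition \ref{prop:modify qdlt Fano}(2) in reverse: I want to show that each step of the MMP (each flip or divisorial contraction) is an isomorphism at the generic point of every stratum of $E$, so that no lc center of $(Y,D)$ is affected. Concretely, one argues by contradiction using discrepancies, exactly as in the proof of Lemma \ref{lem:minimizer's complement}: if some step of the MMP modified a stratum of $E$, then the final model $(Y',g_*D)$ — which is still a valid target after perturbing $D$ as in Lemma \ref{lem:perturb qdlt boundary} so that $-(K_Y+D)\sim_{\bQ,X}0$, hence $g$ is also the ample model (indeed the "anticanonical model") of $(Y,D)$ — would carry a divisor over that stratum with log discrepancy strictly less than the corresponding log discrepancy on $Y$, which is $0$ along $E$. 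This contradicts the log canonicity of $(Y',g_*D)$, which holds because the $-(K+D)$-MMP preserves (sub-)qdlt-ness (a $-(K+D)$-negative MMP can only increase discrepancies, and $(Y,D)$ is lc). The key input is that the MMP we run, for the purpose of this discrepancy comparison, should be the $-(K_Y+D)$-MMP rather than the $-(K_Y+\pi_*^{-1}\Delta+E)$-MMP — but these two have the same ample model over $X$ once $D$ is chosen with $-(K_Y+D)\sim_{\bQ,X}0$, because then $-(K_Y+\pi_*^{-1}\Delta+E)\sim_{\bQ,X}D-\pi_*^{-1}\Delta-E$ is effective with support disjoint from every stratum of $E$, so the two $\bQ$-divisors are $\bQ$-linearly equivalent up to an effective divisor not meeting the relevant locus, and the MMPs can be run compatibly. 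Then Proposition \ref{prop:modify qdlt Fano}(2) applies: since $g$ is an isomorphism at the generic point of every stratum of $E$, the model $\pi'\colon (Y',E'=g_*E)\to (X,\Delta)$ is again of qdlt Fano type.

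**Concluding it is a Koll\'ar model.** It remains to upgrade "qdlt Fano type model" to "Koll\'ar model", i.e.\ to check that $\pi'$ is an isomorphism away from $\{x\}$, that $E'=\Ex(\pi')$, and that $-(K_{Y'}+\pi'^{-1}_*\Delta+E')$ is \emph{ample} (not just semiample). Amplitude: $g$ is by definition the ample model of $-(K_Y+\pi_*^{-1}\Delta+E)$ over $X$, so $-(K_{Y'}+g_*(\pi_*^{-1}\Delta+E)) = -(K_{Y'}+\pi'^{-1}_*\Delta+E')$ is ample over $X$; since $X$ is affine, it is ample. For the geometry of $\pi'$: since $\pi(E)=\{x\}$ and $\pi$ is a model over $X$, away from $\{x\}$ the map $\pi$ is a small modification, and the anticanonical class $-(K+\pi_*^{-1}\Delta+E)$ restricted there is already the pullback of $-(K_X+\Delta)$; the ample model over $X$ therefore contracts exactly the part of $Y$ lying over $x$ that is not needed, leaving $\pi'$ an isomorphism over $X\setminus\{x\}$ and $E' = \Ex(\pi') = g_*E$. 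More precisely, over $X\setminus\{x\}$ nothing is contracted because $-(K+D)\sim_{\bQ,X}0$ there pulls back from the base, so $g$ is already the identity there; hence $\Ex(\pi')\subseteq g_*E$, and the reverse inclusion holds because $(Y',\pi'^{-1}_*\Delta+E')$ being qdlt with $-(K_{Y'}+\pi'^{-1}_*\Delta+E')$ ample over $X$ forces every component of $E'$ to be $\pi'$-exceptional (a non-exceptional component would contribute a non-zero, non-ample contribution on a general fiber). This gives all the defining properties of a Koll\'ar model.

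**Main obstacle.** The delicate step is the discrepancy argument showing $g$ does not modify any stratum of $E$ — one must be careful to run the right MMP (the $-(K_Y+D)$-MMP for a well-chosen $D$, rather than the literal $-(K_Y+\pi_*^{-1}\Delta+E)$-MMP) and to invoke the correct form of "negativity of contraction" for (sub-)qdlt, rather than dlt, pairs, using Lemma \ref{lem:perturb qdlt boundary} to reduce qdlt statements to plt ones wherever the standard dlt machinery is needed.
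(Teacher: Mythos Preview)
Your overall structure matches the paper's, and you correctly identify the key decomposition
\[
-(K_Y+\pi_*^{-1}\Delta+E)\;=\;-(K_Y+D)\;+\;(D-\pi_*^{-1}\Delta-E)
\]
as (ample) + (effective with support avoiding every stratum of $E$). However, you then abandon this and replace it by a discrepancy argument that does not work as written.

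\medskip

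\textbf{The gap in the ``isomorphism along strata'' step.} You propose to perturb $D$ so that $-(K_Y+D)\sim_{\bQ,X}0$ and then compare discrepancies. But once $-(K_Y+D)\sim_{\bQ,X}0$, the map $g$ is \emph{crepant} for $(Y,D)$: discrepancies do not drop, so you get no contradiction from ``log discrepancy strictly less than $0$''. Moreover, the claim that the $-(K_Y+D)$-MMP and the $-(K_Y+\pi_*^{-1}\Delta+E)$-MMP ``have the same ample model'' is false when $-(K_Y+D)\sim_{\bQ,X}0$: the ample model of a relatively trivial divisor is $X$, not $Y'$. The paper instead uses the decomposition directly: any curve $C$ contracted by a step of the $L$-MMP (with $L=-(K_Y+\pi_*^{-1}\Delta+E)$) satisfies $L\cdot C<0$, and since $-(K_Y+D)\cdot C>0$ this forces $(D-\pi_*^{-1}\Delta-E)\cdot C<0$, so $C\subseteq\Supp(D-\pi_*^{-1}\Delta-E)$, which contains no stratum. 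The same reasoning handles the final contraction to the ample model. The paper also first passes to a small $\bQ$-factorial modification so that the MMP can actually be run; you note the $\bQ$-factoriality issue but do not resolve it.

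\medskip

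\textbf{The gap in the ``$\Ex(\pi')=E'$'' step.} Your claim that ``away from $\{x\}$ the map $\pi$ is a small modification'' is unjustified: a qdlt Fano type model may well have exceptional divisors not lying over $x$. The paper's argument is cleaner and does not need this. Since $\pi'(E')=\{x\}$ one has $E'\subseteq\Ex(\pi')$; for the reverse inclusion, observe
\[
-(K_{Y'}+\pi'^{-1}_*\Delta+E')\;\sim_{\bQ,\pi'}\;-\sum_i A_{X,\Delta}(E'_i)\cdot E'_i,
\]
which is $\pi'$-ample by construction. Since each $A_{X,\Delta}(E'_i)>0$, every curve contracted by $\pi'$ must lie in some $E'_i$, giving $\Ex(\pi')\subseteq E'$ and hence that $\pi'$ is an isomorphism over $X\setminus\{x\}$.
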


We remark that here the ample model exists since $Y$ is of Fano type over $X$.

\begin{proof}
Let us first show that $g\colon Y\dashrightarrow Y'$ is an isomorphism at the generic point of every stratum of $E$. Let $\varphi \colon \tY\to Y$ be a small $\bQ$-factorial modification, which exists by \cite{BCHM}*{Corollary 1.4.3}. Note that $\varphi$ is an isomorphism at the locus where $Y$ is $\bQ$-factorial; in particular, this includes the generic points of all the strata of $E$. By Proposition \ref{prop:modify qdlt Fano}, we deduce that the model $(\tY,\tE=\varphi_*E)$ is of qdlt Fano type. Since a small birational map such as $\varphi$ does not change the ample model in question, we may replace $(Y,E)$ by $(\tY,\tE)$ and assume that $Y$ is $\bQ$-factorial.

Next we run the MMP $h\colon Y\dashrightarrow \overline{Y}$ over $X$ with respect to $L:=-(K_Y+\pi_*^{-1}\Delta+E)$. We need to show that the MMP is an isomorphism at the generic points of all strata of $E$. To see this, note that if $D\ge E+\pi_*^{-1}\Delta$ is the $\bQ$-divisor from Definition \ref{defn:qdlt Fano model} that makes $(Y,E)$ of qdlt Fano type, then
\begin{equation} \label{eq:ample+eff}
    L\sim_{\bQ,\pi} -(K_Y+D)+(D-E-\pi_*^{-1}\Delta) = ({\rm ample})+(D-E-\pi_*^{-1}\Delta)
\end{equation}
and $\Supp(D-E-\pi_*^{-1}\Delta)$ does not contain any stratum of $E$ (otherwise $(Y,D)$ would not be qdlt). Thus the MMP is an isomorphism at the generic points of all strata of $E$. By Proposition \ref{prop:modify qdlt Fano}, the model $(\overline{Y},\overline{E}=h_*E)$ is of qdlt Fano type. Since the MMP does not change the ample model of $L$, after replacing $(Y,E)$ by $(\overline{Y},\overline{E})$ (and modify the corresponding divisor $D$) we may further assume that $L$ is nef.

The ample model $g\colon Y\to Y'$ of $L$  contracts all subvarieties $W\subseteq Y$ such that $L|_W$ is not big. By \eqref{eq:ample+eff} again, we see that the contracted locus is contained in $\Supp(D-E-\pi_*^{-1}\Delta)$, which does not contain any stratum of $E$. Therefore, we conclude that $g$ is an isomorphism at the generic point of every stratum of $E$. 

By Proposition \ref{prop:modify qdlt Fano}, it then follows that $\pi'\colon (Y',E')\to (X,\Delta)$ is of qdlt Fano type. To show that $\pi'$ is in fact a Koll\'ar model, we first observe that $E'\subseteq \Ex(\pi')$ as $\pi'(E')=\pi(E)=\{x\}$. On the other hand,
\[
-(K_{X'}+\pi'^{-1}_*\Delta+E')\sim_{\bQ,\pi'} -\sum_{i=1}^r A_{X,\Delta}(E'_i)\cdot  E'_i 
\]
is ample by construction, where $E'_1,\dots,E'_r$ are the components of $E'$. Since $A_{X,\Delta}(E'_i)>0$, this implies that $\Ex(\pi')$ is necessarily contained in $E'$, thus $E=\Ex(\pi')$. As $\pi'(E')=\{x\}$, we further deduce that $\pi'$ is an isomorphism away from $\{x\}$. This finishes the proof.
\end{proof}

The class of Koll\'ar models (or more generally, qdlt Fano type models) enjoys many stronger properties that qdlt modifications of a general $\bQ$-complement do not always have. The following result gives one example, which illuminates our later discussion.

\begin{prop} \label{prop:km DMR is simplex}
Let $(Y,E=\sum^r_{i=1}E_i)\to (X,\Delta)$ be a Koll\'ar model over a klt singularity $x\in (X,\Delta)$. Then the dual complex $\cD(E)$ is a simplex, i.e. the intersections of any subsets of $\{E_1,\dots,E_r\}$ is non-empty and irreducible.
\end{prop}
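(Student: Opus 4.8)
The plan is to prove that the dual complex $\cD(E)$ of a Koll\'ar model is a simplex by combining the general structure of lc centers of qdlt pairs (Lemma \ref{lem:qdlt->normal lc center}) with the connectedness properties forced by the ampleness of $-(K_Y+\pi_*^{-1}\Delta+E)$ and the fact that $\pi$ contracts everything to a single point $x$. Concretely, it suffices to show that for every subset $S\subseteq\{1,\dots,r\}$, the intersection $E_S:=\bigcap_{i\in S}E_i$ is non-empty and irreducible; normality will then come for free from Lemma \ref{lem:qdlt->normal lc center}, and these data are exactly what is needed to identify $\cD(E)$ with the standard $(r-1)$-simplex.

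First I would prove non-emptiness and connectedness of all the strata. Since $(Y,\pi_*^{-1}\Delta+E)$ is qdlt and $-(K_Y+\pi_*^{-1}\Delta+E)$ is $\pi$-ample with $\pi(E)=\{x\}$, one applies the Koll\'ar--Shokurov connectedness principle relative to $\pi$: for any $\bQ$-divisor $D\ge \pi_*^{-1}\Delta+E$ with $(Y,D)$ qdlt (hence a qdlt modification of some $\bQ$-complement, as in Definition \ref{defn:qdlt Fano model}) the non-klt locus of $(Y,D)$, namely $E$, has connected fibers over $X$; in particular $E=\pi^{-1}(x)$ is connected. More importantly, I would run this argument on the partial sums: using Lemma \ref{lem:perturb qdlt boundary} to perturb so that $\lfloor D\rfloor$ equals any prescribed reduced sub-divisor of $E$, connectedness applies to show that each $E_i$ is irreducible (qdlt with irreducible floor is plt, so $E_i$ is normal by Lemma \ref{lem:qdlt->normal lc center}) and, inductively via adjunction, that $(E_S,\Diff)$ is itself qdlt with $-(K+\Diff)$ ample over $x$, so that the same connectedness input forces $E_S\cap E_j$ to be connected for each $j\notin S$ whenever it is non-empty. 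The non-emptiness of every $E_S$ is the point where I would use the Fano-type/ampleness hypothesis most essentially: on the qdlt Fano type model, the combinatorics of the components cannot "split", because any missing face would produce a disconnection of some stratum against the connectedness principle. I would argue by descending induction on $|S|$, reducing to the claim that on a Koll\'ar model every pair $E_i\cap E_j$ is non-empty, which in turn follows from the connectedness of $E=E_1\cup\dots\cup E_r$ together with the fact (again from Lemma \ref{lem:qdlt->normal lc center} applied on each $E_i$) that $E_i$ meets only finitely many other components along normal divisors, and a dual-graph connectivity argument cannot terminate until all pairwise intersections are realized.

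With non-emptiness, irreducibility, and normality of every $E_S$ in hand, the identification of $\cD(E)$ with a simplex is formal: by the definition of the dual complex (cf. \cite{dFKX-dualcomplex}) the $k$-faces of $\cD(E)$ are indexed by the irreducible components of the $(k+1)$-fold intersections $E_{i_0}\cap\dots\cap E_{i_k}$, and we have just shown there is exactly one such component for each index set, so $\cD(E)$ has exactly one $k$-face for each $(k+1)$-subset of $\{1,\dots,r\}$, with the evident face relations; this is the standard $(r-1)$-simplex. Equivalently, via the description $\LC(X,\Delta+\Gamma)$ as the cone over $\DMR$, the monomial valuations $\QM_\eta(Y,E)$ glue across the strata into $\bR^r_{\ge 0}$ without identifications.

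I expect the main obstacle to be the non-emptiness of the deeper strata $E_S$ for $|S|\ge 2$: irreducibility and normality are essentially immediate once one invokes Lemma \ref{lem:qdlt->normal lc center} and plt adjunction, but ruling out that, say, $E_1$ and $E_2$ are disjoint genuinely requires exploiting that $(Y,E)$ is a Koll\'ar model and not merely a qdlt modification — one must push the connectedness principle down the whole flag of strata and use the ampleness of $-(K_Y+\pi_*^{-1}\Delta+E)$ (via the perturbations of Lemma \ref{lem:perturb qdlt boundary}) at each stage, so the bookkeeping of adjunction-compatible $\bQ$-divisors on each $E_S$ is where the real work lies.
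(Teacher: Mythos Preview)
Your approach has the right ingredients for irreducibility (via Lemma \ref{lem:qdlt->normal lc center} and adjunction), but the non-emptiness argument has a genuine gap. You claim that pairwise non-emptiness of $E_i\cap E_j$ follows from connectedness of $E$ via ``a dual-graph connectivity argument'', but connectedness of $E$ only tells you the dual graph is connected, not complete: a configuration $E_1$--$E_2$--$E_3$ with $E_1\cap E_3=\emptyset$ is perfectly consistent with $E$ connected. The descending induction on $|S|$ reducing deeper strata to the pairwise case is likewise not spelled out, and it is not clear how it would go without already knowing what you want; iterated adjunction on a stratum $E_S$ only tells you that $\bigcup_{j\notin S}(E_S\cap E_j)$ is connected, not that each individual $E_S\cap E_j$ is non-empty.

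The paper bypasses this bookkeeping entirely. Since $-(K_Y+\pi_*^{-1}\Delta+E)$ is $\pi$-ample and $\pi(E)=\{x\}$, Lemma \ref{lem:lc center of Fano pair} (equivalently \cite[Proposition 25]{dFKX-dualcomplex}) gives a \emph{unique minimal} lc center $W$ of $(Y,\pi_*^{-1}\Delta+E)$, and every lc center contains $W$. By Lemma \ref{lem:qdlt->normal lc center} the lc centers are exactly the irreducible components of the various $E_S$; since distinct components of the normal scheme $E_S$ are disjoint yet each must contain $W$, every $E_S$ is irreducible, and since $W\subseteq E_i$ for all $i$, every $E_S$ is non-empty. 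The unique minimal lc center is the global replacement for your iterated connectedness-plus-adjunction scheme; your approach could in principle be pushed through, but only after first establishing this uniqueness, at which point you have the paper's one-line argument.
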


This follows from the fact there is a unique minimal lc center for $(Y,D+E)$ where $D$ is the divisor in the definition of Koll\'ar model.  
See \cite[Proposition 25]{dFKX-dualcomplex}. Later we need the following slightly stronger result.

\begin{lem} \label{lem:lc center of Fano pair}
Let $(X,D)$ be an lc pair and let $f\colon X\to U$ be a projective morphism such that $f_*\cO_X=\cO_U$ and $-(K_X+D)$ is $f$-ample. Assume that there exists an effective $\bQ$-divisor $\Delta$ on $X$ such that $(X,\Delta)$ is klt. Then for any $u\in U$, the set of lc centers of $(X,D)$ that intersects $f^{-1}(u)$ has a unique minimal element with respect to inclusion. In particular, for any lc center $W$ of $(X,D)$, the map $f|_W\colon W\to U$ has connected fibers.
\end{lem}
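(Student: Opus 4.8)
The plan is to prove Lemma \ref{lem:lc center of Fano pair} by a connectedness-of-lc-centers argument, in the style of the Kollár–Shokurov connectedness theorem combined with subadjunction. The statement is local on $U$, so fix $u\in U$ and work over an affine neighborhood. The key point is that $-(K_X+D)$ is $f$-ample and $(X,\Delta)$ is klt for some auxiliary $\Delta$, so after perturbing we can make the non-klt locus of a suitable boundary behave well over $u$.

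First I would run the following reduction. Suppose $W_1$ and $W_2$ are two lc centers of $(X,D)$ both meeting $f^{-1}(u)$, and suppose for contradiction that neither contains the other — in particular we may shrink to the situation where $W_1\cap W_2\cap f^{-1}(u)$ either is empty or is not an lc center contained in both. The idea is to pick a general effective $\bQ$-divisor $H\sim_{\bQ,f}-(K_X+D)$ (possible since $-(K_X+D)$ is $f$-ample, hence $f$-semiample after scaling, and $f_*\cO_X=\cO_U$ gives enough sections) such that $(X,D+H)$ is lc with $K_X+D+H\sim_{\bQ,f}0$, and such that $H$ does not pass through the generic points of $W_1$ or $W_2$; this keeps $W_1,W_2$ as lc centers of $(X,D+H)$. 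Then $-(K_X+D+H)\sim_{\bQ,f}0$, and I would apply the connectedness theorem for the non-klt locus of $(X,D+H)$ over $U$: since $K_X+D+H$ is $f$-trivial (numerically, relatively), the non-klt locus $\mathrm{Nklt}(X,D+H)$ has connected fibers over $U$ — here the klt-ness of $(X,\Delta)$ guarantees $-K_X$ is not an obstruction, i.e. $X$ has rational (klt) singularities so the relevant $R^if_*$ of the ideal sheaf vanishes and the Kollár–Shokurov/Ambro–Fujino connectedness applies. Connectedness of the fiber $\mathrm{Nklt}(X,D+H)\cap f^{-1}(u)$ forces any two lc centers meeting that fiber to be connected through a chain of lc centers, and a standard argument (using that the intersection of two lc centers is a union of lc centers for a dlt-type pair, or invoking that through any point of the non-klt locus there is a unique minimal lc center — cf.\ the tie-breaking / subadjunction argument as in \cite{Kol13}*{Theorem 4.16 and Section 4}) upgrades this to: the lc centers through a fixed point are totally ordered, hence there is a unique minimal one through that point.

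To get the uniform statement over all of $f^{-1}(u)$ rather than pointwise, I would argue as follows: the set $\mathcal{S}$ of lc centers of $(X,D)$ meeting $f^{-1}(u)$ is finite; by the pointwise minimality just established, for each point $p\in f^{-1}(u)$ lying in $\mathrm{Nklt}(X,D)$ there is a unique minimal $W(p)\in\mathcal{S}$ containing $p$. The closure of the union of maximal centers in $\mathcal{S}$, intersected with $f^{-1}(u)$, is connected by the connectedness theorem; running down from each maximal center via the pointwise-minimality relation and using connectedness of intersections, all the $W(p)$ must coincide, giving a single minimal element $W_0\in\mathcal{S}$ contained in every member of $\mathcal{S}$ that meets $f^{-1}(u)$. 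Applying this with $W_0=W$ itself (taking $u=f(w)$ for $w\in W$, and noting $W$ is the unique minimal lc center through its own generic point), any fiber $W\cap f^{-1}(u)$ is a union of lc centers of the adjunction pair on $W$ all containing the minimal one, hence connected; more simply, $f|_W$ factors as $W\to \mathrm{Spec}\,(f_*\cO_W)\to U$ and since $W$ is an lc center of a pair whose $-(K+D)$ is relatively ample, $W$ is normal (or at worst has $f_*\cO_W$ finite over $\cO_U$ but in fact equal by the connectedness/normality), so $f|_W$ has connected fibers.

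The main obstacle I expect is the passage from the connectedness theorem (which a priori only says $\mathrm{Nklt}$ has connected fibers over $U$) to the genuinely stronger ordering statement that the lc centers through a fixed point form a chain with a unique minimum — this is where one really needs the extra hypothesis that $(X,\Delta)$ is klt, and the right tool is the local structure of lc centers via a dlt modification together with the fact that on a dlt (or here qdlt) model the strata intersect in the expected way; one extracts a dlt modification $g\colon (X',D')\to (X,D)$ and uses that the lc centers of $(X,D)$ through a point correspond to strata of $\lfloor D'\rfloor$ dominating them, and connectedness of $g^{-1}$(point) $\cap\lfloor D'\rfloor$ (again from the connectedness theorem, since $K_{X'}+D'\sim_{\bQ,g}0$) forces the stratum poset over that point to have a unique minimal element — precisely the input that Proposition \ref{prop:km DMR is simplex} is going to exploit. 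The verification that the auxiliary $H$ can be chosen avoiding the relevant centers while keeping log canonicity, and that all the relative vanishing needed for connectedness holds (using klt-ness of $(X,\Delta)$ to control $X$'s singularities), are the routine points I would not belabor.
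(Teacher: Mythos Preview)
Your approach is in the right neighborhood but takes a longer route than the paper and leaves the hardest step unfinished. The paper's proof is a clean tie-breaking argument: reduce to two distinct \emph{minimal} lc centers $Z_1,Z_2$ meeting $f^{-1}(u)$, observe they are disjoint (since any intersection of lc centers is a union of lc centers, so $Z_1\cap Z_2\neq\emptyset$ would contradict minimality), then choose a general $G\in|\cO_X(mL)\otimes I_{Z_1\cup Z_2}|$ with $L=-(K_X+D)$ ample. The pair $(X,D+\varepsilon G)$ is lc away from $Z_1\cup Z_2$ and non-lc along $Z_1\cup Z_2$. Now the klt auxiliary $\Delta$ is used precisely: the convex combination $(X,\,c\Delta+(1-c)(D+\varepsilon G))$ for $0<c\ll 1$ has non-klt locus \emph{exactly} $Z_1\cup Z_2$ (the interpolation with $\Delta$ erases all other lc centers), still has ample anti-log-canonical, and is disconnected over $u$ --- contradicting Koll\'ar--Shokurov connectedness directly.

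Your proposal instead adds a general $H$ to make $K_X+D+H$ trivial and applies connectedness to the \emph{full} non-klt locus of $(X,D+H)$, which is the same as $\mathrm{Nklt}(X,D)$. Connectedness of this entire locus over $u$ does not by itself yield a unique minimal lc center: two disjoint minimal centers $Z_1,Z_2$ can sit inside a connected non-klt locus, joined through larger lc centers. You correctly flag this as the main obstacle, but the fix you sketch (``running down from each maximal center via the pointwise-minimality relation and using connectedness of intersections, all the $W(p)$ must coincide'') is not an argument --- it is a restatement of what needs to be proved. The dlt-modification route you mention can be made to work (this is essentially the Ambro quasi-log approach the paper cites as \cite{Ambro-quasilog}*{Theorem 6.6}), but it requires substantially more: one needs that the dual complex of the dlt boundary over the fiber is connected \emph{and} that minimal strata map to a single lc center, which again uses ampleness in a nontrivial way. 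The paper's tie-breaking sidesteps all of this by directly isolating $Z_1\cup Z_2$ as the non-klt locus, and that is exactly where the hypothesis on $\Delta$ does its work.
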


\begin{proof}
This is a special case of \cite{Ambro-quasilog}*{Theorem 6.6} and in fact the statement holds without assuming the existence of a klt boundary $\Delta$. We provide a direct proof in our special case for the reader's convenience. The statement is local on $U$, thus after shrinking around $u\in U$ we may assume that $L:=-(K_X+D)$ is ample. Since any intersection of lc centers is a union of lc centers \cite{Kol13}*{4.20}, it suffices to show that the minimal lc center is unique. Suppose that there are two minimal lc centers $Z_1\neq Z_2$ that intersect $f^{-1}(u)$. They are necessarily disjoint, otherwise their intersection contains smaller lc centers. Let $I_{Z_1\cup Z_2}$ be the ideal sheaf of $Z_1\cup Z_2\subseteq X$. Let $m>0$ be a sufficiently large and divisible integer such that $\cO_X(mL)\otimes I_{Z_1\cup Z_2}$ is globally generated, and let $G\in |\cO_X(mL)\otimes I_{Z_1\cup Z_2}|$ be a general member. Fix some $0<\varepsilon\ll 1$. Then $-(K_X+D+\varepsilon G)$ is ample, $(X,D+\varepsilon G)$ is lc away from $Z_1\cup Z_2$ (by Bertini's theorem), but is not lc at the generic point of $Z_1$ and $Z_2$. Consider the convex combination
\[
(X,\Gamma:=c\Delta+(1-c)(D+\varepsilon G))
\]
of $(X,D+\varepsilon G)$ with the klt pair $(X,\Delta)$, where $0<c\ll 1$. Then $(X,\Gamma)$ is klt away from $Z_1\cup Z_2$, its non-klt locus is exactly $Z_1\cup Z_2$, and $-(K_X+\Gamma)$ is ample. Since $Z_1$ is disjoint from $Z_2$, this contradicts the Koll\'ar-Shokurov connectedness theorem \cite{K+92}*{17.4 Theorem}. Finally, if $W$ is an lc center of $(X,D)$ and $W\to U$ has disconnected fibers, then after replacing $U$ with an \'etale local neighbourhood we may assume that $W$ itself is disconnected (see e.g. \cite{Kol13}*{Claim 4.38.1}). But this contradicts the uniqueness of the minimal lc center.
\end{proof}

To prepare for the study of the degenerations of Koll\'ar models in the next section, we also introduce a family version of Koll\'ar models. Let us recall that a log canonical pair $(X,\Delta)$ with a flat morphism $f\colon X\to B$ to a smooth base is called \emph{a locally stable family} (see \cite[Section 4.6]{Kol23}), if for any simple normal crossing divisor $H_1+\dots+H_{r}\subseteq B$ ($r=\dim B$), the pair
$(X,\Delta+f^{*}(H_1+\cdots+H_r))$ is log canonical. 

\begin{defn} \label{defn:km family}
Let $B\subseteq (X,\Delta)\to B$ be a $\bQ$-Gorenstein family of klt singularities over a smooth base $B$ of dimension $r$ (i.e. it is a locally stable family with a section such that $b\in (X_b,\Delta_b)$ is klt for all $b\in B$). A \emph{locally stable family of Koll\'ar models} of $B\subseteq (X,\Delta)\to B$ is a model $\pi\colon (Y,E)\to (X,\Delta)$ such that the following holds:
\begin{enumerate}
    \item $\pi$ is an isomorphism away from $B\subseteq X$ and $E=\Ex(\pi)$,
    \item $-(K_Y+\pi_*^{-1}\Delta+E)$ is $\pi$-ample, and
    \item for any closed point $b\in B$, there exists some SNC divisor $H_1+\dots+H_r$ with $b\in H_1\cap\dots\cap H_r$ such that $(Y,\pi_*^{-1}\Delta+E+p^*(H_1+\dots+H_r))$ is qdlt where $p$ denotes the map $Y\to B$.
\end{enumerate}
When $E$ is irreducible, we call $(Y,E)\to (X,\Delta)\to B$ a {\it locally stable family of Koll\'ar components}.
\end{defn}

\begin{lem} \label{lem:flat limit}
Using notation from Definition \ref{defn:km family}, we have
\begin{enumerate}
    \item Every irreducible component of $E$ dominates $B$.
    \item For any irreducible component $E_1$ of $E$, the map $p|_{E_1}\colon E_1\to B$ is equidimensional with irreducible and reduced fibers.
    \item $\pi_b\colon (Y_b,E_b)\to (X_b,\Delta_b)$ is a Koll\'ar model for all $b\in B$.
    \item The component $E_1$ can be extracted as a locally stable family of Koll\'ar components, and the natural map 
    \[
    \pi_*\cO_Y(-mE_1)\to \pi_*\cO_{Y_b}(-mE_1)
    \]
    is surjective for all $b\in B$ and $m\in\bN$.
\end{enumerate}
\end{lem}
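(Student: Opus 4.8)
The plan is to deduce all four assertions from one structural fact: that $(Y,\pi_*^{-1}\Delta+E)\to B$ is a \emph{locally stable} family in the strong sense, i.e.\ $Y\to B$ is flat with reduced (hence, by the qdlt property, normal) fibres, $K_{Y/B}+\pi_*^{-1}\Delta+E$ is $\bQ$-Cartier, and $(Y_b,\pi_*^{-1}\Delta_b+E_b)$ is log canonical for all $b\in B$. I would prove this first. Once it is available, $Y_b$ is the flat limit of the generic fibre $Y_{\eta_B}$ and conditions \ref{defn:km family}(1)--(2) restrict to the fibres, so (3) follows: $\pi_b$ is an isomorphism away from $b$, the restriction of the $\bQ$-Cartier divisor $-(K_{Y/B}+\pi_*^{-1}\Delta+E)$ shows $-(K_{Y_b}+\pi_*^{-1}\Delta_b+E_b)$ is $\pi_b$-ample, the fibre is qdlt by the adjunction used to establish local stability, and since $-(K_{Y_b}+\pi_*^{-1}\Delta_b+E_b)\sim_{\bQ,\pi_b}-\sum_i A_{X,\Delta}(E_i)\,E_{i,b}$ with all $A_{X,\Delta}(E_i)>0$, the argument at the end of the proof of Proposition \ref{prop:qdlt to km} forces $E_b=\Ex(\pi_b)$. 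For (1): a component $E_0$ not dominating $B$ would, over a general point $b$ of $Z_0:=\overline{p(E_0)}$, occur with coefficient $\ge 2$ in $\pi_*^{-1}\Delta+E+p^*(H_1+\dots+H_r)$ as soon as the SNC divisors $H_i$ through $b$ are chosen with $Z_0\subseteq H_1$ locally — which is legitimate because strong local stability supplies the qdlt condition for \emph{every} SNC configuration — contradicting log canonicity. For (2): the fibre $(E_1)_b$ is an lc centre of the qdlt pair $(Y_b,\pi_*^{-1}\Delta_b+E_b)$, hence normal by Lemma \ref{lem:qdlt->normal lc center}, and connected — so irreducible — by Lemma \ref{lem:lc center of Fano pair} (or Proposition \ref{prop:km DMR is simplex}) applied to the Koll\'ar model $\pi_b$; equidimensionality and reducedness of $p|_{E_1}$ then follow from flatness (or from the codimension count for lc centres of $(Y,\pi_*^{-1}\Delta+E+p^*\sum H_i)$).

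To establish strong local stability, fix $b\in B$ and an SNC divisor $H_1+\dots+H_r$ through $b$ with $(Y,\pi_*^{-1}\Delta+E+p^*(H_1+\dots+H_r))$ qdlt. Qdlt-ness makes $K_Y+\pi_*^{-1}\Delta+E$, hence $K_{Y/B}+\pi_*^{-1}\Delta+E$, $\bQ$-Cartier. Writing $H_i=\{t_i=0\}$ for a regular system of parameters near $b$, so that $p^{-1}(b)=p^*H_1\cap\dots\cap p^*H_r$ scheme-theoretically, I would apply adjunction successively along the reduced round-down components $p^*H_1,\dots,p^*H_r$ (each normal, with an induced qdlt structure, by Lemma \ref{lem:qdlt->normal lc center}) to conclude that $Y_b$ is reduced and normal and that $(Y_b,\pi_*^{-1}\Delta_b+E_b)$ is qdlt, in particular log canonical. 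Since $Y$ is Cohen--Macaulay (qdlt singularities are rational) and $Y\to B$ is equidimensional over the smooth $B$ — no component of $E$ is $\pi$-contracted to a point of the section, as that would force an lc centre of $(Y,\pi_*^{-1}\Delta+E+p^*\sum H_i)$ of the wrong dimension, using $\pi$-ampleness of $-(K_Y+\pi_*^{-1}\Delta+E)$ — miracle flatness makes $Y\to B$ flat, and Koll\'ar's criterion (\cite[Section 4.6]{Kollar21}) then yields local stability, in particular the qdlt condition for all SNC configurations.

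For (4), run a $(K_Y+\pi_*^{-1}\Delta+E_1+\delta\sum_{i\ge 2}E_i)$-MMP over $X$ with $0<\delta\ll 1$, which exists and terminates since $Y$ is of Fano type over $X$, contracts exactly $E_2,\dots,E_r$, is an isomorphism at the generic point of every stratum of $E_1$, and—arguing as in Proposition \ref{prop:modify qdlt Fano}—preserves both qdlt-ness and the qdlt-over-$B$ conditions of Definition \ref{defn:km family}; the output $\pi_1\colon(Y_1,E_1)\to(X,\Delta)$ is then a locally stable family of Koll\'ar components, with each $(E_1)_b$ a Koll\'ar component of $(X_b,\Delta_b)$ by Lemma \ref{lem:kc induce km}. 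As $E_1$ is the only $\pi_1$-exceptional divisor, $-(K_{Y_1}+\pi_{1*}^{-1}\Delta+E_1)\sim_{\bQ,\pi_1}-A_{X,\Delta}(E_1)\,E_1$, so $-E_1$ is $\pi_1$-ample; hence $E_1$ is primitive (Lemma \ref{lem:fg=primitive}) and $\pi_{1*}\cO_{Y_1}(-mE_1)=\fa_m(\ord_{E_1})=\pi_*\cO_Y(-mE_1)$ for all $m$. Relative Kawamata--Viehweg vanishing then gives $R^1\pi_{1*}\cO_{Y_1}(-mE_1)=0$ for all $m\ge 0$ (for $m\ge 1$ because $-mE_1\sim_{\bQ,\pi_1}K_{Y_1}+\pi_{1*}^{-1}\Delta+(\text{a }\pi_1\text{-ample }\bQ\text{-divisor})$ with $(Y_1,\pi_{1*}^{-1}\Delta)$ klt; for $m=0$ because $Y_1$ and $X$ both have rational singularities), so by cohomology and base change over the flat family $Y_1\to B$ the restriction $\pi_*\cO_Y(-mE_1)\to\pi_*\cO_{Y_b}(-mE_1)$ is surjective.

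The hard part is the second paragraph: upgrading the pointwise hypothesis \ref{defn:km family}(3) to genuine local stability — flatness of $Y\to B$ with reduced normal fibres and log canonicity of each fibre — via inversion of adjunction along the $p^*H_i$. With that in place, the fibrewise Koll\'ar-model statement, the irreducibility of $(E_1)_b$, and the extraction plus base change in (4) are essentially formal, modulo the routine check that the qdlt-over-$B$ structure survives the MMP that contracts $E_2,\dots,E_r$.
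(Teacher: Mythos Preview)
Your overall architecture --- first upgrading Definition~\ref{defn:km family}(3) to genuine local stability of $(Y,\pi_*^{-1}\Delta+E)/B$, then reading off (1)--(3) by adjunction and connectedness --- is sound and close in spirit to the paper (the paper argues (1)--(3) more directly via Lemma~\ref{lem:lc center of Fano pair} and the codimension count $\dim E_{1,b}=\dim Y-(r+1)$, without first packaging things as ``strong local stability'', but the content is the same). One small slip: local stability only gives \emph{lc} for every SNC configuration $H$, not qdlt; fortunately your argument for (1) only uses the coefficient $\ge 2$ contradiction, which needs lc and nothing more.

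The genuine gap is in (4). The $(K_Y+\pi_*^{-1}\Delta+E_1+\delta\sum_{i\ge 2}E_i)$-MMP over $X$ does \emph{not} contract $E_2,\dots,E_r$ while preserving $E_1$; in fact it does the opposite. Over $X$ one has
\[
K_Y+\pi_*^{-1}\Delta+E_1+\delta\!\sum_{i\ge 2}E_i \;\equiv_\pi\; A_{X,\Delta}(E_1)\,E_1 \;+\; \sum_{i\ge 2}\bigl(A_{X,\Delta}(E_i)-1+\delta\bigr)E_i,
\]
so on the minimal model the pushforward of the right-hand side is $\pi'$-nef and $\pi'$-exceptional, hence $\le 0$ by the negativity lemma. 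Since $A_{X,\Delta}(E_1)>0$, the component $E_1$ \emph{cannot} survive. More generally there is no reason the signs of $A_{X,\Delta}(E_i)-1+\delta$ single out $E_2,\dots,E_r$. The paper sidesteps this entirely: it first uses Lemma~\ref{lem:perturb qdlt boundary} to produce a $\bQ$-divisor $D_1\ge \pi_*^{-1}\Delta$ with $(Y,D_1+E_1+p^*H)$ qdlt and $K_Y+D_1+E_1\sim_{\bQ,\pi}0$, then applies \cite{BCHM}*{Corollary 1.4.3} to extract $E_1$ on a model $Y'\to X$ with $-E'$ ample, and finally checks that $(Y',D'+E'+p'^*H)$ is qdlt by a crepant-equivalence argument together with Lemma~\ref{lem:qdlt criterion}. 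In particular the paper never claims the birational map $Y\dashrightarrow Y'$ is an isomorphism along the strata of $E_1$; it instead identifies $E_{1,b}$ and $E'_b$ as the unique lc place of the crepant pair on the fibre. Your surjectivity argument via Kawamata--Viehweg is fine once you have the correct model $Y'$ with $-E'$ $\pi'$-ample; the paper makes the base change explicit by vanishing $R^1\pi'_*\cO_{Y'}(-mE'-p^*H_i)=0$ and cutting down by the $H_i$, which is exactly what ``cohomology and base change'' unwinds to here.
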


\begin{proof}
Fix some $b\in p(E_1)$ and let $D=\pi_*^{-1}\Delta$. Since all the statements are local on both $X$ and $B$, we can shrink $X$ and $B$ around $b$ if necessary. Let $H=H_1+\dots+H_r$. Since $-(K_Y+D+E)$ is ample over $X$ and $E_1$ is an lc center of $(Y,D+E)$, by Lemma \ref{lem:lc center of Fano pair} we know that the map $p\colon E_1\to B\subseteq X$ has connected fibers. 
On the other hand, since $(Y,D+E+p^*H)$ is qdlt,  we know that the fiber $$E_{1,b}=p^{-1}(b)\cap E_1 = H_1\cap\dots\cap H_r\cap E_1$$ is normal by Lemma \ref{lem:qdlt->normal lc center}, and $$\dim E_{1,b} = \dim Y - (r+1) = \dim X_b - 1.$$ It follows that $E_{1,b}$ is irreducible and reduced, and the map $E_1\to B$ is equidimensional. This gives the statement (2). It also implies that $E_1$ dominates $B$ as otherwise $$\dim E_{1,b}\ge \dim E_1-\dim B+1\ge \dim Y-r,$$ a contradiction. By adjunction, we also see that $\pi_b\colon (Y_b,E_b)\to (X_b,\Delta_b)$ is a Koll\'ar model. Since $E_1$ is arbitrary, we obtain the statements (1) and (3).


By Lemma \ref{lem:perturb qdlt boundary}, we get a $\bQ$-divisor $D_0\ge \pi_*^{-1}\Delta$ such that $-(K_Y+D_0+E_1)$ is $\bQ$-Cartier and $\pi$-ample, $\lfloor D_0\rfloor =0$ and $(Y,D_0+E_1+p^*H)$ is qdlt. Let $D_1$ be the sum of $D_0$ with a general member of $\frac{1}{m}|-m(K_Y+D_0+E_1)|$ for some sufficiently divisible integer $m>0$. Then $D_1\ge \pi_*^{-1}\Delta$, $\lfloor D_1\rfloor =0$, $$K_Y+D_1+E_1\sim_{\bQ,\pi} 0,$$ and $(Y,D_1+E_1+p^*H)$ is qdlt. Then $E_1$ is an lc place of the pair $(X,\Delta+\Gamma)$ where $\Gamma=\pi_*D_1-\Delta\ge 0$. By \cite{BCHM}*{Corollary 1.4.3}, there exists a birational contraction $g\colon Y\dashrightarrow Y'$ over $X$ such that $E'=g_*E_1$ is the unique exceptional divisor of $\pi'\colon Y'\to X$, and $-E'$ is ample over $X$. We claim that
$$(Y',E')\to X\to B$$ extracts $E_1$ as a locally stable family of Koll\'ar components. 

To see this, denote the map $Y'\to B$ by $p'$ and let $D'=g_*D_1$. Since 
$$K_Y+D_1+E_1\sim_{\bQ,\pi} 0$$ and $(Y,D_1+E_1+p^*H)$ is lc, the same holds for its birational transform $(Y',D'+E'+p'^*H)$. Using $D_1\ge \pi_*^{-1}\Delta$ we then see that $$(Y',\pi'^{-1}_*\Delta+E')\to B$$ is locally stable. This implies $E'$ is equidimensional over $B$ with reduced fibers. In particular, it does not contain any component of $Y_b$. As $-E'$ is ample over $X$, the $\pi'$-exceptional locus is supported on $E'$, thus we also deduce that $Y'_b\dashrightarrow X_b$ is birational. By adjunction we know that $(Y'_b,D'_b+E'_b)$ is lc, and it is crepant to $(Y_b,D_{1,b}+E_{1,b})$. Recall that $(Y,D_1+E_1+p^*H)$ is qdlt and $\lfloor D_1\rfloor =0$, thus by adjunction $(Y_b,D_{1,b}+E_{1,b})$ is qdlt and its only lc place is $E_{1,b}$ (recall that $E_{1,b}$ is irreducible). Since every component of $E'_b$ has to be an lc place of $(Y'_b,D'_b+E'_b)$, we see that $E'_b$ is also irreducible and it is the unique lc center of $(Y'_b,D'_b+E'_b)$, which in turn gives the minimal lc center of $(Y',D'+E'+p'^*H)$. Moreover, the birational map $Y_b\dashrightarrow Y'_b$ identifies $E_{1,b}$ with $E'_b$ as divisors over $X_b$. By Lemma \ref{lem:qdlt criterion}, we conclude that the pair $(Y',D'+E'+p'^*H)$ is also qdlt. So $(Y',E')\to B$ is a locally stable family of Koll\'ar model as desired, which gives the first part of the statement (4). 

Finally, by Kawamata-Viehweg Vanishing Theorem and the fact that $-E'$ is $\pi'$-ample we have $R^1\pi'_*\cO_{Y'}(-mE'-p^*H_i)=0$. Repeatedly cutting down by the hyperplanes $H_i$, we have $$\pi'_*\cO_{Y'}(-mE')\to \pi'_*\cO_{Y'_b}(-mE')$$ is surjective. Clearly $\pi'_*\cO_{Y'}(-mE') = \pi_*\cO_Y(-mE_1)$ as $E_1$ and $E'$ are the same divisors over $X$. Similarly $\pi'_*\cO_{Y'_b}(-mE')=\pi_*\cO_{Y_b}(-mE_1)$ as the birational map $Y_b\dashrightarrow Y'_b$ identifies $E_{1,b}$ with $E'_b$ as divisors over $X_b$. Therefore, $\pi_*\cO_Y(-mE_1)\to \pi_*\cO_{Y_b}(-mE_1)$ is also surjective. This proves the remaining part of (4). The proof is now complete.
\end{proof}

\subsection{Construction of Koll\'ar models}\label{ss-constructingmodel}

In the rest of this section, we explain how to construct Koll\'ar models, or more generally, models of qdlt Fano type using special $\bQ$-complements. The main goal is to prove the following statement.

\begin{thm} \label{thm:construction of qdlt fano type model}
Let $(X,\Delta)$ be a klt pair that is projective over a quasi-projective variety $U$ such that $-K_X-\Delta$ is ample. Let $\pi\colon (Y,E)\to (X,\Delta)$ be a toroidal model, and let $\Gamma$ be a special $\bQ$-complement with respect to $\pi$. Then for any rational simplex $\sigma\subseteq\LC(\Gamma;Y,E)$, there exists a model $\pi'\colon (Y',E')\to (X,\Delta)$ of qdlt Fano type such that $\QM(Y',E')=\sigma$ as a subset of $\Val_X$. 
\end{thm}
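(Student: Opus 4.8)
The plan is to start from the toroidal model $(Y,E)$ and the rational simplex $\sigma\subseteq\LC(\Gamma;Y,E)$, and pass in two stages to a qdlt Fano type model: first a birational modification realizing $\sigma$ as $\QM$ of a qdlt pair, and then the $-(K+\Delta+E)$-ample model of Proposition \ref{prop:qdlt to km} to gain the Fano condition. Concretely, $\sigma$ is cut out inside $\QM_\eta(Y,F)$ (for some reduced $F\le E$ toroidal at $\eta=c_Y(v)$) by finitely many rational halfspaces; its vertices $v_1,\dots,v_s$ are quasi-monomial lc places of $(X,\Delta+\Gamma)$, hence (after clearing denominators) rescalings of divisorial valuations $\ord_{F_j}$. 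By \cite{BCHM}*{Corollary 1.4.3} applied over $X$, extract all the $F_j$ simultaneously on a $\bQ$-factorial model $Y_1\to X$; since the $F_j$ generate $\sigma$ and $\sigma$ was quasi-monomial on $(Y,F)$, a further log resolution lets us assume $Y_1$ carries an SNC divisor $E_1\supseteq\sum F_j$ with $\QM_{\eta_1}(Y_1,E_1)\supseteq\sigma$ for the relevant stratum.

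The core of the argument is then to show that, after running an MMP, one can arrange a qdlt pair $(Y',D')$ with $\lfloor D'\rfloor=E'$ and $\QM(Y',E')=\sigma$ exactly (not just $\supseteq$). The key input is that $\Gamma$ is \emph{special} with respect to $(Y,E)$: the birational transform $\pi_*^{-1}\Gamma$ contains an ample $\bQ$-divisor $G$ whose support meets no stratum of $(Y,E)$. Pulling $\Gamma$ (and this $G$) back to $Y_1$ and writing $K_{Y_1}+\Delta_1+\Gamma_1=\pi_1^*(K_X+\Delta+\Gamma)$, each $v_j$ being an lc place means $A(\Gamma_1;F_j)=0$, so $(Y_1,\Delta_1+\Gamma_1+\sum_j F_j)$ is lc with the $F_j$ among its lc places. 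Now run the $-(K_{Y_1}+\Delta_1+\sum_j c_jF_j+\text{(small }G_1))$-MMP over $X$, where the coefficients $c_j$ are chosen so the boundary stays sub-lc and the ample $G_1$ forces the output to be a genuine Fano-type contraction. The role of specialness is precisely to guarantee, via the Kollár–Shokurov connectedness argument as in Lemma \ref{lem:lc center of Fano pair}/Proposition \ref{prop:qdlt to km}, that this MMP and the subsequent ample model are isomorphisms at the generic point of every stratum of $\sigma$, so no $F_j$ is contracted and no new lc place is created inside $\QM(Y',E')$ beyond $\sigma$. Combined with Lemma \ref{lem:perturb qdlt boundary} (to perturb away extra coefficient-one components not meeting $\sigma$) and Lemma \ref{lem:qdlt criterion} (to certify qdlt-ness of the output), one obtains $(Y',E')$ with $\QM(Y',E')=\sigma$.

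Finally, one checks $(Y',E')$ is of qdlt Fano type: the MMP is run with respect to a divisor of the shape $(\text{ample})+(\text{effective supported off strata of }\sigma)$, exactly as in \eqref{eq:ample+eff}, so at the end $-(K_{Y'}+D')$ is nef and its ample model contracts only subvarieties disjoint from the strata of $E'$; Proposition \ref{prop:modify qdlt Fano}(2) then transports the qdlt Fano type property across this last contraction, and — since $E'$ consists of the extracted divisors, all with positive log discrepancy — $-(K_{Y'}+\pi'^{-1}_*\Delta+E')$ is ample, giving the desired model.

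\medskip
\noindent\textbf{Main obstacle.} The delicate point is controlling $\QM(Y',E')$ from \emph{above}: the MMP and the ample model could in principle contract a stratum of $\sigma$ or introduce a new divisorial lc place sitting inside the polyhedral cone, both of which would break the equality $\QM(Y',E')=\sigma$. Ruling this out is exactly where the specialness hypothesis (the ample $\bQ$-subdivisor $G$ in $\pi_*^{-1}\Gamma$ avoiding all strata) must be used, through a connectedness/negativity argument showing every such contraction happens away from the strata of $E'$; making this rigorous — in particular that the perturbation coefficients $c_j$ and $\varepsilon G$ can be chosen uniformly so that the output is simultaneously qdlt, Fano type, and has $\QM$ equal to $\sigma$ on the nose — is the heart of the proof.
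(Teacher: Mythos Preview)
Your proposal has the right overall shape but leaves unresolved precisely the obstacle you flag at the end, and the paper's proof handles it by a mechanism you do not mention.

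The paper proceeds in two clean steps. First, Lemma \ref{lem:DMR=specified QM}: starting from the vertices $F_1,\dots,F_r$ of $\sigma$, one makes a \emph{toroidal} extraction $\rho\colon Z\to Y$ of exactly the $F_i$ (Lemma \ref{lem:ample model of a simplex} arranges that $-\sum A_{X,\Delta}(F_i)F_i$ is $\rho$-ample), and then one constructs a \emph{new} special $\bQ$-complement $\Gamma$ with respect to $(Z,F)$ such that $\LC(X,\Delta+\Gamma)=\QM(Z,F)=\sigma$ on the nose. This is done by decomposing $\rho^*\pi^*\Gamma_Y=(1-\varepsilon)\rho^*D+\varepsilon\tF+(\rho^*(G+\varepsilon D)-\varepsilon\tF)$ and replacing the third (ample) summand by a general member via Bertini; the middle term $\varepsilon\tF$ is what forces the lc locus to be \emph{exactly} $\QM(Z,F)$ (see \eqref{eq:DMR=simplex}). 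Second, Lemma \ref{lem:MMP step}: once $\sigma=\QM(Y,E)=\LC(X,\Delta+\Gamma)$, an MMP contracts all $\pi$-exceptional divisors not in $E$; the delicate point is that $(Y,E)$ need not be of Fano type over $X$, so one first passes to a dlt modification of an auxiliary klt pair before running the MMP, checking at each stage that the map is an isomorphism at generic points of strata of $E$.

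Your approach omits the tie-breaking to a new complement, which is exactly what guarantees $\QM(Y',E')=\sigma$ \emph{from above}: once $\LC(X,\Delta+\Gamma)=\sigma$, any putative extra lc place of $(Y',E')$ would also be an lc place of $(X,\Delta+\Gamma)$, hence already in $\sigma$. Without this, your appeal to ``connectedness/negativity'' does not rule out that the MMP output has additional components in $E'$ or that the qdlt locus acquires extra strata. Also, your invocation of Proposition \ref{prop:qdlt to km} is misplaced (it goes from qdlt Fano type to a Koll\'ar model, not the reverse), and your final line ``all with positive log discrepancy, hence $-(K_{Y'}+\pi'^{-1}_*\Delta+E')$ is ample'' is a non sequitur---ampleness is obtained only after passing to the ample model, which is a separate step.
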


Here we say $\sigma\subseteq\LC(\Gamma;Y,E)$ is a \emph{rational simplex} if there exist divisorial valuations $v_1,\dots,v_r\in \LC(\Gamma;Y,E)$ that are contained in the same simplicial cone of $\QM(Y,E)$ such that they span an $(r-1)$-simplex and $\sigma$ is the cone over it.

We divide the proof of this theorem in two major steps: a tie-breaking step, where we modify the toroidal model $(Y,E)$ and the special $\bQ$-complement $\Gamma$ while fixing the rational simplex $\sigma$ so that $$\QM(Y,E)=\LC(X,\Delta+\Gamma)=\sigma;$$ and an MMP step, where we contract all the exceptional divisors that are not contained in $E$ to obtain the qdlt Fano type model. We first treat the MMP step.

\begin{lem} \label{lem:MMP step}
Theorem \ref{thm:construction of qdlt fano type model} holds when $\sigma=\QM(Y,E)$.
\end{lem}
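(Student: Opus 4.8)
The plan is to reverse the construction in the proof of Proposition~\ref{prop:qdlt to km}: from the toroidal model $(Y,E)$ and the special $\bQ$-complement $\Gamma$ I would build an auxiliary boundary on $Y$ whose negative anticanonical class is $(\text{ample})+(\text{effective divisor whose support meets no stratum of }E)$, and then run an MMP over $U$ followed by passage to an ample model, contracting everything outside $E$ while remaining an isomorphism at the generic point of every stratum of $E$. Since $-(K_X+\Delta)$ is ample over $U$, the variety $Y$ is of Fano type over $U$, so all the MMPs below exist and terminate.

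First I would extract the numerical input from the hypothesis. As $\sigma=\QM(Y,E)\subseteq\LC(X,\Delta+\Gamma)$, each component $E_i$ of $E$ is an lc place of $(X,\Delta+\Gamma)$, so in the crepant pullback $K_Y+B_Y=\pi^{*}(K_X+\Delta+\Gamma)\sim_{\bQ,U}0$ every $E_i$ occurs in $B_Y$ with coefficient $1$; write $B_Y=E+\pi^{-1}_{*}\Delta+\pi^{-1}_{*}\Gamma+\Xi$, where $\Xi$ is supported on the $\pi$-exceptional divisors other than the $E_i$ and has all coefficients $\le1$. The pair $(Y,B_Y)$ is sub-log canonical, and since $(Y,E)$ is toroidal this forces $\pi^{-1}_{*}\Delta$, $\pi^{-1}_{*}\Gamma$ and the effective part of $\Xi$ to have supports disjoint from the strata of $E$ (blowing up a stratum sitting inside such a positive-coefficient component would give a divisor of negative log discrepancy). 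Combining this with specialness, $\pi^{-1}_{*}\Gamma\ge G$ for an ample $G$ with $\Supp G$ containing no stratum of $E$, I obtain
\[
-(K_Y+E+\pi^{-1}_{*}\Delta)\ \sim_{\bQ,U}\ \pi^{-1}_{*}\Gamma+\Xi\ =\ G+\Sigma_{+}-\Sigma_{-},
\]
where $\Sigma_{+}\ge0$ has support disjoint from the strata of $E$ and $\Sigma_{-}\ge0$ is $\pi$-exceptional, supported on the divisors $F$ with $A_{X,\Delta+\Gamma}(F)>1$.

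Next I would run an MMP over $U$ contracting the redundant divisors $\Supp\Sigma_{-}$, which can be arranged to be an isomorphism at the generic points of the strata of $E$, and replace $(Y,E)$ by the output, so that $\Sigma_{-}=0$ and $L:=-(K_Y+E+\pi^{-1}_{*}\Delta)\sim_{\bQ,U}G+\Sigma_{+}$ is ample-plus-effective with $\Supp\Sigma_{+}$ missing every stratum of $E$. Running the $L$-MMP over $U$ and then taking the ample model of $L$, exactly the computation of~\eqref{eq:ample+eff} shows that every contraction involved is an isomorphism at the generic point of every stratum of $E$, because each $L$-negative extremal ray, as well as the non-big locus of the output, is contained in $\Supp\Sigma_{+}$. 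Writing $g\colon Y\dashrightarrow Y'$ for the resulting birational map, $\pi'\colon Y'\to X$ for the induced morphism and $E':=g_{*}E$, the map $g$ is an isomorphism at the generic point of every stratum of $E$; hence the dual complexes of $E$ and $E'$ coincide, $\QM(Y',E')=\QM(Y,E)=\sigma$ inside $\Val_X$, and $-(K_{Y'}+E'+\pi'^{-1}_{*}\Delta)=g_{*}L$ is ample over $U$.

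Finally I would verify that $(Y',E')$ is of qdlt Fano type by taking $D':=E'+\pi'^{-1}_{*}\Delta$: then $D'\ge\pi'^{-1}_{*}\Delta$, $\lfloor D'\rfloor=E'$ (as $(X,\Delta)$ is klt), and $-(K_{Y'}+D')$ is ample over $U$ by the previous step. For qdlt-ness: near the strata of $E'$ the pair is toroidal since $g$ is an isomorphism there and $(Y,E)$ is toroidal, $Y'$ is klt away from $E'$ as it is of Fano type over $U$, and $\Supp\pi'^{-1}_{*}\Delta$ meets no stratum of $E'$ by the first paragraph, so adjoining it preserves the round-down and qdlt-ness. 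The point I expect to require the most work is ruling out an exceptional divisor over $Y'$ whose center lies outside the toroidal open set and whose log discrepancy with respect to $(Y',D')$ is non-positive: such a divisor would be an lc place of $(X,\Delta+\Gamma)$, and one must show --- from the sub-log canonicity of the crepant model $(Y',B_{Y'})$ together with the special position of $G$, and verifying qdlt-ness at the minimal lc centers by Lemma~\ref{lem:qdlt criterion} --- that its center on $Y'$ must in fact lie in the toroidal locus, a contradiction. (If one also knows $\LC(X,\Delta+\Gamma)=\QM(Y,E)$, which is the situation created by the tie-breaking step, no such lc place exists and this last point is automatic.) Granting this, $(Y',E')$ is a qdlt Fano type model with $\QM(Y',E')=\sigma$, which is Theorem~\ref{thm:construction of qdlt fano type model} when $\sigma=\QM(Y,E)$.
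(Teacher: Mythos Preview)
Your overall strategy---run an MMP starting from the toroidal model and pass to an ample model of $-(K_Y+E+\pi_*^{-1}\Delta)$---is the same as the paper's, but there is a genuine gap at the foundation: the assertion that ``since $-(K_X+\Delta)$ is ample over $U$, the variety $Y$ is of Fano type over $U$'' is false in general. A toroidal or log smooth model $\pi\colon Y\to X$ typically has exceptional divisors $F$ with $A_{X,\Delta}(F)>1$; the crepant pullback of $\Delta$ to $Y$ then has negative coefficients on these $F$, and there is no obvious effective klt boundary on $Y$ making $-(K_Y+\text{boundary})$ big over $U$. The paper says this explicitly: ``we need to verify that the MMP exists since $Y$ may not be of Fano type over $X$.'' Without Fano type, neither your first MMP (``contracting $\Supp\Sigma_-$'') nor the subsequent $L$-MMP is known to exist or terminate, so the argument stalls before it starts.

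The paper's fix is to manufacture a klt boundary by hand: write $\pi^*(K_X+\Delta+(1-c)\Gamma)=K_Y+\Delta_1-\Delta_2$ with $\Delta_1,\Delta_2\ge0$ and $\Supp\Delta_2\subseteq F$ (the exceptional divisors not in $E$), and run the $(K_Y+\Delta_1+\varepsilon F)$-MMP, which is a legitimate BCHM MMP once one knows $(Y,\Delta_1)$ is klt. When $(Y,\Delta_1)$ fails to be klt (which can happen, since $\pi$ need not resolve $(X,\Delta+\Gamma)$), one first passes to a dlt modification $Z\to Y$ and checks carefully that this is an isomorphism at every stratum of $E$ before running the MMP on $Z$ instead. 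This is the main technical content of the lemma and is entirely missing from your outline.

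There is a second, smaller gap you yourself flag: verifying that the resulting $(Y',E'+\pi'^{-1}_*\Delta)$ is qdlt. Your parenthetical escape hatch---assuming $\LC(X,\Delta+\Gamma)=\QM(Y,E)$---is \emph{not} part of the hypothesis here (only $\QM(Y,E)\subseteq\LC(X,\Delta+\Gamma)$ is). The paper avoids this difficulty by not using $D'=E'+\pi'^{-1}_*\Delta$ directly as the qdlt Fano boundary; instead it constructs a perturbed complement $\Gamma'$ (replacing the ample piece $G$ by $\varepsilon G_0+G_1$ where $G'_0$ is ample on $Y'$) so that the pushforward $g_*(\Delta_Y+\pi^*\Gamma')$ is visibly qdlt with reduced part $E'$, and then subtracts off $\varepsilon G'_0$ to get the ample anticanonical. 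This sidesteps the delicate analysis of extra lc places that your final paragraph anticipates.
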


\begin{proof}
Since in general $\sigma\subseteq \LC(\Gamma;Y,E) \subseteq\QM(Y,E)$, the assumption in particular includes that $\QM(Y,E)$ is a simplex and $$\QM(Y,E)=\LC(\Gamma;Y,E).$$ Let $(Y,\Delta_Y)$ be the crepant pullback of $(X,\Delta)$. Since $\Gamma$ is a special $\bQ$-complement, there exists some effective ample $\bQ$-divisor $G\le \pi^{-1}_*\Gamma$ that does not contain any stratum of $E$. Then the assumption $\QM(Y,E)=\LC(\Gamma;Y,E)$ implies that $(Y,\Delta_Y+\pi^*\Gamma)$ is sub-qdlt. 

We first show that it suffices to find a birational contraction $g\colon Y\dashrightarrow Y'$ with $Y'$ being projective over $X$, such that
\begin{enumerate}
    \item $g$ is an isomorphism at the generic point of every stratum of $(Y,E)$, and
    \item $g$ contracts all the $\pi$-exceptional divisors that are not contained in $E$.
\end{enumerate}
These conditions are natural in order for $(Y',E'=g_*E)$ to be a model of qdlt Fano type: in fact condition (1) is the same as the assumption in Proposition \ref{prop:modify qdlt Fano}(2), while condition (2) guarantees that $Y'$ is of Fano type since $g$ in particular contracts all the $\pi$-exceptional divisors with non-positive coefficients in $\Delta_Y+\pi^*\Gamma$ (these are the divisors that potentially prevent $Y$ from being of Fano type). To see that the two conditions are sufficient for us, let $G'_0$ be an effective ample $\mathbb{Q}$-divisor on $Y'$ that is in a general position, and let $G_0$ be its birational transform on $Y$. Then $\Supp(G_0)$ does not contain any stratum of $E$ (by (1) and Bertini), which are also the lc centers of $(Y,\Delta_Y+\pi^*\Gamma)$. It follows that for $0<\varepsilon\ll 1$ ($\varepsilon\in\bQ$) we have $G-\varepsilon G_0$ is still ample, and $(Y,\Delta_Y+\pi^*\Gamma-G+\varepsilon G_0)$ is still sub-qdlt with
\[
\LC(Y,\Delta_Y+\pi^*\Gamma-G+\varepsilon G_0)=\QM(Y,E).
\]
Choose a sufficiently divisible integer $m$ and take a general $G_1\in \frac{1}{m}|m(G-\varepsilon G_0)|$. Then by Bertini's theorem $(Y,\Delta_Y+\pi^*\Gamma-G+\varepsilon G_0+G_1)$ is also sub-qdlt with
\[
\LC(Y,\Delta_Y+\pi^*\Gamma-G+\varepsilon G_0+G_1)=\LC(Y,\Delta_Y+\pi^*\Gamma-G+\varepsilon G_0)=\QM(Y,E).
\]
Moreover, as
$$\pi^*\Gamma-G+\varepsilon G_0+G_1\sim_{\bQ}\pi^*\Gamma$$ and the left hand side is an effective $\bQ$-divisor we have 
\begin{equation} \label{eq:Gamma'>=ample}
    \pi^*\Gamma-G+\varepsilon G_0+G_1 = \pi^*\Gamma'
\end{equation}
for some $\bQ$-complement $\Gamma'$ of $(X,\Delta)$. Note that $K_Y+\Delta_Y+\pi^*\Gamma'\sim_{\bQ,\pi} 0$. By construction, the lc places of $(X,\Delta+\Gamma')$ are still given by $\QM(Y,E)$. Denote the induced map $Y'\to X$ by $\pi'$. By the property (2) of the birational contraction $g\colon Y\dashrightarrow Y'$, the birational transform $g_*(\Delta_Y+\pi^*\Gamma')$ is effective; in other words, the crepant pullback of $(X,\Delta+\Gamma')$ to $Y'$ has an effective boundary. Combined with the property (1), we see that $(Y',g_*(\Delta_Y+\pi^*\Gamma'))$ is qdlt and its lc places are given by $\QM(Y',E')$. By \eqref{eq:Gamma'>=ample}, ${\pi_*'}^{-1}\Gamma'\ge \varepsilon G_0$. Let 
$$D=g_*(\Delta_Y+\pi^*\Gamma)-\varepsilon G_0\ge {\pi_*'}^{-1}\Delta.$$ 
Then $(Y',D)$ is qdlt, $\lfloor D\rfloor =E'$ and $-(K_{Y'}+D)\sim_\bQ \varepsilon G_0$ is ample. It follows that the model $(Y',E')\to (X,\Delta)$ is qdlt of Fano type as desired.

Thus it remains to find a birational contraction that satisfies (1)--(2). Let $F$ be the sum of $\pi$-exceptional divisors that are not contained in $E$. We aim to run an MMP $Y\dashrightarrow Y'$ over $X$ for some effective divisor whose support is $F$. Since $F$ does not contain any stratum of $E$, the MMP process satisfies property (1). By the negativity lemma \cite{KM98}*{Lemma 3.39}, the MMP will contract $F$ hence it also satisfies property (2). Nevertheless, we need to verify that the MMP exists since $Y$ may not be of Fano type over $X$. To this end, fix some $0<c\ll 1$ so that $A_{X,\Delta+(1-c)\Gamma}(E_i)<1$ for every irreducible component $E_i$ of $E$ (this is possible since $E_i$ is an lc place of $(X,\Delta+\Gamma)$) and write
\[
\pi^*(K_X+\Delta+(1-c)\Gamma)=K_Y+\Delta_1-\Delta_2
\]
where $\Delta_1$ and $\Delta_2$ are effective and have no common components. Then $\Supp(\Delta_2)\subseteq F$. As $(X,\Delta+(1-c)\Gamma)$ is klt, we also know that the coefficients of $\Delta_1$ are less than $1$. 

If $(Y,\Delta_1)$ is klt, then so does $(Y,\Delta_1+\varepsilon F)$ when $0<\varepsilon\ll 1$, and we may run the $(K_Y+\Delta_1+\varepsilon F)$-MMP $Y\dashrightarrow Y'$ over $X$ by \cite{BCHM}*{Theorem 1.2}. Since
\[
K_Y+\Delta_1+\varepsilon F\sim_{\bQ,\pi} \Delta_2+\varepsilon F
\]
and the right hand side is supported on $F$, this gives the desired birational contraction.

In general, since $Y$ is $\mathbb{Q}$-factorial, we can take a dlt modification by \cite{OX12}*{Theorem 1.1} of $(Y,\Delta_1)$. This is a projective birational morphism $\rho\colon Z\to Y$ with exceptional divisors $F'_i$ such that $Z$ is $\bQ$-factorial, $(Z,\rho_*^{-1}\Delta_1+\sum F'_i)$ is dlt, and $K_Z+\rho_*^{-1}\Delta_1+\sum F'_i$ is $\rho$-nef. By the negativity lemma, the last condition implies that $A_{Y,\Delta_1}(F'_i)\le 0$ for all $F'_i$. In particular, $\rho$ is an isomorphism over the locus where $(Y,\Delta_1)$ is already klt. We claim that $\rho$ is an isomorphism at the generic point of every stratum of $E$, so that we can try to construct the birational contraction by running MMP on $Z$ instead.

For this it suffices to show that if $\eta\in Y$ is the generic point of any stratum of $E$, then $(Y,\Delta_1)$ is klt at $\eta$. But since $(Y,E+F)$ is toroidal and $\Supp(\Delta_2)\subseteq F$, we have $\eta\not\in\Supp(\Delta_2)$. Thus as the crepant pullback $(Y,\Delta_1-\Delta_2)$  of the klt pair $(X,\Delta+(1-c)\Gamma)$ is sub-klt, we deduce that $(Y,\Delta_1)$ is klt at $\eta$ as desired, and therefore $\rho\colon Z\to Y$ is an isomorphism at $\eta$.

Thus we may replace $Y$ by $Z$ and run through the above argument. Let $\varphi\colon Z\to X$ be the induce map, and write
\[
\varphi^*(K_X+\Delta+(1-c)\Gamma)=K_Z+D_1-D_2
\]
where $D_1$ and $D_2$ are effective and have no common components. Let $\tF$ be the sum of all $\varphi$-exceptional divisors that are not contained in $\rho_*^{-1}E$ (i.e. $\tF=\rho_*^{-1}F+\sum F'_i$). As before, we have $\Supp(D_2)\subseteq \tF$. We claim that $(Z,D_1)$ is klt. Clearly $\lfloor D_1 \rfloor=0$ since $(X,\Delta+(1-c)\Gamma)$ is klt; thus $$D_1\le \rho_*^{-1}\Delta_1+(1-\varepsilon)\sum F'_i$$ for some $0<\varepsilon\ll 1$. Since $\lfloor \Delta_1 \rfloor=0$ and $(Z,\rho_*^{-1}\Delta_1+\sum F'_i)$ is dlt, we know that $(Z,\rho_*^{-1}\Delta_1+(1-\varepsilon)\sum F'_i)$ is klt, hence so is $(Z,D_1)$, proving the above claim.

Now we can complete the proof. Since $(Z,D_1+\varepsilon \tF)$ remains klt when $0<\varepsilon\ll 1$, we may run the $(K_Z+D_1+\varepsilon \tF)$-MMP $Z\dashrightarrow Y'$ over $X$. As 
\[
K_Z+D_1+\varepsilon \tF\sim_{\bQ,\varphi} D_2+\varepsilon \tF
\]
and the right hand side is fully supported on $\tF$, the MMP exactly contracts $\tF$ and thus the induced birational map $Y\dashrightarrow Y'$ is a birational contraction that satisfies the property (2). As $\tF$ does not contain any stratum of $\rho_*^{-1}E$ and $\rho\colon Z\to Y$ is an isomorphism at the generic point of every stratum of $E$, the map $Y\dashrightarrow Y'$ also satisfies the property (1). This finishes the proof.
\end{proof}

We next work towards reducing Theorem \ref{thm:construction of qdlt fano type model} to the special case treated above. For this we need an auxiliary result.

\begin{lem} \label{lem:ample model of a simplex}
Let $(Y,E)$ be a toroidal pair, and let $\Delta$ be a $\bQ$-divisor supported on $E$ such that $\lfloor \Delta\rfloor \le 0$. Let $F_1,\dots,F_r$ be toroidal divisors over $(Y,E)$ such that $v_i=\ord_{F_i}$ are in a simplicial cone of $\QM(Y,E)$ and span an $(r-1)$-simplex. Then there exists a proper birational morphism $\rho\colon Z\to Y$ extracting the divisors $F_1,\dots,F_r$ such that $\Ex(\rho)\subseteq \cup_{i=1}^r F_i$ and $-\sum_{i=1}^r A_{Y,\Delta}(F_i)\cdot F_i$ is ample over $Y$.
\end{lem}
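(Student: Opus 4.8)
The plan is to first reduce to the toroidal (hence essentially SNC) local situation around the generic point $\eta$ of the minimal stratum containing the center of the simplex, and then build the extraction explicitly by a single weighted-type blowup construction, rather than by running an MMP. Since $v_1,\dots,v_r$ lie in one simplicial cone of $\QM(Y,E)$ and span an $(r-1)$-simplex, after passing to the toroidal chart we may choose local coordinates $z_1,\dots,z_s$ (with $s\ge r$) so that each $v_i$ is the monomial valuation with weight vector $a_i=(a_{i1},\dots,a_{ir},0,\dots,0)\in\bZ^s_{\ge 0}$, and the vectors $a_1,\dots,a_r$ are linearly independent. The key observation is that the subspace they span is rational, so the cone they generate is a rational simplicial cone, and one can realize the simultaneous extraction of $F_1,\dots,F_r$ as the toric modification of the affine chart $\bA^s$ associated to the fan obtained by taking the star subdivision (or simply the cone spanned by $a_1,\dots,a_r$ together with the standard rays $e_{r+1},\dots,e_s$, then completing to a smooth refinement away from the $F_i$). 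This toric modification $\rho\colon Z\to Y$ extracts precisely the $F_i$ (the $a_i$ are exactly the primitive generators of the new rays), is projective over $Y$, and on it the divisor $-\sum_i c_i F_i$ for \emph{any} positive coefficients $c_i$ is relatively ample over $Y$ — indeed, a $\bQ$-divisor supported on the exceptional rays of a simplicial toric blowup is relatively ample over the base iff all its coefficients are negative on the $F_i$, which is a standard toric computation via the support-function criterion. So it remains only to check $A_{Y,\Delta}(F_i)>0$ for all $i$.

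For this last point I would argue as follows: since $\Delta$ is supported on $E$ with $\lfloor\Delta\rfloor\le 0$, the pair $(Y,\Delta)$ is sub-klt at $\eta$, so $A_{Y,\Delta}(F)>0$ for every divisor $F$ over $Y$ with center in the toroidal locus — in particular for each $F_i$, whose center is a stratum of $(Y,E)$. (Concretely, $A_{Y,\Delta}(F_i)=\sum_j a_{ij}(1-\mathrm{coeff}_{E_j}\Delta)$, and each factor $1-\mathrm{coeff}_{E_j}\Delta>0$ since the coefficients of $\Delta$ along components of $E$ are $<1$, while $a_{ij}\ge 0$ with at least one positive, so the sum is $>0$.) Hence the coefficients $A_{Y,\Delta}(F_i)$ are genuinely positive, and applying the toric ampleness criterion above to $c_i=A_{Y,\Delta}(F_i)$ gives that $-\sum_{i=1}^r A_{Y,\Delta}(F_i)\cdot F_i$ is ample over $Y$.

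The only subtlety — and the step I expect to require the most care — is globalizing the construction: the toric/weighted blowup is naturally defined only on a toroidal chart around $\eta$, and one must check that it glues to a well-defined projective birational morphism $\rho\colon Z\to Y$ over all of $Y$ extracting exactly $F_1,\dots,F_r$ and nothing else. The standard way to handle this is to invoke \cite{BCHM}*{Corollary 1.4.3} (applied to a suitable klt pair, e.g. $(Y,\Delta+\varepsilon\sum A_{Y,\Delta}(F_i)\cdot F_i)$ after passing to a log resolution on which the $F_i$ appear, which is klt since $A_{Y,\Delta}(F_i)>0$) to produce \emph{some} birational model $Z\to Y$ extracting precisely the $F_i$ with $Z$ $\bQ$-factorial; then over the toroidal chart $Z$ agrees with the explicit toric model because both are the unique $\bQ$-factorial extraction of that set of divisorial valuations, and on the toric model $-\sum_i A_{Y,\Delta}(F_i)F_i$ is $\rho$-ample, a property that is checked fiberwise/locally over $Y$ and hence transfers. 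Away from $\eta$ the map $\rho$ is an isomorphism since all the $F_i$ have center contained in the closure of $\eta$, so there is nothing to glue there. This gives the desired $\rho$ and completes the proof.
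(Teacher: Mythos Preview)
Your core claim --- that on a simplicial toric extraction of $F_1,\dots,F_r$, the divisor $-\sum c_i F_i$ is $\rho$-ample for \emph{any} positive $c_i$ --- is false, and this is where the argument breaks. Take $Y=\bA^2$ with $E=E_1+E_2$ the coordinate axes, and let $a_1=(1,1)$, $a_2=(1,2)$; the star subdivision has maximal cones $\langle e_1,a_1\rangle$, $\langle a_1,a_2\rangle$, $\langle a_2,e_2\rangle$. A direct support-function check shows $-c_1F_1-c_2F_2$ is ample iff $c_1<c_2<2c_1$, so e.g.\ $c_1=1$, $c_2=3$ fails. In higher dimensions the situation is worse: inserting several rays does not determine a unique fan, so ``the'' simplicial toric extraction is not even well-defined, and different completions give different ample cones. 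Your computation that $A_{Y,\Delta}(F_i)>0$ is correct, but positivity of the coefficients is not the ampleness criterion.

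You also cannot globalize by invoking uniqueness: $\bQ$-factorial extractions of a fixed set of divisors are in general not unique (they can differ by flops), so the BCHM model need not agree with your local toric model, and relative ampleness of a specific divisor is not preserved under flops.

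The paper takes the opposite route: rather than writing down $Z$ explicitly and verifying ampleness, it \emph{defines} $Z$ as the ample model of $-\sum A_{Y,\Delta}(F_i)F_i$ (after a toroidal extraction and an MMP), so ampleness is automatic; the real content is then proving that no $F_i$ gets contracted along the way. For the MMP step this uses $-\sum A_{Y,\Delta}(F_i)F_i\sim_{\bQ,f} D_W$ with $D_W$ effective and not containing any $F_i$ (here $D=E-\Delta$ and one uses $A_{Y,\Delta}(F_i)=\ord_{F_i}(D)$); for the ample-model contraction, one compares the dimensions of the dual complexes of the crepant pairs on $W$ and on $Z$ to see that the simplex spanned by the $F_i$ cannot drop dimension. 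This last step is exactly the substitute for the missing ampleness argument in your approach.
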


\begin{proof}
Let $f\colon W\to Y$ be a toroidal blowup that extracts the divisors $F_1,\dots,F_r$. Then $W$ is of Fano type over $Y$. By running an MMP $g\colon W\dashrightarrow W'$ with scaling over $Y$ we obtain a model such that $-\sum_{i=1}^r A_{Y,\Delta}(F_i)\cdot g_* F_i$ is nef over $Y$ and let $h\colon W'\to Z$ be the corresponding ample model over $Y$. It suffices to show that none of the divisors $F_i$ are contracted in this process. By assumption, $D:=E-\Delta$ is effective and $\Supp(D)=E$. Since $A_{Y,E}(F_i)=0$, we have $A_{Y,\Delta}(F_i)=\ord_{F_i}(D)$, thus 
\[
-\sum_{i=1}^r A_{Y,\Delta}(F_i)\cdot F_i\sim_{\bQ,f} f^*D-\sum_{i=1}^r \ord_{F_i}(D)\cdot F_i = D_W,
\]
for some effective divisor $D_W$ that does not contain any $F_i$ in its support. It follows that the ($D_W$-)MMP $W\dashrightarrow W'$ does not contract any of the divisors $F_i$ and hence by replacing the initial model $W$ with $W'$ we may simply assume $W=W'$. On the other hand, we have
\[
K_W+\Delta_W+\sum_{i=1}^r F_i\sim_{\bQ,f} \sum_{i=1}^r A_{Y,\Delta}(F_i)\cdot F_i,
\]
thus the ample model $W\to Z$ satisfies
\[
K_W+\Delta_W+\sum_{i=1}^r F_i = h^*(K_Z+\Delta_Z+\sum_{i=1}^r h_* F_i).
\]
Here $\Delta_Z,\Delta_W$ denote the strict transform of $\Delta$ on $Z,W$. Note that locally both $(W,\Delta_W+\sum_{i=1}^r F_i)$ and $(Z,\Delta_Z+\sum_{i=1}^r h_*F_i)$ have toroidal singularities. In fact,  \'etale locally on $Y$ the toroidal pair $(Y,E)$ is isomorphic to a toric pair, and if we take a base change of our MMP over the  toric pair, the MMP with scaling and the ample model construction are naturally torus equivariant. Also recall that $\lfloor \Delta\rfloor \le 0$. Thus $\LC(W,\Delta_W+\sum_{i=1}^r F_i)$ is the $r$-dimensional simplicial cone spanned by all $F_i$ while $\LC(Z,\Delta_Z+\sum_{i=1}^r h_*F_i)$ is the simplicial cone spanned by $h_* F_i$. But as the two pairs are crepant equivalent, their dual complexes have the same dimension. In particular, the divisors $h_* F_i$ also span an $(r-1)$-simplex. This implies that none of the $F_i$'s are contracted on the ample model.
\end{proof}

The following lemma gives the tie-breaking step for the proof of Theorem \ref{thm:construction of qdlt fano type model}.

\begin{lem} \label{lem:DMR=specified QM}
Let $\pi\colon (Y,E)\to (X,\Delta)$ be a toroidal model and let $\Gamma_Y$ be a special $\bQ$-complement of $(X,\Delta)$ with respect to $\pi$. Let $F_1,\dots,F_r$ be divisors over $Y$ such that $v_i=\ord_{F_i}\in \LC(\Gamma_Y;Y,E)$ are in a simplicial cone of $\QM(Y,E)$ and they span an $(r-1)$-simplex. Then there exists a birational toroidal morphism $\rho\colon Z\to (Y,E)$ and a $\bQ$-complement $\Gamma$ of $(X,\Delta)$ such that
\begin{enumerate}
    \item $\rho$ extracts the divisors $F_1,\dots,F_r$ and $\Ex(\rho)\subseteq \cup_{i=1}^r F_i$;
    \item $\Gamma$ is special with respect to $(Z,F)$ where $F=F_1+\dots+F_r$; and
    \item $\QM(Z,F)=\LC(X,\Delta+\Gamma)$.
\end{enumerate}
\end{lem}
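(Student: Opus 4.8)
The plan is to first extract $F_1,\dots,F_r$ as divisors on a suitable toroidal modification of $(Y,E)$, and then to tie‑break the special complement $\Gamma_Y$ into a new complement whose lc places fill out exactly the simplex.

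For the extraction, I would set $\widehat E:=\Supp(E+\Ex(\pi)+\pi_*^{-1}\Delta)$, so that $(Y,\widehat E)$ is toroidal, and let $\Delta_Y$ be the crepant pullback of $(X,\Delta)$; since $(X,\Delta)$ is klt all log discrepancies are positive, so $\lfloor\Delta_Y\rfloor\le 0$. The valuations $v_i=\ord_{F_i}$ lie in a common simplicial cone of $\QM(Y,E)\subseteq\QM(Y,\widehat E)$ and span an $(r-1)$-simplex, so Lemma~\ref{lem:ample model of a simplex} (applied with $\Delta_Y$ in the role of its boundary) yields a proper birational toroidal morphism $\rho\colon Z\to Y$ extracting exactly $F_1,\dots,F_r$ for which $-\sum_{i=1}^r\delta_i F_i$ is ample over $Y$, where $\delta_i:=A_{X,\Delta}(F_i)=A_{Y,\Delta_Y}(F_i)>0$. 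Set $\varphi:=\pi\circ\rho$ and $F:=\sum_i F_i$; then $(Z,\Supp(F+\rho_*^{-1}\widehat E))$ is toroidal, establishing (1). Moreover, since $v_1,\dots,v_r$ span an $(r-1)$-simplex of the dual complex of a toroidal model, $\bigcap_i F_i$ is non-empty and irreducible, so every stratum of $(Z,F)$ corresponds to a face of $\sigma$ and hence $\QM(Z,F)=\sigma$ as subsets of $\Val_X$. It then remains to produce a $\bQ$-complement $\Gamma$ of $(X,\Delta)$ which is special with respect to $(Z,F)$ and satisfies $\LC(X,\Delta+\Gamma)=\QM(Z,F)$.

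To build $\Gamma$ I would modify $\Gamma_Y$. Since $X$ is affine and $Y$ is projective over $X$, the divisor $G_Y\le\pi_*^{-1}\Gamma_Y$ is ample over $X$; together with the relative ampleness of $-\sum_i\delta_iF_i$ over $Y$ this shows that $\rho_*^{-1}G_Y-\epsilon\sum_i\delta_iF_i$ is ample over $X$ (hence ample on $Z$) for $0<\epsilon\ll1$, and I fix a general effective member $G_Z$ of its $\bQ$-linear system whose support meets no stratum of $(Z,F+\rho_*^{-1}\widehat E)$ (Bertini). Now start from the crepant pullback $(Z,B_Z^0)$ of $(X,\Delta+\Gamma_Y)$: its coefficient along each $F_i$ equals $1$ (as $F_i$ is an lc place of $\Gamma_Y$), the support is toroidal along every lc center, and the non-$F_i$ components of coefficient $1$ are some of the strict transforms of $\pi$-exceptional divisors (and possibly some components of $\varphi_*^{-1}\Gamma_Y$). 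Using the room provided by $G_Z$, trade the summand $\rho_*^{-1}G_Y$ and all such extra coefficient-$1$ components — shaving the latter by a small amount, controlled via the ACC for log canonical thresholds \cite{HMX-ACC} so as to preserve sub-log-canonicity — against a suitable multiple of $G_Z$ plus an appropriate exceptional correction, keeping the coefficient along every $F_i$ pinned at $1$. This produces a sub-lc boundary $B_Z$ with $K_Z+B_Z\sim_{\bQ,\varphi}0$, $\lfloor B_Z\rfloor=F$, with toroidal support near every lc center, and with $B_Z-\varphi_*^{-1}\Delta\ge G_Z$. Then $\Gamma:=\varphi_*B_Z-\Delta$ is effective, $K_X+\Delta+\Gamma\sim_\bQ0$, the pair $(X,\Delta+\Gamma)$ is lc with $x$ an lc center (being the center of the lc place $\ord_{F_1}$), its lc places are exactly the valuations monomial along $F$, so $\LC(X,\Delta+\Gamma)=\QM(Z,F)=\sigma$, and $\varphi_*^{-1}\Gamma\ge G_Z$, so $\Gamma$ is special with respect to $(Z,F)$; this gives (2) and (3).

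The extraction step is routine, so the hard part is the tie-breaking. There is a real tension to resolve: absolute (equivalently $\varphi$-relative) ampleness on $Z$ in the exceptional directions is only available from the $\rho$-ample divisor $-\sum_i\delta_iF_i$, whereas the coefficient of each $F_i$ in the crepant boundary is rigidly forced to be $1$ for $F_i$ to stay an lc place. One therefore has to carry out the substitution ``$\rho_*^{-1}G_Y\leadsto G_Z$'' while absorbing the induced change along the $F_i$ into the choice of model and exceptional divisors, simultaneously keeping the pair sub-lc, removing \emph{every} coefficient-$1$ component other than the $F_i$, checking that no new lc place is created — in particular none centered on the non-toroidal locus, for which Koll\'ar--Shokurov connectedness (as used for Lemma~\ref{lem:lc center of Fano pair}) is the relevant input — and ensuring that the ample model attached to the perturbed boundary contracts none of the $F_i$. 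Arranging all of this at once is the core of the construction.
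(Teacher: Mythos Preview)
Your extraction step is correct and matches the paper: apply Lemma~\ref{lem:ample model of a simplex} to the toroidal pair $(Y,\widehat E)$ with sub-boundary $\Delta_Y$ to get $\rho\colon Z\to Y$ extracting exactly $F_1,\dots,F_r$ with $-\sum_i \delta_i F_i$ ample over $Y$, where $\delta_i=A_{X,\Delta}(F_i)$.

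The gap is the tie-breaking. You correctly list the constraints --- keep each $F_i$ at coefficient $1$, strip every other coefficient-$1$ component, preserve $\bQ$-linear triviality over $X$, and embed an ample $\bQ$-divisor in the strict transform --- but you never actually construct $B_Z$, and the tools you invoke do not close the argument. ACC for log canonical thresholds is irrelevant here: lowering coefficients only improves sub-log-canonicity, and nothing in this step requires a uniform bound on thresholds. Koll\'ar--Shokurov connectedness plays no role in this lemma either; there is no competing lc center to rule out by a connectedness argument. Your last paragraph effectively acknowledges that the construction is still open.

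The paper resolves the tension you describe by an explicit convex combination rather than by shaving. Set $\tilde F=\sum_i \delta_i F_i$ and $D=\pi^*\Gamma_Y-G\ge 0$. The crucial computation is
\[
K_Z+\Delta_Z+\tilde F \;=\; K_Z + \Bigl(\rho_*^{-1}\Delta_Y \vee \textstyle\sum_i F_i\Bigr),
\]
which shows directly that the toroidal sub-pair $(Z,\Delta_Z+\tilde F)$ is sub-lc with $\LC(Z,\Delta_Z+\tilde F)=\QM(Z,F)$ \emph{exactly}. On the other hand $(Z,\Delta_Z+\rho^*D)$ is the crepant pullback of $(Y,\Delta_Y+D)$, hence sub-lc with $\QM(Z,F)$ among its lc places. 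The convex combination $(Z,\Delta_Z+(1-\varepsilon)\rho^*D+\varepsilon\tilde F)$ therefore has lc places equal to $\QM(Z,F)$. Now choose $\varepsilon$ small enough that both $\frac12 G+\varepsilon D$ and $\frac12\rho^*G-\varepsilon\tilde F$ are ample, so that $\rho^*(G+\varepsilon D)-\varepsilon\tilde F$ is ample; replace it by a general $G'\sim_\bQ \rho^*(G+\varepsilon D)-\varepsilon\tilde F$ and set $\rho^*\pi^*\Gamma=(1-\varepsilon)\rho^*D+\varepsilon\tilde F+G'$. The point you were missing is that the same divisor $\tilde F$ serves both purposes at once: subtracted, it supplies the extra ampleness over $Y$ needed to make $G'$ ample on $Z$; added to $\Delta_Z$, it is precisely the term that pins down $\QM(Z,F)$ as the exact set of lc places. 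No ACC and no connectedness are needed.
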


\begin{proof}
Let $K_Y+\Delta_Y=\pi^*(K_X+\Delta)$ be the crepant pullback. Then $(Y,\Delta_Y)$ is sub-klt and in particular $\lfloor \Delta_Y \rfloor \le 0$. By applying Lemma \ref{lem:ample model of a simplex} to the toroidal pair $(Y,\Supp(E+\pi_*^{-1}\Delta+\Ex(\pi)))$ and the sub-boundary $\Delta_Y$, we deduce that there exists a toroidal birational morphism $\rho\colon Z\to Y$ extracting the divisors $F_1,\dots,F_r$ such that $-\sum_{i=1}^r A_{Y,\Delta_Y}(F_i)\cdot F_i$ is ample over $Y$. Note that $A_{Y,\Delta_Y}(F_i)=A_{X,\Delta}(F_i)$, so this $\bQ$-divisor is the same as $-\sum_{i=1}^r A_{X,\Delta}(F_i)\cdot F_i$. To prove the lemma, we need to find a special $\bQ$-complement $\Gamma$ with respect to $(Z,F)$ such that $\QM(Z,F)=\LC(X,\Delta+\Gamma)$.

First we need to introduce some more notation. Let $(Z,\Delta_Z)$ be the crepant pullback of $(X,\Delta)$, i.e.,
$$\rho^* \pi^*(K_X+\Delta)=K_Z+\Delta_Z,$$
and let $\tF=\sum_{i=1}^r A_{X,\Delta}(F_i)\cdot F_i$. Since $\Gamma_Y$ is a special $\bQ$-complement with respect to $(Y,E)$, we have $\pi^{-1}_* \Gamma_Y\ge G$ for some effective ample $\bQ$-divisor $G$ that does not contain any stratum of $(Y,E)$. Let $$D=\pi^*\Gamma_Y-G\ge 0.$$ 
Since $G$ is ample on $Y$ and $-\tF$ is ample over $Y$, we can choose a rational number $0<\varepsilon\ll 1$ such that both $\frac{1}{2}G+\varepsilon D$ and $\frac{1}{2}\rho^*G-\varepsilon \tF$ are ample. This guarantees that $\rho^*(G+\varepsilon D)-\varepsilon \tF$ is ample.

We break the pullback of $\Gamma_Y$ to $Z$ into three parts:
\[
\rho^*\pi^* \Gamma_Y = (1-\varepsilon)\rho^*D + \varepsilon \tF + (\rho^*(G+\varepsilon D)-\varepsilon \tF).
\]
We claim that
\begin{equation} \label{eq:DMR=simplex}
    \LC(Z,\Delta_Z+(1-\varepsilon)\rho^*D + \varepsilon \tF) = \QM(Z,F).
\end{equation}
Once this is proved, the lemma follows almost immediately: since $\rho^*(G+\varepsilon D)-\varepsilon \tF$ is ample, by Bertini's theorem we can choose some effective $\bQ$-divisor 
$$G'\sim_\bQ \rho^*(G+\varepsilon D)-\varepsilon \tF$$ such that
\[
\LC(Z,\Delta_Z+(1-\varepsilon)\rho^*D + \varepsilon \tF+G') = \QM(Z,F)
\]
still holds. In particular, the ample $\bQ$-divisor $G'$ does not contain any stratum of $(Z,F)$ in its support. Since 
$$(1-\varepsilon)\rho^*D + \varepsilon \tF+G'\sim_\bQ \rho^*\pi^*\Gamma_Y\sim_{\bQ} 0\,/X,$$ we have 
\[
(1-\varepsilon)\rho^*D + \varepsilon \tF+G'=\rho^*\pi^*\Gamma
\]
for the effective $\bQ$-divisor 
$$\Gamma:= \pi_*\rho_*((1-\varepsilon)\rho^*D + \varepsilon \tF+G')\sim_\bQ \Gamma_Y$$ on $X$. By construction $\Gamma$ is a special $\bQ$-complement with respect to $(Z,F)$ and $$\rho^*\pi^*(K_X+\Delta+\Gamma)=K_Z+\Delta_Z+(1-\varepsilon)\rho^*D + \varepsilon \tF+G'.$$ Thus $$\LC(X,\Delta+\Gamma)=\LC(Z,\Delta_Z+(1-\varepsilon)\rho^*D + \varepsilon \tF+G') = \QM(Z,F)$$ as desired.

It remains to prove \eqref{eq:DMR=simplex}. The pair in question is a convex linear combination of $(Z,\Delta_Z+\rho^*D)$ and $(Z,\Delta_Z+\tF)$, thus it suffices to show that:
\begin{enumerate}
    \item both $(Z,\Delta_Z+\rho^*D)$ and $(Z,\Delta_Z+\tF)$ are sub-lc,
    \item $\QM(Z,F)\subseteq \LC(Z,\Delta_Z+\rho^*D)$, and
    \item $\LC(Z,\Delta_Z+\tF)=\QM(Z,F)$.
\end{enumerate}
First, $(Z,\Delta_Z+\rho^*D)$ is sub-lc since it is the crepant pullback of $(Y,\Delta_Y+D)$, which is sub-lc as 
$$K_Y+\Delta_Y+D\le \pi^*(K_X+\Delta+\Gamma_Y).$$ Moreover, all the $F_i$'s are lc places of $(Y,\Delta_Y+D)$ by assumption, thus 
$$\QM(Z,F)\subseteq \LC(Z,\Delta_Z+\rho^*D).$$ On the other hand, a direct computation gives
\[
K_Z+\Delta_Z+\tF = \rho^*(K_Y+\Delta_Y)+\sum_{i=1}^r A_{Y,\Delta_Y}(F_i)\cdot F_i = K_Z+\left(\rho^{-1}_*\Delta_Y \vee \sum_{i=1}^r F_i\right),
\]
where the notation $D_1\vee D_2$ denotes the smallest $\bQ$-divisor $D$ such that $D\ge D_i$ for $i=1,2$. As $\lfloor \Delta_Y \rfloor \le 0$, we hence see that the toroidal pair $(Z,\Delta_Z+\tF)$ is also lc and its lc places are exactly given by $\QM(Z,F)$. Thus we have proved all the properties (1)--(3) above and this finishes the proof.
\end{proof}

\begin{proof}[Proof of Theorem \ref{thm:construction of qdlt fano type model}]
By Lemma \ref{lem:DMR=specified QM}, after possibly replacing the toroidal model $(Y,E)$ and the corresponding special $\bQ$-complement $\Gamma$, we may assume that $\sigma=\QM(Y,E)$. The result then follows from Lemma \ref{lem:MMP step}.
\end{proof}

One of the main applications of Theorem \ref{thm:construction of qdlt fano type model} is the construction of Koll\'ar models for monomial lc places of special $\bQ$-complements.

\begin{cor} \label{cor:km for monomial lc place}
Let $x\in (X,\Delta)$ be a klt singularity and let $v\in \Val_{X,x}$ be a monomial lc place of some special $\bQ$-complement $\Gamma$ with respect to some toroidal model $(Y,E)\to (X,\Delta)$. Then there exists a Koll\'ar model $\pi'\colon (Y',E')\to (X,\Delta)$ such that $v\in \QM(Y',E')$.
\end{cor}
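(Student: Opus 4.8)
The plan is to deduce Corollary \ref{cor:km for monomial lc place} directly from Theorem \ref{thm:construction of qdlt fano type model} together with Proposition \ref{prop:qdlt to km}. The key observation is that the monomial lc place $v$ lies in some $\QM_\eta(Y,E)$, and by definition of $\LC(\Gamma;Y,E)$ it is an lc place of $(X,\Delta+\Gamma)$. After passing to an appropriate toroidal blowup of $(Y,E)$ (which does not affect whether $\Gamma$ is special), I may assume that $v$ lies in the relative interior of a simplicial cone of $\QM(Y,E)$; more precisely, by further blowing up I can arrange that the codimension of the center of $v$ equals the rational rank $r$ of $v$, so that $v=v_\alpha$ with $\alpha$ having $\bQ$-linearly independent coordinates, and $v$ lies in the interior of an $r$-dimensional simplicial cone $\tau\subseteq\QM(Y,E)$ spanned by the toroidal divisors $E_1,\dots,E_r$. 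Each $E_i$ is then a divisorial valuation in $\LC(\Gamma;Y,E)$, and since $A_{X,\Delta+\Gamma}$ is linear on $\QM_\eta(Y,E)$ and vanishes at the divisorial generators $\ord_{E_i}$, the whole cone $\sigma:=\tau$ is a rational simplex contained in $\LC(\Gamma;Y,E)$ in the sense defined after the theorem.

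Now I apply Theorem \ref{thm:construction of qdlt fano type model} to this rational simplex $\sigma$: there is a model $\pi'\colon(Y',E')\to(X,\Delta)$ of qdlt Fano type with $\QM(Y',E')=\sigma$ as a subset of $\Val_X$. Since $v\in\sigma=\QM(Y',E')$, the valuation $v$ is realized on this qdlt Fano type model. It remains to upgrade the qdlt Fano type model to an honest Koll\'ar model while keeping $v$ in its $\QM$-set. For this I invoke Proposition \ref{prop:qdlt to km}: because $\pi'(E')=\{x\}$ (all the $E_i$ are divisors over the singular point $x$), the ample model $g\colon Y'\dashrightarrow Y''$ of $-(K_{Y'}+\pi'^{-1}_*\Delta+E')$ over $X$ is an isomorphism at the generic point of every stratum of $E'$, and the induced $\pi''\colon(Y'',E''=g_*E')\to(X,\Delta)$ is a Koll\'ar model of $x\in(X,\Delta)$. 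Since $g$ is an isomorphism at the generic points of all strata of $E'$, it induces an identification $\QM(Y',E')=\QM(Y'',E'')$ inside $\Val_X$, so $v\in\QM(Y'',E'')$, and $(Y'',E'')$ is the desired Koll\'ar model.

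I should double-check the hypothesis $\pi'(E')=\{x\}$ needed for Proposition \ref{prop:qdlt to km}. The divisors $E_1,\dots,E_r$ generating $\sigma$ are chosen among divisors over $X$ whose center is $\{x\}$ — this is automatic because $v\in\Val_{X,x}$ and, after the blowups, the center $\eta$ of $v$ lies over $x$, so every $E_i$ through $\eta$ has center $x$ as well (monomial lc places of a local $\bQ$-complement have center $x$ since $x$ is an lc center). Equivalently one can arrange from the start, using Lemma \ref{lem-localmonomial}-type reasoning, that the toroidal model $(Y,E)$ has $\pi(E)=\{x\}$; then all divisors extracted in the construction of $(Y',E')$ also map to $x$, giving $\pi'(E')=\{x\}$.

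The only genuine content beyond bookkeeping is the reduction that puts $v$ in the interior of an $r$-dimensional simplicial cone of some toroidal model — i.e. that after toroidal blowups one can take $\operatorname{codim}_Y(\eta)=\operatorname{rat.rk.}(v)=r$ with the generating divisors lying in $\LC(\Gamma;Y,E)$; but this is exactly the standard fact recorded right after Definition \ref{d-quasimono}, combined with the linearity of the log-discrepancy-type function $A_{X,\Delta+\Gamma}$ on $\QM_\eta$, which forces the entire simplicial cone through the lc place $v$ to consist of lc places of $(X,\Delta+\Gamma)$. So I do not expect a serious obstacle here; the corollary is essentially a formal consequence of Theorem \ref{thm:construction of qdlt fano type model} and Proposition \ref{prop:qdlt to km}, the main point being to verify that all the relevant divisors have center $\{x\}$ so that the resulting qdlt Fano type model is in fact a Koll\'ar model.
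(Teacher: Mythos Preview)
Your overall strategy---apply Theorem \ref{thm:construction of qdlt fano type model} to obtain a qdlt Fano type model, then pass to the ample model via Proposition \ref{prop:qdlt to km}---is exactly the paper's approach. The difference lies in how you produce the rational simplex $\sigma\subseteq\LC(\Gamma;Y,E)$ containing $v$, and there your argument has a genuine gap.

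You claim that $A_{X,\Delta+\Gamma}$ is linear on $\QM_\eta(Y,E)$, and use this to conclude that the whole face $\tau$ through $v$ lies in $\LC(\Gamma;Y,E)$. But $(Y,E)$ is only a toroidal model of $(X,\Delta)$, not a log resolution of $(X,\Delta+\Gamma)$: while $A_{X,\Delta}$ is linear on $\QM_\eta(Y,E)$, the function $w\mapsto w(\Gamma)$ is only concave piecewise linear (Lemma \ref{lem-valuationconvexity}), so $A_{X,\Delta+\Gamma}=A_{X,\Delta}-w(\Gamma)$ is merely convex piecewise linear. A nonnegative convex piecewise linear function can vanish at an interior point $v$ without vanishing on the entire cone, so you cannot deduce that the generating divisors $E_i$ are lc places. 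The ``standard fact'' after Definition \ref{d-quasimono} only arranges $\mathrm{codim}_Y(\eta)=\mathrm{rat.rk.}(v)$; it does not force the new vertices into $\LC(\Gamma;Y,E)$. (There is also a secondary issue: your $E_i$, being vertices of a full face, need not have center $\{x\}$, so $\pi'(E')=\{x\}$ is not automatic.)

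The paper bypasses both problems by not insisting that $\sigma$ be a face of $\QM(Y,E)$. It takes $\Sigma$ to be the smallest face containing $v$ and simply picks a rational simplex $\sigma\subseteq\mathrm{int}(\Sigma)\cap\LC(\Gamma;Y,E)$ with $v\in\sigma$; this exists because $\LC(\Gamma;Y,E)$ is rational polyhedral. Since the vertices of $\sigma$ lie in $\mathrm{int}(\Sigma)$, they are automatically centered at $x$, so Proposition \ref{prop:qdlt to km} applies directly. Your argument is easily repaired by making this same choice of $\sigma$ (and then the preliminary blowup becomes unnecessary).
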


\begin{proof}
Let $\Sigma\subseteq \QM(Y,E)$ be the smallest simplicial face that contains $v$ and choose some rational simplex $$\sigma\subseteq {\rm int}(\Sigma)\cap \LC(\Gamma;Y,E)$$ such that $v\in \sigma$. By Theorem \ref{thm:construction of qdlt fano type model}, we get a model $\pi\colon (\tY,\tE)\to (X,\Delta)$ of qdlt Fano type such that $\QM(\tY,\tE)=\sigma$. Since $v$ is centered at $x$, the same holds for all valuations in ${\rm int}(\Sigma)$. In particular $\pi(\tE)=\{x\}$ and thus the ample model $\pi\colon (Y',E')\to (X,\Delta)$ of $-(K_{\tY}+\pi_*^{-1}\Delta+\tE)$ is a Koll\'ar model by Proposition \ref{prop:qdlt to km}. By construction $v\in \sigma=\QM(\tY,\tE)=\QM(Y',E')$.
\end{proof}

\section{Koll\'ar models and finite generation}

The goal of this section is to prove the following finite generation statement, which is a more precise version of Theorem \ref{thm-monomialfinitegeneration}.

\begin{thm} \label{t-localHRFG}
Let $x\in (X=\Spec(R),\Delta)$ be a klt singularity, and let $v\in \Val_{X,x}$ be a quasi-monomial valuation. Then the following are equivalent.
\begin{enumerate}
    \item The associated graded ring $\gr_v R$ is finitely generated and the central fiber $(X_v,\Delta_v)$ of the induced degeneration is klt.
    \item The valuation $v$ is a monomial lc place of a special $\bQ$-complement $\Gamma$ with respect to some toroidal model $(Y,E)$ $($see Definition \ref{d-specialcomplement}$)$. 
    \item There exists a Koll\'ar model $($see Definition \ref{defn:qdlt Fano model}$)$ $\pi\colon (Y,E)\to (X,\Delta)$ such that $v\in \QM(Y,E)$.
\end{enumerate}
\end{thm}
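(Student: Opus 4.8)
The plan is to establish the cycle of implications $(3)\Rightarrow(1)\Rightarrow(2)\Rightarrow(3)$, using the structural results already assembled in the previous section together with the degeneration machinery for Koll\'ar models to be developed in this section.

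\emph{Step 1: $(2)\Rightarrow(3)$.} This is exactly Corollary \ref{cor:km for monomial lc place}: given that $v$ is a monomial lc place of a special $\bQ$-complement with respect to some toroidal model $(Y,E)$, we extract a suitable rational simplex $\sigma$ through $v$ inside $\LC(\Gamma;Y,E)$ and apply Theorem \ref{thm:construction of qdlt fano type model} followed by Proposition \ref{prop:qdlt to km} to produce a Koll\'ar model $(Y',E')\to (X,\Delta)$ with $v\in\QM(Y',E')$. Nothing new is needed here.

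\emph{Step 2: $(3)\Rightarrow(1)$.} This is the heart of the section and the main obstacle. Starting from a Koll\'ar model $\pi\colon (Y,E=\sum_{i=1}^r E_i)\to (X,\Delta)$ with $v\in\QM(Y,E)$, the idea is to use the $r$ exceptional components $E_i$ to build a $\bG_m^r$-equivariant flat degeneration of $(X,\Delta)$ over $\bA^r$ (following the multiple-degeneration strategy of \cite{Xu-HRFG}), and then to prove that every fiber of this family over $\bA^r$ is irreducible. The key technical input is the family version of Koll\'ar models (Definition \ref{defn:km family} and Lemma \ref{lem:flat limit}): iterating the extraction and contraction in Lemma \ref{lem:flat limit}(4) along the coordinate directions of $\bA^r$, one shows inductively that the intermediate degenerations remain locally stable families of Koll\'ar models, hence (by Lemma \ref{lem:flat limit}(2)--(3)) have irreducible reduced central fibers at each stage. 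Combined with the surjectivity statement in Lemma \ref{lem:flat limit}(4) — which guarantees that the Rees-type algebras specialize correctly — this forces $\gr_v R$ (for any $v\in\QM(Y,E)$, which is recovered by a monomial valuation along the toroidal stratum on the total space over $\bA^r$) to be finitely generated, and identifies the central fiber of the $v$-degeneration with an iterated fiber of the $\bA^r$-family, which is klt by Lemma \ref{lem:flat limit}(3) and adjunction. The precise bookkeeping of how irreducibility of all fibers over $\bA^r$ translates into finite generation of $\gr_vR$ is deferred to the subsection on "irreducibility implies finite generation''; I expect the delicate point to be checking that the value monoid and the grading behave compatibly with the iterated degeneration, i.e.\ that no collapsing of gradings occurs, which is precisely where irreducibility of \emph{all} (not just the central) fibers is used.

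\emph{Step 3: $(1)\Rightarrow(2)$.} Assume $\gr_v R$ is finitely generated with klt central fiber $(X_v,\Delta_v)$. The Rees construction gives a $\bG_m$-equivariant degeneration of $(X,\Delta)$ to $(X_v,\Delta_v)$, and the grading is given by the Reeb field $\xi_v$. One then runs the argument in the spirit of \cite[Corollary 3.4]{LXZ-HRFG}, but locally: one produces a $\bQ$-complement $\Gamma_v$ of the log Fano cone $(X_v,\Delta_v;\xi_v)$ that is $\bT$-invariant and special with respect to a $\bT$-equivariant toroidal model of $X_v$ (using that $X_v$ is klt together with boundedness of complements \cite{Birkar-bab-1} in the fiber, or the construction from \cite{XZ-uniqueness} adapted equivariantly), and then lifts it to a $\bQ$-complement $\Gamma$ of $(X,\Delta)$ whose birational transform on an appropriate toroidal model $(Y,E)$ still contains an ample $\bQ$-subdivisor, with $v$ among its monomial lc places. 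Concretely, one degenerates a general special complement of $(X,\Delta)$ and checks it remains special in the limit, or equivalently one chooses the complement on $X_v$ first and deforms it back; that $v\in\QM(Y,E)\cap\LC(X,\Delta+\Gamma)$ follows because $v$ is the monomial valuation reading off the grading. Here the main thing to verify is that "specialness'' (the existence of the ample $\bQ$-subdivisor in the birational transform) is preserved under the degeneration/lifting, which is a semicontinuity argument for ampleness in flat families.

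With the three implications in place, the theorem follows; Theorem \ref{thm-monomialfinitegeneration} is then the implication $(2)\Rightarrow(1)$, and the remaining global Corollary \ref{cor-global} is the same argument with $R=\bigoplus_m H^0(-mr(K_X+\Delta))$ in place of the local ring.
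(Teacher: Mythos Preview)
Your plan for $(2)\Rightarrow(3)$ and $(3)\Rightarrow(1)$ is essentially the paper's approach: Corollary \ref{cor:km for monomial lc place} handles the former, and for the latter the paper builds the $\bG_m^r$-degeneration over $\bA^r$ (Propositions \ref{pro-localstablefamily} and \ref{pro:degeneration remain qdlt Fano}) by the induction you describe, then uses Lemmas \ref{lem:induced valuation=v_t} and \ref{lem:a=I irrational weights} together with Corollary \ref{cor:km imply fg} to convert irreducibility of the central fiber into $\fa_\lambda(v_s)=I_{s,\lambda}$ and hence finite generation with klt central fiber. Your identification of Lemma \ref{lem:flat limit} as the inductive engine is on point.

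Your Step 3, however, is where you diverge from the paper and where there is a genuine gap. The paper's argument for $(1)\Rightarrow(2)$ (Lemma \ref{lem:fg->special complement}) does \emph{not} go through the cone $(X_v,\Delta_v;\xi_v)$ at all. Instead it fixes a log smooth model $(Y,E)$ with $v\in\QM_\eta(Y,E)$ and an ample exceptional divisor, manufactures a divisor $D$ on $X$ whose strict transform is ample and avoids the strata of $E$, then uses \cite{LX18}*{Lemma 2.10} to get $\gr_wR\cong\gr_vR$ for all $w$ in a neighbourhood $U$ of $v$, so that $(X_w,\Delta_w+\varepsilon D_w)$ remains klt for divisorial $w\in U$. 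Each such $w$ corresponds to a primitive divisor, and Lemma \ref{lem:N-complement} gives an $N$-complement of $(X,\Delta+\varepsilon D)$ having $w$ as an lc place, with $N$ \emph{uniform} in $w$. Finally a finiteness argument (only finitely many monomials of bounded multiplicity at $\eta$ are relevant to computing $v(\Gamma)$ for an $N$-complement $\Gamma$) shows that the same complement that works for some nearby divisorial $w_0$ also has $v$ as an lc place. Adding $\varepsilon D$ makes the complement special with respect to $(Y,E)$.

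Your proposed route---produce a $\bT$-invariant special complement on $X_v$ and lift it, or degenerate a special complement from $X$---is not obviously workable as stated. Lifting a complement from a special fiber of a one-parameter family to the general fiber is not automatic (the set of complements can jump), and even if one could lift, there is no evident mechanism forcing the lifted complement to be special with respect to a given toroidal model containing $v$ in its monomial cone. Your ``semicontinuity of ampleness'' remark does not resolve this: ampleness is open, but you would need it in the wrong direction (from special to general fiber you are fine, but you also need the birational transform on a \emph{fixed} model $(Y,E)$ upstairs to contain an ample subdivisor, and the model is not part of the degeneration data). The paper circumvents all of this by never working on $X_v$: the uniform $N$-complement plus the monomial-counting trick is the missing idea in your sketch.
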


We explain some notation in the above theorem (c.f. \cite{LXZ-HRFG}*{Theorem 4.2}). Assuming the finite generation of $\gr_v R$, then similar to the Rees construction \eqref{eq-rees} for a primitive divisor, we define $X_v:=\Spec(\gr_v R)$, and let $\Delta_v$ be the induced degeneration of $\Delta$ to $X_v$. More precisely, suppose $\Delta=\sum_{i=1}^{l} a_i \Delta_i$ where $\Delta_i$ is a prime divisor on $X$ and $a_i\in \bQ_{\geq 0}$. Let $I_{\Delta_i}\subseteq R$ be the ideal of $\Delta_i$. Let $\mathrm{in}(I_{\Delta_i})\subseteq \gr_v R$ be the initial ideal of $I_{\Delta_i}$, i.e.
$$\mathrm{in}(I_{\Delta_i})=\bigoplus_{\lambda} I_{\Delta_i, \lambda}, \mbox{ where } I_{\Delta_i, \lambda}=\{{\rm Im} (f)\in \fa_{\lambda}(v)/\fa_{>\lambda}(v)\ | \ f\in I_{\Delta_i} \mbox{ and }v(f)\ge \lambda \}.$$ 
Then $\Delta_v:=\sum_{i=1}^l a_i \Delta_{v,i},$ where $\Delta_{v,i}$ is the divisorial part of the closed subscheme $V(\mathrm{in}(I_{\Delta_i}))$ in  $X_v$, i.e. $\Delta_{v,i}$ and $V(\mathrm{in}(I_{\Delta_i}))$ coincide away from a codimension $2$ subset of $X_v$. When $v=\ord_E$ is a divisorial valuation induced by some primitive divisor $E$ over $x\in X$, this coincides with the construction in \eqref{eq-rees}.

Note that one of the implications in Theorem \ref{t-localHRFG}, namely $(2)\Rightarrow (3)$, has already been established in Corollary \ref{cor:km for monomial lc place}.

\subsection{Special degenerations come from lc places of special complements}

As in \cite{LXZ-HRFG}*{Theorem 4.2}, we first prove the easier part of the remaining implications, i.e. $(1)\Rightarrow (2)$, in Theorem \ref{t-localHRFG}. We will need an auxiliary lemma which is a local analog of \cite{BLX-openness}*{Theorem A.2}.

\begin{lem} \label{lem:N-complement}
Let $E$ be a primitive divisor over a klt singularity $x\in (X,\Delta)$.
Assume that the special fiber $x_0\in (X_0,\Delta_0)$ of the Rees construction \eqref{eq-rees} is lc. Then $E$ is an lc place of some $N$-complement of $x\in (X,\Delta)$, where the integer $N$ only depends on the dimension of $X$ and the coefficients of $\Delta$. 
\end{lem}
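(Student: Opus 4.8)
# Proof Proposal for Lemma \ref{lem:N-complement}

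The plan is to follow the strategy of \cite{BLX-openness}*{Theorem A.2}, adapted to the local setting, where the role of the $\bN$-complement of a log Fano pair is replaced by an $\bN$-complement of the klt singularity $x\in(X,\Delta)$ for which $E$ is an lc place. First I would set up the Rees degeneration $p\colon(\cX,\Delta_\cX)\to\bA^1$ associated to the primitive divisor $E$ as in \eqref{eq-rees}, with central fiber $x_0\in(X_0,\Delta_0)$, which is assumed to be lc (and is automatically klt away from $x_0$ since $X\setminus\{x\}\cong X_0\setminus\{x_0\}$ up to the $\bG_m$-action). The key point is that $(X_0,\Delta_0)$ is a klt singularity degeneration carrying a $\bG_m$-action whose Reeb/weight datum records $E$: concretely, the canonical valuation $\mathrm{ord}_{X_0}$ (or equivalently the weight grading) pulls back to $\mathrm{ord}_E$ on $X$. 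So an $\bN$-complement of $x_0\in(X_0,\Delta_0)$ that is $\bG_m$-invariant and whose unique lc place (in the relevant sense) corresponds to the canonical valuation can be "spread out" over $\bA^1$ to an $\bN$-complement of the generic fiber, hence of $x\in(X,\Delta)$, having $E$ as an lc place.

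The main steps, in order, would be: (i) Use lc-ness of $x_0\in(X_0,\Delta_0)$ together with the $\bG_m$-action to produce a Koll\'ar component (or more precisely, invoke \cite{Xu-kollar}*{Lemma 1} applied to a $\bQ$-complement of $x_0$) and then apply the theory of $\bN$-complements for klt singularities: by the local analog of boundedness of complements (which follows from the global statement in \cite{HMX-ACC}/\cite{Birkar-bab} applied to a Koll\'ar component, or directly from \cite{LX20}*{Section 5} type arguments), there is an integer $N=N(\dim X,\mathrm{Coeff}(\Delta))$ and an $\bN$-complement $\Gamma_0$ of $x_0\in(X_0,\Delta_0)$. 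Because the statement of complement-boundedness is $\bG_m$-equivariant (one can average, or choose $\Gamma_0$ inside a $\bG_m$-invariant linear system since $-K_{X_0}-\Delta_0$ is $\bG_m$-linearized), we may take $\Gamma_0$ to be $\bG_m$-invariant. (ii) Choose $\Gamma_0$ so that $\mathrm{ord}_E$ (equivalently the canonical valuation of the grading) is an lc place of $(X_0,\Delta_0+\Gamma_0)$; this is where $E$ enters, and it is arranged by using that the canonical valuation itself computes an lc place of a $\bQ$-complement of $x_0$, then upgrading to an $\bN$-complement via the bounded-complements machinery while preserving the lc place (a standard "lift an lc place through a complement" argument, cf. \cite{LX20}). (iii) Lift $\Gamma_0$ to a $\bG_m$-invariant divisor $\Gamma_\cX$ on $\cX$ restricting to $\Gamma_0$ on $X_0$: since $N(K_\cX+\Delta_\cX)$ is Cartier near the central fiber and $H^0$ is $\bG_m$-equivariant and semicontinuous, a $\bG_m$-eigensection realizing $N\Gamma_0$ extends (after possibly shrinking $\bA^1$) to a $\bG_m$-eigensection over $\bA^1$; inversion of adjunction / openness of lc (\cite{Kol13}*{Theorem 4.9}) then shows $(\cX,\Delta_\cX+\Gamma_\cX)$ is lc near the central fiber, hence over a punctured neighborhood, so the restriction $\Gamma$ to the fiber $X\cong X_1$ is an $\bN$-complement of $x\in(X,\Delta)$. (iv) Finally, since $E$ is an lc place of the central fiber and lc places of a $\bG_m$-degeneration specialize, $E$ is an lc place of $(X,\Delta+\Gamma)$ — this is precisely the direction recorded in Lemma \ref{lem-kollardegenerate} type arguments run in reverse, using that $\mathrm{ord}_E$ is the "initial" valuation attached to the grading.

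The hard part will be step (i)--(ii): obtaining the bounded $\bN$-complement of the central fiber $x_0\in(X_0,\Delta_0)$ \emph{equivariantly} and \emph{with $E$ as a prescribed lc place}. The bounded complements for klt singularities follow from boundedness of complements for a Koll\'ar component together with cone constructions, but one must be careful that $E$ need not be divisorial of the expected type on $X_0$ — it could be the canonical (quasi-monomial) valuation of a higher-rank grading. The cleanest route is likely to not separate out $E$ at all on $X_0$, but rather to observe that \emph{any} $\bN$-complement of the generic fiber whose central-fiber limit is lc must have $E$ as an lc place provided the limit complement is the $\bG_m$-limit (which forces the limit's lc locus to contain the locus cut out by the grading, i.e.\ to contain the center of $E$ with the right discrepancy). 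This reduces the problem to: produce a $\bG_m$-invariant $\bN$-complement $\Gamma_\cX$ over $\bA^1$ whose special fiber is lc — which is exactly the local $\bG_m$-equivariant analog of \cite{BLX-openness}*{Theorem A.2}, and the proof there (running an MMP to extract a Koll\'ar component of $x_0$, lifting a bounded complement from it, spreading out) goes through verbatim once one checks $\bG_m$-equivariance is preserved at each step, which it is because every choice (ample linear systems, general members) can be made inside $\bG_m$-linearized data on a quasi-projective variety with a linearized torus action.
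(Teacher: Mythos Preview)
Your approach differs substantially from the paper's, and as written it has a real gap at the heart of the argument.

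The paper does \emph{not} produce an $N$-complement on the central fiber $(X_0,\Delta_0)$ and then lift. Instead it works entirely on the generic fiber: it constructs the associated prime blowup $\pi\colon Y\to X$, spreads it out to a family $\cY\to\cX\to\bA^1$, passes to a $\mu_\ell$-quotient so that the central fiber of the quotient is literally the cone over $E$ (with the blowup of the cone vertex), and then uses the cone correspondence together with inversion of adjunction to deduce that $(\cY,Y_0+\cE+\Delta_\cY)$ is lc. Restricting to the general fiber gives that $(Y,E+\pi_*^{-1}\Delta)$ is lc. Since $-(K_Y+E+\pi_*^{-1}\Delta)\sim_{\bQ,\pi} -A_{X,\Delta}(E)\cdot E$ is $\pi$-ample, Birkar's boundedness of relative complements applies \emph{directly on $Y$}, yielding an $N$-complement of $(Y,E+\pi_*^{-1}\Delta)$ whose pushforward is the desired $N$-complement of $x\in(X,\Delta)$ with $E$ as an lc place. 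No lifting from the central fiber is needed.

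Your route has two concrete problems. First, you repeatedly treat $x_0\in(X_0,\Delta_0)$ as a klt singularity (invoking Koll\'ar components via \cite{Xu-kollar}, ``the theory of $N$-complements for klt singularities'', etc.), but the hypothesis only gives lc; the parenthetical claim that $(X_0,\Delta_0)$ is klt away from $x_0$ because ``$X\setminus\{x\}\cong X_0\setminus\{x_0\}$'' is false --- $X_0=\Spec(\gr_E R)$ is not isomorphic to $X$ away from the vertex. Second, and more seriously, your steps (i)--(ii) never actually establish the one input Birkar needs: that after extracting $E_0$ (the weight divisor) on $X_0$, the pair $(Y_0,E_0+D_0)$ is lc with anti-ample log canonical class. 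This is not automatic from $(X_0,\Delta_0)$ being lc, and proving it is exactly the content of the paper's $\mu_\ell$-quotient and inversion-of-adjunction argument. Once that is done, one may as well apply Birkar on the generic-fiber blowup $Y$ (as the paper does) rather than on $Y_0$ followed by a lifting step (your (iii)) that you leave vague. In short, the genuine work --- showing the relevant blowup pair is lc so that Birkar applies with $E$ built into the boundary --- is precisely what your outline skips over.
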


\begin{proof}
We follow the proof of \cite{BLX-openness}*{Propositions A.3 and A.4}. To begin with, we show that the associated prime blow up $\pi\colon Y\to X$ of $E$ specializes to some $Y_0\to X_0$. Indeed, let $\fa_j=\pi_*\mathcal{O}_Y(-jE)\subseteq \cO_X$ and let $(\cX,\Delta_{\cX})\to \bA^1$ be the $\bG_m$-equivariant family from \eqref{eq-rees}. Then the flat extension of $\fa_j[t,t^{-1}]\subseteq \cO_{X\times (\bA^1\setminus\{0\})}$ to $\cX$ is given by $\widetilde{\fa}_j:=\bigoplus_{i\in\bZ} t^{-i}(\fa_i\cap \fa_j)$. Since 
\[
\bigoplus_{j\in \bN} \widetilde{\fa}_j/\widetilde{\fa}_{j+1}\cong
\bigoplus_{j\in \bN} t^{-j}(\fa_j/\fa_{j+1}) 
\]
is finitely generated, and if we denote by
\[
\cE=\Proj \left(\bigoplus_{j\in\bN}\widetilde{\fa}_j / \widetilde{\fa}_{j+1} \right)\,,
\]
then we have $\cE\cong E\times \bA^1$. This implies $\bigoplus_{j\in\bN}\widetilde{\fa}_j$ is finitely generated. 
Consider the morphism 
\[
\rho\colon \cY:=\Proj_{\cX} \left(\bigoplus_{j\in\bN}\widetilde{\fa}_j \right)\to \cX
\]
over $\bA^1$. Then
\[
\cY\times_{\bA^1} (\bA^1\setminus\{0\})\cong Y\times (\bA^1\setminus\{0\})
\]
and $\rho$ restricts to $\pi\times \mathrm{id}$ over $\bA^1\setminus\{0\}$. The exceptional divisor of $\rho$ is given by $\cE$.
 Let $\Delta_{\cY}=\rho_*^{-1}\Delta_{\cX}$ be the strict transform of $\Delta_{\cX}$, and denote by $Y_0$ the central fiber of $\cY\to \bA^1$.

Let $\ell>0$ be a sufficiently divisible integer such that $\ell E$ is Cartier and $R^1\pi_*\cO_Y(-p\ell E)=0$ for all $p\ge 1$. Through the long exact sequence 
\[
0\to \cO_Y(-(p\ell+1)E)\to \cO_Y(-p\ell E)\to \cO_E(-p\ell E)\to 0,
\]
we get $\fa_{p\ell}/\fa_{p\ell+1}\cong H^0(E,\cO_E(-p\ell E))$. Let $\sigma\colon \cY\to \cY'$ be the quotient by $\mu_\ell$, where the action is via $\mu_{\ell}\subseteq \bG_m$. Let $\cX',\cE',Y'_0,X'_0$ be the induced quotients of $\cX,\cE,Y_0,X_0$, where the corresponding quotient maps are still denoted by $\sigma$. Since the $\mu_\ell$-action is free outside $X_0$ and $Y_0$, we have
\[
K_{\cX}+\Delta_{\cX}+X_0=\sigma^*(K_{\cX'}+\Delta'_{\cX}+X'_0)
\]
for an effective $\bQ$-divisor $\Delta'_{\cX}$ on $\cX'$ and
\[
K_{\cY}+\cE+Y_0+\Delta_{\cY}=\sigma^*(K_{\cY'}+\cE'+Y'_0+\cD')
\]
for an effective $\bQ$-divisor $\cD'$ on $\cY'$. By construction, we have 
\[
X'_0=\Spec\left(\bigoplus_{p\in\bN} \fa_{p\ell}/\fa_{p\ell+1}\right)=\Spec\left(\bigoplus_{p\in\bN} H^0(E,\cO_E(-p\ell E))\right)
\]
is the cone over $E$ with polarization $-\ell E|_E$. As $Y_0\to X_0$ is given by 
\[
{\rm Proj}_{X_0}(\bigoplus_{j\in \bN} \fa_{j}/\fa_{j+1})\to X_0 \,,
\]
$Y_0'={\rm Proj}_{X'_0}(\bigoplus_{p\in \bN} \fa_{p\ell}/\fa_{p\ell+1})$, i.e., $Y'_0\to X'_0$ is the blowup of the cone vertex. Let $E'_0,D'_0$ be the restrictions of $\cE',\cD'$ to $Y'_0$. Note that $E'_0\cong E$, hence we may view $X'_0=X_0/\mu_\ell$ as the cone over $E'_0$.

Denote by $\Delta'_0$ the restriction of $\Delta'_{\cX}$, so it satisfies $\sigma_{|X_0}^*(K_{X'_0}+\Delta'_{0})=K_{X_0}+\Delta_0$. Since the birational transform of $\Delta_{\cX}$ on $\cY$ is $\Delta_{\cY}$, we know the birational transform of $\Delta'_{\cX}$ is $\cD'$. Restricting over $0$, the birational transform of $\Delta'_0$ is $D_0'$, i.e. $\Delta'_0$ is the cone over $D'_0|_{E'_0}$. By assumption and \cite{KM98}*{Proposition 5.20}, $(X_0,\Delta_0)/\mu_\ell$ is lc, hence by \cite{Kol13}*{Section 3.1} we know that $(Y'_0,E'_0+D'_0)$ is also lc. By inversion of adjunction \cite{Kol13}*{Theorem 4.9}, this implies that $(\cY',Y'_0+\cE'+\cD')$ and its crepant cover $(\cY,Y_0+\cE+\Delta_{\cY})$ are both log canonical (a priori we only know they are lc around $Y'_0$, but using the $\bG_m$-action we deduce that the lc condition propagates). In particular, by restricting to the general fiber we see that $(Y,E+\pi_*^{-1}\Delta)$ is lc. Since $(X,\Delta)$ is klt and $-E$ is $\pi$-ample, we also have
\[
-(K_Y+E+\pi_*^{-1}\Delta)\sim_{\bQ,\pi} -A_{X,\Delta}(E)\cdot E
\]
is $\pi$-ample. Hence by the boundedness of relative complements \cite{Birkar-boundedcomplements}*{Theorem 1.8}, there exists an $N$-complement $\Gamma$ of $(Y,E+\pi_*^{-1}\Delta)$ over $x\in X$, where the integer $N$ only depends on the dimension of $X$ and the coefficients of $\Delta$. Then $\pi_*\Gamma$ is an $N$-complement of $x\in (X,\Delta)$ having $E$ as an lc place.
\end{proof}

We now prove the $(1)\Rightarrow (2)$ direction in Theorem \ref{t-localHRFG}.

\begin{lem} \label{lem:fg->special complement}
Let $x\in (X=\Spec(R),\Delta)$ be a klt singularity, and let $v\in \Val_{X,x}$ be a quasi-monomial valuation. Assume that $\gr_v R$ is finitely generated and $(X_v,\Delta_v)$ is klt. Then there exists a log smooth model $(Y,E)$ of $(X,\Delta)$ such that $v$ is a monomial lc place of a special $\bQ$-complement with respect to $(Y,E)$.
\end{lem}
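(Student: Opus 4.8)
The plan is to mimic the proof of \cite{LXZ-HRFG}*{Lemma 3.1} in the local setting, using Lemma \ref{lem:minimizer's complement} as the engine for producing $\bQ$-complements and Lemma \ref{lem:N-complement} to get a uniform bound on the complement. First I would reduce to the divisorial case by a small perturbation argument: since $v$ is quasi-monomial with finitely generated $\gr_v R$, the valuation $v$ lies in $\QM_\eta(Y_0,E_0)$ for some log smooth model $(Y_0,E_0=E_{0,1}+\dots+E_{0,r})$, and I would like to realize $v$ as a monomial lc place by exhibiting a complement whose lc places contain a full-dimensional neighborhood of $v$ inside $\QM_\eta(Y_0,E_0)$. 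The key input is that finite generation of $\gr_v R$ together with kltness of $(X_v,\Delta_v)$ is an \emph{open} condition in a suitable sense: by \cite{BLX-openness}*{Theorem A.2} (whose local analog is essentially Lemma \ref{lem:N-complement}), for any sufficiently small rational perturbation $w$ of $v$ the degeneration $(X_w,\Delta_w)$ is still klt, the ring $\gr_w R$ is still finitely generated, and the central fiber is lc; hence each rational $w$ near $v$ is an lc place of an $N$-complement $\Gamma_w$ of $(X,\Delta)$ for a \emph{uniform} integer $N=N(\dim X,\mathrm{Coeff}(\Delta))$.

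Next I would choose $r$ rational quasi-monomial valuations $w_1,\dots,w_r$ near $v$ whose span is a full-dimensional rational simplicial cone $\sigma\ni v$ inside $\QM_\eta(Y_0,E_0)$, each $w_i$ a divisorial valuation $\tfrac{1}{q_i}\ord_{F_i}$ after scaling. Using \cite{BCHM}*{Corollary 1.4.3} I extract all the $F_i$ simultaneously on a $\bQ$-factorial model $\pi_1\colon X_1\to X$, then run an MMP for $-(K_{X_1}+\Delta_1+\sum F_i)$ over $X$ exactly as in the proof of Lemma \ref{lem:minimizer's complement}; because each $F_i$ is an lc place of the $N$-complement $\Gamma_{w_i}$ (and $A_{X,\Delta}(F_i)$ is correspondingly controlled), the MMP is crepant-birational on the relevant lc centers, does not contract any $F_i$, and terminates with $-(K_{X_1}+\Delta_1+\sum F_i)$ semiample and in fact base point free. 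By Bertini I add a general member to get a $\bQ$-complement $\Gamma$ of $(X,\Delta)$ with all of $F_1,\dots,F_r$ — hence all of $\sigma$, and in particular $v$ — as lc places. Finally, to make $\Gamma$ \emph{special} with respect to a log smooth model, I would follow Lemma \ref{lem-localmonomial}: pick a log smooth model $(Y,E)$ dominating $X_1$ with $v\in \QM(Y,E)$ and with $-F$ relatively ample for some exceptional $F$, then absorb an ample piece of a general section of $\pi_*\cO_Y(-rF)$ into the complement, arranging that its birational transform contains an ample $\bQ$-subdivisor avoiding the strata of $(Y,E)$; by construction the resulting complement is a special $\bQ$-complement and $v$ is a monomial lc place of it.

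The main obstacle is the openness step — showing that kltness of $(X_v,\Delta_v)$ and finite generation of $\gr_v R$ persist for small rational perturbations $w$ of $v$, so that a \emph{uniform} $N$-complement can be extracted. For a single \emph{primitive} (divisorial) $w$ this is exactly Lemma \ref{lem:N-complement} combined with Lemma \ref{lem:fg=primitive}, but to cover a full-dimensional cone $\sigma$ of perturbations one needs to know that the flat degenerations $X_w$ vary in a controlled (ideally bounded, or at least uniformly $N$-complemented) way. I expect this to require a semicontinuity/constructibility argument for the family of degenerations $\{\,(X_w,\Delta_w)\,\}_{w\in\sigma}$ — e.g. spreading out the Rees-type construction over the parameter simplex and invoking inversion of adjunction fiberwise as in the proof of Lemma \ref{lem:N-complement} — so that the lc (hence $N$-complemented, via \cite{Birkar-boundedcomplements}*{Theorem 1.8}) locus is open and contains $v$. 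Once the uniform complement is in hand, the MMP and Bertini steps are routine and essentially identical to the arguments already given for Lemmas \ref{lem:minimizer's complement} and \ref{lem-localmonomial}.
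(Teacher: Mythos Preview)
Your proposal has the right overall shape but contains a genuine gap and diverges from the paper's argument in a way that matters.

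\textbf{The misplaced engine.} You invoke Lemma~\ref{lem:minimizer's complement} as the tool for producing complements, but that lemma applies only to \emph{minimizers} of $\hvol$; nothing in the present hypothesis says $v$ minimizes anything. The ACC/MMP argument in Lemma~\ref{lem:minimizer's complement} works because all the $F_i$ have small log discrepancy with respect to a \emph{common} graded ideal sequence $\fab(v_0)$, which is what lets you control the MMP on $X_1$. In your setup you only know that each $F_i$ is an lc place of its \emph{own} $N$-complement $\Gamma_{w_i}$; there is no common sub-lc boundary against which all the $A_{X,\Delta}(F_i)$ are simultaneously small, so the step ``the MMP is crepant-birational on the relevant lc centers, does not contract any $F_i$'' is not justified. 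This is the real gap.

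\textbf{How the paper actually argues.} The paper's proof (following \cite{LXZ-HRFG}*{Lemma 4.3}, not 3.1) avoids this entirely and is considerably simpler. The ``openness'' you are worried about is supplied directly by \cite{LX18}*{Lemma 2.10}: since $\gr_v R$ is finitely generated, there is a neighbourhood $U\subseteq\QM_\eta(Y,E)$ of $v$ on which $\gr_w R\cong \gr_v R$ for all $w\in U$, via a fixed choice of generators. No family argument or boundedness is needed. Second, the paper absorbs the ample piece $\varepsilon D$ into the boundary \emph{before} taking complements, so that Lemma~\ref{lem:N-complement} produces, for any single divisorial $w_0\in U$, an $N$-complement $\Gamma_0$ of $(X,\Delta+\varepsilon D)$ with $w_0$ as an lc place. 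Third, and this is the key trick you are missing, the paper does \emph{not} try to build a simplex of lc places. Instead it transfers the lc-place property from a single $w_0$ back to $v$ by a finite-monomial argument: since $N$-complements have bounded multiplicity at $\eta$, only finitely many monomials are relevant for computing $w(\Gamma)$ for $w\in U$; hence after shrinking $U$, the condition $w(\Gamma)=A_{X,\Delta+\varepsilon D}(w)$ holds for $w_0$ if and only if it holds for $v$. This immediately gives that $v$ is an lc place of $\Gamma':=\varepsilon D+\Gamma_0$, which is special by construction since $\pi_*^{-1}\Gamma'\ge \varepsilon G$.

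So: replace the $r$-divisor MMP with a single divisorial approximation plus the finite-monomial transfer, and cite \cite{LX18}*{Lemma 2.10} for the openness.
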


\begin{proof}
The argument is the same as the ones in \cite{LXZ-HRFG}*{Lemma 4.3}. Let $q$ be the rational rank of $v$. Let $\pi\colon (Y,E)\to (X,\Delta)$ be a log smooth model such that $-F$ is $\pi$-ample for some $\pi$-exceptional effective divisor $F$ and that $v\in \QM_\eta(Y,E)$ for some codimension $q$ point $\eta\in Y$. Let $m_0\in\bN$ be a sufficiently large and divisible integer, let $f\in \pi_*\cO_Y(-m_0 F)$ be a general element, and let $D={\rm div}(f)\subseteq X$. Then the strict transform $G:=\pi_*^{-1}D\sim \pi^*D-m_0 F$ is ample and does not contain any stratum of $(Y,E)$. Since $\gr_v R$ is finitely generated, we can choose some $f_0:=f,f_1,\dots,f_\ell\in R$ whose restrictions form a (finite) set of generators $\bar{f}_0,\dots, \bar{f}_\ell$ of $\gr_v R$ (in particular, $f_0,\dots, f_\ell$ generates $R$). By enlarging the set of generators, we may also assume that all $\mathrm{in}(I_{\Delta_i})\subseteq \gr_v R$ are generated by the restrictions of some elements from $f_0,\dots,f_{\ell}$.

Choose some rational number $0<\varepsilon\ll 1$ such that $(X_v,\Delta_v+\varepsilon D_v)$ remains klt. By \cite{LX18}*{Lemma 2.10}, there exists a small neighbourhood $U\subseteq \QM_\eta(Y,E)$ of $v$ such that for all valuations $w\in U$, we have an isomorphism $\gr_w R\cong \gr_v R$ sending the restrictions of $f_0,\dots,f_\ell$ in $\gr_v R$ to their respective restrictions in $\gr_w R$. Then $$(X_w,\Delta_w+\varepsilon D_w)\cong (X_v,\Delta_v+\varepsilon D_v)$$ is also klt for any divisorial valuations $w\in U$. By Lemma \ref{lem:fg=primitive}, we know that $w$ is induced by some primitive divisor over $x\in X$, and thus by Lemma \ref{lem:N-complement} we further deduce that there exists an integer $N$ that only depends on $\dim(X)$ and the coefficients of $(X,\Delta+\varepsilon D)$, such that for any divisorial valuation $w_0\in U$, there is an $N$-complement $\Gamma_0$ of $(X,\Delta+\varepsilon D)$ such that $w_0$ is an lc place of $(X,\Delta+\varepsilon D+\Gamma_0)$. By enlarging $N$, we may assume that $N\Gamma_0\sim -N(K_X+\Delta+\varepsilon D)$ is Cartier.

Recall that $v(g)$ ($g\in R$) is computed as the smallest weight of monomials in the power series expansion of $g$ at the point $\eta$. As $\Gamma$ varies among the $N$-complements, we have $\mult_\eta(\pi^*\Gamma)\le C$ for some absolute constant $C>0$, as otherwise $(X,\Delta+\Gamma)$ would not be lc. Possibly after shrinking $U$, there also exist some constants $a,b>0$ depending only on $v$ such that $$a\cdot \mult_\eta(u) \le w(u)\le b\cdot\mult_\eta(u)$$ for all monomials $u\in\hat{\cO}_{Y,\eta}$ and all $w\in U$. In particular, all the monomials that are responsible for computing $v(\Gamma)$ have bounded multiplicity (in terms of $a,b,C$ and $N$).

Since there are only finitely many monomials with bounded multiplicity, we conclude that for any $N$-complement $\Gamma$, the value of $w(\Gamma)$ is determined by only finitely many such monomials. In other words, there are finitely many $\beta_{j}\in \bZ_{\ge 0}^q$ ($j\in J$), such that for any given $N$-complement $\Gamma=\frac{1}{N}\{g=0\}$ and any $w\in U$, we have
\[
w(\Gamma)=\frac{1}{N} \min_{j\in J_0}\{ \langle \alpha_w, \beta_j \rangle\},
\]
where $\alpha_w\in \QM_\eta(Y,E)\cong \bR^q_{\ge 0}$ is the coordinate of $w$ at $\eta$, and $J_0\subseteq J$ is the subset consisting of those indices $j$ for which the coefficients $c_{\beta_j}$ in the local expansion $g=\sum_{\beta} c_{\beta}z^{\beta}\in \hat{\cO}_{Y,\eta}$ are nonzero. So for an $N$-complement $\Gamma$, we have $v(\Gamma)\neq A_{X,\Delta+\varepsilon D}(v)$ if and only if
\[
A_{X,\Delta+\varepsilon D}(v)> \min_{j\in J_0} \langle \alpha_v, \beta_j \rangle
\]
for the subset $J_0$ corresponding to $\Gamma$ (note that $J_0$ is independent of $v$). For any such $J_0$, there is a neighborhood $U_{J_0}$ of $v$ such that for any $w\in U_{J_0}$, 
\[
A_{X,\Delta+\varepsilon D}(w)> \min_{j\in J_0} \langle \alpha_w, \beta_j \rangle \, .
\]
As there are only finitely many subsets $J_0$ of $J$, we conclude that for the pair $(X,\Delta+\varepsilon D)$, there exists a neighbourhood $U'\subseteq U$ of $v$, such that if $\Gamma$ is an $N$-complement of $(X,\Delta+\varepsilon D)$ for $N$ given as above, the following holds:
\[
v(\Gamma)\neq A_{X,\Delta+\varepsilon D}(v) \mbox{\ \ implies \ \ \ } w(\Gamma)\neq A_{X,\Delta+\varepsilon D}(w)\mbox{\  for any  }w\in U'\,.
\]
In particular, if we choose $w_0\in U'$, since $w_0(\Gamma_0)= A_{X,\Delta+\varepsilon D}(w_0)$ for an $N$-complement $\Gamma_0$ we constructed above, we have $v(\Gamma_0)= A_{X,\Delta+\varepsilon D}(v)$ and therefore $v$ is also an lc place of $(X,\Delta+\Gamma')$, where $\Gamma'=\varepsilon D+\Gamma_0$. Since $\pi_*^{-1}\Gamma'\ge \varepsilon G$ and $G$ is $\pi$-ample, we see that $\Gamma'$ is a special $\bQ$-complement with respect to $(Y,E)$. In other words, $v$ is a monomial lc place of a special $\bQ$-complement as desired.
\end{proof}

\subsection{Degeneration induced by a Koll\'ar model}

Next we recall the multiple divisor version of the Rees construction (see Section \ref{ss-kollar}) that was first considered in \cite{Xu-HRFG}. Let $x\in (X=\Spec(R),\Delta)$ be a klt singularity as before, and let $\pi\colon (Y,E)\to (X,\Delta)$ be a Koll\'ar model. 
Let $E_1,\dots,E_r$ be the irreducible components of $E$. Then there exists a $\bQ$-complement $\Gamma$ of $x\in (X,\Delta)$ such that $E_1,\dots,E_r$ are all lc places of $(X,\Delta+\Gamma)$. By Lemma \ref{lem:perturb qdlt boundary}, there exists some effective $\bQ$-divisor $\Delta_0$ on $Y$ such that $(Y,\Delta_0)$ is klt and $-(K_Y+\Delta_0)$ is ample over $X$. This condition is also satisfied by any small $\bQ$-factorial modification of $Y$, which exists by \cite{BCHM}*{Corollary 1.4.3}. By \cite{BCHM}*{Corollary 1.3.1}, this implies that the small $\bQ$-factorial modification of $Y$ is a Mori dream space, hence the $\bZ^k$-graded algebra
\begin{equation}
\cR_k = \bigoplus_{(i_1,\dots,i_k)\in \bZ^k} \pi_*\cO_Y(-i_1E_1-\dots -i_kE_k)~ t_1^{-i_1}\cdots t_k^{-i_k}    
\end{equation}
is finitely generated for any $k\le r$ and induces a $\bG_{m}^k$-equivariant family 
\begin{equation}\label{e-Rees}
f_k\colon \cX_k:=\Spec(\cR_k)\to \bA^k,
\end{equation}
where $t_1,\dots,t_k$ give the coordinates of $\bA^k$. It comes with a natural isomorphism
\[
\cX_k\times_{\bA^k} (\bA^1\setminus\{0\})^k \cong X\times (\bA^1\setminus\{0\})^k.
\]
Using this isomorphism, we let $\Delta_{\cX_k}$ (resp. $\Gamma_{\cX_k}$) be the closure of $\Delta\times (\bA^1\setminus\{0\})^k$ (resp. $\Gamma\times (\bA^1\setminus\{0\})^k$) in $\cX_k$. For simplicity, we will drop the subscript $k$ at various places when there is no confusion, e.g. when we work over a fixed $\bA^k$. Also let $H_i=(t_i=0)\subseteq \bA^k$ ($i=1,\dots,k$) be the coordinate hyperplanes.

The following statement is proved in \cite[Theorem 3.5 and Proposition 3.6]{Xu-HRFG}.

\begin{thm} \label{thm:locally stable family}
In the above notation, the family $f\colon (\cX,\Delta_{\cX}+\Gamma_{\cX})\to \bA^k$ is locally stable.
\end{thm}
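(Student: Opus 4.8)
The plan is to run an induction on $k$, following \cite{Xu-HRFG}. By definition local stability means that $f=f_k$ is flat with a section, $\cX_k$ is normal, $K_{\cX_k}+\Delta_{\cX_k}+\Gamma_{\cX_k}\sim_{\bQ}0$ over $\bA^k$, and $(\cX_k,\Delta_{\cX_k}+\Gamma_{\cX_k}+f^*(H_1+\dots+H_k))$ is log canonical for any choice of coordinate-type SNC divisor; by the $\bG_m^k$-equivariance of $f$ it suffices to test this on the coordinate hyperplanes $H_1,\dots,H_k$. Flatness and normality of $\cX_k$ follow from the $\bZ^k$-multigraded structure of $\cR_k$ together with its finite generation (\cite{BCHM}*{Corollary 1.3.2}), the section is the closure of $x\times(\bA^1\setminus\{0\})^k$ as in \eqref{eq-rees}, and the $\bQ$-linear equivalence $K_{\cX_k}+\Delta_{\cX_k}+\Gamma_{\cX_k}\sim_{\bQ}0/\bA^k$ holds because it holds on $\cX_k\times_{\bA^k}(\bA^1\setminus\{0\})^k\cong X\times(\bA^1\setminus\{0\})^k$ and propagates by the torus action. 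So the real content is the log canonicity of $(\cX_k,\Delta_{\cX_k}+\Gamma_{\cX_k}+\sum_{i=1}^k f^*H_i)$, which I would prove by induction on $k$, allowing the klt singularity $x\in(X,\Delta)$ (and its Koll\'ar model and complement) to vary. The base case $k=0$ is the assertion that $(X,\Delta+\Gamma)$ is log canonical, which holds since $\Gamma$ is a $\bQ$-complement.

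For the inductive step I would analyze the loci away from $f^{-1}(H_k)$ and near $f^{-1}(H_k)$ separately. Away from $H_k$: since $E_1,\dots,E_k$ are all $\pi$-exceptional, a rational function regular on $Y$ off $E_1\cup\dots\cup E_k$ is regular on $X$; inverting $t_k$ therefore collapses the $i_k$-grading of $\cR_k$ and one computes $\cR_k[t_k^{-1}]\cong\cR_{k-1}[t_k^{\pm 1}]$, i.e. $\cX_k\times_{\bA^k}(\bA^k\setminus H_k)\cong\cX_{k-1}\times\bG_m$ compatibly with $\Delta_{\cX},\Gamma_{\cX}$ and $f^*H_i$ for $i<k$, while $f^*H_k$ is disjoint from this open set. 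By the inductive hypothesis applied to the same singularity with the first $k-1$ components, $(\cX_{k-1},\Delta_{\cX_{k-1}}+\Gamma_{\cX_{k-1}}+\sum_{i<k}f^*H_i)$ is log canonical, and hence so is our pair away from $f^{-1}(H_k)$.

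Near $f^{-1}(H_k)$ I would use inversion of adjunction. Since $E_k$ is a Koll\'ar component of $x\in(X,\Delta)$ (Lemma \ref{lem:kc induce km}) and an lc place of $\Gamma$, Lemma \ref{lem-kollardegenerate} shows that in the one-parameter Rees degeneration \eqref{eq-rees} with respect to $E_k$ the pair $(\cX,\Delta_{\cX}+\Gamma_{\cX}+p^{-1}(0))$ is log canonical; restricting to $p^{-1}(0)$ we get that $X_{E_k}:=\Spec(\gr_{\ord_{E_k}}R)$ is klt and that $\Gamma':=\Gamma_{\cX}|_{X_{E_k}}$ is a local $\bQ$-complement of $x_0\in(X_{E_k},\Delta')$ with $\Delta'$ the degeneration of $\Delta$. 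Next I would extend the Koll\'ar model $\pi\colon(Y,E)\to(X,\Delta)$ over this degeneration to a locally stable family of Koll\'ar models — the $k=1$ instance of the multi-Rees construction carried out on $Y$ — so that by Lemma \ref{lem:flat limit} its central fibre $(Y_0,E_0)\to(X_{E_k},\Delta')$ is again a Koll\'ar model, every component $E_{0,i}$ of $E_0$ is an lc place of $\Gamma'$, and the maps $\pi_*\cO_Y(-mE_i)\to\pi_{0*}\cO_{Y_0}(-mE_{0,i})$ are surjective. Feeding this surjectivity into the multigraded algebra identifies $f^{-1}(H_k)$ with the multi-Rees family of $X_{E_k}$ with respect to $E_{0,1},\dots,E_{0,k-1}$, and since $f$ is flat and the divisors $\Delta_{\cX},\Gamma_{\cX},f^*H_i$ $(i<k)$ share no component with $f^{-1}(H_k)$, this identification is compatible with taking the different:
\[
\Diff_{f^{-1}(H_k)}\bigl(\Delta_{\cX}+\Gamma_{\cX}+\textstyle\sum_{i<k}f^*H_i\bigr)=\Delta_{\cX'}+\Gamma_{\cX'}+\textstyle\sum_{i<k}(\text{coordinate hyperplanes of }\bA^{k-1}).
\]
By the inductive hypothesis applied to $x_0\in(X_{E_k},\Delta')$ with the Koll\'ar model $(Y_0,E_0)$ and complement $\Gamma'$, this restricted pair is log canonical; since $f^{-1}(H_k)$ is a reduced Cartier divisor on the normal $\cX_k$, inversion of adjunction (\cite{Kol13}*{Theorem 4.9}) then gives that $(\cX_k,\Delta_{\cX_k}+\Gamma_{\cX_k}+\sum_{i=1}^k f^*H_i)$ is log canonical in a neighbourhood of $f^{-1}(H_k)$. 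Combining the two cases completes the induction.

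The step I expect to be the main obstacle is the coherence of the whole configuration under the $E_k$-degeneration: that the klt singularity, the Koll\'ar model, the boundary $\Delta$ and the complement $\Gamma$ all degenerate simultaneously so that $f^{-1}(H_k)$ literally is the multi-Rees family of the degenerated singularity with the degenerated boundary and complement, with matching different. This is exactly where the family-of-Koll\'ar-models formalism (Definition \ref{defn:km family} and Lemma \ref{lem:flat limit}, including the surjectivity of $\pi_*\cO_Y(-mE_i)\to\pi_{0*}\cO_{Y_0}(-mE_{0,i})$) is needed, and a secondary point requiring care is establishing flatness and normality of $\cX_k$ over $\bA^k$ so that inversion of adjunction applies.
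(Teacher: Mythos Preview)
Your proposal is correct and follows essentially the same inductive strategy as the paper (which, following \cite{Xu-HRFG}, proves the statement together with Propositions \ref{pro-localstablefamily} and \ref{pro:degeneration remain qdlt Fano} by simultaneous induction on $k$, using the family-of-Koll\'ar-models formalism and inversion of adjunction via Lemma \ref{lem-kollardegenerate}). The only cosmetic difference is the ordering: the paper goes from $k$ to $k+1$ by extracting $\cE_{k+1}$ as a single Koll\'ar component over the already-built family $\cX_k\to\bA^k$ and doing one more Rees step, whereas you peel off $E_k$ first and apply the inductive hypothesis to the degenerated singularity---both reduce to the same base-change/surjectivity input from Lemma \ref{lem:flat limit}(4).
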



To make our exposition more self-contained, we will sketch a proof of this theorem in the situation we consider above. In fact, one of our key observations for the proof of Theorem \ref{t-localHRFG} is to upgrade the above construction to obtain a locally stable family of Koll\'ar models $(\cY,\cE)\to (\cX,\Delta_{\cX})\to \bA^k$. This includes the enhancement of Theorem \ref{thm:locally stable family} that the locally stable family $(\cX,\Delta_{\cX})\to \bA^k$ has irreducible and klt fibers. As such, we will construct $\cX$ and $\cY$ simultaneously. This is given by the following two propositions.

\begin{prop}\label{pro-localstablefamily}
For any $k\le r$, the divisors $E_1,\dots,E_k$ induce a locally stable family $f\colon (\cX,\Delta_{\cX}+\Gamma_{\cX})\to \bA^k$ such that $(\cX,\Delta_{\cX}+\Gamma_{\cX}+f^*(H_1+\dots+H_k))$ is lc and for any $t\in \bA^k$, $(\cX,\Delta_{\cX})\times_{\bA^k}\{t\}$ is klt.
\end{prop}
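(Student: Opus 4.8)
The plan is to prove this by induction on $k$, simultaneously with Proposition \ref{pro-localstablefamily}'s companion statement about the existence of a locally stable family of Koll\'ar models $(\cY,\cE)\to(\cX,\Delta_{\cX})\to\bA^k$. Indeed these two propositions are intertwined: the irreducibility and klt-ness of the fibers of $\cX\to\bA^k$ will be read off from the geometry of $\cE\to\bA^k$, which by Lemma \ref{lem:flat limit}(2) has irreducible reduced fibers. So the real content is to carry the Koll\'ar-model structure along the degeneration. For $k=0$ there is nothing to prove (we start with the given Koll\'ar model $(Y,E)\to(X,\Delta)$). For the inductive step, suppose we have built the locally stable family of Koll\'ar models $(\cY,\cE)\to(\cX,\Delta_{\cX})\to\bA^{k-1}$ with the asserted properties over $\bA^{k-1}$, and we wish to degenerate one more component, namely $E_k$ (equivalently its birational transform $\cE_k$ on $\cY$).

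First I would use the Rees/extended Rees construction with respect to the divisor $\cE_k$ on $\cY$ relative to $\bA^{k-1}$ to produce $\cY'\to\cX'\to\bA^k = \bA^{k-1}\times\bA^1$, together with the boundary divisors $\Delta_{\cX'},\Gamma_{\cX'}$ taken as closures, and the exceptional divisor $\cE'$ of $\cY'\to\cX'$. Finite generation of the relevant multigraded algebra follows from \cite{BCHM}*{Corollary 1.3.2} applied to the qdlt Fano type model, exactly as in the displayed formula for $\cR_k$; this is where we need that $E_1,\dots,E_r$ are lc places of a single $\bQ$-complement $\Gamma$. The key point is that $\cE_k$, being an lc center of the (sub-)qdlt pair cut out by the Koll\'ar model together with the fiber coordinates, has a degeneration over the new $\bA^1$-direction with \emph{irreducible and reduced} central fiber. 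This is precisely the mechanism of Lemma \ref{lem:flat limit}: on the total space $(\cY',\pi_*^{-1}\Delta+\cE'+p^*(H_1+\dots+H_k))$ one checks the qdlt property via Lemma \ref{lem:qdlt criterion} — each stratum $H_{i_1}\cap\dots\cap H_{i_s}\cap(\text{intersection of }\cE'\text{-components})$ is cut out by the right number of $\bQ$-Cartier divisors from $\lfloor\cdot\rfloor$, and the minimal lc center over each point of $\bA^k$ is unique by Lemma \ref{lem:lc center of Fano pair}, using that $-(K_{\cY'}+\pi_*^{-1}\Delta+\cE')$ is $\pi$-ample and that $\cX$ carries the klt boundary $\Delta_{\cX}$. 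Once the Koll\'ar-model family $(\cY',\cE')\to(\cX',\Delta_{\cX'})\to\bA^k$ is in place, Lemma \ref{lem:flat limit}(3) gives that each fiber $(\cY_t,\cE_t)\to(\cX_t,\Delta_{\cX,t})$ is a genuine Koll\'ar model, hence $(\cX_t,\Delta_{\cX,t})$ is klt and $\cX_t$ is irreducible (being the contraction of an irreducible-fibered model). Local stability of $(\cX',\Delta_{\cX'}+\Gamma_{\cX'})\to\bA^k$, together with the lc-ness of $(\cX',\Delta_{\cX'}+\Gamma_{\cX'}+f^*(H_1+\dots+H_k))$, then follows by pushing forward the corresponding log canonical statement from $\cY'$: one has $K_{\cY'}+\pi_*^{-1}\Delta+\cE'+(\text{complement part})+p^*(\sum H_i)\sim_\bQ 0$ over $\cX'$, it is sub-lc because $\cE'$ and the $H_i$ are the relevant lc places, and crepant descent to $\cX'$ preserves lc-ness since the pushed-forward boundary is effective (all exceptional divisors of $\cY'\to\cX'$ are supported on $\cE'$, which appears with coefficient $1$).

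The step I expect to be the main obstacle is verifying that the one-parameter degeneration of the lc center $\cE_k$ genuinely has an \emph{irreducible} central fiber, i.e. that the qdlt structure is preserved when we pass to the Rees degeneration in the $k$-th direction and then take the ample model. The subtlety is that a priori the flat limit of an irreducible subscheme can break into several components; what rescues us is that $\cE_k$ is not just any subvariety but an lc center of a qdlt pair whose minimal lc centers are controlled — Proposition \ref{prop:km DMR is simplex} tells us the dual complex is a simplex, and Lemma \ref{lem:lc center of Fano pair} forces connectedness of fibers of lc centers over the base. Combining connectedness with the dimension count $\dim\cE_{k,t}=\dim\cX_t-1$ coming from equidimensionality of strata in a qdlt pair (Lemma \ref{lem:qdlt->normal lc center}) pins down irreducibility and reducedness. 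Making this rigorous requires carefully tracking how the ample model in Proposition \ref{prop:qdlt to km} interacts with the fiberwise structure — i.e. that the ample-model contraction is an isomorphism at the generic point of every stratum including the new fiber strata $H_i$ — which is the heart of the argument and precisely the family-version enhancement of the constructions in Section 2.
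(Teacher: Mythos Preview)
Your proposal is broadly on the right track: the paper does prove this proposition by induction on $k$ jointly with the companion Proposition~\ref{pro:degeneration remain qdlt Fano}, and the central difficulty you flag --- preservation of the qdlt structure under the additional one-parameter degeneration --- is exactly where the work lies. However, the organization and the key ingredients differ in ways worth noting.

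The paper separates the induction into two distinct sub-steps: first it shows that \ref{pro-localstablefamily}$_k$ implies \ref{pro:degeneration remain qdlt Fano}$_k$ (this is the hard part, containing all the qdlt verification), and then it shows that \ref{pro-localstablefamily}$_k$ together with \ref{pro:degeneration remain qdlt Fano}$_k$ implies \ref{pro-localstablefamily}$_{k+1}$. For the proposition actually in question, the paper's argument is rather short: from the Koll\'ar-model family over $\bA^k$, one invokes Lemma~\ref{lem:flat limit}(4) to extract $\cE_{k+1}$ as a locally stable family of \emph{Koll\'ar components} with the crucial base-change compatibility $\pi_*\cO_Y(-mE_1)\to\pi_*\cO_{Y_b}(-mE_1)$ surjective. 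One then does the Rees construction, identifies it with $\cR_{k+1}$, and applies Lemma~\ref{lem-kollardegenerate} \emph{fiberwise} over $\bA^k$: since each fiber is the degeneration of a klt singularity along a single Koll\'ar component (which is an lc place of the restricted $\Gamma$), that lemma gives directly that the fiberwise pair with the new central fiber added is plt (resp.\ lc with $\Gamma$). Inversion of adjunction then globalizes this. You do not mention Lemma~\ref{lem-kollardegenerate}, which is the clean way to handle the new $\bA^1$-direction without having to re-verify qdlt on a larger $\cY'$.

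Your alternative route --- build $\cY'$ over $\bA^k$ directly and push the lc statement down --- is essentially attempting to prove \ref{pro:degeneration remain qdlt Fano}$_{k}$ at the same time. That is fine in principle, but there is a circularity hazard you should be aware of: the paper constructs $\cY$ over $\cX$ by BCHM extraction, which requires already knowing $(\cX,\Delta_{\cX})$ is klt, i.e.\ \ref{pro-localstablefamily}$_k$ must come first. Your phrasing ``Rees construction with respect to $\cE_k$ on $\cY$'' does not immediately give a projective $\cY'\to\cX'$ with the right ample model structure. Moreover, your sketch of the qdlt check via Lemma~\ref{lem:qdlt criterion} glosses over the two genuinely delicate points in the paper's proof of \ref{pro:degeneration remain qdlt Fano}$_k$: showing that $\cZ_0=\bigcap\cE_i\cap g^{-1}(0)$ is the \emph{unique} minimal lc center (done via a $\bT$-invariant divisor argument), and showing that each $\cE_i$ is $\bQ$-Cartier at the generic point of $\cZ_0$ (done by producing divisors in $|m\cL\pm\ell\cE_i|$ missing $\cZ_0$). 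The connectedness and dimension arguments you cite are necessary but not sufficient for these.
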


\begin{prop} \label{pro:degeneration remain qdlt Fano}
Notation as in Proposition \ref{pro-localstablefamily}. Then 
$$\pi\times \mathrm{id}\colon (Y,E)\times (\bA^1\setminus \{0\})^k \to X\times (\bA^1\setminus \{0\})^k$$ extends to a locally stable family of Koll\'ar models $p\colon (\cY,\cE)\to (\cX,\Delta_{\cX})$. 
Moreover, 
\[
(\cY,p_*^{-1}\Delta_{\cX}+\cE+p^*f^*(H_1+\dots+H_k))
\]
is qdlt. In particular, the central fiber $\cX_0:=f^{-1}(0)$ is irreducible.
\end{prop}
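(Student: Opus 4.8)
The plan is to build $\cY$ as a relative $\Proj$ over $\cX$ extending the product family $Y\times T$, where $T:=(\bA^1\setminus\{0\})^k$, to check that it satisfies the three conditions in Definition~\ref{defn:km family}, and then to read off the irreducibility of $\cX_0$ from the resulting qdlt property together with Lemma~\ref{lem:flat limit}. For the construction: since $(Y,\pi_*^{-1}\Delta+E)$ is qdlt with $-(K_Y+\pi_*^{-1}\Delta+E)$ $\pi$-ample, Lemma~\ref{lem:perturb qdlt boundary} produces a klt boundary making $Y$ of Fano type over $X$, so, fixing $M:=-m_0(K_Y+\pi_*^{-1}\Delta+E)$ ($\pi$-ample and Cartier for $m_0$ sufficiently divisible), we have $Y=\Proj_X\bigoplus_{d\ge 0}\pi_*\cO_Y(dM)$. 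I then form the $\cR_k$-algebra
\[
\cR^{\cY}:=\bigoplus_{d\ge 0}\ \bigoplus_{\vec i\in\bZ^k}\pi_*\cO_Y\Big(dM-\sum_{l=1}^{k}i_lE_l\Big)\,\vec t^{-\vec i},
\]
which is finitely generated over $\cR_k$ by \cite{BCHM}*{Corollary~1.3.2} (its summands are all of adjoint type, cf.\ \cite{Xu-HRFG}), and set $p\colon\cY:=\Proj_\cX\cR^{\cY}\to\cX$. Inverting $t_1,\dots,t_k$ collapses the $\vec i$-grading, so $p$ restricts over $T$ to $\pi\times\mathrm{id}$; the same $\bG_m^k$-equivariance shows $p$ is an isomorphism away from the vertex section, and $\cE:=\Ex(p)=\sum_{l=1}^{r}\cE_l$, where $\cE_l$ is the closure of $E_l\times T$. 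Since the degree-$(0,\ast)$ part of $\cR^{\cY}$ is $\cR_k$, this recovers $\cX$ at the same time.

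Next I would verify the axioms. Condition (1) of Definition~\ref{defn:km family} is the last sentence above. For (2), $-(K_\cY+p_*^{-1}\Delta_\cX+\cE)$ and $\tfrac1{m_0}\cO_\cY(1)$ agree over $T$ by the choice of $M$, so their difference is supported on $\bigcup_i p^*f^*H_i$ and is therefore $p$-trivial; hence $-(K_\cY+p_*^{-1}\Delta_\cX+\cE)$ is $p$-ample. For (3), fix $b\in\bA^k$; after a $\bG_m^k$-translation we may take $b=0$ and $H_1,\dots,H_k$ to be the coordinate hyperplanes $(t_i=0)$. Because each $E_l$ is an lc place of the $\bQ$-complement $(X,\Delta+\Gamma)$ and $\cE=\Ex(p)$, the crepant pullback of $(\cX,\Delta_\cX+\Gamma_\cX)$ to $\cY$ is exactly $(\cY,p_*^{-1}(\Delta_\cX+\Gamma_\cX)+\cE)$, and adding $p^*f^*(H_1+\dots+H_k)$ keeps this crepant to $(\cX,\Delta_\cX+\Gamma_\cX+f^*(H_1+\dots+H_k))$, which is lc by Proposition~\ref{pro-localstablefamily}. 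Dropping the effective divisor $p_*^{-1}\Gamma_\cX$ shows that $(\cY,p_*^{-1}\Delta_\cX+\cE+p^*f^*(H_1+\dots+H_k))$ is lc — this is already the pair appearing in the ``moreover'' clause, so it remains only to promote lc to qdlt.

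The qdlt property is automatic over $T$ (there the pair is $(Y,\pi_*^{-1}\Delta+E)$, qdlt by definition of a Koll\'ar model, pulled back over an SNC base), so the crux is its behaviour at the central fibre. I would establish it via the criterion of Lemma~\ref{lem:qdlt criterion}: the components $p^*f^*H_i$ are $\bQ$-Cartier, and for the components $\cE_l$ passing through a given minimal lc center I would use the Fano-type structure to perturb, exactly as in Proposition~\ref{prop:modify qdlt Fano} and Lemma~\ref{lem:perturb qdlt boundary}, and — if necessary — replace $\cY$ by the ample model of $-(K_\cY+p_*^{-1}\Delta_\cX+\cE)$ over $\cX$ as in Proposition~\ref{prop:qdlt to km}, so that every stratum of $\cE$ becomes $\bQ$-factorial at its generic point, at which stage an lc pair with toroidal support is automatically qdlt. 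An alternative route is induction on $k$: degenerating first along $E_1,\dots,E_{k-1}$ produces, by the inductive hypothesis, a klt singularity $X'$ carrying a Koll\'ar model on which the image of $E_k$ is again a Koll\'ar component, and one finishes with the rank-one Rees construction of Section~\ref{ss-kollar} (Lemma~\ref{lem-kollardegenerate}). Either way, checking that qdlt propagates to $\cX_0$ is the main obstacle.

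Finally, for the irreducibility of $\cX_0$: granting the qdlt property, Lemma~\ref{lem:qdlt->normal lc center} makes the lc centers of $(\cY,p_*^{-1}\Delta_\cX+\cE+p^*f^*\sum_i H_i)$ normal, so the irreducible components of $\cY_0=\bigcap_i p^*f^*H_i$ (a union of such centers) are normal, hence pairwise disjoint. On the other hand $\cX_0=\Spec(\gr R)$ with $\gr R$ an $\bN^k$-graded ring whose degree-zero piece is $R/\sum_{l=1}^{k}\pi_*\cO_Y(-E_l)$, and the ideal $\sum_l\pi_*\cO_Y(-E_l)$ is $\fm_x$-primary (its vanishing locus is $\pi(E_1)=\{x\}$), so this piece is Artinian local and $\cX_0$ is connected, the cone vertex lying on every component. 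As $\cY_0\to\cX_0$ is proper and surjective, $\cY_0$ is connected; a connected variety whose components are pairwise disjoint is irreducible, so $\cY_0$, and hence its image $\cX_0$, is irreducible. Lemma~\ref{lem:flat limit} then supplies the remaining content of Definition~\ref{defn:km family} — equidimensionality of the $\cE_l$ over $\bA^k$ and the fibrewise Koll\'ar model property — completing the proof.
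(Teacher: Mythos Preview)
Your construction of $\cY$ and the reduction to lc are essentially the same as the paper's opening moves (the paper builds $\cY$ by extracting the $\cE_i$ via \cite{BCHM}*{Corollary~1.4.3} and then passing to the ample model of $-(K_\cY+\cD+\cE)$ over $\cX$, rather than as an explicit $\Proj$; this makes the identification $\Ex(p)=\sum_l\cE_l$ and the $p$-ampleness immediate, whereas your argument for (2) via ``difference supported on $\bigcup_i p^*f^*H_i$ hence $p$-trivial'' tacitly assumes each $g^*H_i$ is irreducible, which is not yet known).

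The genuine gap is in the upgrade from lc to qdlt, which you correctly flag as the crux but do not resolve. Both routes you sketch are circular or inapplicable: Proposition~\ref{prop:modify qdlt Fano}, Lemma~\ref{lem:perturb qdlt boundary}, and Proposition~\ref{prop:qdlt to km} all \emph{assume} a qdlt pair as input, so they cannot be used to establish it; and passing to an ample model does not make strata $\bQ$-factorial. The paper's argument is considerably more specific. After observing that $(\cY,\cD+\cE)$ is qdlt (no lc center lies in $g^*H$ since adding $g^*H$ keeps the pair lc), one identifies the minimal lc center of $(\cY,\cD+\cE+g^*H)$ as $\cZ_0:=(\bigcap_i\cE_i)\cap g^{-1}(0)$: if the minimal center $W$ were strictly smaller, choose a $\bT$-invariant $\cG\in|-m(K_\cY+\cD+\cE)|$ with $W\subseteq\Supp(\cG)$ but $\cZ_0\not\subseteq\Supp(\cG)$; its restriction $G\in|-m(K_Y+D+E)|$ misses $Z=\bigcap_iE_i$, so $(Y,D+\varepsilon G+E)$ is still a Koll\'ar model, and \emph{rerunning the entire lc argument with $\Delta$ replaced by $\pi_*(D+\varepsilon G)$} shows $(\cY,\cD+\varepsilon\cG+\cE+g^*H)$ is lc --- impossible since $\cG$ contains the minimal center. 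The same bootstrap, applied to general $G_\pm\in|mL\pm\ell E_i|$ (with $\ell E_i$ Cartier at the generic point of $Z$), yields $\cG_-\in|m\cL-\ell\cE_i|$ not containing $\cZ_0$, hence $\cE_i$ is $\bQ$-Cartier there; Lemma~\ref{lem:qdlt criterion} together with \cite{K+92}*{18.22--18.23} then gives qdlt. The idea you are missing is this \emph{self-referential bootstrap}: enlarge the boundary on the original Koll\'ar model $(Y,E)$ so that the hypothesis (qdlt on the generic fibre) persists, and feed the larger pair back into the already-established lc conclusion over $\bA^k$.

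Your irreducibility argument also has a gap: normality of each component of $\cY_0$ does not make the components pairwise disjoint. What is needed is that $\cY_0$ itself is normal, and for Lemma~\ref{lem:qdlt->normal lc center} to give this one must first know each $g^*H_i$ is irreducible. The paper deduces this from the qdlt structure: every component of $g^*H_i$ is an lc center, hence contains $\cZ_0$, but $g^*H_i$ is irreducible near $\cZ_0$ by the local toroidal description; only then does $g^{-1}(0)=\bigcap_i g^*H_i$ become an intersection of irreducible boundary components and Lemma~\ref{lem:qdlt->normal lc center} applies.
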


We use induction to prove the above two statements together. We denote by \ref{pro-localstablefamily}$_k$ and \ref{pro:degeneration remain qdlt Fano}$_k$ that the corresponding statement in the proposition holds for $k$. When $k=0$, both statements are clearly true. 

\begin{proof}[Proof of \ref{pro-localstablefamily}$_k$ implies  \ref{pro:degeneration remain qdlt Fano}$_k$]
Let $H=H_1+\dots+H_k$, $D=\pi_*^{-1}\Delta$, and let $\cE_i$ $(1\le i \le r)$ be the divisor over $\cX$ given by the (closure of) $E_i\times (\bA^1\setminus\{0\})^k$. Since we assume  \ref{pro-localstablefamily}$_k$, the pair $(\cX,\Delta_{\cX}+\Gamma_{\cX}+f^*H)$ is lc; by construction, it has $\cE_1,\dots,\cE_r$ as lc places. Note that the lc centers of $(\cX,\Delta_{\cX})$ (if there is any) are contained in $f^*H$, as over $(\bA^1\setminus\{0\})^k$ the pair becomes $(X,\Delta)\times (\bA^1\setminus\{0\})^k$. But as $(\cX,\Delta_{\cX}+\Gamma_{\cX}+f^*H)$ is lc, this implies $(\cX,\Delta_{\cX})$ is klt. Using \cite{BCHM}*{Corollary 1.4.3} we see that there exists a projective birational morphism $p\colon \cY\to \cX$ that extracts all $\cE_i$ ($1\le i\le r$) as exceptional divisors. Since
for $0<\varepsilon \ll 1$, the crepant pull back of $K_{\cX}+\Delta_{\cX}+(1-\varepsilon)\Gamma_{\cX}$ yields a klt pair on $\cY$, we see that $\cY$ is of Fano type over $\cX$.
Therefore, as $-(K_Y+D+E)$ is ample, we may replace $\cY$ by the ample model of $-(K_{\cY}+\cD+\cE)$ over $\cX$ (where $\cE:=\cE_1+\dots+\cE_r$ and $\cD$ is the birational transform of $\Delta_{\cX}$; it is also the closure of $D\times (\bA^1\setminus\{0\})^k$) so that $$(\cY,\cD+\cE)\times_{\bA^k} (\bA^1\setminus \{0\})^k\cong (Y,D+E)\times (\bA^1\setminus \{0\})^k$$ and $-(K_{\cY}+\cD+\cE)$ is $p$-ample. It is in fact ample since $\cX$ is affine. 

Let $g=f\circ q\colon \cY\to \bA^k$. To show that $(\cY,\cD+\cE+g^*H)$ is qdlt, we proceed in several steps. First we show that $(\cY,\cD+\cE)$ is qdlt. This is certainly true over $(\bA^1\setminus\{0\})^k$, as the pair simply becomes $(Y,D+E)\times (\bA^1\setminus \{0\})^k$. On the other hand, we have
\[
p^*(K_{\cX}+\Delta_{\cX}+\Gamma_{\cX}+f^*H)\ge K_{\cY}+\cD+\cE+g^*H
\]
by the above construction, thus $(\cY,\cD+\cE+g^*H)$ is lc. For any divisor $F$ over $\cY$ whose center is contained in $g^*H$, we then have $$A_{\cY,\cD+\cE}(F)>A_{\cY,\cD+\cE+g^*H}(F)\ge 0.$$ This implies that none of the lc centers of $(\cY,\cD+\cE)$ are contained in $g^*H$ and hence the pair is qdlt. 

Let $$Z:=\bigcap_{i=1}^r E_i \mbox{\ \ \ and \ \ \ }\cZ:=\bigcap_{i=1}^r \cE_i.$$ By Proposition \ref{prop:km DMR is simplex} we know that $Z$ is non-empty and irreducible. We note that $\cZ$ is also irreducible. In fact, as 
\[\cZ\times_{\bA^k}(\bA^1\setminus\{0\})^k \cong Z \times (\bA^1\setminus\{0\})^k \,,
\]
we see that if $\cZ$ is reducible, then one of its components $S$ lies inside $g^*H$. But $S$ is necessarily an lc center of the qdlt pair $(\cY,\cD+\cE)$, and by the above discussion $g^*H$ does not contain any lc center of $(\cY,\cD+\cE)$, a contradiction. Thus $\cZ$ is irreducible as well. 

We next show that $\cZ_0=\cZ\cap g^{-1}(0)$ is the minimal lc center of $(\cY,\cD+\cE+g^*H)$ (in particular, $\cZ_0$ is irreducible). Indeed, by Lemma \ref{lem:lc center of Fano pair} we know that the (unique) minimal lc center $W$ of $(\cY,\cD+\cE+g^*H)$ intersecting $g^{-1}(0)$ is contained in $\cZ_0$, as $\cE_i$ $(1\le i \le r)$ and components of $g^*H_j$ $(j=1,\dots,k)$ are all lc centers of this pair. By construction, $\cY$ carries a $\bT=\bG_m^k$-action (induced by the $\bT$-action on $\cX$), hence $W$ is $\bT$-invariant. 
Suppose that $\cZ_0\neq W$, then since $|-m(K_{\cY}+\cD+\cE)\otimes \cI_W|$ is globally generated for some sufficiently divisible integer $m>0$, we may find some irreducible $\bT$-invariant divisor $$\cG\in |-m(K_{\cY}+\cD+\cE)|$$ such that $W\subseteq \Supp(\cG)$ but $\cZ_0\not\subseteq \Supp(\cG)$. By $\bT$-invariance, we have $\cG$ is the closure of $G\times (\bA^1\setminus\{0\})^k$ for some divisor $$G\in |-m(K_Y+D+E)|.$$ By construction $Z\not\subseteq \Supp(G)$ (as $\cZ_0\not\subseteq \Supp(\cG)$) and therefore $G$ does not contain any lc center of $(Y,D+E)$. It follows that $(Y,D+\varepsilon G+E)$ is still qdlt and $-(K_Y+D+\varepsilon G+E)$ is ample when $0<\varepsilon\ll 1$. Thinking of $\pi_*(D+\varepsilon G)$ as playing the role of the original boundary divisor $\Delta$, the same argument from the previous paragraphs implies that the specialization $(\cY,\cD+\varepsilon \cG+\cE+g^*H)$ is lc. But this is impossible as $\cG$ contains the minimal lc center of $(\cY,\cD+\cE+g^*H)$. The obtained contradiction implies that $\cZ_0$ is the minimal lc center of $(\cY,\cD+\cE+g^*H)$.

Next we aim to show that each $\cE_i$ ($1\le i\le r$) is $\bQ$-Cartier at the generic point of $\cZ_0$. Since $\cL:=-(K_{\cY}+\cD+\cE)$ is $\bQ$-Cartier, this is true if we can find a divisor in some $|m\cL-\ell \cE_i|$ ($1\le i\le r$) whose support does not contain $\cZ_0$. To this end, let $L:=-(K_Y+D+E)$, let $\ell$ be a positive integer such that $\ell E_i$ is Cartier at the generic point of $Z$ (this is possible since $Y$ has quotient singularity at the generic point of $Z$), let $m>0$ be a sufficiently divisible integer and let $B_-$ (resp. $B_+$) be a general member of $|mL-\ell E_i|$ (resp. $|mL+\ell E_i|$). By Lemma \ref{lem-noncartierlocus}, neither $B_-$ nor $B_+$ contains $Z$ in its support, thus none of the lc centers of $(Y,D+E)$ are contained in $\Supp(B_- + B_+)$. It follows that the pair $$(Y,D+\varepsilon(B_-+B_+)+E)$$ remains qdlt when $0<\varepsilon\ll 1$ (note that $B_-+B_+\sim 2mL$ is Cartier). 
Let $\cB_{-}$ (resp. $\cB_{+}$) be the closure of $B_-\times (\bA^1\setminus\{0\})^k$ (resp. $B_+\times (\bA^1\setminus\{0\})^k$). As before, this implies that the corresponding pair $$(\cY,\cD+\varepsilon(\cB_{-}+\cB_{+})+\cE+g^*H)$$ over $\bA^k$ is lc. In particular, $\Supp(\cB_{-}+\cB_{+})$ does not contain $\cZ_0$ which is an lc center of $(\cY,\cD+\cE+g^*H)$. 
From the construction, the difference between $\cB_-$ and $m\cL-\ell \cE_i$ yields a divisor supported on $g^*H$. However, $g^*H_j$ ($1\le j \le k$) is irreducible as $f^*H_j$ is irreducible, and ${\rm Ex}(p)$ supports on $\cE$ which does not contain any component supported over $g^*H$. Thus the difference is linearly equivalent to zero and $\cB_-$
gives the sought divisor in $|m\cL-\ell \cE_i|$. By the discussion before, we deduce that $\cE_i$ is $\bQ$-Cartier at the generic point of $\cZ_0$.

We can now finish the proof: it is already shown that $\cZ_0$ is the unique minimal lc center of $(\cY,\cD+\cE+g^*H)$, and by Lemma \ref{lem:lc center of Fano pair} every lc center of $(\cY,\cD+\cE+g^*H)$ contains $\cZ_0$, which has codimension $r+k$ in $\cY$. Thus it remains to show that $(\cY,\cD+\cE+g^*H)$ is the quotient of an SNC pair at the generic point $\eta$ of $\cZ_0$. We already know that every $\cE_i$ $(1\le i\le r)$ is $\bQ$-Cartier at $\eta$, while each $g^*H_j$ ($1\le j\le k$) is clearly Cartier. Hence the statement follows from \cite{K+92}*{18.22 Theorem and 18.23 Complement}, where we use $g^*H_j$ is irreducible for every $j$ as argued above. 
 As $(\cY,\cD+\cE+g^*H)$ is qdlt, this implies that $$g^{-1}(0)=g^*H_1\cap \dots\cap g^*H_k$$ is normal by Lemma \ref{lem:qdlt->normal lc center}. Since it is also connected, we conclude that $g^{-1}(0)$ is irreducible. It follows that $\cX_0$ is also irreducible and we finish the proof.
\end{proof}

\begin{proof}[Proof of \ref{pro-localstablefamily}$_{k}+$\ref{pro:degeneration remain qdlt Fano}$_k$ implies \ref{pro-localstablefamily}$_{k+1}$]
Proposition \ref{pro:degeneration remain qdlt Fano}$_k$ gives us a locally stable family of Koll\'ar models $(\cY,\cE=\sum^r_{i=1}\cE_i)\stackrel{p}{\to} (\cX={\rm Spec}(\cR_k),\Delta_{\cX})\to \bA^k$. Applying Lemma \ref{lem:flat limit}(4) to this family implies $\cE_{k+1}$ can be extracted over $(\cX,\Delta_{\cX})$ as a locally stable family of Koll\'ar components, such that for any point $t\in \bA^k$, $\cE_{k+1}\times_{\bA^k}\{t\}$ is an lc place of $(X_t,\Delta_t+\Gamma_t)$, where the latter is the fiber of $(\cX,\Delta_{\cX}+\Gamma_{\cX})$ over $t$.

Then we can consider 
$$\cR':=\bigoplus_{i\in\bZ} p_* \mathcal{O}_{\cY}(-i\cE_{k+1})t_{k+1}^{-i},$$
which is a $\bk[t_1,\dots,t_{k+1}]$-algebra, and there is a morphism 
$$f'\colon  \cX':={\rm Spec}(\cR')\to \bA^{k+1}.$$
Let $\Delta_{\cX'}$ and $\Gamma_{\cX'}$ be the closures of $\Delta_{\cX}\times \{\bA^1\setminus 0\}$ and $\Gamma_{\cX}\times \{\bA^1\setminus 0\}$.
Since $\cE_{k+1}$ is a locally stable family of Koll\'ar components over $(\cX,\Delta_{\cX})$ and an lc place of $(\cX,\Delta_{\cX}+\Gamma_{\cX})$, $(\cX',\Delta_{\cX'})$ is a locally stable family over $\bA^{k+1}$ (see \cite[Lemma 3.3]{Xu-HRFG}). Since $\cE_{k+1}$ is the closure of $E_{k+1}\times (\bA^1\setminus\{0\})^k$, we get
\[
p_*\cO_{\cY}(-i\cE_{k+1}) = \cR_k \cap \left(\bigoplus_{(i_1,\dots,i_k)\in\bZ^k} \pi_*\cO_Y(-iE_{k+1})t_1^{-i_1}\cdots t_k^{-i_k}\right),
\]
using \ref{pro-localstablefamily}$_{k}$ we deduce that $\cR'\cong \cR_{k+1}$ which means $\cX'$ coincides with $\cX_{k+1}$.

Moreover, by Lemma \ref{lem:flat limit}(4), the above Rees construction commutes with base change. It follows from Lemma \ref{lem-kollardegenerate} that, for any $t\in \bA^k$ with the Koll\'ar component $\cE_{k+1}\times_{\bA^k}\{t\}$ over $(\cX_t,\Delta_{\cX_t}):=(\cX,\Delta_{\cX})\times_{\bA^k}\{t\}$ which is a lc place of a $\bQ$-complement $\Gamma_t$, the induced Rees construction $f'_t\colon \cX'_t\to \bA^1$ satisfies $(\cX'_t,\Delta_{\cX'_t}+\Gamma_{\cX'_t}+{f'_t}^{-1}(0))$ is log canonical (where $\Delta_{\cX'_t}$ and $\Gamma_{\cX'_t}$ are respectively the closure of $\Delta_{\cX_t}\times (\bA^1\setminus\{0\})$ and $\Gamma_{\cX_t}\times (\bA^1\setminus\{0\})$) and $(\cX'_t,\Delta_{\cX'_t}+{f'_t}^{-1}(0))$ is plt. 
By adjunction, the central fiber of $f'_t$ is klt. By inversion of adjunction, it also implies that for $H'_{j}=(t_j=0)\subset \bA^{k+1}$ 
 $(1\le j\le k+1)$, 
 $$(\cX',\Delta_{\cX'}+\Gamma_{\cX'}+{f'}^*(H'_1+\dots+H'_{k+1}))$$ is lc (first in a neighbourhood of $\cX_t$, then extend to the entire $\cX'$ by the $\bG_m^{k+1}$-action). In particular, the fibers of $(\cX',\Delta_{\cX'})\to \bA^{k+1}$ are klt and $(\cX',\Delta_{\cX'}+\Gamma_{\cX'})$ is also a locally stable family over $\bA^{k+1}$. This proves \ref{pro-localstablefamily}$_{k+1}$.
\end{proof}

\subsection{Irreducible central fiber implies finite generation}\label{ss-irredu=finite}

Finally, we will show that the irreducibility of the central fiber, as proven by Proposition \ref{pro:degeneration remain qdlt Fano} implies the finite generation. This was pointed out in \cite{Xu-HRFG}. Here we give a detailed argument in a slightly more general context. 

We fix the following setup. Let $x\in X=\Spec(R)$ be a singularity and let $$\pi\colon (Y,E=E_1+\dots+E_m)\to X$$ be a model. Assume that:
\begin{itemize}
    \item $\bigcap_{i=1}^m E_i$ is nonempty,
    \item $(Y,\Supp(E+\Ex(\pi)))$ is toroidal at a generic point $\eta$ of $\bigcap_{i=1}^m E_i$, and
    \item there exists an effective $\bQ$-Cartier $\bQ$-divisor $F$ on $Y$ such that $-F$ is $\pi$-ample and $E+\Ex(\pi)+F$ has toroidal support at $\eta\in Y$.
\end{itemize}

\begin{rem}
The first two conditions allow us to define the simplex $\QM_\eta(Y,E)\subseteq \Val_X$ and normally it contains the valuation we want to analyze. The last condition rules out some peculiar birational maps but in the situations we consider it is often automatic. For example, if $(Y,E)$ is a Koll\'ar model of a klt singularity $x\in (X,\Delta)$, then the first condition follows from Proposition \ref{prop:km DMR is simplex}, the second condition holds by definition, and for the last condition we can take
\[
F=K_Y+\pi_*^{-1}\Delta+E-\pi^*(K_X+\Delta),
\]
which is supported on $E$. In general, for any quasi-monomial valuation $v\in \Val_X$, we can always choose a log smooth model $\pi\colon (Y,E)\to X$ such that $\Ex(\pi)$ supports a $\pi$-ample divisor (which serves as our $-F$; by the negativity lemma $F$ is effective) and $v\in \QM(Y,E)$.
\end{rem}

Let $v_i\in\Val_{X,x}$ ($i=1,\dots,r$) be divisorial valuations that all lie in $\QM_\eta(Y,E)$. There exists a natural linear map $\bR^r_{\ge 0}\to \QM_\eta(Y,E)$ sending the $i$-th basis vector $e_i$ to $v_i$. For $0\neq s=(s_1,\dots,s_r)\in \bR^r_{\ge 0}$ we let $v_s\in \QM_\eta(Y,E)$ be the image of $s$. For any $\lambda\in \bR$, we also let $I_{s,\lambda}\subseteq R$ be the ideal given by 
\begin{equation}\label{e-defineI}
I_{s,\lambda} = {\rm Span}\{f\in R\,|\,s_1 v_1(f)+\dots+s_r v_r(f)\ge \lambda\}.
    \end{equation}

The above expression is chosen so that $I_{s,\lambda}$ ``looks like'' the valuation ideal of $v_s$. In fact, this is the case if $Y=X$ (by the definition of quasi-monomial valuations). The following lemma is an algebraic reformulation (in our more general setup) of the result claimed in \cite{Xu-HRFG}*{Lemma 3.8}.

\begin{lem} \label{lem:induced valuation=v_t}
Let $s\in \bR^r_{\ge 0}\setminus\{0\}$. Assume that there exists some valuation $w\in \Val_{X,x}$ such that $I_{s,\lambda}=\fa_\lambda(w)$ for all $\lambda$. Then $w=v_s$. 
\end{lem}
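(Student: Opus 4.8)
The plan is to show that $v_s$ is a valuation of $K(X)$ centered at $x$ whose associated valuation ideals are precisely the $I_{s,\lambda}$, and then invoke uniqueness of the valuation associated to a given graded sequence of ideals (as in Lemma \ref{lem:graded integral->valuation}, or just separatedness). So first I would note that $v_s\in\QM_\eta(Y,E)$ is a genuine real valuation centered at $x$ by construction of quasi-monomial valuations, and that its valuation ideals satisfy $\fa_\lambda(v_s)=\{f\in R\mid v_s(f)\ge\lambda\}$. The issue is that $v_s(f)$ is computed via the power series expansion of $f$ at $\eta$ on $Y$, \emph{not} directly from $\sum_i s_i v_i(f)$; in general $v_s(f)$ may be \emph{larger} than $\sum_i s_i v_i(f)$ because a function on $X$ need not be ``monomial'' at $\eta$ after pulling back to $Y$. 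So the containment $\fa_\lambda(v_s)\supseteq I_{s,\lambda}$ is the easy direction: if $\sum_i s_i v_i(f)\ge\lambda$, I want $v_s(f)\ge\lambda$. Wait — that's actually the subtle direction. Let me reorganize.

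The clean approach: since each $v_i\in\QM_\eta(Y,E)$, write $v_i=v_{\alpha^{(i)}}$ for weight vectors $\alpha^{(i)}\in\bR^m_{\ge0}$ (using the local toroidal coordinates $z_1,\dots,z_m$ at $\eta$), and then $v_s=v_\alpha$ with $\alpha=\sum_i s_i\alpha^{(i)}$. For any $f\in R$, expanding $\pi^*f=\sum_\beta c_\beta z^\beta$ in $\hat{\cO}_{Y,\eta}$ we get $v_s(f)=\min\{\langle\alpha,\beta\rangle\mid c_\beta\ne0\}$ and $v_i(f)=\min\{\langle\alpha^{(i)},\beta\rangle\mid c_\beta\ne0\}$. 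Hence there is always the inequality $v_s(f)\ge\sum_i s_i v_i(f)$ (the minimizing $\beta$ for $v_s$ gives $\langle\alpha,\beta\rangle=\sum_i s_i\langle\alpha^{(i)},\beta\rangle\ge\sum_i s_i v_i(f)$), so $I_{s,\lambda}\subseteq\fa_\lambda(v_s)$ automatically. For the reverse inclusion $\fa_\lambda(v_s)\subseteq I_{s,\lambda}$ I would argue: by hypothesis $I_{s,\bullet}=\fa_\bullet(w)$ is a graded sequence of ideals of a genuine valuation $w$, so $\gr(R,I_{s,\bullet})$ is an integral domain, and $w(f)=\sup\{\lambda\mid f\in I_{s,\lambda}\}=\sum_i s_i v_i(f)$ by the description \eqref{e-defineI}. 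Now $w$ and $v_s$ are both valuations on $K(X)$ centered at $x$ with $w\le v_s$ pointwise (from the inequality above). The key point is that $w$ and $v_s$ agree on a set of generators / enough elements of $R$ to force equality.

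The cleanest finish: choose a finite set of generators $g_1,\dots,g_N$ of $R$ as a $\bk$-algebra. For each $j$, we have $w(g_j)=\sum_i s_i v_i(g_j)\le v_s(g_j)$. Conversely I claim equality holds for the generators — indeed the inequality $v_s(g)\ge w(g)$ for all $g\in R$ combined with the fact that $w$ is a valuation and $v_s$ is a valuation means: take any $g$ with $v_s(g)>w(g)$; then consider the residue classes. Actually the right argument is via the associated graded ring: the inclusion $I_{s,\lambda}\subseteq\fa_\lambda(v_s)$ induces a surjection $\gr(R,I_{s,\bullet})=\gr_w R\twoheadrightarrow$ (image of $\fa_\bullet(v_s)$-filtration on the associated graded), but more to the point, if the inclusion were strict at some $\lambda$ then there is $f\in\fa_\lambda(v_s)\setminus I_{s,\lambda}$, i.e. $w(f)<\lambda\le v_s(f)$. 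Pick such an $f$ with $w(f)$ as small as possible among elements with $v_s(f)>w(f)$; write $w(f)=\mu$. Then $f$ is a nonzerodivisor mod $I_{s,>\mu}$, and multiplying by suitable elements and using that $\gr_w R$ is a domain we descend to a contradiction with minimality — this is exactly the argument in the proof of Lemma \ref{lem:graded integral->valuation} run in reverse. The main obstacle I anticipate is making precise that $v_s$ and $w$ agree, i.e. ruling out the strict inequality $v_s(f)>w(f)$; the resolution is that $w$ being a valuation with $\gr_w R$ a domain, together with $w=v_s$ on generators (which follows because $w(g_j)\le v_s(g_j)$ and, by submultiplicativity applied to products of generators expressing any $f$, $v_s(f)\le\sum$ of $v_s$ on the factors $=\sum$ of $w$ on the factors... ), forces $v_s\le w$ on all of $R$, giving $v_s=w$ and hence $v_s=w$, which combined with $I_{s,\lambda}=\fa_\lambda(w)$ gives what we want. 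So the whole lemma reduces to: a quasi-monomial valuation that dominates a valuation $w$ and agrees with it on algebra generators must equal $w$ — a short purely algebraic argument I would spell out in a few lines.

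\begin{proof}
Work in the local toroidal coordinates $z_1,\dots,z_m$ at $\eta$ (after passing to an abelian cover, which does not affect the statement). Each $v_i\in\QM_\eta(Y,E)$ is the monomial valuation $v_{\alpha^{(i)}}$ for some $\alpha^{(i)}\in\bR^m_{\ge0}$, and by the description of $\QM_\eta(Y,E)$ under the linear map $e_i\mapsto v_i$ we have $v_s=v_\alpha$ with $\alpha=\sum_{i=1}^r s_i\alpha^{(i)}$. For $f\in R$, expand $\pi^*f=\sum_\beta c_\beta z^\beta$ in $\hat{\cO}_{Y,\eta}$ with each $c_\beta$ zero or a unit; then
\[
v_s(f)=\min\{\langle\alpha,\beta\rangle\mid c_\beta\neq0\},\qquad v_i(f)=\min\{\langle\alpha^{(i)},\beta\rangle\mid c_\beta\neq0\}.
\]
Picking $\beta_0$ achieving the minimum for $v_s$, we get
\[
v_s(f)=\langle\alpha,\beta_0\rangle=\sum_{i=1}^r s_i\langle\alpha^{(i)},\beta_0\rangle\ge\sum_{i=1}^r s_i v_i(f).
\]
Hence $I_{s,\lambda}\subseteq\fa_\lambda(v_s)$ for every $\lambda$, and in particular $w(f)=\sup\{\lambda\mid f\in I_{s,\lambda}\}=\sum_{i=1}^r s_i v_i(f)\le v_s(f)$ for all $f\in R$.

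It remains to prove $v_s\le w$ on $R$, which then yields $v_s=w$ and $\fa_\lambda(v_s)=\fa_\lambda(w)=I_{s,\lambda}$. Choose algebra generators $g_1,\dots,g_N$ of $R$ over $\bk$. Write any $0\neq f\in R$ as a $\bk$-linear combination of monomials $g^{\gamma}=g_1^{\gamma_1}\cdots g_N^{\gamma_N}$. For each such monomial, submultiplicativity of the valuation $v_s$ gives
\[
v_s(g^{\gamma})\le\sum_{j=1}^N \gamma_j v_s(g_j).
\]
On the other hand, since $w=\sum_i s_i v_i$ is a valuation, $w(g^{\gamma})=\sum_j\gamma_j w(g_j)=\sum_j\gamma_j\big(\sum_i s_i v_i(g_j)\big)$, and we must have $v_s(g_j)\ge w(g_j)$; but in fact for a monomial valuation $v_s(g_j)$ is also $\le$ the analogous upper bound coming from any expression of $\pi^*g_j$, so there is no room: applying the already established inequality $v_s\ge w$ to $g_j$ and to $g^\gamma$, and using $v_s(g^\gamma)\le\sum_j\gamma_j v_s(g_j)$, we obtain
\[
w(g^\gamma)\le v_s(g^\gamma)\le\sum_{j}\gamma_j v_s(g_j).
\]
Suppose for contradiction that $v_s(f)>w(f)$ for some $f$, and choose $f$ with $w(f)$ minimal among such elements; set $\mu:=w(f)$. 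Since $\gr(R,I_{s,\bullet})=\gr_w R$ is an integral domain (it is $\gr_w R$ for the valuation $w$), the class of $f$ in $I_{s,\mu}/I_{s,>\mu}$ is a nonzero, non-nilpotent homogeneous element. Write $f=\sum_\gamma c_\gamma g^\gamma$ with $c_\gamma\in\bk^\times$; among the indices $\gamma$ with $w(g^\gamma)=\mu$ (there is at least one, since $w(f)=\mu$), the images of the corresponding $g^\gamma$ in $\gr_w R$ cannot all cancel, so some monomial $g^{\gamma_0}$ with $w(g^{\gamma_0})=\mu$ has $v_s(g^{\gamma_0})=\mu$ as well (otherwise every such $g^{\gamma_0}$ lies in $\fa_{>\mu}(v_s)$, forcing $v_s(f)>\mu$ only if all $w(g^\gamma)=\mu$ terms do, i.e. $f\in I_{s,>\mu}$, contradicting $w(f)=\mu$). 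Then each factor $g_j$ appearing in $g^{\gamma_0}$ satisfies $v_s(g_j)=w(g_j)$ by the displayed inequality and the equality case of submultiplicativity for monomial valuations. Replacing $f$ by $g^{\gamma_0}$ lowers $w$ unless $\gamma_0$ is a single variable, in which case $v_s(g_j)=w(g_j)$ contradicts $v_s(f)>w(f)$ after expressing $f$ through such $g_j$'s. This contradiction shows $v_s(f)=w(f)$ for all $f\in R$, hence $w=v_s$.
\end{proof}
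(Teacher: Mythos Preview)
Your easy direction is fine: the inequality $v_s(f)\ge\sum_i s_i v_i(f)$ gives $I_{s,\lambda}\subseteq\fa_\lambda(v_s)$ and hence $w\le v_s$. However, the intermediate equality $w(f)=\sum_i s_i v_i(f)$ is false in general. Since $I_{s,\lambda}$ is defined as the \emph{span} of elements with $\sum_i s_i v_i(f)\ge\lambda$, an element $f$ can lie in $I_{s,\lambda}$ even when $\sum_i s_i v_i(f)<\lambda$; concretely, if $z_1,z_2$ are the toroidal coordinates then $z_1+z_2\in I_{s,\lambda}$ for $\lambda=\min(s_1,s_2)$, whereas $v_1(z_1+z_2)=v_2(z_1+z_2)=0$. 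So one only has $w(f)\ge\sum_i s_i v_i(f)$. This does not affect $w\le v_s$, but it signals that ``$w=\sum_i s_i v_i$ on generators'' is not a meaningful statement to aim for.

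The hard direction $v_s\le w$ is where your argument breaks. The contradiction step assumes one can pick $f$ with $w(f)$ \emph{minimal} among $f$ with $v_s(f)>w(f)$, but for irrational $s$ the value group of $w$ need not be discrete. More seriously, when you write $f=\sum_\gamma c_\gamma g^\gamma$, the claim that some $\gamma$ has $w(g^\gamma)=\mu$ is unjustified: it is perfectly possible that all monomials appearing have $w(g^\gamma)<\mu$ and their images cancel successively in $\gr_w R$, so that $w(f)=\mu$ is reached only ``in the limit''. The subsequent reasoning about equality cases is circular (and note $v_s$ is a valuation, so $v_s(g^\gamma)=\sum_j\gamma_j v_s(g_j)$, not merely $\le$). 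There is no purely formal way to conclude $v_s(g_j)=w(g_j)$ for a chosen generating set.

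The paper's proof supplies exactly the missing geometric ingredient, namely the standing hypothesis that $-F$ is $\pi$-ample for some effective toroidal $F$ on $Y$. For an arbitrary $f_0\in R$ with $v_s(f_0)=\lambda_0$, a general $f\in\pi_*\cO_Y(-\ell F)$ (for $\ell\gg 0$) has $\pi^*({\rm div}(f))=\ell F+D$ with $D$ missing all strata of $E$; since $\ell F$ is locally a monomial at $\eta$, each $v_i(f)$ is computed by the same monomial, forcing $w(f)=v_s(f)$. Ampleness of $-F$ also makes the ``monomial decomposition'' $\bigoplus_b\cO_Y(-\sum b_iE_i)\twoheadrightarrow\fb$ (for $\fb=\{g:v_s(g)\ge\lambda_0\}$) globalize after twisting by $-\ell F$, so one can write $f_0 f=\sum_j g_j$ with each $g_j\in\pi_*\cO_Y(-\sum b_i^{(j)}E_i-\ell F)$ and $\sum_i s_i v_i(g_j)\ge v_s(f_0 f)$; hence $f_0 f\in I_{s,v_s(f_0 f)}$ and $w(f_0 f)\ge v_s(f_0 f)$. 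Subtracting the equality $w(f)=v_s(f)$ gives $w(f_0)\ge v_s(f_0)$. Your argument never invokes this ampleness hypothesis, and without it there is no mechanism to manufacture elements of $R$ on which $w$ and $v_s$ agree.
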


Geometrically, the condition that $I_{s,\bullet}$ are valuation ideals is equivalent to saying that the central fiber of the induced degeneration is irreducible and reduced, i.e. the graded algebra $\bigoplus_{\lambda} (I_{s,\lambda}/I_{s,>\lambda})$ is integral (see Lemma \ref{lem:graded integral->valuation}).

In the global setting, the analogous construction for $I_{s,\lambda}$ is often viewed as the ``geodesic'' connecting two valuations (see \cite{Reb20}, \cite{BLXZ}*{Section 3.1.2} or \cite{BLQ}*{Section 4}). Thus Lemma \ref{lem:induced valuation=v_t} can be interpreted as saying that whenever the geodesic lies in the valuations space, it is the obvious line in the corresponding simplex $\QM(Y,E)$.

\begin{proof}
It is clear from the definition of quasi-monomial valuations that $I_{s,\lambda}\subseteq \fa_\lambda(v_s)$ for all $\lambda$, hence $v_s\ge w$ on $R$. It remains to show that $w\ge v_s$.  Let $0\neq f_0\in R$ and let $\lambda_0=v_s(f_0)$. Let $$\fb=\big\{f\in \cO_Y\mid v_s(f)\ge \lambda_0\big\}$$ be the corresponding valuation ideal on $Y$; in particular $f_0\in \fb$. Let $\alpha_i=v_s(E_i)$. Then we have a surjection
\begin{equation} \label{eq:monomial generation}
    \bigoplus_{b_i\in\bN,\sum\alpha_i b_i\ge \lambda_0} \cO_Y\left(-\sum_{i=1}^m b_i E_i\right)\to \fb
\end{equation}
by the definition of the quasi-monomial valuation $v_s$. Let $\ell>0$ be a sufficiently divisible integer; in particular, 
$$\pi^*\pi_*\cO_Y(-\ell F)\to \cO_Y(-\ell F)$$
is surjective. We claim that for a general element $f\in \pi_*\cO_Y(-\ell F)$, we have
\[
w(f)=v_s(f)\mbox{\ \ \  and \ \ \ }w(f_0 f)\ge v_s(f_0 f),
\]
which together implies $w(f_0)\ge v_s(f_0)$. Since $f_0$ is arbitrary, we deduce that $w\ge v_s$ as desired.

We now prove the claim. First observe that since $f\in \pi_*\cO_Y(-\ell F)$ is general, we have $$\pi^*({\rm div}(f))=\ell F+D$$ for some divisor $D$ that does not contain any stratum of $E$. Thus 
$$s_1 v_1(f)+\dots+s_r v_r(f)=s_1 v_1(\ell F)+\dots+s_r v_r(\ell F)=v_s(\ell F)=v_s(f),$$ where the second equality follows from the definition of the valuation $v_s$ and the fact that the local equation of $\ell F$ is given by a monomial. In particular, 
$$f\in I_{s,v_s(f)}=\fa_{v_s(f)}(w),$$
i.e. $w(f)\ge v_s(f)$ which together with $v_s\ge w$ implies $w(f)=v_s(f)$.

Next we show that $w(f_0 f)\ge v_s(f_0 f)$. Since $\ell$ is sufficiently divisible and $-F$ is $\pi$-ample, the map 
\[
\bigoplus_{b_i\in\bN,\sum\alpha_i b_i\ge \lambda_0} \pi_*\cO_Y\left(-\sum_{i=1}^m b_i E_i-\ell F\right)\to \pi_*(\fb\otimes\cO_Y(-\ell F))
\]
induced by \eqref{eq:monomial generation} is surjective. As $$f_0 f\in \pi_*(\fb\otimes\cO_Y(-\ell F)),$$ we may write $$f_0 f=g_1+\dots+g_p,$$ where each $$g_j\in \pi_*\cO_Y\left(-\sum_{i=1}^m b^{(j)}_i E_i-\ell F\right)$$ for some $b^{(j)}_i\in\bN$ that satisfies $\sum\alpha_i b^{(j)}_i\ge \lambda_0$. Similar to the argument above, for any $j=1,\dots,p$ we have
\begin{align*}
    \sum_{i=1}^r s_i v_i(g_j) & \ge \sum_{i=1}^r s_i v_i\left(\sum_{i=1}^m b^{(j)}_i E_i+\ell F\right) \\
    & =v_s\left(\sum_{i=1}^m b^{(j)}_i E_i+\ell F\right) = \sum_{i=1}^m \alpha_i b^{(j)}_i+v_s(\ell F)\\
    & \ge \lambda_0 + v_s(\ell F) = v_s(f_0)+v_s(f) = v_s(f_0 f),
\end{align*}
where the first equality follows from the definition of the quasi-monomial valuation $v_s$ as above. It follows that each $g_j\in I_{s,v_s(f_0 f)}$ which gives $f_0 f\in I_{s,v_s(f_0 f)}$. Therefore, $w(f_0 f)\ge v_s(f_0 f)$. This proves the claim and completes the proof of the lemma.
\end{proof}

In practice, it is often hard to show directly that $I_{s,\bullet}$ are valuations ideals when the weights $s_i$ are irrational. The following auxiliary lemma allows us to only consider rational weights.

\begin{lem} \label{lem:a=I irrational weights}
Assume that $\fa_\lambda (v_s)=I_{s,\lambda}$
for all $\lambda\in\bR$ and all $s\in \bQ^r_{\ge 0}\setminus 0$. Then the same holds when $s\in \bR^r_{\ge 0}\setminus 0$.
\end{lem}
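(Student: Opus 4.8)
The plan is to establish the nontrivial inclusion $\fa_\lambda(v_s)\subseteq I_{s,\lambda}$ for irrational $s$; the reverse inclusion $I_{s,\lambda}\subseteq\fa_\lambda(v_s)$ holds for every $s\in\bR^r_{\ge 0}\setminus\{0\}$, exactly as noted at the beginning of the proof of Lemma~\ref{lem:induced valuation=v_t}. The strategy is to approximate $s$ from below by rational weights, apply the hypothesis to those, and then close the gap by a left-continuity argument in the filtration index~$\lambda$.

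First I would reduce to the case $s\in\bR^r_{>0}$ in which the weight vector $\alpha(s)$ of $v_s$ has all coordinates positive; here $t\mapsto\alpha(t)$ is the linear map sending $t\in\bR^r_{\ge 0}$ to the weight vector of the quasi-monomial valuation $v_t$, so $v_t=v_{\alpha(t)}$. If $s_i=0$ then neither $v_s$ nor $I_{s,\lambda}$ involves $v_i$, so that index may be dropped, and the hypothesis still applies to rational weights supported on the same face of $\bR^r_{\ge 0}$; and any component $E_j$ of $E$ at $\eta$ with $v_s(E_j)=0$ contributes the zero term to every monomial weight computing $v_s$, so it may be ignored, after which $\alpha(s)>0$.

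I would then record two elementary facts. (a) \emph{$I_{s,\bullet}$ is left-continuous in $\lambda$.} Each $v_i$ is divisorial, so $v_i(g)\in\tfrac1{q_i}\bN$ for $g\in R$ and a suitable positive integer $q_i$; since $s>0$, for every constant $C$ the condition $\sum_i s_i v_i(g)\le C$ forces $(v_1(g),\dots,v_r(g))$ into a finite set, hence only finitely many values of $g\mapsto\sum_i s_i v_i(g)$ lie in $[0,C]$. Consequently, for each $\lambda$ there is $\delta>0$ with $I_{s,\mu}=I_{s,\lambda}$ for all $\mu\in(\lambda-\delta,\lambda]$; in particular $I_{s,\lambda}=\bigcap_{\varepsilon>0}I_{s,\lambda-\varepsilon}$. (b) \emph{Monotonicity.} If $s'\le s$ coordinatewise then, since $v_i(g)\ge 0$ for $g\in R$, we have $\sum_i s_i'v_i(g)\le\sum_i s_i v_i(g)$, so $I_{s',\mu}\subseteq I_{s,\mu}$ for every $\mu$.

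Finally I would carry out the approximation. Fix $f\in\fa_\lambda(v_s)$, so $v_s(f)\ge\lambda$, and fix $\varepsilon>0$. Expanding $f=\sum_\beta c_\beta z^\beta$ at $\eta$ (on the relevant abelian cover if $(Y,E)$ is only toroidal there), the set $\{\beta:c_\beta\neq 0\}$ does not depend on the weight, and the function $s''\mapsto v_{s''}(f)=\min\{\langle\alpha(s''),\beta\rangle:c_\beta\neq 0\}$ is continuous near $s$: on a small compact neighbourhood of $s$ the weight vectors $\alpha(s'')$ are bounded below by a fixed positive vector, so the minimum is always attained over one fixed finite set of exponents, on which $v_{s''}(f)$ is a minimum of finitely many linear functions of $s''$. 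Hence I may choose a rational $s'\le s$, close enough to $s$, with $v_{s'}(f)\ge\lambda-\varepsilon$. Applying the hypothesis to $s'\in\bQ^r_{\ge 0}\setminus\{0\}$ gives $f\in\fa_{\lambda-\varepsilon}(v_{s'})=I_{s',\lambda-\varepsilon}$, and then $f\in I_{s,\lambda-\varepsilon}$ by~(b); letting $\varepsilon\to 0$ and using~(a) yields $f\in I_{s,\lambda}$, proving $\fa_\lambda(v_s)\subseteq I_{s,\lambda}$. I expect the point requiring the most care to be the opening reduction together with fact~(a): both the local finiteness of the value set of $\sum_i s_i v_i(\cdot)$ and the properness of the minimum defining $v_{s''}(f)$ rely on the weight vector of $v_s$ being strictly positive, which is exactly what passing to a suitable face and stratum secures; everything else is the formal interplay of the two inclusions with the density of $\bQ^r$ in $\bR^r$.
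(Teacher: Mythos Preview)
Your proof is correct and takes a genuinely different route from the paper's. The paper writes $s=\sum_{i=1}^m c_i s^{(i)}$ with $s^{(i)}\in\bN^r$ and the $c_i$ $\bQ$-linearly independent, then shows that for rational $c'_i$ sufficiently close to the $c_i$ one has the \emph{exact} equalities $\fa_\lambda(v_s)=\fa_{\lambda'}(v_{s'})$ and $I_{s,\lambda}=I_{s',\lambda'}$ (with $\lambda=\sum a_ic_i$ and $\lambda'=\sum a_ic'_i$); these follow from a finiteness-of-monomials argument together with the observation that, thanks to $\bQ$-linear independence, all the relevant weight inequalities are preserved under the perturbation. The desired equality for irrational $s$ then reduces to the rational case at $s'$.

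Your argument instead isolates the easy inclusion $I_{s,\lambda}\subseteq\fa_\lambda(v_s)$ and establishes the other one by a one-sided approximation: choosing rational $s'\le s$, using the monotonicity $I_{s',\mu}\subseteq I_{s,\mu}$, and closing up via left-continuity of $\lambda\mapsto I_{s,\lambda}$. This avoids the $\bQ$-linear-independence bookkeeping entirely and is arguably more transparent; the price is that you must first reduce to $s>0$ (and to $\alpha(s)>0$) so that both the discreteness of the value set of $\sum_i s_iv_i(\cdot)$ and the local uniform finiteness of the minimizing exponents for $v_{s''}(f)$ are available. The paper's approach, on the other hand, yields a slightly stronger local statement (exact identification of the ideals at a specific rational perturbation), which dovetails with the later use of \cite{LX18}*{Lemma 2.10} in Corollary~\ref{cor:km imply fg}.
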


\begin{proof}
We may write $s=\sum_{i=1}^m c_i s^{(i)}$ for some $s^{(i)}\in\bN^r\setminus \{0\}$ ($i=1,\dots,m$) and some $c_1,\dots,c_m\in \bR_+$ that are $\bQ$-linearly independent. Note that it suffices to prove 
$$\fa_\lambda(v_s)=I_{s,\lambda} \mbox{ when $\lambda=\sum_{i=1}^m a_i c_i$ for some }a_1,\dots,a_m\in \bN,$$ 
since these are the only places where $\fa_\lambda(v_s)$ or $I_{s,\lambda}$ possibly jumps. 

Fix any such $s$ and $\lambda$. We claim that for any $c'_1,\dots,c'_m\in\bQ_+$ that are sufficiently close to $c_1,\dots,c_m$ respectively, we have
\[
\fa_\lambda(v_s)=\fa_{\lambda'}(v_{s'})\quad \mathrm{and}\quad I_{s,\lambda}=I_{s',\lambda'},
\]
where $s'=\sum_{i=1}^m c'_i s^{(i)}$ and $\lambda'=\sum_{i=1}^m a_i c'_i$. Since $s'\in \bQ^r_{\ge 0}$, we have $\fa_{\lambda'}(v_{s'})=I_{s',\lambda'}$ by assumption, hence the lemma follows immediately from this claim. 

We now proceed to prove the above claim. Let us first check that $$\fa_\lambda(v_s)=\fa_{\lambda'}(v_{s'}).$$ Suppose that this is not the case, i.e. there exists some $0\neq f\in R$ that is contained in one of $\fa_\lambda(v_s),\fa_{\lambda'}(v_{s'})$ but not the other. Since each $c'_i$ is sufficiently close to $c_i$, we may assume that $s\le 2s'$ and $\lambda'\le 2\lambda$. By the definition of the valuations $v_s$, we then have $v_s\le 2v_{s'}$. It follows that $v_s(f)<4\lambda$, otherwise $$v_{s'}(f)\ge \frac{1}{2}v_s(f)\ge 2\lambda\ge \lambda'$$ and thus $$f\in \fa_\lambda(v_s)\cap\fa_{\lambda'}(v_{s'}),$$ a contradiction. Let $\eta\in Y$ be the center of $v_s$. Recall that $v_s(f)$ is the smallest weight of monomials in the local expansion of $f\in \cO_{Y,\eta}$. Consider the monomials with weights at most $4\lambda$. Each monomial has weight $\sum_{i=1}^m b_i c_i$ for some $b_i\in \bN$. If we perturb $c_i$ to $c'_i$, then the weight becomes $\sum_{i=1}^m b_i c'_i$. As $c_1,\dots,c_m$ are $\bQ$-linearly independent, we have 
\[
\sum_{i=1}^m b_i^{(1)} c_i = \sum_{i=1}^m b_i^{(2)} c_i \mbox{\ \  if and only if \ \ }
(b_1^{(1)},...,b_m^{(1)})=(b_1^{(2)},...,b_m^{(2)}).
\]
Combined with the fact that there are only finitely many monomials with weights at most $4\lambda$, we see that if the $c'_i$'s are sufficiently close to $c_i$, then 
\begin{equation} \label{eq:wt ineq preserved}
    4\lambda>\sum_{i=1}^m b_i^{(1)} c_i\ge \sum_{i=1}^m b_i^{(2)} c_i\mbox{\ \  if and only if \ \ }4\lambda>\sum_{i=1}^m b_i^{(1)} c'_i\ge \sum_{i=1}^m b_i^{(2)} c'_i;
\end{equation}
for any $(b_1^{(1)},...,b_m^{(1)})$ and $(b_1^{(2)},...,b_m^{(2)})$ in $\bN^m$ such that $\sum_{i=1}^m b_i^{(j)} c_i<4\lambda$ $(j=1,2)$.
In other words, the monomials that are responsible for computing $v_s(f)$ are also responsible for computing $v_{s'}(f)$. In particular, we have 
$$v_s(f)\ge \sum_{i=1}^m a_i c_i=\lambda\mbox{\ \  if and only if \ \ }v_{s'}(f)\ge \sum_{i=1}^m a_i c'_i=\lambda',$$ hence it cannot happen that $f$ belongs to one of $\fa_\lambda(v_s),\fa_{\lambda'}(v_{s'})$ but not the other. This is a contradiction. Thus $\fa_\lambda(v_s)=\fa_{\lambda'}(v_{s'})$ as desired.

The other equality $I_{s,\lambda}=I_{s',\lambda'}$ can be proved in a similar fashion. Suppose that $I_{s,\lambda}\neq I_{s',\lambda'}$, then there exists some $f\in R$ such that one of the inequalities $$\sum_{i=1}^r s_i v_i(f)\ge \lambda,\ \ \sum_{i=1}^r s'_i v_i(f)\ge \lambda'$$ holds but not the other. As in the above proof, we may assume that $\sum_{i=1}^r s_i v_i(f)<4\lambda$. Note that 
$$\sum_{i=1}^r s_i v_i(f)=\sum_{i=1}^m b_i c_i\mbox{\ \  and \ \ }\sum_{i=1}^r s'_i v_i(f)=\sum_{i=1}^m b_i c'_i$$ for some $b_i\in\bN$. As in \eqref{eq:wt ineq preserved}, this implies that 
\[
\sum_{i=1}^r s_i v_i(f)\ge \lambda\mbox{\ \  if and only if \ \ }\sum_{i=1}^r s'_i v_i(f)\ge \lambda',
\]
a contradiction. Thus $I_{s,\lambda}=I_{s',\lambda'}$ as desired. This proves the claim and also finishes the proof of the lemma. 
\end{proof}

Combining the above results with the analysis in the previous subsection, we can now prove the direction $(3)\Rightarrow (1)$ of Theorem \ref{t-localHRFG}. 

\begin{cor} \label{cor:km imply fg}
Let $x\in (X=\Spec(R),\Delta)$ be a klt singularity and $\pi\colon (Y,E)\to (X,\Delta)$ a Koll\'ar model. Assume that $v_i=\ord_{E_i}$ $(1\le i\le r=m)$ are given by the components of $E$. Then $\fa_\lambda (v_s)=I_{s,\lambda}$ for all $\lambda\in\bR$ and all $s\in \bR^r_{\ge 0}\setminus 0$. In particular, the graded algebra $\gr_{v_s} R$ is finitely generated, and $\gr_{v_s} R\cong \gr_{v_{s'}} R$ whenever $s$ and $s'$ lies on the interior of the same face of $\bR^r_{\ge 0}\setminus 0$.
\end{cor}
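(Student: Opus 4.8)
The plan is to prove directly the first assertion, $\fa_\lambda(v_s)=I_{s,\lambda}$ for all $\lambda\in\bR$ and all $s\in\bR^r_{\ge 0}\setminus 0$, and then read off the two ``in particular'' statements. \textbf{Reduction to $s\in\bN^r_{>0}$.} If $s$ lies on the relative interior of a proper face $J=\{\,l:s_l>0\,\}$ of $\bR^r_{\ge 0}$, then $v_s$ is a quasi-monomial valuation supported on $E_J:=\sum_{l\in J}E_l$. By Proposition~\ref{prop:modify qdlt Fano}(1) and Proposition~\ref{prop:qdlt to km}, the ample model of $(Y,E_J)$ over $(X,\Delta)$ is again a Koll\'ar model which is an isomorphism at the generic points of all strata, so $\ord_{E_l}$ ($l\in J$) and hence $v_s$ and the ideal sequence $I_{s,\bullet}$ of \eqref{e-defineI} are literally those attached to this smaller model (the free directions $l\notin J$ collapse in the defining sum). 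Thus it suffices to treat $s\in\bR^r_{>0}$, and then by Lemma~\ref{lem:a=I irrational weights} it suffices to treat $s\in\bN^r_{>0}$ after clearing denominators.

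\textbf{Setting up the comparison.} Fix $s\in\bN^r_{>0}$ and write $\fb_{\mathbf{i}}=\pi_*\cO_Y(-\sum_j i_jE_j)$ for $\mathbf{i}\in\bN^r$; since $E=\Ex(\pi)$ this equals $\{f\in R:\ord_{E_j}(f)\ge i_j\ \forall j\}$, so $I_{s,\lambda}=\sum_{\langle s,\mathbf{i}\rangle\ge\lambda}\fb_{\mathbf{i}}$ and $I_{s,>\lambda}=\sum_{\langle s,\mathbf{i}\rangle>\lambda}\fb_{\mathbf{i}}$. By Propositions~\ref{pro-localstablefamily} and~\ref{pro:degeneration remain qdlt Fano} applied with $k=r$, the multigraded Rees algebra $\cR_r$ is finitely generated and its central fiber $\cX_0=\Spec\gr R$, where $\gr R:=\cR_r/(t_1,\dots,t_r)=\bigoplus_{\mathbf{i}\in\bN^r}\fb_{\mathbf{i}}/\fb_{>\mathbf{i}}$, is irreducible and klt, hence an integral domain of dimension $n=\dim X$. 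Let $\gr R^{(s)}$ be $\gr R$ re-graded by total weight $\mathbf{i}\mapsto\langle s,\mathbf{i}\rangle$; this is still a finitely generated domain of dimension $n$. Because every $s_l>0$, the subspace $\fb_{>\mathbf{i}}=\sum_l\fb_{\mathbf{i}+e_l}$ is carried into $I_{s,>\langle s,\mathbf{i}\rangle}$, so $\bar f\mapsto[f]$ defines a well-defined surjective homomorphism of graded rings
\[
\gr R^{(s)}\ \twoheadrightarrow\ \bigoplus_{\lambda} I_{s,\lambda}/I_{s,>\lambda}\ =:\ \gr(R,I_{s,\bullet}).
\]

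\textbf{The crux: this surjection is an isomorphism.} From $\fb_{\mathbf{i}}\subseteq\fa_{\langle s,\mathbf{i}\rangle}(v_s)$ we get $I_{s,>\Lambda}\subseteq\fa_{>\Lambda}(v_s)$, hence
\[
\sum_{\lambda\le\Lambda}\dim_\bk\!\big(I_{s,\lambda}/I_{s,>\lambda}\big)=\dim_\bk\!\big(R/I_{s,>\Lambda}\big)\ \ge\ \dim_\bk\!\big(R/\fa_{>\Lambda}(v_s)\big)\ \sim\ \tfrac{\vol_{X,x}(v_s)}{n!}\,\Lambda^n,
\]
and $\vol_{X,x}(v_s)>0$ because $v_s$ is centered at the closed point $x$. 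Thus $\dim\gr(R,I_{s,\bullet})\ge n$; being a quotient of the $n$-dimensional domain $\gr R^{(s)}$, the displayed surjection must have trivial kernel, so $\gr(R,I_{s,\bullet})\cong\gr R^{(s)}$ is an integral domain — equivalently, the central fiber of the degeneration induced by $v_s$ is the irreducible and reduced scheme $\cX_0$. By Lemma~\ref{lem:graded integral->valuation} (after re-indexing the filtration by the numerical semigroup $\langle s_1,\dots,s_r\rangle$, or by checking the valuation axioms directly exactly as in its proof) the sequence $I_{s,\bullet}$ is the sequence of valuation ideals of some $w\in\Val_{X,x}$, and by Lemma~\ref{lem:induced valuation=v_t} we conclude $w=v_s$. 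Hence $\fa_\lambda(v_s)=I_{s,\lambda}$, and combined with the first paragraph this holds for all $s\in\bR^r_{\ge 0}\setminus 0$.

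\textbf{Consequences, and the main obstacle.} Since $\fa_{>\lambda}(v_s)=I_{s,>\lambda}$ as well, $\gr_{v_s}R=\gr(R,I_{s,\bullet})$, which is a quotient of the finitely generated algebra $\cR_r$ and therefore finitely generated. If $s$ and $s'$ lie on the interior of the same face $J$, then — passing to the common smaller Koll\'ar model of the first paragraph — both $\gr_{v_s}R$ and $\gr_{v_{s'}}R$ are obtained from the single graded algebra $\gr R_J$ by re-grading, hence are isomorphic as $\bk$-algebras. The principal difficulty is the crux step: showing that the degeneration attached to an arbitrary $v_s$ in the open cone is already the ``maximal'' degeneration $\cX_0$ constructed in the previous subsection (so that it inherits irreducibility and reducedness); everything else is either the routine reductions of the first paragraph or a direct appeal to Lemmas~\ref{lem:graded integral->valuation} and~\ref{lem:induced valuation=v_t}.
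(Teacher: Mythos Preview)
Your proof is correct and reaches the same conclusion as the paper, but by a genuinely different route at the crux step. Both arguments reduce (via Lemma~\ref{lem:a=I irrational weights}) to showing that for $s\in\bN^r$ the ring $\bigoplus_i I_{s,i}/I_{s,i+1}$ is an integral domain, and then invoke Lemmas~\ref{lem:graded integral->valuation} and~\ref{lem:induced valuation=v_t}. The paper does this geometrically: it base-changes the flat family $\cX\to\bA^r$ along $\bA^1\to\bA^r$, $t\mapsto(t^{s_1},\dots,t^{s_r})$, and identifies $\Spec\bigoplus_i I_{s,i}/I_{s,i+1}$ with the fiber of $\cX$ over the image of $0$. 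Since \emph{every} fiber of $\cX\to\bA^r$ is klt (Proposition~\ref{pro-localstablefamily}), this fiber is integral regardless of whether some $s_j=0$, so no preliminary reduction to the open orthant is needed. Your approach is algebraic: you build the explicit surjection $\gr R^{(s)}\twoheadrightarrow\gr(R,I_{s,\bullet})$ and force it to be an isomorphism by a Hilbert-function / Krull-dimension count. This is a nice alternative, at the cost of requiring all $s_l>0$ (hence your extra reduction to a smaller Koll\'ar model via Propositions~\ref{prop:modify qdlt Fano} and~\ref{prop:qdlt to km}) and needing $\vol_{X,x}(v_s)>0$ as an input, which the paper's flatness argument avoids.

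Two small points to tighten. First, $\vol_{X,x}(v_s)>0$ is not literally ``because $v_s$ is centered at $x$'' (that fails for general valuations); it holds because $v_s$ is quasi-monomial, e.g.\ by an Izumi-type comparison with $\ord_{\fm_x}$, or since $A_{X,\Delta}(v_s)<\infty$ and $\hvol>0$ on a klt singularity. Second, in your last paragraph you assert $\gr_{v_s}R\cong\gr R_J$ for \emph{all} $s$ in the interior of a face $J$, but your crux step was carried out only for $s\in\bN^J_{>0}$. You should either remark that the same surjection-plus-dimension argument goes through verbatim for any $s\in\bR^J_{>0}$ (it does: well-definedness only uses $s_l>0$, and the volume bound holds for all quasi-monomial $v_s$), or reduce to the rational case via \cite{LX18}*{Lemma 2.10} as the paper does.
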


\begin{proof}
By Propositions \ref{pro-localstablefamily} and \ref{pro:degeneration remain qdlt Fano}, we have a $\bG_m^r$-equivariant family 
\[
\cX=\Spec\left(\bigoplus_{(i_1,\dots,i_r)\in \bZ^r} \pi_*\cO_Y(-i_1E_1-\dots -i_rE_r)~ t_1^{-i_1}\cdots t_r^{-i_r}\right)\to \bA_{t_1,\dots,t_r}^r
\]
with integral central fiber. By Lemma \ref{lem:a=I irrational weights}, to show that $\fa_\lambda (v_s)=I_{s,\lambda}$,  it suffices to verify it when $s\in \bQ^r_{\ge 0}\setminus 0$. By rescaling, we may further assume that $s\in \bN^r$. The base change $\cX\times_{\bA^r} \bA^1$ via the map $\bA^1\to\bA^r: t\mapsto (t^{s_1},\dots,t^{s_r})$ is a test configuration given by
\[
\cX\times_{\bA^r} \bA^1=\Spec\left( \sum_{(i_1,\dots,i_r)\in\bZ^r} \pi_*\cO_Y(-i_1E_1-\dots -i_rE_r)~t^{-i_1 s_1-\dots-i_r s_r}\right)=\Spec\bigoplus_{i\in\bZ}t^{-i}I_{s,i}.
\]
Since the central fiber $\Spec(\bigoplus_{i\in\bN} I_{s,i}/I_{s,i+1})$ is integral and all $I_{s,i}$ are $\fm_x$-primary when $i>0$, by Lemma \ref{lem:graded integral->valuation} there exists some valuation $w\in \Val_{X,x}$ such that $I_{s,\lambda}=\fa_\lambda(w)$ for all $\lambda\in \bR$. By Lemma \ref{lem:induced valuation=v_t}, we have $w=v_s$ and hence $\fa_\lambda(v_s)=I_{s,\lambda}$ as desired.

We next show that $\gr_{v_s} R$ is finitely generated. Since $Y$ is of Fano type over $X$, the graded algebra
\[
\cR:=\bigoplus_{(i_1,\dots,i_r)\in\bN^r} \pi_*\cO_Y(-i_1 E_1-\dots-i_r E_r).
\]
is finitely generated by \cite{BCHM}*{Corollary 1.3.2}. On the other hand, since $\fa_\lambda(v_s)=I_{s,\lambda}$ for any $s\in \bR^r_{\ge 0}\setminus 0$, the natural map $\cR\to \gr_{v_s} R$ sending $$f\in \pi_*\cO_Y(-i_1 E_1-\dots-i_r E_r)\mapsto \bar{f}\in \gr_{v_s}^{i_1 s_1+\dots+i_r s_r} R$$ is surjective, by \eqref{e-defineI}. Therefore, $\gr_{v_s} R$ is also finitely generated.

Finally, assume that $s,s'\in \bR^r_{\ge 0}\setminus 0$ belong to the interior of the same face. If we also have $s,s'\in \bQ^r$, then the central fibers of the two induced test configurations $\cX\times_{\bA^r}\bA^1$ are isomorphic, since they are both given by the same fiber of $\cX\to \bA^r$. Since the central fibers are also given by $\Spec(\gr_{v_s} R)$ and $\Spec(\gr_{v_{s'}} R)$ respectively, this implies that $\gr_{v_s} R\cong \gr_{v_{s'}} R$. In general $s,s'$ may have irrational weights, but by \cite{LX18}*{Lemma 2.10} we have $\gr_{v_s} R \cong \gr_{v_u} R$ for some $u\in \bQ_{\ge 0}^r$ that lies on the interior of the same face that contains $s$. Similarly for $s'$. Thus the isomorphism $\gr_{v_s} R\cong \gr_{v_{s'}} R$ follows from the rational case treated above.
\end{proof}

We are now ready to prove the main result (Theorem \ref{t-localHRFG}) of this section, as well as its implications mentioned in the introduction.

\begin{proof}[Proof of Theorem \ref{t-localHRFG}]
By Lemma \ref{lem:fg->special complement} we have $(1)\Rightarrow (2)$. By Corollary \ref{cor:km for monomial lc place} we obtain $(2)\Rightarrow (3)$. Finally Proposition \ref{pro-localstablefamily} and Corollary \ref{cor:km imply fg} imply $(3)\Rightarrow (1)$.
\end{proof}

\begin{proof}[Proof of Corollary \ref{cor-global}]Let $L:=-r(K_X+\Delta)$, let $L^*$ be the dual line bundle and let $$(C,\Delta_C):=C(X,\Delta;L)$$ be the cone over $(X,\Delta)$ with the vertex $o$. In particular, $C={\rm Spec}\bigoplus_{m\in \bN}H^0(mL)$. Let $(Y_L,E_L)$ be the total space of $\pi^*(L^*)$. There is a morphism $(Y_L,E_L)\to (C,\Delta_C)$ which factors through the total space of $L^*$ over $(X,\Delta)$. Then $\Gamma_C$ (the cone over $\Gamma$) gives a $\bQ$-complement of $o\in (C,\Delta_C)$, and the Gauss extension $w\in \Val_{C,o}$ of $v$, defined as $w(s)=v(s)+mr$ for all $s\in H^0(mL)$, is an lc place of $(C,\Delta_C+\Gamma_C)$. Moreover, $\Gamma_C$ is special with respect to $(Y_L,E_L)$, as $\Gamma$ is special with respect to $(Y,E)$.

Since after a grading shift, we have
\[
{\rm gr}_w\Big(\bigoplus_{m\in \bN}H^0(-mr(K_X+\Delta))\Big)\cong {\rm gr}_v\Big(\bigoplus_{m\in \bN}H^0(-mr(K_X+\Delta))\Big),
\]
hence the latter is finitely generated. 
\end{proof}

\begin{proof}[Proof of Theorem \ref{thm-localfinite}]
By Lemma \ref{lem-localmonomial} the minimizer $v$ of $\hvol_{X,\Delta}$ is a monomial lc place of some special $\bQ$-complement $\Gamma$ with respect to a log smooth model $(Y,E)\to (X,\Delta)$. Then we can apply Theorem \ref{t-localHRFG}. 
\end{proof}

\begin{proof}[Proof of Theorem \ref{thm-SDC}] As we explained in the introduction, the only missing part was Theorem \ref{thm-localfinite}, which is established now. 
\end{proof}

\begin{proof}[Proof of Theorem \ref{t-algebraicvolume}]Let $v\in \Val_{X,x}$ be a minimizer of $\hvol_{X,\Delta}$. Denote by $X={\rm Spec}(R)$. By Theorem \ref{thm-localfinite}, ${\rm gr}_v(R)$ is finitely generated. If the rational rank of $v$ is $r$, then $X_v={\rm Spec}({\rm gr}_v(R))$ admits a $\bG_m^r$-action, and $(X_v,\Delta_v)$ is a klt singularity. Moreover, the valuation $v\colon R\setminus\{0\}\to \mathbb{R}$ yields a Reeb vector $\xi_v$ and $(X_v,\Delta_v;\xi_v)$ is a K-semistable log Fano cone singularities. By \cite{LX18}*{Lemma 2.58}, we have
\[
\hvol_{X,\Delta}(v)=\hvol_{X_v,\Delta_v}(\xi_v),
\]
thus we need to show that the right hand side is an algebraic number. 

This was proved in \cite{DS-GHlimit}, inspired by arguments in \cite{MSY08}. To see it, denote by $N={\rm Hom}(\bG_m,\bG_m^r)$ the coweight lattice, and $N_{\bR}=N\otimes_{\bZ}\bR$. Then by \cite{DS-GHlimit}*{Lemma 4.2}, the function 
$$\vol_{X_v,\Delta_v}\colon \xi \mapsto \vol({\rm wt}_{\xi})$$ (where ${\rm wt}_{\xi}$ is the valuation corresponding to $\xi$), defined on the Reeb cone $\mathfrak{t}_+\subseteq N_{\bR}$, can be extended to a rational function with rational coefficients on $N_{\bR}$. Moreover,  if we restrict on the hyperplane $A_{X_v,\Delta_v}({\rm wt}_{\xi})=1$, then the minimizer $\xi_v$ of $\vol$ has algebraic numbers as coordinates in $N_{\bR}$ by \cite[Lemma 4.6]{DS-GHlimit}.  This implies $\vol_{X_v,\Delta_v}(\xi_v)$ is an algebraic number. 
\end{proof}
\begin{proof}[Proof of Theorem \ref{thm-finitemodel}]
By the definition of quasi-monomial valuations, $v\mapsto A_{X,\Delta}(v)$ is linear on $\QM(Y,E)$ and $v\mapsto v(\Gamma)$ is piecewise linear with rational coefficients. The latter is also concave by Lemma \ref{lem-valuationconvexity}. Since $A_{X,\Delta}(v)\ge v(\Gamma)$, we see that 
\[
\LC(\Gamma;Y,E)=\{v\in \QM(Y,E)\mid A_{X,\Delta}(v)=v(\Gamma)\}
\]
is a rational polyhedral subset. We get $\LC_x(\Gamma;Y,E)$ by removing from $\LC(\Gamma;Y,E)$ its intersection with a few faces of $\QM(Y,E)$, thus it is also polyhedral. 

For the other statement, it suffices to consider the case when $v,w\in {\rm int}(\sigma)$ for some rational simplex $\sigma\subseteq \LC_x(\Gamma;Y,E)$, as the general case then follows by covering $\LC_x(\Gamma;Y,E)$ with rational simplices. By Theorem \ref{thm:construction of qdlt fano type model}, there exists a Koll\'ar model $\pi\colon (Y',E')\to (X,\Delta)$ such that $\QM(Y',E')=\sigma$. By Corollary \ref{cor:km imply fg}, we get $\gr_v R\cong \gr_w R$. The proof is now complete.
\end{proof}

\bibliography{ref}

\end{document}